\def\a{\alpha}
\def\b{\beta}
\def\ga{\gamma}
\def\Ga{\Gamma}
\def\de{\delta}
\def\De{\Delta}
\def\ep{\epsilon}
\def\la{\lambda}
\def\La{\Lambda}
\def\si{\sigma}
\def\om{\omega}
\def\Om{\Omega}
\def\th{\theta}
\def\nab{\nabla}
\def\varep{\varepsilon}
\def\DD{{\cal D}}
\def\II{{\cal I}}
\def\HH{{\cal H}}
\def\DD{{\cal D}}
\def\RR{{\cal R}}
\def\SS{{\cal S}}
\newcommand{\N}[0]{\mathbb{N}}
\newcommand{\F}[0]{\mathbb{F}}
\newcommand{\R}[0]{\mathbb{R}}
\newcommand{\Z}[0]{\mathbb{Z}}
\newcommand{\C}[0]{\mathbb{C}}
\newcommand{\T}[0]{\mathbb{T}}
\newcommand{\supp}{\mathrm{supp} \,}
\newcommand{\suppt}{\mathrm{supp}_t \,}
\newcommand{\fr}[2]{\frac{#1}{#2}}
\newcommand{\vect}[1]{\left[ \begin{array}{c} #1 \end{array} \right]}
\newcommand{\mat}[2]{\left[ \begin{array}{ #1} #2 \end{array} \right]}
\newcommand{\ALI}[1]{\begin{align*} #1 \end{align*}}
\newcommand{\tx}[1]{\mbox{#1}}
\newcommand{\lsm}[0]{\lesssim}
\newcommand{\pr}[0]{\partial}
\newcommand{\nb}{\nabla}
\newcommand{\co}[1]{\|#1\|_{C^0}}
\newcommand{\cda}[1]{\|#1\|_{\dot{C}^\a}}
\newcommand{\Ddt}[0]{\overline{D}_t}
\newcommand{\hxi}[0]{\widehat{\Xi}}          
\newcommand{\lhxi}[0]{\log \widehat{\Xi} \,}
\newcommand{\nhat}[0]{\widehat{N}}
\newcommand{\plhxi}[0]{(\log \widehat{\Xi})}
\newcommand{\va}[0]{\vec{a}}
\newcommand{\chka}[0]{\check{a}}
\newcommand{\vcb}[0]{\vec{b}}
\newcommand{\vcc}[0]{\vec{c}}
\newcommand{\vce}[0]{\vec{e}}
\DeclareMathAlphabet{\mathpzc}{OT1}{pzc}{m}{it}
\newcommand{\hh}[0]{\mathpzc{h}}
\newcommand{\bp}[0]{\bar{p}}
\newcommand{\hq}[0]{\hat{q}}
\newcommand{\VR}[0]{\mathring{V}}
\newcommand{\ever}[0]{\left( \fr{e_v}{e_R} \right)}
\newcommand{\wtld}[1]{\widetilde{#1}}
\newcommand{\ali}[1]{ \begin{align} #1 \end{align} }
\def\XXint#1#2#3{{\setbox0=\hbox{$#1{#2#3}{\int}$}
     \vcenter{\hbox{$#2#3$}}\kern-.5\wd0}}
\newcommand{\hltRed}[1]{#1} 
\newtheorem{thm}{Theorem}
\newtheorem{lem}{Lemma}[section]
\newtheorem{prop}{Proposition}[section]
\newtheorem{conj}{Conjecture}
\theoremstyle{definition}
\newtheorem{defn}{Definition}[section]
\theoremstyle{remark}
\title{ A Proof of Onsager's Conjecture }
\author{ Philip Isett\thanks{Department of Mathematics, University of Texas at Austin, Austin, TX  (\href{mailto:isett@math.utexas.edu}{isett@math.utexas.edu}).  The work of P. Isett is supported by the National Science Foundation under Award No. DMS-1402370 and DMS-1700312.} 
}
\date{ }
\begin{document}
\maketitle

\begin{abstract}
For any $\alpha < 1/3$, we construct weak solutions to the $3D$ incompressible Euler equations in the class $C_tC_x^\alpha$ that have nonempty, compact support in time on ${\mathbb R} \times {\mathbb T}^3$ and therefore fail to conserve the total kinetic energy.  This result, together with the proof of energy conservation for $\alpha > 1/3$ due to [Eyink] and [Constantin, E, Titi], solves Onsager's conjecture that the exponent $\alpha = 1/3$ marks the threshold for conservation of energy for weak solutions in the class $L_t^\infty C_x^\alpha$.  The previous best results were solutions in the class $C_tC_x^\alpha$ for $\alpha < 1/5$, due to [Isett], and in the class $L_t^1 C_x^\alpha$ for $\alpha < 1/3$ due to [Buckmaster, De Lellis, Sz\'{e}kelyhidi], both based on the method of convex integration developed for the incompressible Euler equations by [De Lellis, Sz\'{e}kelyhidi].  The present proof combines the method of convex integration and a new ``gluing approximation'' technique.  The convex integration part of the proof relies on the ``Mikado flows'' introduced by [Daneri, Sz\'{e}kelyhidi] and the framework of estimates developed in the author's previous work.  
\end{abstract}
\tableofcontents

\part{Introduction}
In this work, we consider weak solutions to the 3D incompressible Euler equations (posed on a periodic domain), which we write using the Einstein summation convention and in divergence form as
\ali{
\begin{split} \label{eq:eulerEqn}
\pr_t v^\ell + \nab_j (v^j v^\ell) + \nb^\ell p &= 0 \\
\nb_j v^j &= 0 
\end{split}
}
For continuous velocity and pressure fields $v : \R \times \T^3 \to \R^3$, $p : \R \times \T^3 \to \R$, 
being a weak solution to \eqref{eq:eulerEqn} is equivalent to \eqref{eq:eulerEqn} holding in the sense of distributions, or to the equations
\ali{
\fr{d}{dt} \int_\Om v(t,x) \, dx = \int_{\pr \Om} v(t,x) &(v \cdot n) \, dS + \int_{\pr \Om} p(t,x) n \, dS \label{eq:balanceMoment}\\
\int_{\pr \Om} v(t,x) \cdot n(x) \, dS &= 0 \label{eq:balanceMass}
}
holding (as continuous functions of $t \in \R$) for all smooth subregions $\Om \subseteq \T^3$, where $n = n(x)$ is the inward unit normal vector field on the boundary $\pr \Om$, and $dS = dS(x)$ is the surface measure on the boundary.  Equations \eqref{eq:balanceMoment}-\eqref{eq:balanceMass} express the balance of momentum and balance of mass for the portion of an incompressible fluid occupying the region $\Om$, and they are equivalent to \eqref{eq:eulerEqn} holding pointwise for solutions that are continuously differentiable.  More detailed discussions of the concept of a weak solution and its physical meaning can be found in \cite{deLSzeOnsagSurv}.

For $C^1$ solutions to \eqref{eq:eulerEqn} on a periodic domain, one can prove that any solution on a time interval $I$ is uniquely determined by its values $v(t_0, x)$ at a single initial time $t_0 \in I$, and that the total kinetic energy $:= \int_{\T^3} \fr{1}{2} |v|^2(t,x) dx$ is a constant function of time (i.e., $v$ conserves energy).  However, the simple proofs of these results do not apply to weak solutions, and in fact it has been known since the startling discovery of Sheffer \cite{scheff} \hltRed{and later works of \cite{shnNonUnq,shnDiss}} that general distributional solutions to \eqref{eq:eulerEqn} in the class $v \in L_{t,x}^2(\R \times \R^2)$ may fail to be unique, may fail to conserve energy, and may even have compact support \hltRed{or have strictly decreasing total kinetic energy}.  

A longstanding open question has been to determine what degree(s) of regularity must be assumed to guarantee uniqueness or conservation of energy for weak solutions to \eqref{eq:eulerEqn}.  A folklore conjecture is that uniqueness should fail when $v \in C^1$ is replaced by $v \in C_t C_x^\a$ for some $\a < 1$. 
Regarding the conservation of energy, one has the following conjecture, which originates from a 1949 paper by the physicist and chemist Lars Onsager \cite{onsag}:
\begin{conj}[Onsager's Conjecture, Positive Direction] \label{conj:onsPos} If $\a > 1/3$, then (on a periodic domain and a time interval $I$), every weak solution to \eqref{eq:eulerEqn} that satisfies the H\"{o}lder condition
\ali{
|v(t,x + \De x) - v(t,x)| &\leq C |\De x|^\a ,\quad \tx{ for all } t \in I, \De x \in \R^3 \label{eq:holderCondAlpha}
}
for some $C \geq 0$ must satisfy the conservation of energy (i.e. $\int_{\T^3} \fr{1}{2} |v|^2(t,x) \, dx$ is constant in time).
\end{conj}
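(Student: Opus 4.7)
The plan is to execute the standard mollification argument of Constantin--E--Titi and Eyink: regularize $v$ at spatial scale $\varep$, derive an approximate energy identity whose defect is a quadratic commutator, estimate that defect by H\"{o}lder increments, and send $\varep \to 0$. Fix a radial $\eta \in C_c^\infty(\R^3)$ with $\int \eta = 1$, set $\eta_\varep(x) := \varep^{-3} \eta(x/\varep)$, and let $v_\varep := v *_x \eta_\varep$, $p_\varep := p *_x \eta_\varep$. Convolving \eqref{eq:eulerEqn} in space yields $\pr_t v_\varep^\ell + \nab_j (v^j v^\ell)_\varep + \nab^\ell p_\varep = 0$ together with $\nab_j v_\varep^j = 0$. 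The central object is the quadratic commutator
\[ r_\varep^{j\ell}(x) \;:=\; (v^j v^\ell)_\varep(x) - v_\varep^j(x)\, v_\varep^\ell(x). \]
Writing $v(x-y) = v(x) + \de_y v(x)$ with $\de_y v(x) := v(x-y) - v(x)$ inside the convolution, a direct manipulation yields the Constantin--E--Titi identity
\[ r_\varep^{j\ell}(x) \;=\; \int_{\R^3} \eta_\varep(y)\, \de_y v^j(x)\, \de_y v^\ell(x)\, dy \;-\; (v - v_\varep)^j(x)\,(v - v_\varep)^\ell(x). \]
Both terms are manifestly quadratic in velocity increments across distances $\lsm \varep$, so \eqref{eq:holderCondAlpha} immediately delivers the key commutator bound $\co{r_\varep} \lsm \varep^{2\a}$.

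Next I would pair the mollified momentum equation with $v_\varep^\ell$ and integrate over $\T^3$. The pressure term vanishes by $\nab_j v_\varep^j = 0$, and the cubic local flux integrates to zero, $\int_{\T^3} v_\varep^\ell \nab_j (v_\varep^j v_\varep^\ell)\, dx = \int_{\T^3} \nab_j (\tfrac{1}{2} |v_\varep|^2 v_\varep^j)\, dx = 0$. What survives is the flux identity
\[ \fr{d}{dt} \int_{\T^3} \fr{1}{2} |v_\varep|^2 \, dx \;=\; \int_{\T^3} \nab_j v_\varep^\ell \cdot r_\varep^{j\ell} \, dx, \]
valid initially in the distributional sense in $t$ (which is why I would really pair against a smooth time cutoff $\chi(t)$, invoking continuity of $v$ at the end to upgrade to a pointwise statement). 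Combining the mollification bound $\|\nab v_\varep\|_{C^0} \lsm \varep^{\a - 1}$ with $\co{r_\varep} \lsm \varep^{2\a}$, the right-hand side is bounded in absolute value by $C \varep^{3\a - 1}$, which tends to $0$ as $\varep \to 0$ \emph{precisely} when $\a > 1/3$.

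To close, integrate the flux identity in time over an arbitrary $[t_1, t_2] \subseteq I$ and send $\varep \to 0$: the defect integral vanishes, while on the left $v_\varep \to v$ uniformly on $I \times \T^3$ (using that $v$ is uniformly continuous there), so $\int_{\T^3} \tfrac{1}{2} |v_\varep|^2 dx \to \int_{\T^3} \tfrac{1}{2} |v|^2 dx$ uniformly in $t$. This quantity therefore takes the same value at $t_1$ and $t_2$, and energy is conserved. No step is a serious obstacle: the analytic heart of the argument lies entirely in the single observation that the natural scaling of the energy flux, $\varep^{3\a - 1}$, changes sign at the Onsager exponent $\a = 1/3$, and all remaining manipulations are standard once the distributional formulation of \eqref{eq:eulerEqn} is handled carefully via time-mollification or test-function pairing.
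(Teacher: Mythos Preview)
Your argument is the standard Constantin--E--Titi mollification proof and is correct. However, you should note that the paper does not itself prove this statement: Conjecture~\ref{conj:onsPos} is stated only as background and is attributed to \cite{eyink} and \cite{CET} (with the sharpest version credited to \cite{ches}). The paper's contribution is the \emph{negative} direction, Conjecture~\ref{conj:onsag}, established via Theorem~\ref{thm:main}. So there is no proof in the paper to compare against; your write-up is essentially the argument the paper cites as already known.
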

\begin{conj}[Onsager's Conjecture, Negative Direction] \label{conj:onsag} For every $\a < 1/3$, there exist (periodic) weak solutions to \eqref{eq:eulerEqn} that satisfy \eqref{eq:holderCondAlpha} (in other words, $v \in L_t^\infty C_x^\a$) such that the conservation of energy fails (i.e., $\int_{\T^3} \fr{1}{2} |v|^2(t,x) dx$ fails to be constant in time).
\end{conj}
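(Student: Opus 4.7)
The plan is to prove Conjecture~\ref{conj:onsag} via a convex integration scheme that produces $v$ as the uniform limit of a sequence of smooth approximate solutions $(v_q,p_q,R_q)$ to the Euler--Reynolds system
\[ \pr_t v_q^\ell + \nb_j(v_q^j v_q^\ell) + \nb^\ell p_q = \nb_j R_q^{j\ell}, \qquad \nb_j v_q^j = 0, \]
indexed by a frequency parameter $\la_q$ and an amplitude parameter $\de_q^{1/2}$ (both double-exponential in $q$). At each step one adds a divergence-free perturbation $w_{q+1}$ of frequency $\la_{q+1}$ and amplitude $\sim \de_{q+1}^{1/2}$, chosen so that the low-frequency part of $w_{q+1}\otimes w_{q+1}$ cancels $R_q$ to leading order; the residual errors become the next stress $R_{q+1}$, required to have size $\lesssim \de_{q+2}$. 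The target H\"older exponent $\a$ is controlled by the balance of the cost $\la_{q+1}^\a \de_{q+1}^{1/2}$ against the rate of decay of $\de_q$. The initial datum $v_0$ is chosen to carry a prescribed, compactly supported energy profile; summability of $\sum_q \|v_q - v_{q-1}\|_{C^\a}$ for every $\a < 1/3$ will then produce the required $C_tC_x^\a$ weak solution whose energy fails to be constant.

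The main new ingredient, which breaks the $1/5$ threshold of the author's earlier scheme, is a \emph{gluing approximation} step inserted before each convex integration step. One partitions the time axis into subintervals of length $\tau_q$, slightly shorter than the natural eddy-turnover time of $v_q$; on each subinterval one solves the exact (mollified) Euler equations from the value of $v_q$ at the start of the subinterval; and one stitches these local exact solutions together using smooth temporal cutoffs, introducing a pressure and a divergence-free correction to enforce incompressibility. The outcome is a new Euler--Reynolds tuple $(\wtld v_q,\wtld p_q,\wtld R_q)$ whose stress $\wtld R_q$ is supported on a \emph{disjoint} union of short time intervals and is essentially of factored space/time form on each. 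This disjoint-temporal-support property is what permits the subsequent convex integration step to employ the \emph{Mikado flows} from \cite{danSze}---stationary Euler solutions concentrated on pairwise disjoint straight tubes---in place of Beltrami flows, thereby eliminating the advective error terms that previously forced $\a < 1/5$.

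The convex integration step itself then takes $w_{q+1} = \sum_\xi a_\xi(t,x) W_\xi(\la_{q+1} x) + (\text{correctors})$, with amplitudes $a_\xi$ dictated by a geometric lemma applied to $\wtld R_q$ on each active subinterval. Disjointness of the tubes makes $w_{q+1}\otimes w_{q+1}$ produce exactly $-\wtld R_q$ at low frequency, and an inverse-divergence operator applied to the high-frequency residuals yields $R_{q+1}$, whose size is bounded by stationary-phase estimates in H\"older spaces. Because $\wtld R_q$ is nearly time-independent on each active subinterval, the transport errors $(\pr_t + v_q \cdot \nb) R_{q+1}$, which in the Beltrami scheme cost a full factor of the flow's oscillation frequency, are now essentially absent.

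The principal obstacles I expect are: (i) carrying out the gluing without degrading the H\"older regularity of $v_q$, which demands sharp $C^0$ and material-derivative estimates on the exact Euler evolution over the short time $\tau_q$, together with careful control of the associated pressure correction; (ii) verifying that $\wtld R_q$ obeys both the correct amplitude and material-derivative estimates, since these are what replace the advective losses of the earlier iteration; and (iii) calibrating $\de_q$, $\la_q$, and the subinterval length $\tau_q$ so that $R_{q+1}$ fits under $\de_{q+2}$ while the sums $\sum_q \la_q^\a \de_q^{1/2}$ converge for every $\a < 1/3$, with the limiting exponent approaching $1/3$ as the free double-exponential parameter tends to $1$. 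Once these are in place, the compactly supported energy profile imposed on $v_0$ is inherited by the limit, contradicting energy conservation and completing the proof.
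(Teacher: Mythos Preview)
Your proposal is correct and follows essentially the same approach as the paper: a gluing approximation that solves exact Euler on short subintervals and patches with temporal cutoffs to produce a stress supported on disjoint time intervals, followed by a convex integration step built on Mikado flows, with the disjoint temporal supports eliminating the wave-interaction errors that previously capped the exponent at $1/5$. The paper's organization differs only in details: it uses a three-parameter frequency-energy bookkeeping $(\Xi,e_v,e_R)$ rather than your $(\lambda_q,\delta_q)$, inserts a separate Regularization Lemma before the gluing (since the gluing loses derivatives), and---a small inaccuracy in your description---the glued velocity needs no extra divergence-free corrector, since each local Euler solution is already divergence-free and the patching is purely in time.
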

Onsager's interest in the possibility of Conjecture~\ref{conj:onsag} came from an effort to explain the primary mechanism driving ``anomalous dissipation of energy'' in turbulence in terms of ``energy cascades'' that are modeled by the advective term present in the incompressible Euler equations rather than the viscosity that is present in the Navier-Stokes equations.  He asserted that 
Conjecture~\ref{conj:onsPos} was true to emphasize that, if anomalous dissipation of energy were indeed possible for solutions to the Euler equations, one would have to consider solutions with low regularity.  (Onsager's notion of ``weak solution'' was based on an equivalent definition in terms of Fourier series.) For further discussion of Onsager's conjecture and its significance in turbulence theory we refer to \cite{deLSzeOnsagSurv, eyinkSreen, bardTitiSurv, shvOns}.  \hltRed{See also \cite{chesShvFrCtsmodel} and the references therein for work on model equations for the energy cascade in the Navier-Stokes equations.}

Following the proof of a slightly weaker version of Conjecture~\ref{conj:onsPos} by \cite{eyink}, the positive direction of Onsager's conjecture 
was proven by \cite{CET} using a very short argument.  The sharpest result available, obtained in \cite{ches}, proves conservation of energy for solutions in the class $L_t^3 B_{3, c(\N)}^{1/3} \cap C_t L_x^2$ (on either $\T^n$ or $\R^n$) where $B_{3, c(\N)}^{1/3}$ denotes the closure of $C^\infty_c$ in the Besov space\footnote{Functions in $B_{3, \infty}^{1/3}$ have, roughly speaking, 1/3 of a derivative in the spatial variables in a sense measured by an $L^3$ type norm, rather than the supremum type bound in \eqref{eq:holderCondAlpha}.  See \cite{ches} for a precise definition. } $B_{3, \infty}^{1/3}$.  This result allows for the possibility that the failure of energy conservation in Conjecture~\ref{conj:onsag} may also hold in the endpoint case $\a = 1/3$, and \cite{eyink, ches} provide examples that suggests that fluctuations in kinetic energy should indeed be possible for $\a = 1/3$.  We refer also to \hltRed{\cite{duchonRobert, shvEnSing,IOheat,chesLFShv,robinsonRodrigo2016integral} for extensions} of these results and alternative proofs.

The first results towards the negative direction of Onsager's conjecture came in a breakthrough series of papers by De Lellis and Sz\'{e}kelyhidi \cite{deLSzeCts, deLSzeHoldCts} wherein the authors proved that the failure of energy conservation in Conjecture~\ref{conj:onsag} is possible for solutions in $L_t^\infty C_x^\a$ if $\a < 1/10$.  To achieve this result, the authors adapted a method known as ``convex integration'' -- which has its origins in the work of Nash on constructing paradoxical $C^1$ isometric embeddings \cite{nashC1} -- to the (very different) setting of the incompressible Euler equations \eqref{eq:eulerEqn} (see the survey \cite{deLszehOnsagSurvEur} for a thorough discussion).  Their method 
involves explicitly constructing the velocity field $v$ by adding a series of increasingly high frequency, divergence free waves that are specially designed as perturbations of a family of stationary solutions to 3D Euler known as ``Beltrami flows''.  See \cite{choffDeLSzeCts2d,choff} for extensions to dimension $2$.

In \cite{isettThesis}, the author introduced improvements to the convex integration scheme of \cite{deLSzeCts, deLSzeHoldCts} to establish Conjecture~\ref{conj:onsag} in the range $\a < 1/5$.  (See also \cite{deLSzeBuck,buckDeLIsettSze} for a shorter proof that includes a result on the existence of anomalous dissipation.)  A central theme of the above improvements concerns how to deal with the transport of high frequency waves in the construction by a low frequency velocity field, and the importance of improved estimates for the advective derivative $\pr_t + v \cdot \nb$ as part of improving the regularity of the scheme.  In \cite{isettThesis}, the author also presented a conjectural ``Ideal Case Scenario'' that would imply Onsager's conjecture, and investigated the potential for convex integration to achieve this scenario if the method could be sufficiently improved.

Another direction of research aimed at improvements towards Conjecture~\ref{conj:onsag} in weaker topologies was initiated by the work of Buckmaster \cite{Buckmaster}, who constructed $C_tC_x^{1/5-\ep}$ solutions that fail to conserve energy such that for almost every $t \in \R$ the velocity field has Onsager critical spatial regularity $v(t,\cdot) \in C_x^{1/3-\ep}$.  Using a more involved construction, Buckmaster, De Lellis and Sz\'{e}kelyhidi \cite{buckDeLSzeOnsCrit} improved this result to obtain continuous solutions in the class $v \in L_t^1 C_x^{1/3 - \ep}$ (which means that \eqref{eq:holderCondAlpha} holds with $\a = 1/3 - \ep$ not for all $t \in I$, but with a constant $C(t)$ depending on time such that $\int_{I} |C(t)| dt < \infty$).  A possible target of this direction of research suggested in \cite{buckDeLSzeOnsCrit} could be to obtain solutions in a class such as $v \in L_t^3 C_x^{1/3 - \ep}$, as this class would be borderline with the $L^3$ type spaces $L_t^3 B_{3,c_0(\N)}^{1/3}$ in which one is able to prove energy conservation as in \cite{CET, ches}.  
However, obtaining improvements in the uniform topologies $L_t^\infty C_x^\a$ -- with respect to which Conjecture~\ref{conj:onsag} is formulated above -- appears to be far out of reach of these methods.




Our main theorem is the following, which implies a complete solution to the negative direction of Onsager's conjecture, Conjecture~\ref{conj:onsag}.
\begin{thm} \label{thm:main}  For any $\a < 1/3$, there is a nonzero weak solution to incompressible Euler in the class\footnote{We write $f \in C_{t,x}^\a$ if there exists $C \geq 0$ such that $|f(t + \De t, x + \De x) - f(t,x)| \leq C (|\De t| + |\De x|)^\a$ uniformly in $t,x, \De t, \De x$.}
\ALI{
v \in C_{t,x}^\a(\R \times (\R/\Z)^3), p \in C_{t,x}^{2\a}(\R \times (\R/\Z)^3)
}
such that $v$ is identically $0$ outside a finite time interal.  In particular, the solution $v$ above fails to conserve energy.
\end{thm}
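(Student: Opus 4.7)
My plan is to prove Theorem 1 by an iterative convex integration scheme built around the Euler-Reynolds system
\ali{
\pr_t v_q^\ell + \nab_j(v_q^j v_q^\ell) + \nab^\ell p_q = \nab_j R_q^{j\ell}, \qquad \nab_j v_q^j = 0,
}
where $R_q$ is a symmetric traceless ``stress'' error that is driven to zero as $q \to \infty$. At stage $q$ I would maintain bounds on $v_q$ at spatial frequency scale $\lambda_q$ and on $R_q$ at amplitude $\delta_{q+1}$, choosing $\lambda_q$ to grow super-exponentially and $\delta_q = \lambda_q^{-2\beta}$ for any prescribed $\beta < 1/3$. Interpolating the eventual $C^0$ and $C^1$ bounds on the limit $v = v_0 + \sum_{q \geq 0}(v_{q+1} - v_q)$ then yields $v \in C_{t,x}^\alpha$ for any $\alpha < \beta$, and taking the initial profile compactly supported in time lets the final solution inherit this property, giving a non-conservative weak solution.

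The heart of the step $v_q \mapsto v_{q+1}$ is to break it into two substages. First, a gluing approximation: partition time into intervals of length $\tau_q \sim \lambda_q^{-\gamma}$ and on each such interval solve the genuine Euler equations with data $v_q(t_i)$, producing smooth exact flows $\bar v_i$. Using a smooth partition of unity adapted to these intervals, glue the $\bar v_i$ into a new Euler-Reynolds field $\bar v_q$ whose stress $\bar R_q$ is supported only in disjoint small sub-intervals $J_i$ around the gluing times. This drastically improves the time structure of the stress and kills the transport error that limited earlier schemes, because on the portion of time where $\bar v_q$ differs from the exact Euler flow the background is still smooth and controlled. The key quantitative input is a short-time propagation-of-regularity estimate for Euler together with sharp control on the differences $\bar v_i - \bar v_{i+1}$ coming from stability under perturbation of data; this is the step I expect to be the main technical obstacle.

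Second, with $\bar R_q$ localized in disjoint time intervals, I apply convex integration using the Mikado flows of Daneri-Sz\'ekelyhidi, stationary solutions to Euler whose support lies along disjoint parallel pipes traveling in prescribed directions $\vec e$. On each $J_i$ I select finitely many Mikado profiles, multiply each by an amplitude roughly like $\sqrt{|\bar R_q|}$ times a temporal cutoff $\chi_i(t)$, and form $w_{q+1}$ as the sum of these pieces plus a small divergence corrector. The algebraic identity satisfied by the Mikado quadratic interactions produces a low-frequency piece exactly of the form $-\bar R_q^{j\ell}$ on each $J_i$, cancelling the glued stress. Because the $J_i$ are pairwise disjoint in time, Mikado waves with different axes never interact, which is what makes the Mikado construction compatible with the dimensional constraints that blocked progress past exponent $1/5$ in the Beltrami setting.

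Finally I set $v_{q+1} = \bar v_q + w_{q+1}$ and read off the new Reynolds stress $R_{q+1}$ as a sum of a high-frequency interaction term, a transport term of the form $(\pr_t + \bar v_q \cdot \nab)w_{q+1}$, and a standard mollification commutator. The Mikado structure plus the fact that the background $\bar v_q$ is an exact Euler flow on the support of $w_{q+1}$ makes the transport derivative essentially free, and the advective derivative framework from the author's previous work yields the sharp bound $\|R_{q+1}\|_{C^0} \lesssim \delta_{q+2} \lambda_{q+1}^{-\varepsilon}$ for some $\varepsilon > 0$. Choosing $\tau_q$, $\gamma$, and $\varepsilon$ optimally against the super-exponential growth of $\lambda_q$ lets $\beta$ approach $1/3$. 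The principal difficulty, as noted, is executing the gluing step with estimates sharp enough that no polynomial or logarithmic losses in $\lambda_q$ degrade the target regularity; overcoming this is precisely where the $1/5$ barrier is broken.
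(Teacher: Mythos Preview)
Your outline matches the paper's architecture: a gluing approximation to localize the stress onto disjoint time intervals, followed by Mikado-based convex integration in which the time-disjointness eliminates interactions between waves of different directions. Two points deserve correction.

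First, the glued background $\bar v_q$ is \emph{not} an exact Euler flow on the support of $w_{q+1}$; the correction lives precisely on the intervals $J_i$ where the glued stress is nonzero, since that is what it must cancel. The transport term is small not for the reason you give but because the Mikado profile is composed with the back-to-labels map, so $(\pr_t + v_\ep \cdot \nb)[\psi_f(\lambda \Gamma_I)] = 0$ and only the advective derivative of the low-frequency amplitude survives; this in turn is controlled by an advective-derivative bound on the glued stress $\bar R_q$ that the gluing lemma must supply as part of its output.

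Second, you underestimate where the gluing difficulty lies. Short-time stability of the local Euler solutions gives $\|y_I\|_{C^0} \lesssim e_R^{1/2}$ for $y_I = \bar v_i - v_q$, but the glued error contains a term $\eta_I'(t)\, y_I$ with $|\eta_I'| \sim \th^{-1} \sim \Xi e_v^{1/2}$, and to put this in divergence form one needs a symmetric tensor $r_I$ with $\nb_j r_I^{j\ell} = y_I^\ell$ and $\|r_I\|_{C^0} \lesssim e_R /(\Xi e_v^{1/2})$, far smaller than the naive bound $\|r_I\| \lesssim \|y_I\|$ coming from any order $-1$ operator. The paper obtains this by solving a coupled transport--elliptic system for $r_I$ that exploits a cancellation in the dangerous term $\RR^{j\ell}[y_I^i \nb_i v]$, using that $y_I$ is itself the divergence of $r_I$; this anti-divergence construction, not the stability estimate for $\bar v_i - \bar v_{i+1}$, is the genuine technical heart of the gluing step.
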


The strategy of proof for Theorem~\ref{thm:main} will be to construct an iteration scheme that establishes the key estimates of the Ideal Case Scenario conjectured in \cite[Sections 10, 13]{isettThesis}.  As in the previous works on Onsager's conjecture described above, a large part of this iteration scheme will be based on the method of convex integration.  We will rely in particular on the framework of estimates developed in \cite{isettThesis}, which had been designed originally to potentially achieve the Ideal Case Scenario.  

One of the main difficulties in convex integration is how to control the interference terms that arise when different high frequency waves in the construction interact with each other through the nonlinear term in the equation.  Following a suggestion of P. Constantin, the idea in \cite{deLSzeCts,deLSzeHoldCts} that turned out to be key for addressing this difficulty was to find a way to design high frequency waves in the construction using ``Beltrami flows'' -- a certain family of stationary solutions to 3D incompressible Euler.  
A version of these Beltrami flows (modified to be well-adapted to the ambient velocity field of the construction) also played a key role in the proof in \cite{isettThesis}, but in that treatment they suffered a deficiency that controlling the high frequency interference terms between Beltrami flows required very sharp cutoffs in time that ultimately limited the regularity of the construction to $1/5$ rather than $1/3$.  

A key idea in the convex integration part of this work, which comes from a recent paper of Daneri and Sz\'{e}kelyhidi \cite{danSze}, is to use Mikado flows as an alternative to Beltrami flows to build the waves in the construction.  Mikado flows (see Section~\ref{sec:mikado} below), are stationary solutions to Euler built by adding together ``straight-pipe'' flows supported in disjoint cylinders that point in multiple directions.  The key difference between Mikado flows and Beltrami flows is that a Mikado flow on its own does not generate unacceptable error terms over a sufficiently long time scale, even when it is made to be well-adapted to the ambient velocity field as was done in \cite{danSze}.  The main difficulty in using Mikado flows to improve the regularity of solutions is that there seems to be no way to control the interference terms that arise when {\it distinct} Mikado flow-based waves interact with each other over the time scale one requires to improve the regularity.  For the h-principle application in \cite{danSze}, it was sufficient to use only a single Mikado flow-based wave, and so no interaction terms were present; however, to produce solutions using an iterative convex integration scheme, one requires an unbounded number of waves, since the time scale during which each high-frequency wave remains coherent shrinks to zero as the frequencies become large.  To improve on the regularity $1/5$, one must be able to control the interference between these waves over a sufficiently long time scale.


Our new method to address this difficulty of distinct wave interference is the following.  Applying convex integration directly would mean generating a sequence of Euler-Reynolds flows $(v, p, R)_{(k)}$ (see Definition~\ref{defn:euReynFlow} below), where the $k$'th error in solving the Euler equation, called $R_{(k)}$, tends to $0$ uniformly and has compact support in time contained in an interval (say, $[0,1]$).  Given $(v,p,R)_{(k)}$, we first find a new Euler-Reynolds flow $(\tilde{v}, \tilde{p}, \tilde{R})_{(k)}$ that is an acceptably small perturbation of the original $(v,p,R)_{(k)}$ obeying essentially the same estimates, such that the new error $\tilde{R}$ decomposes as a sum $\tilde{R} = \sum_I R_I$ such that the $R_I$ are supported in short time intervals that are well-separated from each other.  This technique seems related to the construction in \cite{shnNonUnq}.  After this procedure (which we call a ``gluing approximation technique''), we can apply convex integration to $(\tilde{v}, \tilde{p}, \tilde{R})_{(k)}$ by using a single Mikado-flow based wave to eliminate each $R_I$ up to a small error that is consistent with the Ideal Case Scenario.  
The distinct Mikado flow-based waves will not interact at all in the convex integration due to their supports being well-separated in time.  

The challenge of this technique is to construct the $(\tilde{v}, \tilde{p}, \tilde{R})_{(k)}$ such that all of the desired estimates will hold over the desired time scale (which is relatively long).  Our method for proving the existence and necessary estimates for the new $(\tilde{v}, \tilde{p}, \tilde{R})$ exploits a special structure in the linearization of the Euler and Euler-Reynolds equations to achieve this goal. \hltRed{This important structure is highlighted in more detail towards the end of Section~\ref{sec:goodAntiDiv} below.}

With the confirmation of Onsager's conjecture in the standard formulation of Conjecture~\ref{conj:onsag} now completed by Theorem~\ref{thm:main}, we note that there are several natural generalizations of Conjecture~\ref{conj:onsag} that have been considered in previous work and remain interesting open questions.  
Most immediately, Conjecture~\ref{conj:onsag} should extend as well to dimensions $d \geq 2$ and to general, nonperiodic domains including the whole space.  Our proof extends readily to dimensions $d \geq 3$, but leaves open the case\footnote{The best result recorded for the two-dimensional case is the existence of $(1/10-\ep)$-H\"{o}lder solutions given in \cite{choffDeLSzeCts2d}.  However, the main observations in \cite{choffDeLSzeCts2d} can be used to extend all of the results and arguments based on Beltrami flows on $C_{t,x}^{1/5-\ep}$ and $L_t^1C_x^{1/3-\ep}$ solutions in dimension $3$ (e.g. \cite{isettThesis,isettOh,buckDeLSzeOnsCrit}) to the two-dimensional setting.} $d = 2$ due to the lack of a suitable replacement for Mikado flows.  Our proof also does not produce finite energy solutions\footnote{See \cite{isettOh} for a construction of $C_{t,x}^{1/5-\ep}$ solutions on $\R^3$ with compact support and exposition of the additional issues arising in constructing solutions in the nonperiodic setting.} in $\R^3$ due mainly to analysis related to the gluing approximation technique.  
Further open questions include the extension of Onsager's conjecture to more general fluid equations including active scalar equations and the Boussinesq equation considered in \cite{corFarGanPor,shvConvInt,isettVicol,taoZhangBousCts,taoZhangBous}  and a version of Onsager's conjecture for the steady state Euler equations considered in \cite{luoShv2Dhomog,choffSzeStationary,shvydkoy2017homogeneous}.    

$ $

\noindent {\bf Acknowledgments.}  We thank S.-J. Oh for his discussions during the final preparations of this work.  We also thank the anonymous referees for their recommendations for the final version of this paper.  The work of the author is supported by the National Science Foundation under Award No. DMS-1402370 and DMS-1700312.

\numberwithin{equation}{section} 

\section{Organization of Paper}
The Main Lemma of the paper is stated as Lemma~\ref{lem:mainLem} below after some preliminary general notation introduced in Section~\ref{sec:notation}.  In Section~\ref{sec:subLemmas}, we introduce the three Main Sublemmas of the paper (The Regularization Lemma, the Gluing Approximation Lemma, and the Convex Integration Lemma) and show that they imply the Main Lemma.  Section~\ref{sec:regStep} contains the proof of the Regularization Lemma.  The proof of the Gluing Approximation Lemma occupies Sections~\ref{sec:outlineGlue}-\ref{sec:proofGlueLem}.  The proof of the Convex Integration Lemma occupies Sections~\ref{sec:mikado}-\ref{sec:concludingProof}.  The proof of Theorem~\ref{thm:main} using the Main Lemma is then given in Section~\ref{proofMainThm}.  The Appendix provides proofs or statements of analytical facts that were used in the proofs of the Main Sublemmas of the paper.



\section{Notation and Preliminaries} \label{sec:notation}
If $x \in \R$, we will write $(x)_+ = \max \{ x, 0 \}$.  We will make use of the following ``counting inequality'', which is stated as Lemma 17.1 in \cite{isett} and can be shown by induction on $m$
\ali{
\sum_{i=1}^m(x_i - y)_+ &\leq (\sum_{i=1}^m x_i - y)_+, \quad \tx{ for all }  x_1, x_2, \ldots, x_m, y \geq 0 \label{ineq:counting}
}
We will use the Einstein summation convention to sum over indices that are repeated; for example $\nb_j v^j = \sum_{j=1}^3 \nb_j v^j$ is the divergence of a vector field $v$.  Indices are raised or lowered to distinguish covariant and contravariant indices as in the conventions of invariant index notation.  The summation convention will be used only to pair a raised index and a lowered index.  
We will write $\SS$ to denote the subspace of $\R^3 \otimes \R^3$ consisting of symmetric $(2,0)$ tensors.

For partial derivatives, we will distinguish between multi-indices and first order indices by writing a multi-index in vector form.  For instance, if $\va = (a_1, a_2, a_3)$ is a multi-index of order $|\va| = 3$, then $\nb_{\va} = \nb_{a_1} \nb_{a_2} \nb_{a_3}$ is the corresponding third-order partial derivative.  In contrast, $\nb_a$ without a vector symbol denotes the first order, a'th partial derivative.  The full derivative of a tensor will be denoted using a superscript; for example $\nb^k f$ refers to the full, $k$th derivative of a function $f$, which takes values in the $k$-fold tensor product of $(\R^3)^*$.

In what follows we will refer to functions $f : \R \times \T^3 \to \R$ or $f : \T^3 \to \R$, but the discussion in this section generalizes immediately to vector fields and tensor fields taking values in $\R^n$.

For functions $f : \R \times \T^3 \to \R$, we will use the following notation to describe their time support
\ali{
\suppt f &:= \{ t \in \R ~:~ \supp f \cap \{ t \} \times \T^3 \neq \emptyset \} \notag
}
For simplicity, we refer to a function $f : \R \times \T^3 \to \R$ of space and time as {\bf smooth} if all of its spatial derivatives are continuous on $\R \times \T^3$ (which implies $f \in \bigcap_{k \geq 0} C_t C_x^k(\R \times \T^3)$).  We will write $C^\infty$ or $C^\infty(\R \times \T^3) = \bigcap_{k \geq 0} C^k(\R \times \T^3)$ to refer to the usual class of infinitely differentiable functions.  
The distinction between the two can be safely neglected in reading the argument since all the functions involved that are required in the course of the proof to be ``smooth'' will in fact be $C^\infty$; however, the higher differentiability in time will not be as important.

If $f : \T^3 \to \R$, we will write $u = \De^{-1} f$ to mean the unique function $u : \T^3 \to \R$ solving
\ALI{
\De u = (1 - \Pi_0) f, \qquad \int_{\T^3} u(x) dx = 0
}
where $\Pi_0 f = |\T^3|^{-1} \int_{\T^3} f(x) dx$ is the average value of $f$.

Given a subset $S \subseteq \R$ and $\tau \geq 0$, we will denote its $\tau$-neighborhood in $\R$ by
\ali{
N(S; \tau) &:= \{ t + t' ~:~  t \in S, |t'| \leq \tau \} \notag
}If $f : \T^3 \to \R$ is continuous and $0 < \a < 1$, we denote its homogeneous H\"{o}lder seminorm by
\ali{
[f]_\a = \cda{f} &:= \sup_{x \in \T^3} \sup_{h \in \R^3 \setminus \{0 \}} \fr{|f(x+h) - f(x)|}{|h|^\a} \label{defn:HoldSmnrm}
}
For integers $k \geq 0$, a function belongs to $f \in C^{k,\a}$ if $f \in C^k(\T^3)$ and $\cda{\nab^k f}$ is finite.  
The mean value theorem leads to the following interpolation inequality
\begin{prop} If $f : \T^3 \to \R$ is $C^1$ and $0 < \a < 1$, then
\ali{
\cda{f} &\lsm_\a \co{\nab f}^\a \co{f}^{(1-\a)} \label{ineq:holdInterp}
}
\end{prop}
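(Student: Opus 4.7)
The plan is the standard two-regime interpolation argument: bound the difference quotient $|f(x+h) - f(x)|/|h|^\a$ pointwise in $(x,h)$ and then take the supremum, splitting at a characteristic length scale that balances a derivative-based estimate against a trivial $L^\infty$ estimate. If $\co{\nab f} = 0$ then $f$ is constant (by connectedness of $\T^3$) and both sides of \eqref{ineq:holdInterp} vanish, so I may assume $\co{\nab f} > 0$ and set the balancing scale
\[
L := \frac{\co{f}}{\co{\nab f}}.
\]

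For $|h| \leq L$, I would use the fundamental theorem of calculus along the straight segment from $x$ to $x+h$ (the periodic function $f$ is defined on all of $\R^3$, so there is no issue making sense of this), writing $f(x+h) - f(x) = \int_0^1 h^j \nb_j f(x + sh)\, ds$. This gives $|f(x+h) - f(x)| \leq \co{\nab f}\, |h|$ and hence
\[
\frac{|f(x+h) - f(x)|}{|h|^\a} \leq \co{\nab f}\, |h|^{1-\a} \leq \co{\nab f}\, L^{1-\a} = \co{\nab f}^\a\, \co{f}^{1-\a}.
\]
For $|h| > L$, I would instead use the trivial bound $|f(x+h) - f(x)| \leq 2\co{f}$ together with $|h|^{-\a} < L^{-\a}$ to get
\[
\frac{|f(x+h) - f(x)|}{|h|^\a} \leq 2\co{f}\, |h|^{-\a} \leq 2\co{\nab f}^\a\, \co{f}^{1-\a}.
\]
Taking the supremum over $x \in \T^3$ and $h \in \R^3 \setminus \{0\}$ then yields \eqref{ineq:holdInterp} with an absolute constant (essentially $2$), so the implicit constant does not even depend on $\a$.

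There is essentially no obstacle here; this is a standard exercise, and the only detail worth noting is that the mean value bound on the torus is valid even when $|h|$ exceeds the diameter of $\T^3$, since $f$ extends periodically to $\R^3$ with the same $\co{\nab f}$. If desired, the final bound can be sharpened to constant $1$ by using $\co{f} := \sup|f|$ interpretable as an oscillation bound, but since the statement is given with the freely-chosen implicit constant $\lesssim_\a$, the argument above is more than sufficient.
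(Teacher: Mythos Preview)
Your argument is correct and is exactly the standard mean-value-theorem interpolation that the paper has in mind; the paper does not spell out a proof but simply states that ``the mean value theorem leads to the following interpolation inequality,'' which is precisely the two-regime bound you carried out.
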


\part{The Main Lemma and Sublemmas}
To state the Main Lemma, we first recall the concept of an Euler-Reynolds flow from\footnote{In \cite{deLSzeCts} the definition is given in an equivalent form where $R^{j\ell}$ is required to have $0$ trace $\de_{j\ell} R^{j\ell} = 0$ pointwise.} \cite{deLSzeCts} and define the notion of frequency-energy levels that will be used in our paper.  
\begin{defn}\label{defn:euReynFlow}  A vector field $v^\ell : \R \times \T^3 \to \R^3$, function $p : \R \times \T^3 \to \R$ and symmetric tensor field $R^{j\ell} : \R \times \T^3 \to \SS$ satisfy the {\bf Euler Reynolds Equations} if
\ALI{
\pr_t v^\ell + \nb_j(v^j v^\ell) + \nb^\ell p &=\nb_j R^{j\ell} \\
\nb_j v^j &= 0
}
on $\R \times \T^3$.  Any solution to the Euler-Reynolds equations $(v,p,R)$ is called an {\bf Euler-Reynolds Flow}.  The symmetric tensor field $R^{j\ell}$ is called the {\bf stress} tensor.
\end{defn}

Our notion of frequency energy levels will be based on the one introduced in \cite{isett}, but simpler in that we do not assume control over the pressure gradient or over the advective derivative of $R$.  This simplification ultimately arises due to a special feature of the gluing approximation technique summarized in Lemma~\ref{lem:glueLem} below, which is that the stress tensor $\tilde{R}$ arising from the gluing approximation technique turns out to exhibit a suitable estimate on its advective derivative even if the starting Euler-Reynolds flow does not satisfy such a bound.  This feature of the argument will also allow us to circumvent the use of the mollification on the flow technique introduced in \cite[Section 18]{isett}, which would otherwise have been needed within the convex integration part of the proof.

\begin{defn}\label{defn:frenlvls}  Let $(v, p, R)$ be a solution of the Euler-Reynolds equation, $\Xi \geq 3$ and $e_v \geq e_R \geq 0$ be non-negative numbers.  We say that $(v, p, R)$ have {\bf frequency-energy levels} bounded by $(\Xi, e_v, e_R)$ to order $L$ in $C^0$ if \hltRed{their spatial derivatives $\nb^k v$ and $\nb^k R$ of order $k$ are continuous for all $k \leq L$ }
and the following estimates hold
\ali{
\co{\nab^{k} v} &\leq \Xi^{k} e_v^{1/2}, \quad \tx{ for all } 1 \leq k \leq L \label{eq:frEnVelocbds} \\
\co{\nab^{k}R} &\leq \Xi^{k} e_R, \quad \tx{ for all } 0 \leq k \leq L
}
\end{defn}
The Main Lemma of our paper states the following
\begin{lem}[The Main Lemma] \label{lem:mainLem}  Let $L = 3$ and $\eta > 0$.  There exists a constant $C$ depending only on $\eta$ such that the following holds.   Let $(v, p, R)$ be any solution of the Euler-Reynolds equation with frequency-energy levels bounded by $(\Xi, e_v, e_R)$ to order $L$ in $C^0$ and let $J$ be an open subinterval of $\R$ such that (recalling the notation of Section~\ref{sec:notation})
\ALI{
\suppt v \cup \suppt R &\subseteq J.
}
Define the parameter $\hxi = \Xi (e_v/e_R)^{1/2}$.  Let $N$ be any positive number obeying the conditions
\ali{
N \geq \max \{ \, \Xi^\eta, (e_v/e_R)^{1/2} \, \}  \label{eq:Nrestrict}
}
Then there exists a solution $(v_1, p_1, R_1)$ of Euler-Reynolds with frequency-energy levels bounded by
\ali{
(\Xi', e_v', e_R') &= \left(C N \Xi, \lhxi e_R, \plhxi^{5/2} \fr{e_v^{1/2} e_R^{1/2}}{N} \right) 
}
such that 
\ali{
\suppt v_1 \cup \suppt R_1 &\subseteq N(J; \Xi^{-1} e_v^{-1/2}) \label{ct:suppCdn}
}
and such that the correction $V = v_1 - v$ obeys the bounds
\ali{
\co{V} &\leq C \plhxi^{1/2} e_R^{1/2} \label{ineq:coBdV} \\
\co{\nab V} &\leq C N \Xi \plhxi^{1/2} e_R^{1/2}, \label{ineq:coNbvCorrect}
}
\end{lem}

Lemma~\ref{lem:mainLem} will follow from a combination of three Main Sublemmas that we will describe in the following Section~\ref{sec:subLemmas}.  The proof of Lemma~\ref{lem:mainLem} assuming these sublemmas will be included at the end of Section~\ref{sec:subLemmas}.

\section{The Main Sublemmas} \label{sec:subLemmas}
Here we state the three sublemmas that together will imply our Main Lemma, Lemma~\ref{lem:mainLem}.

\begin{lem}[The Regularization Lemma] \label{lem:regSublem}  There is an absolute constant $C_0$ such that the following holds.  Let $(v_0, p_0,R_0)$ be an Euler-Reynolds flow with frequency-energy levels bounded by $(\Xi, e_v, e_R)$ to order $3$ in $C^0$ such that $\suppt v_0 \cup \suppt R_0 \subseteq J$.  Define $\nhat := (e_v/e_R)^{1/2}$.  Then there exists an Euler-Reynolds flow $(v, p, R)$ such that $\suppt v \cup \suppt R \subseteq J$ that obeys the estimates
\ali{
\co{ \nab^{k} v } &\leq C_0 \nhat^{(k-3)_+} \Xi^{k} e_v^{1/2}, \quad k = 1, \ldots, 5 \label{eq:v1Est} \\
\co{ \nab^{k} R } &\leq C_0 \nhat^{(k-2)_+} \Xi^{k} e_R, \quad k = 0, \ldots, 5 \label{eq:R1Est} \\
\co{ v - v_0 } &\leq C_0 e_R^{1/2} \label{eq:incBdReg}
}
Furthermore, one can arrange that $v, R \in \bigcap_{k \geq 0} C_t C_x^k$ are smooth. 
\end{lem}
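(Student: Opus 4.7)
The plan is to mollify $(v_0, p_0, R_0)$ only in the spatial variables at a carefully chosen length scale, and to absorb the resulting convective commutator into the Reynolds stress. Let $\eta \in C_c^\infty(B_1(0))$ be a standard mollifier of unit mass, and set $\eta_\ell(x) = \ell^{-3}\eta(x/\ell)$ with the crucial choice
\ali{
\ell \;:=\; (\nhat\, \Xi)^{-1} \;=\; \Xi^{-1}(e_R/e_v)^{1/2}. \notag
}
Define $v := v_0 *_x \eta_\ell$, $p := p_0 *_x \eta_\ell$, which are smooth in space and retain the time support of $v_0$, $p_0$. Mollifying the Euler–Reynolds equation in space and adding and subtracting $\nb_j(v^j v^\ell)$ identifies the new stress as
\ali{
R^{j\ell} \;:=\; R_0^{j\ell}*_x\eta_\ell \;+\; \underbrace{\bigl(v^j v^\ell - (v_0^j v_0^\ell)*_x \eta_\ell\bigr)}_{=:\, E^{j\ell}}, \notag
}
which is symmetric because $E^{j\ell}$ admits the Constantin–E–Titi representation
\ali{
E^{j\ell}(t,x) = -\tfrac{1}{2} \iint \eta_\ell(y)\eta_\ell(z)\bigl(v_0^j(t,x-y)-v_0^j(t,x-z)\bigr)\bigl(v_0^\ell(t,x-y)-v_0^\ell(t,x-z)\bigr)\, dy\, dz. \notag
}
This identity makes manifest that $\suppt E \subseteq \suppt v_0 \subseteq J$, so \eqref{ct:suppCdn} holds for the regularized flow, and that $v$ remains divergence-free.

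The bounds on $v$ and on $R_0*_x\eta_\ell$ are then standard. For $|a|\le 3$, Young's inequality gives $\co{\nb^{|a|}v}\le\co{\nb^{|a|}v_0}\le\Xi^{|a|}e_v^{1/2}$, while for $|a|=4,5$ one transfers the extra derivatives onto the mollifier (i.e., uses Bernstein) to obtain $\co{\nb^{|a|}v}\lsm \ell^{3-|a|}\co{\nb^3 v_0}\lsm \nhat^{|a|-3}\Xi^{|a|}e_v^{1/2}$; the same argument, with $e_v^{1/2}$ replaced by $e_R$ and $3$ by $2$ (since $R_0$ has only two-derivative control without loss), shows $\co{\nb^{|a|}(R_0*_x\eta_\ell)}\lsm \nhat^{(|a|-2)_+}\Xi^{|a|}e_R$. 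The approximation estimate \eqref{eq:incBdReg} is immediate from the mean value theorem:
\ali{
\co{v-v_0} \lsm \ell\,\co{\nb v_0} \le \ell\, \Xi\, e_v^{1/2} = \nhat^{-1} e_v^{1/2} = e_R^{1/2}. \notag
}

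The main work lies in verifying the analogous bound $\co{\nb^{|a|} E}\lsm \nhat^{(|a|-2)_+}\Xi^{|a|}e_R$ for the commutator, using only the hypothesis $\co{\nb^m v_0}\le\Xi^m e_v^{1/2}$ up to $m=3$. Differentiating the commutator formula and applying Leibniz distributes $|a|$ derivatives among the two shifted differences: a factor $\nb^{m}(v_0(x-y)-v_0(x-z))$ can be bounded by $|y-z|\co{\nb^{m+1}v_0}\lsm \ell\,\Xi^{m+1}e_v^{1/2}$ when $m\le 2$, and only by $2\co{\nb^{m} v_0}\lsm \Xi^m e_v^{1/2}$ when $m=3$. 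The worst case occurs when one factor is forced to have order $m=3$ and hence loses the $\ell$-gain; a direct case analysis then shows that for each $|a|\in\{0,1,\ldots,5\}$ the dominant contribution is $\lsm \ell^{\min(2,\,5-|a|)}\Xi^{|a|+2}e_v$ (with the convention that when $|a|\le 2$ both factors keep a difference bound), which upon substituting $\ell=(\nhat\Xi)^{-1}$ and $e_v=\nhat^2 e_R$ collapses exactly to $\nhat^{(|a|-2)_+}\Xi^{|a|}e_R$. The hard part of the proof is therefore bookkeeping this Leibniz expansion and checking that the single "bad" factor $\co{\nb^3 v_0}$ is compensated by the product structure; once this is done, smoothness of $v$ and $R$ in $x$ follows from $\eta_\ell\in C_c^\infty$, and the choice of $C_0$ absorbs all absolute constants that appear along the way.
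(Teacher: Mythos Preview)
Your overall strategy---spatial mollification at scale $\ell=(\nhat\Xi)^{-1}$ and absorbing the quadratic commutator into the stress---is exactly the paper's approach, and your bounds on $v$, on $R_0*\eta_\ell$, and on $v-v_0$ are fine.

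The gap is in your treatment of $\nb^{|a|}E$ for $|a|\ge 4$.  Differentiating the Constantin--E--Titi formula directly in $x$ and distributing the derivatives by Leibniz produces, for $|a|=4,5$, terms of the form $(\nb^{m_1}v_0(\cdot-y)-\nb^{m_1}v_0(\cdot-z))\otimes(\nb^{m_2}v_0(\cdot-y)-\nb^{m_2}v_0(\cdot-z))$ with $m_1+m_2=|a|$, and in particular terms with $m_1\in\{4,5\}$.  These require $\nb^4 v_0$ or $\nb^5 v_0$, which are not controlled by the hypothesis (only $\|\nb^m v_0\|$ for $m\le 3$ is available).  Your case analysis simply omits these terms.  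Moreover, the formula you write down, $\ell^{\min(2,\,5-|a|)}\Xi^{|a|+2}e_v$, does not even reduce to the target: e.g.\ for $|a|=4$ it gives $\ell\,\Xi^6 e_v=\nhat\,\Xi^5 e_R$, whereas the required bound is $\nhat^2\Xi^4 e_R$, and there is no assumption forcing $\Xi\lsm\nhat$.

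The paper's fix is to first pass two derivatives \emph{inside} the commutator (using that $\nb$ commutes with $*\eta_\ell$ and the product rule), so that $\nb_{b_1}\nb_{b_2}E$ becomes a sum of commutators of the form $\eta_\ell*(\nb^{c_1}v_0)\,\eta_\ell*(\nb^{c_2}v_0)-\eta_\ell*(\nb^{c_1}v_0\,\nb^{c_2}v_0)$ with $c_1+c_2=2$.  One then applies the quadratic commutator estimate $\|\nb^{|a|}(\eta_\ell*(fg)-\eta_\ell*f\,\eta_\ell*g)\|\lsm \ell^{2-|a|}\|\nb f\|\|\nb g\|$ to each piece with $f,g\in\{\nb^{c}v_0:c\le 2\}$; this uses only $\|\nb^3 v_0\|$ and transfers the remaining $|a|$ derivatives to the mollifier at cost $\ell^{-1}$ apiece, yielding $\|\nb^{|a|+2}E\|\lsm \ell^{2-|a|}\Xi^4 e_v=\nhat^{|a|}\Xi^{|a|+2}e_R$ for $0\le|a|\le 3$, which is the correct bound.  (The $|a|=1$ case then follows by interpolation between $|a|=0$ and $|a|=2$.)
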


\begin{lem}[The Gluing Approximation] \label{lem:glueLem}  For any $C_0 \geq 1$ there exist positive constants $C_1 \geq 1$ and $\de_0 \in(0, 1/25)$ such that the following holds.  Let $(v, p, R)$ be a smooth Euler-Reynolds flow that satisfies the estimates  \eqref{eq:v1Est}-\eqref{eq:R1Est} for $C_0$ and $\suppt v \cup \suppt R \subseteq J$.  Define $\hxi := \nhat \Xi = \Xi(e_v/e_R)^{1/2}$.  Then for any $0 < \de \leq \de_0$ there exist: a constant $C_\de \geq 1$, a constant $\th > 0$, a sequence of times $\{t(I)\}_{I \in \Z} \subseteq \R$ and an Euler-Reynolds flow $(\tilde{v}, \tilde{p}, \tilde{R})$, $\widetilde{R} = \sum_{I \in \Z} R_I$, that satisfy the following support restrictions
\ali{
\suppt \tilde{v} \cup \suppt \widetilde{R} &\subseteq N(J; \fr{1}{3}\Xi^{-1} e_v^{-1/2}) \label{ct:growth} \\
2^{-1} \de \plhxi^{-2} \Xi^{-1} e_v^{-1/2} \leq \th &\leq  \de \plhxi^{-2} \Xi^{-1} e_v^{-1/2} \label{ineq:thBound} \\
\suppt R_I &\subseteq \left[t(I) - \fr{\th}{2}, t(I) + \fr{\th}{2}\right] \label{eq:supptRI} \\
\bigcup_I \bigcup_{I' \neq I} [t(I) - \th, t(I) + \th] &\cap [t(I') - \th, t(I') + \th] = \emptyset \label{ct:disjointness}
}
and the following estimates
\ali{
\co{ \tilde{v} - v } &\leq C_1 e_R^{1/2}  \\
\co{ \nab^{k} \tilde{v} } &\leq C_1 \Xi^{k} e_v^{1/2}, \quad k = 1, 2, 3 \label{eq:newVelocBdGlue} \\
\sup_I \co{ \nab^{k} R_I  } &\leq C_\de \nhat^{(k-2)_+} \Xi^{k} \lhxi  e_R, \quad k = 0, 1,2, 3 \label{eq:newRIbdGlue} \\
\sup_I \co{ \nab^{k} (\pr_t + \tilde{v} \cdot \nab) R_I } &\leq C_\de  \plhxi^3 \Xi e_v^{1/2} \Xi^{k} e_R, \quad k = 0, 1, 2. \label{eq:newDtRIbdGlue}
}
\end{lem}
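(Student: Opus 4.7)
The plan is a gluing construction: cover a slight thickening of $J$ by overlapping time windows of length comparable to the advection time scale $\tau_0 := \Xi^{-1} e_v^{-1/2}$; on each window, replace $v$ by the smooth exact Euler solution $v_I$ that agrees with $v$ at a central time $t(I)$; then stitch the $v_I$ together by a partition of unity $\{\chi_I\}$ in time. Concretely, take $\tau$ a sufficiently small fraction of $\tau_0$, set $t(I) = t_* + I\tau$, and fix $\th$ as in \eqref{ineq:thBound}; since $\tau \gg 2\th$, the disjointness \eqref{ct:disjointness} is automatic. Choose $\chi_I \in C^\infty(\R)$ with $\sum_I \chi_I \equiv 1$ on $N(J; \tau_0/3)$, with $\chi_I \equiv 1$ on the central sub-interval of $[t(I), t(I+1)]$ and $\chi_I'$ supported in intervals of length $\th$ around each $t(I)$ and $t(I+1)$. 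Each $v_I$ is obtained by solving the Cauchy problem for smooth Euler on $[t(I-1), t(I+1)]$ with data $v_I(t(I)) = v(t(I))$; for $t(I)$ outside $J$ one has $v_I \equiv 0$. Set $\tilde v := \sum_I \chi_I v_I$, which is divergence-free because each $v_I$ is. On the central portion of each $[t(I), t(I+1)]$, $\tilde v = v_I$ solves Euler exactly, so the new Reynolds stress is supported near each $t(I)$, yielding the decomposition $\tilde R = \sum_I R_I$ with $\suppt R_I$ as in \eqref{eq:supptRI}.

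The preliminary estimates are on $w_I := v_I - v$, which solves a linearized Euler system forced by $\nab_j R^{j\ell}$ with zero data at $t = t(I)$. Integrating along the flow of $v$ on $|t - t(I)| \lsm \tau_0$ and applying a Gr\"onwall argument with \eqref{eq:v1Est}--\eqref{eq:R1Est} gives
\begin{align*}
\co{\nab^{|a|} w_I} &\lsm \plhxi \Xi^{|a|} e_R^{1/2}, \quad |a| \leq 4, \\
\co{(\pr_t + v \cdot \nab) \nab^{|a|} w_I} &\lsm \plhxi \Xi e_v^{1/2} \Xi^{|a|} e_R^{1/2}, \quad |a| \leq 3.
\end{align*}
The base case follows from $\int_{|t-t(I)| \lsm \tau_0} \co{\nab \cdot R} \, dt \lsm \tau_0 \Xi e_R = e_R^{1/2} (e_R/e_v)^{1/2} \leq e_R^{1/2}$; the log factors come from commuting derivatives past the transport operator. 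Near each $t(I)$, $\tilde v = \chi_{I-1} v_{I-1} + \chi_I v_I$, and inserting this into Euler-Reynolds produces, modulo a gradient absorbed into the pressure,
\begin{align*}
\nab_j R_I^{j\ell} = \chi_I'(v_I - v_{I-1})^\ell + \nab_j\bigl[\chi_{I-1}\chi_I (v_I - v_{I-1})^j (v_I - v_{I-1})^\ell\bigr].
\end{align*}
Since $v_I - v_{I-1} = w_I - w_{I-1}$ is divergence-free and has zero mean, a symmetric inverse-divergence operator $\RR$ inverts $\nab_j$ on the first term, giving $R_I = \chi_{I-1}\chi_I (v_I-v_{I-1}) \otimes (v_I-v_{I-1}) + \RR(\chi_I'(v_I - v_{I-1}))$. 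The spatial bound \eqref{eq:newRIbdGlue} then follows from $\co{w_I}^2 \lsm \plhxi^2 e_R$ on the quadratic piece and from the loss $\th^{-1} \lsm \plhxi^2 \Xi e_v^{1/2}$ being compensated by $\Xi^{-1}$ from $\RR$ together with $\co{w_I} \lsm \plhxi e_R^{1/2}$ on the anti-divergence piece.

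The main obstacle is the material-derivative bound \eqref{eq:newDtRIbdGlue}: a naive $\pr_t$ on $\chi_I'$ would cost $\th^{-1} \gg \Xi e_v^{1/2}$. The resolution exploits the special transport structure of the linearization: since each $v_I$ exactly solves Euler, $(\pr_t + v_I \cdot \nab)(v_I - v_{I-1})$ equals a pressure gradient plus $(v_{I-1} - v_I) \cdot \nab v_{I-1}$, of size $\plhxi \Xi e_v^{1/2} e_R^{1/2}$ rather than $\th^{-1} e_R^{1/2}$. For the quadratic piece this estimate is applied directly via Leibniz. For the anti-divergence piece one writes
\begin{align*}
(\pr_t + \tilde v \cdot \nab) \RR(\chi_I' (v_I - v_{I-1})) &= \RR(\chi_I''(v_I - v_{I-1})) + \RR(\chi_I' (\pr_t + \tilde v \cdot \nab)(v_I - v_{I-1})) \\
&\quad + [\tilde v \cdot \nab, \RR](\chi_I'(v_I - v_{I-1})),
\end{align*}
where the $\chi_I''$ term is controlled via a time integration of size $\th$ combined with the refined transport estimate on $w_I$, the middle term uses the transport identity just noted, and the commutator $[\tilde v \cdot \nab, \RR]$ is a zeroth-order operator bounded by $\co{\nab \tilde v}$ from \eqref{eq:newVelocBdGlue}. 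Tracking the accumulating logarithmic losses through these commutations produces the factor $\plhxi^3$ appearing in \eqref{eq:newDtRIbdGlue}.
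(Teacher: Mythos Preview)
Your overall architecture---local Euler solutions $v_I$ glued by a partition of unity in time---matches the paper, but the estimate on the anti-divergence piece has a genuine gap that is in fact the central difficulty of the Lemma. You write that the bound on $\mathcal{R}\bigl(\chi_I'(v_I-v_{I-1})\bigr)$ follows because the loss $\theta^{-1}\lesssim(\log\widehat\Xi)^2\,\Xi e_v^{1/2}$ is ``compensated by $\Xi^{-1}$ from $\mathcal{R}$.'' But the order $-1$ operator $\mathcal{R}$ is merely bounded on $C^0(\mathbb{T}^3)$; it does not gain a factor of $\Xi^{-1}$ unless its input is already known to live at frequencies $\gtrsim\Xi$, and nothing in your argument establishes this for $w_I$. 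Worse, even if one grants the $\Xi^{-1}$ gain, the resulting bound is $\theta^{-1}\Xi^{-1}\|w_I\|_{C^0}\lesssim(\log\widehat\Xi)^2\, e_v^{1/2}e_R^{1/2}$, which exceeds the target $(\log\widehat\Xi)\,e_R$ by the factor $(e_v/e_R)^{1/2}=\widehat N$---precisely the large parameter one has to beat. The paper flags exactly this failure of the naive anti-divergence in its Section~\ref{sec:goodAntiDiv}.

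The paper's resolution is not a sharper estimate on $\mathcal{R}[y_I]$ but an entirely different symmetric anti-divergence $r_I^{j\ell}$ with $\nabla_j r_I^{j\ell}=y_I^\ell$ satisfying the much stronger bound $\|r_I\|_{C^0}\lesssim e_R/\bigl((\log\widehat\Xi)\,\Xi e_v^{1/2}\bigr)$. This $r_I=\rho_I+z_I$ is obtained by solving a coupled transport--elliptic system driven by the evolution equation for $y_I$; the dangerous forcing term $\mathcal{R}[\,y_I\!\cdot\!\nabla v\,]$ is then handled by a ``frequency increment'' decomposition that feeds the identity $y_I=\nabla\!\cdot r_I$ back into the estimate for $r_I$ itself. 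The transport structure built into $r_I$ is also what delivers the advective-derivative bound \eqref{eq:newDtRIbdGlue} directly, since $(\partial_t+v\cdot\nabla)r_I$ is explicitly controlled; your proposed control of $\mathcal{R}\bigl(\chi_I''(v_I-v_{I-1})\bigr)$ via ``time integration of size $\theta$'' runs into the same missing factor of $\widehat N$.
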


\begin{lem}[The Convex Integration Lemma] \label{lem:convexInt} There exists an absolute constant $b_0$ such that for any $C_1, C_\de \geq 1$ and $\de, \eta > 0$ there is a constant $\tilde{C} = \tilde{C}_{\eta, \de, C_1, C_\de}$ for which the following holds.  Suppose $J$ is a subinterval of $\R$ and $(v, p, R)$ is an Euler-Reynolds flow, $R = \sum_I R_I$, that satisfy the conclusions \eqref{ct:growth}-\eqref{ct:disjointness} and \eqref{eq:newVelocBdGlue}-\eqref{eq:newRIbdGlue} of Lemma~\ref{lem:glueLem} (with $(\tilde{v}, \widetilde{R})$ replaced by $(v, R)$ ) for some $(\Xi, e_v, e_R)$, some $\th  > 0$ and some sequence of times $\{ t(I)\}_{I \in \Z} \subseteq \R$.  Suppose also that
\ali{
|\th| \co{ \nab v } &\leq b_0. \label{eq:b0bd}
}
Let $N \geq \max \{ \Xi^\eta, (e_v/e_R)^{1/2} \}$.  Then there is an Euler-Reynolds flow $(v_1, p_1, R_1)$ with frequency-energy levels in the sense of Definition~\ref{defn:frenlvls} bounded by
\ali{
(\Xi', e_v', e_R') &= \left(\tilde{C} N \Xi, \lhxi e_R, \plhxi^{5/2} \fr{e_v^{1/2} e_R^{1/2}}{N} \right) \label{eq:newFrEnLvls}
}
such that
\ali{
\suppt v_1 \cup \suppt R_1 &\subseteq N(J; \Xi^{-1} e_v^{-1/2}) \label{ct:suppCvxInt} \\
\co{ v_1 - v } &\leq \tilde{C} \plhxi^{1/2} e_R^{1/2} \label{eq:cobdcorrect}
}
\end{lem}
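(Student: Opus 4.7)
The plan is to construct $v_1 = v + V$, where $V = \sum_I V_I$ is a sum of Mikado-flow-based waves indexed by $I$, with each $V_I$ supported in the time window $[t(I) - \theta, t(I) + \theta]$. By the disjointness condition \eqref{ct:disjointness}, the supports of distinct $V_I$ are pairwise disjoint, so $V \otimes V = \sum_I V_I \otimes V_I$ pointwise and no interaction term between different high-frequency waves ever appears. This collapses the problem into solving, independently for each $I$, the single-wave convex integration problem handled by the Daneri--Sz\'{e}kelyhidi Mikado construction of Section~\ref{sec:mikado}. This independence is the entire reason for first invoking the Gluing Approximation.

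Set $\la \sim N \Xi$. On each window $[t(I)-\theta, t(I)+\theta]$ let $\Phi_I(t,\cdot)$ be the Lagrangian flow of $v$ with $\Phi_I(t(I), x) = x$; hypothesis \eqref{eq:b0bd} is exactly what guarantees that $\Phi_I$ remains a mild perturbation of the identity, with uniformly controlled Jacobian throughout the window. Apply the geometric decomposition of Mikado directions to write, on $\supp \chi_I^2$,
\[ e\,\mathrm{Id} - R_I \;=\; \sum_k \gamma_{I,k}^2 \; \xi_k \otimes \xi_k, \]
with coefficients bounded by $\|R_I\|_{C^0}^{1/2} \lesssim \plhxi^{1/2} e_R^{1/2}$, and set
\[ V_I(t,x) \;=\; \chi_I(t) \sum_k \gamma_{I,k}\bigl(t, \Phi_I(t,x)\bigr)\, W_k\bigl(\la \,\Phi_I(t,x)\bigr) \;+\; (\text{divergence-free corrector}), \]
where each $W_k$ is a Mikado flow supported on a thin pipe along $\xi_k$ with $\dashint W_k \otimes W_k = \xi_k \otimes \xi_k$, and $\chi_I$ is a smooth cutoff equal to $1$ on $[t(I)-\theta/2, t(I)+\theta/2]$ and vanishing outside $[t(I)-\theta, t(I)+\theta]$. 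Because the Mikado pipes for distinct $k$ are spatially disjoint, the low-frequency part of $V_I \otimes V_I$ equals $\chi_I^2 \sum_k \gamma_{I,k}^2\, \xi_k \otimes \xi_k$ pulled back through $\Phi_I$, which exactly cancels $R_I$ modulo a pressure.

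The new stress $R_1$ then decomposes into: (i) a high-frequency oscillation error, reduced by an inverse-divergence operator with gain $\la^{-1} \sim (N\Xi)^{-1}$; (ii) a transport error $(\pr_t + v \cdot \nab)V_I$, which in Lagrangian coordinates reduces to terms involving $(\pr_t + v\cdot\nab)\gamma_{I,k}$ and $\chi_I'$, controlled respectively by \eqref{eq:newDtRIbdGlue} and \eqref{ineq:thBound}; (iii) a Nash-type error $V_I \cdot \nab v$ handled using \eqref{eq:newVelocBdGlue}; and (iv) mollification commutators. The crucial algebraic cancellation is that $(\pr_t + v\cdot\nab)\bigl(W_k(\la \Phi_I(t,x))\bigr) \equiv 0$ since $\Phi_I$ is the flow map of $v$; this is precisely why Mikado flows outperform Beltrami flows and let one reach $\a < 1/3$. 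Each error is shown to be $\lesssim \tilde{C}\plhxi^{5/2}(e_v e_R)^{1/2}/N$, giving the claimed $e_R'$, while \eqref{eq:cobdcorrect} is immediate from the amplitude bound on $\gamma_{I,k}$ and $\|W_k\|_{C^0} \lesssim 1$. The support condition \eqref{ct:suppCvxInt} follows since $\theta \leq \de \plhxi^{-2}\Xi^{-1}e_v^{-1/2} \leq \Xi^{-1}e_v^{-1/2}$.

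I expect the main obstacle to be the control of the transport error in the presence of the logarithmic losses already present in the hypotheses. Each derivative of $\gamma_{I,k}$ or of the slow factor pulled back by $\Phi_I$ costs a factor of $\Xi$ (or of $\hxi \Xi e_v^{1/2}$ in the case of an advective derivative), and verifying that after division by $\la = N\Xi$ every term matches the tight budget $\plhxi^{5/2}(e_v e_R)^{1/2}/N$ requires the hypothesis $N \geq \hat N = (e_v/e_R)^{1/2}$ in an essential way: this is precisely the threshold at which the transport error balances the oscillation error. Propagating the higher-order spatial estimates needed for the full frequency-energy bounds of Definition~\ref{defn:frenlvls} to order $L=3$ through the composition with $\Phi_I$, while absorbing the several $\plhxi$ factors coming from Bernstein-type losses on the coefficients, is the most delicate part of the accounting.
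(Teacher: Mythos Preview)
Your overall architecture is exactly that of the paper: Mikado flows composed with a back-to-labels map, one wave per time window, disjointness killing all cross terms, and the transport cancellation $(\pr_t + v_\ep\cdot\nb)[\psi_f(\la\Ga_I)]=0$. However, your ansatz as written is missing a crucial ingredient. You compose the Mikado vector field $W_k$ directly with the flow map, $W_k(\la\Phi_I(t,x))=\psi_k(\la\Phi_I)\,\xi_k$, with the direction $\xi_k$ \emph{fixed}. The paper instead takes the amplitude to be $v_J^\ell = e_I^{1/2}\ga_J\,(\nb\Ga_I^{-1})_a^\ell f^a$, i.e.\ the direction is $\nb\Ga_I^{-1}f$, not $f$. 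This Jacobian factor is not cosmetic; it is what makes $v_J^\ell\nb_\ell[\psi_f(\la\Ga_I)]=0$ hold \emph{exactly}, since $(\nb\Ga_I^{-1})_a^\ell f^a\cdot\la(\pr_b\psi_f)\nb_\ell\Ga_I^b = \la f^a\de_a^b\pr_b\psi_f = \la f\cdot\nb\psi_f = 0$. Without it, the leading-order divergence of your main term is $\la\,\ga\,(\pr_\a\psi_k)(\nb_\ell\Phi_I^\a)\xi_k^\ell$, which is $O(\la\,\co{\nb\Phi_I-\mathrm{Id}})\sim O(\la\, b_0)$ times the amplitude. Since $b_0$ is an absolute constant, your divergence-free corrector is then the \emph{same size} as the principal term, and the corrector-interaction error $R_S$ is $O(e_R)$ rather than $O(e_R/N)$.

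The same Jacobian cancellation is used a second time in the high-frequency stress term $R_H$: the paper writes $\nb_j[v_J^jv_J^\ell(\psi_f^2(\la\Ga_I)-1)] = \nb_j[v_J^jv_J^\ell]\,(\psi_f^2-1)$ precisely because $v_J^j\nb_j\psi_f(\la\Ga_I)=0$, and only then does the inverse-divergence gain the full factor $\la^{-1}$. With fixed directions $\xi_k$ this step fails and the oscillation error is too large by a factor of $N$. So there are really \emph{two} algebraic identities at work, and you have only identified the transport one. A smaller point: the paper transports along a spatially mollified $v_\ep$ (at scale $\sim N^{-1/2}\Xi^{-1}$), not $v$ itself, because the nonstationary-phase parametrix for the divergence equation requires $D\sim\eta^{-1}$ derivatives of $\nb\Ga_I$, well beyond the three derivatives of $v$ supplied by the hypotheses; this is the origin of your error (iv).
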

\begin{proof}[Proof of Lemma~\ref{lem:mainLem}]
It is now straightforward to show that Lemmas~\ref{lem:regSublem}, \ref{lem:glueLem} and \ref{lem:convexInt} together imply Lemma~\ref{lem:mainLem}.  Indeed, suppose that $(v_0, p_0, R_0)$ and $J$ are the Euler-Reynolds flow and time interval in the hypotheses of Lemma~\ref{lem:mainLem} and let $N$ and $\eta$ be the parameters given in the Lemma.  Apply Lemma~\ref{lem:regSublem} to this $(v_0, p_0, R_0)$ to obtain $(v_{01}, p_{01}, R_{01})$ obeying the conclusions of that Lemma for the constant $C_0$.  Let $C_1, \de_0$ be the constants in Lemma~\ref{lem:glueLem} associated to $C_0$.  Choose $\de > 0$ such that $\de \leq \de_0$ and $C_1 \de \leq b_0$, where $b_0$ is the absolute constant in Lemma~\ref{lem:convexInt}.  Apply Lemma~\ref{lem:glueLem} with this value of $\de$ to the $(v_{01}, p_{01}, R_{01})$ above to obtain an Euler-Reynolds flow $(\tilde{v}, \tilde{p}, \tilde{R})$ together with parameters $\th, C_\de$ and $\{ t(I) \}_{I \in \Z}$ that satisfy the conclusions Lemma~\ref{lem:glueLem}.  Observe also that \eqref{ineq:thBound} and \eqref{eq:newVelocBdGlue} imply
\ALI{
|\th| \co{ \nab \tilde{v} } \leq C_1 \de \leq b_0.
}
We may therefore apply the Convex Integration Lemma~\ref{lem:convexInt} to $(\tilde{v}, \tilde{p}, \tilde{R})$ with the parameter $N$ to obtain an Euler-Reynolds flow $(v_1, p_1, R_1)$ and a constant $\tilde{C}$ satisfying the conclusions of that Lemma.  Note that this constant $\tilde{C}$ depends only on $\eta$ since $C_0$ is an absolute constant and $C_1$ and therefore $\de$ and $C_\de$ depend only on $C_0$.  The correction $V = v_1 - v_0$ induced by this combination of Lemmas satisfies (using Definition~\ref{defn:frenlvls} in the last line)
\ali{
V = (v_1 - v_0) &= (v_1 - \tilde{v}) + (\tilde{v} - v_{01}) + (v_{01} - v_0) \notag \\
\co{ V } &\leq A_0 C_\de \plhxi^{1/2} e_R^{1/2} + C_1 e_R^{1/2} + C_0 e_R^{1/2} \notag \\
\co{ \nab V } &\leq \co{ \nab v_1} + \co{\nab v_0} \notag \\
&\leq \tilde{C} N \Xi \plhxi^{1/2} e_R^{1/2} + \Xi e_v^{1/2} \label{eq:Vc1BdLemLem}
}
The $C^0$ term is bounded by $\check{C}_0 \plhxi^{1/2} e_R^{1/2}$ with $\check{C}_0$ some constant, as stated in Lemma~\ref{lem:mainLem}.  The lower bound \eqref{eq:Nrestrict} on $N$ implies $e_v^{1/2} \leq N e_R^{1/2}$, so the right hand side of \eqref{eq:Vc1BdLemLem} is bounded by $C N \Xi  \plhxi^{1/2} e_R^{1/2}$ for some constant $C$ (depending on $\eta$) that we can take to be the one whose existence is asserted in Lemma~\ref{lem:mainLem}.  Since the containment \eqref{ct:suppCdn} follows from \eqref{ct:suppCvxInt}, we have proved all the conclusions of Lemma~\ref{lem:mainLem} for the above $(v_1, p_1, R_1)$.
\end{proof}

\part{The Gluing Sublemmas} 
In this part of the paper, we prove the two lemmas, Lemmas~\ref{lem:regSublem} and \ref{lem:glueLem}, related to the gluing approximation procedure.   
\section{The Regularization Step} \label{sec:regStep}
Here we prove the Regularization Lemma~\ref{lem:regSublem}.  This Lemma is important for the gluing approximation procedure, as the Gluing Approximation Lemma~\ref{lem:glueLem} loses spatial regularity.  We note that the loss of spatial regularity in the Gluing Approximation Lemma~\ref{lem:glueLem} is inherent to the gluing argument and is distinct from the separate loss of spatial regularity that occurs during the Convex Integration Lemma~\ref{lem:convexInt}, which is addressed by a different mollification technique.

We start by recalling the following quadratic commutator estimate.  Proofs of this statement can be found in \cite[Lemma 1]{deLSzeC1iso} or \cite[Proposition 5.3]{isett2}.
\begin{prop}\label{prop:mollifyCommute} Let $\eta \in C^\infty(\R^n)$ satisy $\int_{\R^n} \eta(h) dh = 1$, $(1 + |h|^2) \eta(h) \in L^1(\R^n)$, and define $\eta_\ep(h) := \ep^{-n} \eta(h/\ep)$.  If $f, g \in C^1(\R^n)$, then for all $0 \leq k < \infty$ we have
\ali{
\| \nab^{k} ( \eta_\ep \ast ( f g ) - (\eta_\ep \ast f) (\eta_\ep \ast g) ) \|_{C^0(\R^n)} &\lsm_{k} \ep^{2-k} \| \nab f \|_{C^0(\R^n)} \| \nab g \|_{C^0(\R^n)} \label{ineq:mollifyCommute} 
}
\end{prop}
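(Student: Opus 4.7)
The plan is to use the classical Constantin--E--Titi symmetric rewriting.  Expanding the convolutions (using $\int \eta_\ep = 1$ to write $(\eta_\ep \ast f)(x) = \int\int \eta_\ep(x-u)\eta_\ep(x-v) f(u) \, du \, dv$) and then symmetrizing in the dummy variables $u, v$ yields the identity
\ALI{
C(x) := \eta_\ep \ast (fg)(x) - (\eta_\ep \ast f)(x)(\eta_\ep \ast g)(x) = \fr{1}{2} \int\int \eta_\ep(x-u)\eta_\ep(x-v) [f(u)-f(v)][g(u)-g(v)] \, du \, dv.
}
The two key features of this representation are: (i) $f$ and $g$ appear only inside differences, so the mean value theorem gives $|f(u)-f(v)| \leq (|x-u|+|x-v|) \co{\nab f}$ and likewise for $g$, supplying a factor of order $|y|+|z|$ after the substitution $y=x-u$, $z=x-v$; and (ii) the variable $x$ appears only inside the two kernel factors, so any $x$-derivatives transfer cleanly onto $\eta_\ep$ without ever touching $f$ or $g$.

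For the base case $|a| = 0$, I would insert the pointwise bounds above, change variables $y = x-u$, $z = x-v$, expand $(|y|+|z|)^2$, and estimate each resulting piece using the rescaling identity $\int |y|^j \eta_\ep(y) \, dy \lsm \ep^j$ for $j = 0, 1, 2$.  This rescaling follows from $\eta_\ep(h) = \ep^{-n}\eta(h/\ep)$ together with the hypothesis $(1+|h|^2)\eta \in L^1$, and integrating independently in $y$ and $z$ produces the total factor $\ep^2$, giving the desired $\ep^2 \co{\nab f}\co{\nab g}$ bound.

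For $|a| \geq 1$ I would differentiate under the integral and apply the Leibniz rule to the pair of kernels, obtaining (after the same change of variables) a sum of the form
\ALI{
\nab^{|a|} C(x) = \fr{1}{2}\sum_{|a_1|+|a_2|=|a|} c_{a_1,a_2} \int\int (\nab^{|a_1|}\eta_\ep)(y)(\nab^{|a_2|}\eta_\ep)(z)[f(x-y)-f(x-z)][g(x-y)-g(x-z)] \, dy \, dz.
}
Bounding each bracket as before and applying the rescaling estimate $\int |y|^j |\nab^k \eta_\ep(y)|\, dy \lsm_{j,k} \ep^{j-k}$, each term in the sum contributes the power $\ep^{2-|a_1|-|a_2|} = \ep^{2-|a|}$, which is the claimed bound.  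The proof is essentially mechanical once the symmetric representation is in hand; the only minor bookkeeping point is that the literal hypothesis $(1+|h|^2)\eta \in L^1$ suffices for $|a|=0$ but for $|a| \geq 1$ one implicitly needs the analogous moment integrability $\int (1+|h|^2)|\nab^k \eta| \, dh < \infty$ for $k \leq |a|$, which is automatic for the smooth compactly supported mollifiers used in practice.  I do not anticipate any serious obstacle.
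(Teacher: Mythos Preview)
Your proposal is correct and follows the standard Constantin--E--Titi symmetrization approach; the paper itself does not give a proof of this proposition but simply cites \cite[Lemma~1]{deLSzeC1iso} and \cite[Proposition~5.3]{isett2}, where essentially this argument appears. Your caveat about needing $(1+|h|^2)|\nabla^k\eta|\in L^1$ for $|a|\geq 1$ is also apt and worth keeping.
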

With Proposition~\ref{prop:mollifyCommute} in hand, the proof of Lemma~\ref{lem:glueLem} goes as follows.  Given $(v_0,p_0,R_0)$ as in Lemma~\ref{lem:regSublem}, set $v = \eta_\ep \ast v_0$ and $p = \eta_\ep \ast p_0$, then $\ep = \hxi^{-1} = \nhat^{-1} \Xi^{-1} = (e_v/e_R)^{-1/2} \Xi^{-1}$.  By writing the Euler-Reynolds equations in divergence form
\ALI{
\pr_t v_0^\ell + \nb_j(v_0^j v_0^\ell) + \nb^\ell p_0 &= \nb_j R_0^{j\ell}, \quad \nb_j v_0^j = 0
}
we see that $(v,p, R)$ satisfies the Euler-Reynolds equations with a new stress given by
\ali{
R^{j\ell} &= [ \eta_\ep \ast v_0^j \eta_\ep \ast v_0^\ell - \eta_\ep \ast (v_0^j v_0^\ell) ] + \eta_\ep \ast R_0^{jl} \label{eq:newMollifyStress}
}
The term $\eta_\ep \ast R^{jl}$ is easily seen to obey the desired estimate \eqref{eq:R1Est}.  For example, 
\ALI{
\co{ \nab^{k + 2}\eta_\ep \ast R_0^{jl} } = \co{ \nab^{k}\eta_\ep \ast \nab^{2}R_0^{jl} } &\lsm_{k} \hxi^{k} \Xi^2 e_R = \nhat^{k} \Xi^{k + 2} e_R, \quad k \geq 0,
}
while for $k \leq 2$ we have $\co{ \nab^{k} \eta_\ep \ast R_0^{jl} } = \co{ \eta_\ep \ast \nab^{k} R_0^{jl} } \leq \co{\nab^{k} R_0^{jl}} $.

Let $Q_\ep^{j\ell}$ denote the commutator term in \eqref{eq:newMollifyStress}.  By \eqref{ineq:mollifyCommute}, we have 
\ALI{ \| Q_\ep \|_{C^0} \lsm \ep^2 \| \nab v_0 \|_{C^0}^2 \leq \hxi^{-2} \Xi^2 e_v = e_R.}
Also, if $\nb_{b_1}\nb_{b_2}$ is any second partial derivative operator, we have
\ALI{
\nb_{b_1}\nb_{b_2} Q_\ep^{jl} &= \eta_\ep \ast (\nb_{b_1}\nb_{b_2} v_0^j ) \eta_\ep \ast v_0^l - \eta_\ep \ast (\nb_{b_1}\nb_{b_2} v_0^j v_0^l)  \\
&+ \eta_\ep \ast ( \nb_{b_1} v_0^j) \eta_\ep \ast( \nb_{b_2} v_0^l ) - \eta_\ep \ast (\nb_{b_1} v_0^j \nb_{b_2} v_0^l ) + \tx{ similar terms}
}
From this expression and \eqref{ineq:mollifyCommute}, our control on $\co{\nab^3 v_0}$ gives us the desired bound \eqref{eq:R1Est}
\ALI{
\co{\nab^{k + 2} Q_\ep^{jl} } &\lsm_{k} \ep^{2 - k} ( \co{ \nab^3 v_0 } \co{ \nab v_0 } + \co{\nab^2 v_0 }^2 ) \\
&\lsm_{k} \nhat^{k} \Xi^{k + 2} e_R
}
The desired estimate for $\co{ \nab Q_\ep }$ follows by interpolating $\co{ \nab Q_\ep} \lsm \co{ Q_\ep }^{1/2} \co{ \nab^{2} Q_\ep }^{1/2}$.   

The proof of \eqref{eq:v1Est} follows similarly to the bounds for $\eta_\ep \ast R_0^{j\ell}$ above.  Finally, \eqref{eq:incBdReg} follows from $\co{ v_0 - \eta_\ep \ast v_0 } \lsm \ep \co{ \nab v_0 } \leq e_R^{1/2}$. 

\section{Proof of Gluing Approximation: Outline} \label{sec:outlineGlue}
Sections~\ref{sec:glueConstruct}-\ref{sec:gluingProof} below are devoted to the proof of the Gluing Approximation Lemma~\ref{lem:glueLem}.  The construction of the new Euler-Reynolds flow $(\tilde{v}, \tilde{p}, \tilde{R})$ is described in Sections~\ref{sec:glueConstruct}-\ref{sec:exist}.  Section~\ref{sec:gluingPrelims} contains preliminary facts that will be used in the estimates in the proof of Lemma~\ref{lem:glueLem}.  The main estimates of the gluing construction are carried out over Sections~\ref{sec:pressEsts}-\ref{sec:rhoIests}.  These estimates are then applied in Section~\ref{sec:proofGlueLem} to establish that the $(\tilde{v}, \tilde{p}, \tilde{R})$ defined in Sections~\ref{sec:glueConstruct}-\ref{sec:exist} does indeed satisfy the estimates claimed in Lemma~\ref{lem:glueLem}.

\section{The Gluing Construction} \label{sec:glueConstruct}
We now begin the proof of the Gluing Approximation Lemma~\ref{lem:glueLem}.  In this construction, we will use the notation $\lsm$ for constants that are allowed to depend on the $C_0$ of Lemma~\ref{lem:regSublem}, while we will write $\lsm_\de$ if the constant depends on the $\de$ given in the Lemma~\ref{lem:glueLem}.  

  Let $(v, p, R)$ be the given Euler-Reynolds flow.  We desire a new Euler-Reynolds flow $(\tilde{v}, \tilde{p}, \tilde{R})$ such that the new stress $\tilde{R}$ can be decomposed as a sum $\tilde{R} = \sum_I R_I$ in which each piece is localized in time to a time interval that is smaller than the natural time scale of the construction  $| \suppt R_I | \leq |\th| \lsm \Xi^{-1} e_v^{-1/2}$.  It is also important that the gap between these supports is comparable to the natural time scale of the construction: $\tx{dist} ( \suppt R_I, \suppt R_{I'} ) \geq \Xi^{-1}e_v^{-1/2}$ (although we will ultimately miss this goal by a small margin.)
	
	Since we need $\tilde{R} = 0$ to vanish in the gaps between the interals supporting each $R_I$, our new solution $\tilde{v}$ has to solve the Euler equations within those gaps.  Thus, we construct $\tilde{v}$ by solving the Euler equations on many time intervals of length $8 \th$, and gluing the solutions together with the following partition of unity construction.  
	
	We introduce the velocity increment $y^\ell$ and the pressure increment $\bar{p}$, so that the new solution will be given by $\tilde{v}^\ell = v^\ell + y^\ell$, $\tilde{p} = p + \bar{p}$.  The pair $(y^\ell, \bar{p})$ must satisfy the equation
\ali{
\begin{split} \label{eq:eulerIncEqn}
\pr_t y^\ell + v^j \nb_j y^\ell + y^j \nb_j v^\ell + \nb_j(y^j y^\ell) + \nb^\ell \bar{p} &= \nb_j \tilde{R}^{j\ell} - \nb_j R^{j\ell} \\
\nb_j y^j &= 0
\end{split}
}
For each $I \in \Z$ set $t_0(I) = 8 \th \cdot I $.  Define $(u_I^\ell, p_I)$ to be the unique classical solution to the incompressible Euler that satisfies the initial condition $u_I^\ell(t_0(I), x) = v^l(t_0(I), x)$, where $v^l$ is our given Euler-Reynolds flow.  It is well-known (see Theorem~\ref{thm:locExistEuler} below) that, for any $\a > 0$, if $\th$ times the $C^{1,\a}$ norm of the initial data is sufficiently small, then $u_I^\ell$ is well-defined and remains smooth on the interval $[t_0(I) - 8 \th, t_0(I) + 8 \th]$.  (Our $\th$ will be larger than the reciprocal of $C^{1,\a}$ norm of the initial data, but we will nonetheless be able to prove existence.)

Define $y_I^\ell$ and $\bar{p}_I$ so that $u_I^\ell = v^\ell + y_I^\ell$ and $p_I = p + \bar{p}_I$.  Then $(y_I^\ell, \bar{p}_I)$ solve
\ali{
\begin{split}
\pr_t y_I^\ell + v^j \nb_j y_I^\ell + y_I^j \nb_j v^\ell + \nb_j ( y_I^j y_I^\ell) + \nb^\ell \bar{p}_I &= - \nb_j R^{j\ell} \\
\nb_j y_I^j &= 0 \\
y_I^\ell(t(I), x) &= 0 
\end{split} \label{eq:eulerIncIeqn}
}
Now choose a partition of unity in time $(\eta_I(t))_{I \in \Z}$ with the following properties:
\ali{
\begin{split} \label{eq:partitionProperties}
\suppt \eta_I(t) &\subseteq \left[ t_0(I) - \fr{9 \th}{2}, t_0(I) + \fr{9 \th}{2}\right] \\
\eta_I(t) &= 1 \quad \tx{ if } t \in \left[t_0(I) - \fr{7 \th}{2}, t_0(I) + \fr{7 \th}{2} \right] \\
\suppt \eta_I \cap \suppt \eta_{I'} &= \emptyset \quad \tx{if } |I - I'| > 1 \\
\sum_I \eta_I(t) &= 1 \quad \tx{ for all } t \in \R 
\end{split}
\\
\sup_I \sup_t \left| \fr{d^k}{dt^k} \eta_I(t) \right| &\lsm |\th|^{-k}, \quad k = 0, 1, 2 \label{eq:partitionBound}
}
An easy way to construct such a partition of unity is to start with the rough partition of unity given by characteristic functions $\chi_I(t) = 1_{(t_0(I) - 4 \th, t_0(I) + 4 \th]}(t)$, and then regularize it by mollification to obtain $\eta_I = \psi_\th \ast \chi_I$, with $\psi_\th(t)$ a smooth mollifier, $\int_\R \psi_\th(t) dt = 1$, with support in $|t| < \fr{\th}{2}$.  Now set 
\ali{
y^\ell = \sum_I \eta_I y_I^\ell, \quad \bar{p} = \sum_I \eta_I \bar{p}_I \notag
}
From \eqref{eq:eulerIncIeqn} and \eqref{eq:partitionProperties}, one sees that $y^\ell$ is divergence free and satisfies
\ali{
\begin{split} \label{eq:gluedEulerEqn}
\pr_t y^\ell + v^j \nb_j y^\ell + y^j \nb_j v^\ell + \nb_j(y^j y^\ell) + \nab^\ell \bar{p} &= \sum_I \eta_I'(t) y_I^\ell + \sum_I \eta_I \eta_{I+1} \nb_j(y_I^j y_{I+1}^\ell + y_{I+1}^j y_I^\ell ) \\
&+ \sum_I ( \eta_I^2 - \eta_I) \nb_j ( y_I^j y_I^\ell ) - \nb_j R^{j\ell}
\end{split}
}
For each $I$, we will choose a {\bf symmetric anti-divergence} for $y_I^\ell$, by which we mean a symmetric tensor $r_I^{j\ell}$ verifying
\ali{
\nb_j r_I^{j\ell} &= y_I^\ell \label{eq:symDivEqnI}
}
Now set $t(I) = t_0(I) + 4 \th = (8 \th) \cdot I + 4 \th$, 
and define
\ali{
\begin{split}
R_I^{j\ell} &= 1_{[t(I) - \th, t(I)+\th]}\, \eta_I'(t) (r_I^{jl} - r_{I+1}^{j\ell}) + \eta_I \eta_{I+1} ( y_I^j y_{I+1}^\ell + y_{I+1}^j y_I^\ell ) \\
&- \eta_I \eta_{I+1} ( y_I^j y_I^\ell + y_{I+1}^j y_{I+1}^\ell )  \label{eq:RIformula}
\end{split}
}
Comparing with \eqref{eq:gluedEulerEqn} and using the identities $\eta_I'(t) = 1_{[t(I) + \th, t(I)-\th]} \eta_I'(t) - 1_{[t(I-1) + \th, t(I-1)-\th]} \eta_{I - 1}'(t)$ and $\eta_I(1-\eta_I) = \eta_I \eta_{I+1} + \eta_{I-1} \eta_I$, we see that our goal equation \eqref{eq:eulerIncEqn} holds for $y^\ell$ and $\tilde{R}^{j\ell} = \sum_I R_I^{j\ell}$ provided that \eqref{eq:symDivEqnI} holds for all $I$.


As for $\th$, we choose $\th = \de \plhxi^{-2} \Xi^{-1} e_v^{-1/2}$ so that equality holds for the upper bound in \eqref{ineq:thBound}.  However, the lower bound, which strengthens condition \eqref{ct:disjointness}, will be important in the proof of Lemma~\ref{lem:convexInt}).

It is now apparent that the containment \eqref{ct:growth} holds using the condition $\de < \de_0 < 1/25$, as $y_I = 0$ for all $I$ such that $\suppt \eta_I$ intersects the complement of $N(J; \Xi^{-1} e_v^{-1/2} )$.  This fact follows from uniqueness of classical solutions to incompressible Euler (see Theorem~\ref{thm:locExistEuler} below) and the assumption that $\suppt R \subseteq J$, which implies that $v^\ell$ already solves the Euler equations outside of $J \times \T^3$.   Also, the conditions \eqref{ineq:thBound}-\eqref{ct:disjointness} are apparent from the construction above once we show that the $y_I$ and $r_I$ are well-defined on the supports of the $\eta_I$.

Our construction is now complete except that we have not proven that $y_I^\ell$ exists on the required interval for the above formulas to be well-defined, and we have not defined or proven the existence of the tensors $r_I^{j\ell}$ verifying \eqref{eq:symDivEqnI}.  
We discuss the construction of $r_I^{j\ell}$ in the following Section~\ref{sec:goodAntiDiv} below.  We will then address the existence of $y_I^\ell$ in Section~\ref{sec:exist} below.

\section{Constructing a Good Anti-Divergence } \label{sec:goodAntiDiv}
In this Section, we discuss our construction of a symmetric anti-divergence for the $y_I^\ell$ of the gluing construction in Section~\ref{sec:glueConstruct}.  The construction will involve an operator $\RR^{j\ell}$ that we define as follows.  Recall that any smooth vector field on $\T^n$ has a ``Helmholtz decomposition'' such that
\ali{
U^\ell &= \HH U^\ell + \Pi_0 U^\ell + \nb^\ell \De^{-1}[ \nb_b U^b] \label{eq:helmholtz} \\ 
\Pi_0 U^\ell &:= \fr{1}{|\T^n|} \int_{\T^n} U^\ell dx \notag \\
\nb_\ell \HH U^\ell = 0, &\qquad \int_{\T^n} \HH U^\ell dx = 0 \notag
}
Given a smooth vector field $U$ on $\T^n$,  $\RR^{j\ell}[U]$ is the smooth, symmetric $(2,0)$ tensor on $\T^n$ defined by
\ali{
\RR^{j\ell} [ U ] &= \De^{-1} (\nb^\ell \HH U^j + \nb^j \HH U^\ell ) + \de^{j\ell} \De^{-1}[\nb_b U^b] \label{eq:RRformula}
}
Then $\RR^{j\ell}$ is an operator (of order $-1$) that satisfies, for all $U \in C^\infty(\T^n)$,
\ali{
\nb_j \RR^{j\ell}[U] &= (1 - \Pi_0) U^\ell  \label{eq:invertModIntegral} \\
\int_{\T^n} \RR^{j\ell}[U](x) dx &= 0 \notag
}
In particular, $\RR^{j\ell}$ inverts the divergence operator when restricted to integral $0$ vector fields.  (Other inverses for the divergence can be chosen that would be equivalent for our purposes.  This choice has the advantage of having a simple formula.)

Using the operator $\RR$ above, one can find a solution to \eqref{eq:symDivEqnI} by taking $r_I^{j\ell}$ equal to $\RR^{j\ell}[y_I]$.  Indeed, by the conservation of momentum for Euler and Euler-Reynolds, $y_I$ has integral $0$, so we have $\nb_j \RR^{j\ell}[y_I] = y_I$.  However, the best estimate one has in general for this solution is $\co{ \RR^{j\ell}[y_I] } \lsm \co{ y_I }$, which leads to an estimate for the $R_I$ in \eqref{eq:RIformula} of the form 
\ali{
\co{R_I} = \co{ \eta'_I(t) r_I^{jl} + \ldots } &\lsm |\th|^{-1} \co{y_I} + \ldots \label{eq:easyRIbound}
}
Let us assume now that we are able to prove a bound of the order $\co{y_I} \lsm e_R^{1/2}$, which is essentially the best estimate we can hope for from \eqref{eq:eulerIncIeqn} if we neglect the pressure term.  Making this assumption, the right hand side of \eqref{eq:easyRIbound} is still even larger than
%
\ali{
\co{R_I} &\lsm \Xi e_v^{1/2} e_R^{1/2} + \ldots \notag
}
However, our goal estimate for $\co{R_I}$ is that $\co{R_I} \lsm e_R$ should be the same size as the original stress $\co{R^{j\ell}}$, so \eqref{eq:easyRIbound} is missing the mark by an enormous factor.  

One can do better if one uses the evolution equation for $y_I^\ell$.  Indeed, writing \eqref{eq:eulerIncIeqn} in divergence form, one has
\ali{
\pr_t y_I^\ell &= - \nb_j[ v^j y_I^\ell + y_I^j v^\ell + R^{j\ell} + \bar{p}_I \de^{j\ell} ] \notag \\
y_I^\ell(t,x) &= \nb_j \int_{t(I)}^t [y_I^j(\tau) v^\ell(\tau) + y_I^\ell(\tau) v^j(\tau) + R^{j\ell}(\tau) + \bar{p}_I(\tau) \de^{j\ell} ] d\tau \label{eq:intAntidiv}
}
If we take the integral in \eqref{eq:intAntidiv} as a definition for $r_I^{j\ell}$ and substitute into \eqref{eq:easyRIbound}, this construction of an anti-divergence leads to a much better estimate of the form
\ali{
\co{ R_I } &\lsm \co{y_I} \co{v} + \ldots, \label{eq:badBoundRI}
}
as the integration over time cancels with the large time cutoff factor in \eqref{eq:easyRIbound}.  

Unfortunately, this bound is still way off the mark $\co{R_I} \lsm e_R$ that we desire, as the bounds we have are of the order $\co{ y_I } \lsm e_R^{1/2}$ and $\co{v} \lsm 1$.  Another problem with the construction of \eqref{eq:intAntidiv} is that it does not lead to a good estimate on the advective derivative $\co{ (\pr_t + \tilde{v} \cdot \nab) R_I }$.

A natural attempt to address the latter problem is to obtain $r_I^{j\ell}$ by solving a transport equation, for instance by solving
\ali{
\begin{split}
(\pr_t + v^i \nb_i) \nb_j r_I^{j\ell} &= (\pr_t + v^i \nab_i) y_I^\ell \\
r_I^{j\ell}(t(I), x) &= 0  \label{eq:evolEqnDiv}
\end{split}
}
In this way one effectively integrates in time over trajectories to obtain $r_I^{j\ell}$, as opposed to formula \eqref{eq:intAntidiv}, which integrates in time at a fixed point $x$.  Of course, \eqref{eq:evolEqnDiv} is not a well-defined evolution equation for $r_I^{j\ell}$.  However, as was observed in \cite{isett}, in certain cases one can find solutions to \eqref{eq:evolEqnDiv} by solving another evolution equation related to \eqref{eq:evolEqnDiv} by commuting the divergence and using $\RR$ above to invert.  Here we will follow a similar approach while taking advantage of the evolution equation for $y_I^\ell$.

Commuting $v \cdot \nab$ in \eqref{eq:evolEqnDiv} with $\nb_j$ and using \eqref{eq:eulerIncIeqn}, the equation we want to solve \eqref{eq:evolEqnDiv} becomes 
\ali{
\nb_j[ (\pr_t + v^i \nb_i) r_I^{j\ell} ] &= \nb_j v^i \nb_i r_I^{j\ell} - y_I^j \nb_j v^\ell - \nb_j(y_I^j y_I^\ell) - \nb_j(\bar{p}_I \de^{j\ell}) - \nb_j R^{j\ell} \notag
}
We therefore construct $r_I^{j\ell}$ as a sum of two parts that solve the following system of equations
\ali{
r_I^{j\ell} &= \rho_I^{j\ell} + z_I^{j\ell} \notag \\
(\pr_t + v^i \nb_i) z_I^{j\ell} &= - y_I^j y_I^\ell - \bar{p}_I \de^{j\ell} - R^{j\ell} \label{eq:zItrans} \\
(\pr_t + v^i\nb_i) \rho_I^{j\ell} &= \RR^{j\ell}[\nb_a v^i \nb_i( \rho_I^{ab} + z_I^{ab} )- y_I^i \nb_i v^b ] \label{eq:rhoItrans} \\
\rho_I^{jl}(t(I), x) = z_I^{j\ell}(t(I), x) &= 0 \label{eq:initCondsRhoz}
}
Existence of a solution to \eqref{eq:rhoItrans} will follow as in the arguments of \cite{isett} provided $y_I$ remains regular.

At this point, it is quite unclear that our method has any hope of obtaining a solution $r_I^{j\ell}$ that approaches our desired estimate.  Indeed, one of the terms in the estimate for $\rho_I$ from \eqref{eq:rhoItrans} is 
\ali{
\co{ \rho_I } &\leq |\th| \co{ (\pr_t + v \cdot \nab) \rho_I^{j\ell} } \leq |\th| \co{ \RR^{j\ell}[y_I^i \nb_i v^b] } + \ldots \label{eq:rhoIbound}
}
The trivial estimate for \eqref{eq:rhoIbound} is $\co{ \RR^{j\ell}[y_I^i \nb_i v^b] } \lsm \co{ y_I } \co{ \nab v } \lsm \Xi e_v^{1/2} e_R^{1/2}$, while a better estimate comes by writing $\co{ \RR^{j\ell} \nb_i[ y_I^i v^b ] } \lsm \co{y_I} \co{v} + \ldots \lsm e_R^{1/2} + \ldots$ (pretending for the moment that the $0$th order operator $\RR^{j\ell} \nb_i$ is bounded on $C^0$).  Substituting this bound into \eqref{eq:easyRIbound}, one obtains the same estimate $\co{R_I} \lsm e_R^{1/2} + \cdots$ from \eqref{eq:badBoundRI} that we concluded was insufficient when we considered the construction in \eqref{eq:intAntidiv}.

The key idea for handling this difficulty is to exploit a special structure in the term $\RR^{j\ell}[y_I^i \nb_i v^b]$ and the incompressible Euler and Euler-Reynolds equations that allows one to prove a much better estimate.  Specifically, we observe that the estimates available for $y_I$ and $v$, even taking into account the incompressibility, cannot by themselves yield any estimate stronger than the bound $\lsm e_R^{1/2} + \cdots$ noted above.  The key to obtaining an improved estimate will be to bound this term through a dynamical approach that takes advantage of the structure of the evolution equation for $y_I$ derived from the incompressible Euler and Euler-Reynolds equations.  
This structure leads ultimately to an essentially ideal bound on the resulting $R_I$, and holds also for the similar linearized Euler term $\De^{-1}\nb_\ell[y_I^i \nb_i v^\ell]$ that appears as part of the pressure increment $\bar{p}_I$ in \eqref{eq:zItrans}.  The calculations used to estimate these terms are presented in the proof of Proposition~\ref{prop:pressEstimates} in Section~\ref{sec:pressEsts} (see the Proof of \eqref{eq:co03press}, $k = 0$) and in Section~\ref{sec:rhoIests} (see the Proof of ~\eqref{eq:coTransBdrI}).


\section{Existence Considerations} \label{sec:exist}
We now address the existence and well-definedness of the solutions to equations \eqref{eq:eulerIncIeqn} and \eqref{eq:zItrans}.  

The classical local well-posedness theory for incompressible Euler yields the following result:
\begin{thm} \label{thm:locExistEuler} Let $t_0 \in \R$ and $u_0 : \T^n \to \R^n$ be a smooth divergence free vector field.  Then there exists a unique open interval $\wtld{J} \subseteq \R$ containing $t_0$ such that there exists a solution $u : \wtld{J} \times \T^n \to \R^n$ to the incompressible Euler equations that is smooth $u \in \bigcap_{k \geq 0} C_t C_x^k$ and for all $T^* \in \pr \wtld{J}$ endpoints of $\wtld{J}$, we have
\ali{
\limsup_{t \to T^*,  t \in \wtld{J}} \co{ \nab u(t) } &= \infty \label{eq:continueNabu}
}
Furthermore, the $u^\ell$ above is unique among solutions to incompressible Euler with the regularity $\nab u \in C^0(\wtld{J} \times \T^n)$.
\end{thm}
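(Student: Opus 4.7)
The plan is to invoke the classical high-regularity local well-posedness theory for incompressible Euler, establish a Beale--Kato--Majda-style continuation criterion controlled by $\co{\nab u}$, and then prove the uniqueness statement by a direct energy estimate for the difference of two candidate solutions.

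First, I would establish local existence of a smooth solution on an open interval around $t_0$ by the standard procedure: working in $H^s(\T^n)$ for some $s > n/2 + 1$, one mollifies the data, solves the mollified Euler system as a Banach-space ODE in $H^s$, and passes to a limit using uniform $H^s$ estimates that close on an interval of length $T \geqc \co{\nab u_0}^{-1}$ (a Galerkin or Picard iteration scheme works equally well). Persistence of $H^s$ for every $s$, combined with Sobolev embedding, upgrades this to $u \in \bigcap_{k \geq 0} C_t C_x^k$. Define $\wtld{J}$ as the union of all open intervals containing $t_0$ on which such a smooth Euler solution with initial data $u_0$ exists; the uniqueness step below shows these pieces agree on overlaps, so they glue to a single smooth $u$ on $\wtld{J}$, which is automatically the unique maximal such interval.

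To verify the blow-up criterion \eqref{eq:continueNabu}, suppose for contradiction that $T^* \in \pr \wtld{J}$ is a finite endpoint and that $\co{\nab u(t)} \leq M$ for $t \in \wtld{J}$ in a one-sided neighborhood of $T^*$. The vorticity $\om = \nab \times u$ satisfies $(\pr_t + u \cdot \nab)\om = \om \cdot \nab u$, so Gr\"onwall yields $\co{\om(t)} \leq \co{\om(t_0)} e^{CM|t-t_0|}$. Standard Beale--Kato--Majda type commutator and Moser estimates then propagate $H^s$ bounds all the way up to $T^*$ for every $s$, so $\|u(t)\|_{H^s}$ remains bounded as $t \to T^*$. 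Restarting the local existence theory at a time $t_1 \in \wtld{J}$ sufficiently close to $T^*$ produces a smooth solution on an open interval strictly containing $T^*$, which by uniqueness coincides with $u$ on the overlap, contradicting the maximality of $\wtld{J}$.

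For uniqueness, suppose $u_1, u_2$ are two Euler solutions on $\wtld{J}$ with common initial data $u_0$ and both satisfying $\nab u_i \in C^0(\wtld{J} \times \T^n)$. Setting $w = u_1 - u_2$ and letting $q = p_1 - p_2$ be the pressure difference, $w$ is divergence free and satisfies $\pr_t w^\ell + u_1^j \nb_j w^\ell + w^j \nb_j u_2^\ell + \nb^\ell q = 0$. Pairing with $w$ in $L^2(\T^n)$, the pressure and the $u_1 \cdot \nab w$ terms vanish by the divergence free conditions, leaving $\fr{d}{dt}\|w(t)\|_{L^2}^2 \leq 2 \co{\nab u_2(t)} \|w(t)\|_{L^2}^2$. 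Since $\co{\nab u_2}$ is bounded on every compact subinterval of $\wtld{J}$, Gr\"onwall and $w(t_0) = 0$ force $w \equiv 0$. The main obstacle is the blow-up criterion step: propagating high regularity from a mere $\co{\nab u}$ bound requires the Beale--Kato--Majda logarithmic Sobolev inequality and associated commutator estimates, which, while classical, are the nontrivial analytic input; the uniqueness argument, by contrast, is essentially immediate once one has the spatial Lipschitz regularity that $\nab u \in C^0$ provides.
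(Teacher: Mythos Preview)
The paper does not supply its own proof of this theorem; it is recorded as a classical fact with a citation to the literature (specifically to Bardos--Frisch, for a lower bound on the existence time in terms of the $C^{1,\a}$ norm of the data). Your sketch is a correct outline of the standard theory and more than the paper itself provides.

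Two minor remarks on your write-up. First, the vorticity equation $(\pr_t + u\cdot\nab)\om = \om\cdot\nab u$ is the three-dimensional form, whereas the statement is for $\T^n$; in general dimension one works with the vorticity two-form or, more directly, bypasses vorticity and propagates $H^s$ norms via the Kato--Ponce estimate $\fr{d}{dt}\|u\|_{H^s}^2 \lsm \co{\nab u}\,\|u\|_{H^s}^2$. Second, the full Beale--Kato--Majda apparatus is stronger than what is needed here: the criterion is already phrased in terms of $\co{\nab u}$, so the only nontrivial step is propagating $H^s$ from a bound on $\co{\nab u}$, and that follows immediately from Kato--Ponce without going through $\co{\om}$.
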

\noindent See for example \cite{bardFrisch}, which gives a lower bound for the time of existence depending on the $C^{1,\a}$ norm of the initial data.

Theorem~\ref{thm:locExistEuler} therefore provides, for each $I$, a unique interval of existence $\wtld{J}_I$ and a unique smooth solution $(u_I^\ell,p_I)$ on $\wtld{J}_I \times \T^3$ to the incompressible Euler equations that realizes the (smooth, divergence free) initial data $u_I^\ell(t_0(I), x) = v^\ell(t_0(I), x)$.  

It follows that $y_I^\ell := u_I^\ell - v^\ell$ and $\bar{p}_I = p_I - p$ are well defined and smooth on $\wtld{J}_I \times \T^3$ (since $v^\ell$ and $p$ are also smooth on that domain).  From this, one has from the theory of regular transport equations (Proposition~\ref{prop:maxTransport} in the Appendix) that there is a unique solution $z_I^{j\ell}$ to \eqref{eq:zItrans}, \eqref{eq:initCondsRhoz} that is also smooth and well-defined on $\wtld{J}_I \times \T^3$ (since the velocity $v$ and all the terms on the right hand side of \eqref{eq:zItrans} are smooth and well-defined on that domain).  The initial value problem  \eqref{eq:rhoItrans}-\eqref{eq:initCondsRhoz} for $\rho_I$ has of the form
\ali{
\begin{split} \label{eq:transportElliptic}
(\pr_t + v \cdot \nab) \rho_I^{j\ell} &= \RR^{j\ell}[\nb_a v^i \nb_i( \rho_I^{ab} ) + Z^b ] \\
\rho_I^{j\ell}(t(I), x) &= \rho_{I,0}^{j\ell}(x) 
\end{split}
}
where $\rho_{I,0}^{j\ell} = 0$ and $Z^b$ are smooth on $\wtld{J}_I \times \T^3$.  The well-posedness of equation \eqref{eq:transportElliptic}, called a {\it transport-elliptic equation} due to the presense of $\RR$, has been considered in \cite{isett}.  It follows from the analysis there (see Theorem~\ref{prop:transElliptWPthry} in the Appendix below) that there exists a solution $\rho_I$ to \eqref{eq:rhoItrans}-\eqref{eq:initCondsRhoz} on $\wtld{J}_I \times \T^3$ that is smooth. 

The well-definedness of our construction now follows from Propositions~\ref{prop:gotAnAntidiv} and \ref{prop:gluingProp} below.
\begin{prop} \label{prop:gotAnAntidiv} If $z_I$ and $\rho_I$ are smooth solutions to \eqref{eq:zItrans}, \eqref{eq:rhoItrans}, \eqref{eq:initCondsRhoz} on $\wtld{J}_I \times \T^3$, then the tensor $r_I^{j\ell} = \rho_I^{j\ell} + z_I^{j\ell}$ is a symmetric anti-divergence for $y_I^\ell$.  That is, equation \eqref{eq:symDivEqnI} is satisfied on $\wtld{J}_I \times \T^3$.
\end{prop}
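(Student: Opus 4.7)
The plan is to prove the identity $\nb_j r_I^{j\ell} = y_I^\ell$ on $\wtld{J}_I \times \T^3$ by introducing the defect $w^\ell := \nb_j r_I^{j\ell} - y_I^\ell$ and showing it satisfies a homogeneous transport equation $(\pr_t + v^i \nb_i) w^\ell = 0$ with zero initial data at $t = t(I)$. Uniqueness for smooth transport against the divergence-free field $v$ (Proposition~\ref{prop:maxTransport}) then forces $w^\ell \equiv 0$. The initial condition $w^\ell(t(I), \cdot) = 0$ is immediate from the zero initial data in \eqref{eq:initCondsRhoz} together with $y_I^\ell(t(I),\cdot) = 0$ from \eqref{eq:eulerIncIeqn}.

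The main computation is to apply the material derivative $\pr_t + v^i \nb_i$ to $\nb_j r_I^{j\ell}$ and commute the spatial divergence past it, picking up the commutator $-(\nb_j v^i)\nb_i r_I^{j\ell}$ (the second-order covariant derivatives commute on the flat torus). Writing $(\pr_t + v\cdot\nb) r_I^{j\ell}$ as the sum of the right-hand sides of \eqref{eq:zItrans} and \eqref{eq:rhoItrans}, and applying $\nb_j$, the identity $\nb_j \RR^{j\ell}[U] = (1-\Pi_0)U^\ell$ from \eqref{eq:invertModIntegral} turns the $\RR$ term into $(1-\Pi_0)\bigl[(\nb_a v^i)\nb_i r_I^{a\ell} - y_I^i \nb_i v^\ell\bigr]$. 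The piece $(\nb_a v^i)\nb_i r_I^{a\ell}$ exactly cancels the commutator, while subtracting \eqref{eq:eulerIncIeqn} for $(\pr_t + v\cdot\nb)y_I^\ell$ pairs up the $-y_I^i\nb_i v^\ell$ term with $-y_I^j\nb_j v^\ell$ and also kills the $\nb_j(y_I^j y_I^\ell)$, $\nb^\ell \bar{p}_I$, and $\nb_j R^{j\ell}$ contributions. What remains is $(\pr_t + v\cdot \nb) w^\ell = -\Pi_0 W^\ell$ where $W^\ell := (\nb_a v^i)\nb_i r_I^{a\ell} - y_I^i \nb_i v^\ell$.

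The main (mild) obstacle is to verify that this spatial-average correction vanishes; this is precisely where the divergence-free hypotheses on $v$ and $y_I$ enter. For the first summand, two integrations by parts on $\T^3$ move both derivatives onto $v$ and yield $\int_{\T^3} (\nb_a v^i)\nb_i r_I^{a\ell}\,dx = \int_{\T^3} (\nb_i v^i)\,\nb_a r_I^{a\ell}\,dx = 0$. For the second, a single integration by parts gives $\int_{\T^3} y_I^i \nb_i v^\ell \,dx = -\int_{\T^3} (\nb_i y_I^i) v^\ell\,dx = 0$ by the divergence-free condition from \eqref{eq:eulerIncIeqn}. Hence $\Pi_0 W^\ell = 0$, the transport equation for $w^\ell$ is homogeneous, and uniqueness gives $w^\ell \equiv 0$. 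To close the proposition, I would also remark that $r_I^{j\ell}$ is genuinely \emph{symmetric}: $\RR^{j\ell}$ is manifestly symmetric in $(j,\ell)$ from \eqref{eq:RRformula}, the right-hand sides of \eqref{eq:zItrans} and \eqref{eq:rhoItrans} are symmetric in $(j,\ell)$, and the initial data \eqref{eq:initCondsRhoz} is symmetric, so uniqueness for the transport and transport-elliptic equations preserves symmetry of each of $z_I$ and $\rho_I$ along the flow.
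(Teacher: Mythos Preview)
Your proof is correct and follows essentially the same approach as the paper's: define the defect $w^\ell = \nb_j r_I^{j\ell} - y_I^\ell$, show it solves the homogeneous transport equation with zero initial data, and conclude by uniqueness. The only organizational difference is in dispatching the $\Pi_0$ obstruction: the paper rewrites the argument of $\RR^{j\ell}$ as an exact divergence $\nb_i[\nb_a v^i r_I^{ab} - y_I^i v^b]$ (using $\nb_i v^i = \nb_i y_I^i = 0$) so that $(1-\Pi_0)$ acts as the identity, whereas you verify $\Pi_0 W^\ell = 0$ by explicit integration by parts --- these are the same computation in different packaging.
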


\begin{prop}[The Gluing Proposition] \label{prop:gluingProp} Let $\a = 1/17$ and $C_0$ be as in the assumptions of Lemma~\ref{lem:glueLem}.  Then there exists $\de_0 \in (0, 1/25)$ depending on $C_0$ and there exist implicit constants depending on $C_0$ such that for all $I \in \Z$ and for $\th_0 = \de_0 \plhxi^{-2} \Xi^{-1} e_v^{-1/2}$ we have
\ali{
[t_0(I) - 8\th_0, t_0(I) + 8 \th_0] &\subseteq \wtld{J}_I \notag \\
\co{ \nb^k y_I } &\lsm \nhat^{(k-2)_+} \Xi^k e_R^{1/2} \label{eq:nbkyIbd}  \\
\cda{ \nb^k y_I } &\lsm \hxi^\a \nhat^{(k-2)_+} \Xi^k e_R^{1/2} \notag\\
\co{ \nb^k \rho_I } + \co{ \nb^k z_I } &\lsm \nhat^{(k-2)_+} \Xi^k \hat{\varep} \notag \\
\cda{ \nb^k \rho_I } + \cda{ \nb^k z_I } &\lsm \hxi^\a \nhat^{(k-2)_+} \Xi^k \hat{\varep} \notag\\
\hat{\varep} &:= \fr{1}{\lhxi} \fr{e_R}{\Xi e_v^{1/2}} \notag
}
where the above estimates hold uniformly in $I$ for $t \in [t_0(I) - 8\th_0, t_0(I) + 8 \th_0]$ and $k = 0, 1, 2, 3$.
\end{prop}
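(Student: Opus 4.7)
My plan is to treat the triple $(y_I, z_I, \rho_I)$ as a coupled system evolving along the flow of $v$ and to close all estimates via Gronwall on the short time interval of length $\lsm \th_0$. The key tuning is that $\th_0 = \de_0 \plhxi^{-2} \Xi^{-1} e_v^{-1/2}$ is shorter than $\co{\nab v}^{-1}\lsm (\Xi e_v^{1/2})^{-1}$ by a factor $\de_0 \plhxi^{-2}$, so the Gronwall amplification $\exp(C\int \co{\nab v}\,dt)\lsm 1$; the two extra logarithmic factors provide just enough room to absorb the log-losses that appear when Calder\'on--Zygmund operators such as $\RR^{j\ell}$ and $\RR^{j\ell} \nb_i$ act on $C^0$.

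\textbf{Existence and bounds on $y_I$.} To establish existence of $u_I$ on the stated interval, I would apply Theorem~\ref{thm:locExistEuler} with initial data $v(t_0(I), \cdot)$ and close a bootstrap: as long as $\co{\nab u_I}\leq 2 C_0 \Xi e_v^{1/2}$, Chemin/BKM-type propagation of $C^{1,\a}$ regularity bounds $\|\nab u_I\|_{C^{0,\a}}$ by $\|\nab v(t_0(I))\|_{C^{0,\a}} \exp(C\int\co{\nab u_I}\,dt)$, with exponent $\lsm \de_0 \plhxi^{-2}\ll 1$, so the continuation criterion \eqref{eq:continueNabu} extends $u_I$ throughout $[t_0(I)-8\th_0, t_0(I)+8\th_0]$. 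For $y_I$, I would integrate \eqref{eq:eulerIncIeqn} along the flow of $u_I$ starting from $y_I(t(I))=0$: the forcing $-\nb_j R^{j\ell}$ contributes $\lsm \th_0 \Xi e_R \lsm \plhxi^{-2} (e_R/e_v)^{1/2} e_R^{1/2}\lsm e_R^{1/2}$, and Gronwall absorbs the linear and quadratic terms to give $\co{y_I}\lsm e_R^{1/2}$. The bounds $\co{\nb^k y_I}\lsm \nhat^{(k-2)_+}\Xi^k e_R^{1/2}$ follow by commuting $\nb^k$ through the $y_I$-equation inductively, inheriting the $\nhat^{(k-2)_+}$ factor from the regularization estimates \eqref{eq:v1Est}--\eqref{eq:R1Est}; the H\"older estimates come from interpolating between two consecutive $C^k$ bounds, producing the $\hxi^\a$ factor.

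\textbf{Bounds on $z_I$ and $\rho_I$.} For $z_I$, integrating \eqref{eq:zItrans} along the flow of $v$ from $z_I(t(I))=0$ and using $\co{y_I}^2 + \co{\bar p_I} + \co{R}\lsm e_R$ (the pressure bound coming from inverting $\De$ on the divergence of \eqref{eq:eulerIncIeqn}), one obtains $\co{z_I}\lsm \th_0 e_R = \de_0 \plhxi^{-1}\varep_R\leq \varep_R$, and the derivative and H\"older bounds follow analogously. The main obstacle is $\rho_I$: the forcing $\RR^{j\ell}[y_I^i \nb_i v^b]$ in \eqref{eq:rhoItrans}, bounded trivially by $\co{y_I}\co{\nab v}\lsm \Xi e_v^{1/2} e_R^{1/2}$, gives $\co{\rho_I}\lsm \th_0 \Xi e_v^{1/2} e_R^{1/2}\lsm \plhxi^{-2} e_R^{1/2}$, which overshoots $\varep_R = \plhxi^{-1}\Xi^{-1} e_v^{-1/2} e_R$ by a factor $\nhat$. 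To recover this missing $\nhat$, I would use $\nb_i y_I^i=0$ to rewrite $y_I^i \nb_i v^b = \nb_i(y_I^i v^b)$, so that the forcing becomes $\RR^{j\ell}\nb_i[y_I^i v^b]$, a zeroth-order Calder\'on--Zygmund operator applied to $y_I\otimes v$; then integrate by parts in time along the trajectories of $v$, using the divergence form \eqref{eq:intAntidiv} of the $y_I$-equation, to convert the time integral of the forcing into a boundary term plus spatial divergences of $R^{j\ell}$, $\bar p_I$, and the quadratic terms $y_I\otimes v$ and $y_I\otimes y_I$, each of which is smaller than the trivial bound by the required factor. The remaining term $\RR^{j\ell}[\nb_a v^i \nb_i(\rho_I^{ab} + z_I^{ab})]$ is handled by standard transport-elliptic theory (Theorem~\ref{prop:transElliptWPthry}), and the higher-derivative and H\"older bounds on $\rho_I$ follow by commuting $\nb^k$ through \eqref{eq:rhoItrans}. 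Extracting the factor $\nhat/\lhxi$ in the $\rho_I$ estimate is the central difficulty of the whole proposition.
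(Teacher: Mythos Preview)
Your identification of the main obstacle --- the forcing term $\RR^{j\ell}[y_I^i\nb_i v^b]$ in the $\rho_I$ equation --- is exactly right, but your proposed mechanism for gaining the missing factor $\nhat$ does not work. Rewriting the forcing as $\RR^{j\ell}\nb_i[y_I^i v^b]$ and then integrating by parts in time via the divergence form \eqref{eq:intAntidiv} produces terms of the schematic size $\co{y_I}\co{v}$, and the point is that $\co{v}$ is not controlled by the frequency-energy levels (only $\co{\nb v}$ is). This route is in fact exactly the one the paper considers and rejects in Section~\ref{sec:goodAntiDiv} (see the discussion around \eqref{eq:badBoundRI}). Your pressure claim has the same defect: the term $\bp_{I,1}=\De^{-1}\nb_\ell[y_I^j\nb_j v^\ell]$ has the identical structure and cannot be bounded by $e_R$ using only $\co{y_I}\lsm e_R^{1/2}$ and the available bounds on $v$.

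The paper's actual mechanism is quite different and is the heart of the argument. One uses the identity $y_I^i=\nb_a r_I^{ai}$ (Proposition~\ref{prop:gotAnAntidiv}) to replace $y_I$ by a derivative of $r_I=\rho_I+z_I$, then performs a Littlewood--Paley decomposition of both the outer projection and the factor $v$, and organizes the low-frequency part into ``frequency increments'' (see \eqref{eq:LLfreqinc} and \eqref{eq:freqincDecompRhoI}). In each increment the frequency localization of $y_I$ allows one to trade the extra derivative on $r_I$ against the $\RR^{j\ell}$ (or $\De^{-1}\nb$) to gain the missing $\nhat^{-1}$, at the cost of the $\plhxi^{2}$ loss that the timescale $\th_0$ is designed to absorb. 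This makes the $\rho_I$ estimate genuinely coupled to the $r_I$ bounds themselves; the paper therefore packages all of $y_I$, $z_I$, $\rho_I$ (in $C^0$ and $\dot C^\a$, up to three derivatives) into a single dimensionless norm $\hh(t)$ and closes via a nonlinear Gronwall inequality (Proposition~\ref{prop:hGrowth}). Your decoupled plan --- first $y_I$, then $z_I$, then $\rho_I$ --- cannot close because the $\rho_I$ forcing estimate already requires control of $r_I$.
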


Proposition~\ref{prop:gotAnAntidiv} on the equality of $\nb_j r_I^{j\ell} = y_I^\ell$ is not immediate, but rather relies crucially on the fact that both vector fields $y_I^\ell$ and $v^\ell$ are divergence free.  
We give the following proof:
\begin{proof}[Proof of Proposition~\ref{prop:gotAnAntidiv}] 
Taking the divergence of \eqref{eq:zItrans}, \eqref{eq:rhoItrans}, relabeling indices and summing gives
\ALI{
\nb_j[ (\pr_t + v^i \nb_i ) z_I^{j\ell} ] &= - \nb_j ( y_I^j y_I^\ell + \bar{p}_I \de^{j\ell} + R^{j\ell} )\\
\nb_j[ (\pr_t + v^i \nb_i ) \rho_I^{j\ell} ] &= \nb_j \RR^{j\ell}[\nb_a v^i \nb_i r_I^{jb}- y_I^i \nb_i v^b ] \\
&=  \nb_j \RR^{j\ell}\nb_i[\nb_a v^i r_I^{jb} - y_I^i v^b ] \\
&\stackrel{\eqref{eq:invertModIntegral}}{=} \nb_i[ \nb_a v^i r_I^{a\ell} - y_I^i v^\ell ] = (1 - \Pi_0)\nb_i[ \nb_a v^i r_I^{a\ell} - y_I^i v^\ell ]  \\
&= \nb_j v^i \nb_i r_I^{j\ell} - y_I^j \nb_j v^\ell \\
\nb_j[ (\pr_t + v^i \nb_i ) r_I^{j\ell} ] &= \nb_j v^i \nb_i r_I^{j\ell} - y_I^j \nb_j v^\ell  - \nb_j ( y_I^j y_I^\ell + \bar{p}_I \de^{j\ell} + R^{j\ell} ) \\
(\pr_t + v^i \nb_i ) \nb_j r_I^{j\ell} &= (\pr_t + v^j \nb_j) y_I^\ell
}
The vector field $f^\ell = \nb_j r_I^{j\ell} - y_I^\ell$ is thus a smooth solution to $(\pr_t + v \cdot \nab) f^\ell = 0$ with initial data $f^\ell(t_0(I), x) = 0$.  By uniqueness, $f^\ell$ is identically $0$ and Proposition~\ref{prop:gotAnAntidiv} follows.
\end{proof} 

We now turn to the proof of Proposition~\ref{prop:gluingProp}.  

\section{Preliminaries for the Gluing Proposition} \label{sec:gluingPrelims}
In this Section we prepare for the proof of The Gluing Proposition~\ref{prop:gluingProp} by introducing some notation and preparatory lemmas.  If $T$ is an operator acting on functions (or tensor fields) on $\T^3$ (or $\R^3$), then
\ALI{
\| T \| &:= \inf \{ A \, : \, \forall~ f \in C^\infty(\T^3), \, \, \| Tf \|_{C^0(\T^3)} \leq A \| f \|_{C^0(\T^3)} \} 
}
will denote the operator norm of $T$, regarded as a bounded operator on $C^0(\T^3)$.  If $T f = K \ast f$ is a convolution operator\footnote{The standard definition for convolution involves $f(x-h)$ rather than $f(x+h)$, but this minor difference will not be important.} with kernel $K$
\ALI{
K \ast f(x) &= \int_{\R^3} f(x + h) K(h) dh, \quad x \in \T^3
}
then we have a bound
\ali{
\| K \ast \| &\leq \| K \|_{L^1(\R^3)}  \label{eq:convL1bd}
}
We will use this inequality only in cases where $K$ is a Schwartz function on $\R^3$.

We will employ the Littlewood-Paley decomposition of continuous functions (or tensor fields) with the following notation.  Choose a radially symmetric, Schwartz function $\chi : \R^3 \to \R$ whose Fourier transform $\hat{\chi}(\xi)$ has compact support in the ball of radius $2$ in frequency space $\widehat{\R}^3$, and such that $\hat{\chi}(\xi) = 1 $ for all $|\xi| \leq 1$.  Then $\int_{\R^3} \chi(h) dh = 1$.  For $q \in \Z$, set $\chi_{\leq q}(h) = 2^{3q} \chi(2^q h)$, so that $\hat{\chi}_{\leq q} \in C_c^\infty(B_{2^q}(\widehat{\R}^3)$ and $\int_{\R^3} \chi_{\leq q}(h) dh = 1$.  

\begin{defn} \label{def:LPNotation}  For any continuous $f : \T^3 \to \R^n$, we define
\ALI{
P_{\leq q} f(x) &= \int_{\R^3} f( x + h ) \chi_{\leq q}(h) dh \\
P_q f(x) &= P_{\leq q} f(x) - P_{\leq q-1}f(x) = \chi_q \ast f(x) \\
\chi_q &= \chi_{\leq q} - \chi_{\leq q - 1}
}
We will also use the notation $P_{[q_1, q_2]} f := P_{\leq q_2} f - P_{\leq q_1} f$ and $P_{\approx q} f = P_{[q-2, q+2]} f$.  With this definition, we have $P_q f = P_{\approx q} P_q f$.
\end{defn}
Observe that $\chi_q$ defined above is Schwartz and
\ALI{
\supp \hat{\chi}_q &\subseteq \{ 2^{q - 1} \leq |\xi| \leq 2^{q + 1} \} \\
\chi_q(x) &= 2^{3q} \chi_0(2^q x)
}
Every continuous tensor field $f$ on $\T^3$ has a decomposition that converges weakly in $\DD'$ 
\ali{
f(x) &= \Pi_0 f + \sum_{q = 0}^\infty P_q f(x) \label{eq:LPdecomp}
}
The sum here extends only over $q \geq 0$ because the $\T^3$-periodicity causes $P_q f$ to vanish for $q < 0$.

We will employ the following (standard) Littlewood-Paley characterization of the $\dot{C}^\a$ seminorm.  (See the Appendix for a proof.)  This estimate implies that \eqref{eq:LPdecomp} converges absolutely in $C^0$ for $f \in C^\a$.
\begin{prop} \label{prop:LPhold}  For all $0 < \a < 1$, there exist implicit constants depending on $\a$ such that for all $f \in C^0(\T^3)$
\ali{
\| f \|_{\dot{C}^\a(\R^3)} &\lsm_\a \sup_{q \in \Z} 2^{\a q} \co{P_q f} \lsm_\a \| f \|_{\dot{C}^\a(\R^3)}  \label{eq:LPhold}
}
Moreover, the inequality $\sup_{q \in \Z} 2^{q} \co{P_q f} \lsm \co{ \nb f}$ holds in the case $\a = 1$.
\end{prop}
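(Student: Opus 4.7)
\textbf{Plan for the proof of Proposition~\ref{prop:LPhold}.} The two inequalities are standard and the argument splits naturally into an upper bound and a lower bound, with a small modification at the endpoint $\alpha = 1$.

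\textit{Upper bound} $\sup_q 2^{\alpha q}\|P_q f\|_{C^0} \lesssim \|f\|_{\dot C^\alpha}$. The key point is that the convolution kernel $\chi_q = \chi_{\leq q} - \chi_{\leq q-1}$ has integral $0$, since $\int \chi_{\leq q}(h)\,dh = 1$ for every $q$ (equivalently, $\hat\chi_q(0) = 0$). Thus we may write
\[
P_q f(x) = \int_{\R^3}\bigl(f(x+h) - f(x)\bigr)\chi_q(h)\,dh,
\]
and estimate pointwise by $|P_q f(x)| \leq \|f\|_{\dot C^\alpha}\int|h|^\alpha|\chi_q(h)|\,dh$. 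By the scaling $\chi_q(h) = 2^{3q}\chi_0(2^q h)$ and a change of variables, $\int|h|^\alpha|\chi_q(h)|\,dh = 2^{-\alpha q}\int|h|^\alpha|\chi_0(h)|\,dh$, and the latter integral is finite because $\chi_0$ is Schwartz. This yields $\|P_q f\|_{C^0}\lesssim 2^{-\alpha q}\|f\|_{\dot C^\alpha}$, which is the desired bound. The same computation with $f(x+h)-f(x) = \int_0^1 h\cdot\nabla f(x+sh)\,ds$ replaces the $|h|^\alpha$ factor by $|h|$ and gives the $\alpha = 1$ statement $\|P_q f\|_{C^0}\lesssim 2^{-q}\|\nabla f\|_{C^0}$.

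\textit{Lower bound} $\|f\|_{\dot C^\alpha}\lesssim \sup_q 2^{\alpha q}\|P_q f\|_{C^0}$. Let $A := \sup_{q\geq 0} 2^{\alpha q}\|P_q f\|_{C^0}$ and fix $x,y\in\T^3$. Choose the integer $Q\geq 0$ with $2^{-Q}\leq|x-y|<2^{-Q+1}$ (the case $|x-y|\gtrsim 1$ is handled trivially by $C^0$ bounds on the finite sum). Using the absolutely convergent Littlewood--Paley decomposition $f = \Pi_0 f + \sum_{q\geq 0}P_q f$, split
\[
f(x) - f(y) = \sum_{q=0}^{Q}(P_q f(x) - P_q f(y)) + \sum_{q>Q}(P_q f(x) - P_q f(y)).
\]
For the high-frequency tail, simply bound each difference by $2\|P_q f\|_{C^0}\leq 2A\cdot 2^{-\alpha q}$ and sum the geometric series to get a contribution $\lesssim A\cdot 2^{-\alpha Q}\lesssim A|x-y|^\alpha$. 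For the low-frequency part, I would invoke Bernstein's inequality $\|\nabla P_q f\|_{C^0}\lesssim 2^q\|P_q f\|_{C^0}$ (proved by writing $P_q f = P_{\approx q}P_q f$ via Definition~\ref{def:LPNotation} and differentiating the rescaled Schwartz kernel $\chi_{\approx q}$; the $L^1$ norm of $\nabla\chi_{\approx q}$ is $\lesssim 2^q$, and we apply \eqref{eq:convL1bd}). Then the mean value theorem gives
\[
|P_q f(x) - P_q f(y)|\lesssim |x-y|\cdot 2^q\|P_q f\|_{C^0}\leq A|x-y|\cdot 2^{(1-\alpha)q},
\]
and summing over $0\leq q\leq Q$ yields $\lesssim A|x-y|\cdot 2^{(1-\alpha)Q}\lesssim A|x-y|^\alpha$ (using $0<\alpha<1$ so the geometric series is increasing). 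Combining the two estimates proves the lower bound.

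\textit{Main obstacle.} There is no serious obstacle; everything reduces to the mean-zero cancellation of $\chi_q$ for the upper bound and to Bernstein's inequality for the low-frequency part of the lower bound. The only subtlety is the restriction $0<\alpha<1$, which is exactly what makes both geometric sums in the lower bound converge in the correct direction; the endpoint $\alpha=1$ is handled separately precisely because the low-frequency sum $\sum_{q\leq Q}2^{(1-\alpha)q}$ ceases to be geometric there, but the one-sided statement $\sup_q 2^q\|P_q f\|_{C^0}\lesssim\|\nabla f\|_{C^0}$ follows directly from the mean-zero argument as noted above.
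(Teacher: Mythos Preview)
Your proof is correct and follows essentially the same approach as the paper's own argument in the Appendix: both use the mean-zero property of $\chi_q$ for the upper bound, and for the lower bound both split the Littlewood--Paley sum at the frequency scale $\sim |x-y|^{-1}$, handling the low-frequency block via the mean value theorem combined with a Bernstein-type bound (obtained exactly as you do, by writing $P_q = P_{\approx q}P_q$ and differentiating the kernel) and the high-frequency tail by direct summation of the geometric series. The only cosmetic difference is that the paper allows the splitting index $\bar q$ to range over all of $\Z$, whereas you restrict to $Q\geq 0$ and dispose of the large-$|x-y|$ case separately; both are fine since $P_q f = 0$ for $q < 0$ on $\T^3$.
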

The Proposition applies as well to tensor fields on $\T^3$.  
The above Littlewoood-Paley characterization of the $\dot{C}^\a$ seminorm will help us to control $\dot{C}^\a$ seminorms of the solutions to our transport equations when combined with the following well-known commutator estimate.  (See the Appendix for a proof.)
\begin{prop}[Littlewood-Paley Commutator Estimate] \label{prop:LPcommutunab}  If $0 < \a \leq 1$, then there is an implicit constant depending on $\a$ (and the dimension $n$) such that for any smooth vector field $u \in L^\infty(\R^n)$ and for any smooth function $f \in L^\infty(\R^n)$, we have
\ali{
\| u \cdot \nab P_q f - P_q ( u \cdot \nab f ) \|_{C^0(\R^n)} &\lsm_\a 2^{-\a q} \| \nab u \|_{C^0(\R^n)} \| f \|_{\dot{C}^\a(\R^n)}. \label{eq:commutEst}
}
\end{prop}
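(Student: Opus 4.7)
The plan is to write the commutator out explicitly as a convolution integral and then perform an integration by parts that converts a derivative on $f$ into a Hölder difference. Starting from $P_q f(x) = \int f(x+h) \chi_q(h) dh$, pulling $u^i(x)$ inside the integral of the first term and using $P_q(u^i \pr_i f)(x) = \int u^i(x+h)(\pr_i f)(x+h)\chi_q(h)dh$, the two pieces of the commutator combine to yield
\[
(u \cdot \nb P_q f - P_q(u \cdot \nb f))(x) = \int_{\R^n} [u^i(x) - u^i(x+h)]\, (\pr_i f)(x+h)\, \chi_q(h)\, dh.
\]
The crucial cancellation has produced the Lipschitz-type increment $u^i(x) - u^i(x+h)$, which will account for one factor of $\co{\nb u}$.

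Next I would rewrite $(\pr_i f)(x+h) = \pr_{h_i}[f(x+h) - f(x)]$ (legitimate because $\pr_{h_i}$ kills the constant $f(x)$) and integrate by parts in $h_i$, with no boundary terms because $\chi_q$ is Schwartz. This produces
\begin{align*}
[u \cdot \nb, P_q] f(x) &= \int (\pr_i u^i)(x+h)\, \chi_q(h)\, [f(x+h) - f(x)]\, dh \\
&\quad - \int [u^i(x) - u^i(x+h)]\, (\pr_{h_i} \chi_q)(h)\, [f(x+h) - f(x)]\, dh.
\end{align*}
Now every factor is a first-order increment: either of $u$ (bounded by $\co{\nb u}$ times $|h|$ or $1$) or of $f$ (bounded by $\cda{f}|h|^\alpha$).

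It remains to estimate the two integrals using the scaling of $\chi_q$. For the first, I use $|\pr_i u^i| \leq n\,\co{\nb u}$ and $|f(x+h) - f(x)| \leq \cda{f}|h|^\alpha$, together with the scaling bound $\int_{\R^n} |h|^\alpha |\chi_q(h)|\, dh \lsm_\alpha 2^{-q\alpha}$ which follows from $h' = 2^q h$ and the Schwartz decay of $\chi_0$. For the second, I use $|u^i(x) - u^i(x+h)| \leq \co{\nb u}|h|$, producing an integrand bounded by $\co{\nb u}\cda{f}|h|^{1+\alpha}|\pr_{h_i}\chi_q(h)|$; since $\pr_{h_i}\chi_q$ is a rescaling of a fixed Schwartz function carrying one extra factor of $2^q$, an analogous change of variables gives $\int |h|^{1+\alpha}|\pr_{h_i}\chi_q(h)|\, dh \lsm_\alpha 2^{-q\alpha}$. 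Summing both contributions yields the claim.

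The main obstacle is the integration-by-parts step: one must subtract the constant $f(x)$ \emph{before} integrating by parts, so that when $\pr_{h_i}$ is moved back, the differences $f(x+h) - f(x)$ appear in both remaining integrals, and one can exploit $\cda{f}$ instead of needing $\co{\nb f}$. Without this manipulation the estimate would require $f \in C^1$ and would not give the correct $2^{-q\alpha}$ decay in terms of the Hölder seminorm. Everything else reduces to routine $L^1$ bounds on rescaled moments of the fixed Schwartz function $\chi_0$, which provide the correct power of $2^{-q\alpha}$ uniformly in $q$.
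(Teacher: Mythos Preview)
Your proposal is correct and follows essentially the same approach as the paper's proof: write the commutator as a convolution integral with the difference $u^i(x)-u^i(x+h)$, subtract the constant $f(x)$ so that $(\pr_i f)(x+h)=\pr_{h_i}[f(x+h)-f(x)]$, integrate by parts in $h$, and then estimate the two resulting integrals using the Lipschitz bound on $u$, the H\"older bound on $f$, and the scaling of $\chi_q$. The signs, the two terms produced, and the final moment estimates on $\chi_q$ and $\nabla\chi_q$ all match the paper's argument.
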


In what follows, all implicit constants in the notation $\lsm$ are allowed to depend on the parameter $C_0$ in the bounds assumed in Lemma~\ref{lem:glueLem}, but not on $\de$.  The above Propositions~\ref{prop:LPhold}-\ref{prop:LPcommutunab} will be employed only for the particular choice of $\a = 1/17$ of Proposition~\ref{prop:gluingProp} or $\a = 1$.  (Any other choice of $\a \in (0,1)$ would also suffice.)  If an implicit constant depends on the parameter $\de$, we will write $\lsm_\de$.  

With these preliminaries in hand, we are now ready to begin the proof of Proposition~\ref{prop:gluingProp}.

\section{Proof of the Gluing Proposition} \label{sec:gluingProof}
We now prove the Proposition~\ref{prop:gluingProp}.  The analysis draws some inspiration from the methods in \cite{isett2}.

We start by defining a dimensionless norm for $(y_I, \rho_I, z_I)$ that controls all the quantities we wish to control in Proposition~\ref{prop:gluingProp}.  We denote this norm by $\hh(t)$.  The norm also depends on $I$, but we will suppress this dependence for simplicity of notation. 
\begin{defn} \label{defn:hNorm} For $\a = 1/17$, we define the dimensionless norm $\hh(t) : \wtld{J}_I \to \R$ by 
\ALI{
\begin{split}
\hh(t) &:= \fr{1}{e_R^{1/2}} \left(\sum_{k=0}^3 \fr{ \co{\nb^k y_I(t)}}{\nhat^{(k-2)_+} \Xi^k} \right) \\
&+ \fr{1}{\hat{\varep}} \left(\sum_{k=0}^3 \fr{ \co{\nb^k \rho_I(t)} + \co{\nb^k z_I(t)}}{\nhat^{(k-2)_+} \Xi^k} \right)  \\
&+ \fr{1}{\hxi^\a \nhat \Xi^3} \left( \fr{\cda{\nb^3 y_I(t)} }{e_R^{1/2}} + \fr{\cda{\nb^3 \rho_I(t)} + \cda{\nb^3 z_I(t)}}{\hat{\varep}} \right) 
\end{split}
}
\end{defn}
The definition of $\hh(t)$ and the interpolation inequality \eqref{ineq:holdInterp} imply the following bounds for $t \in \wtld{J}_I$
\ali{
\begin{split}
\co{\nab^k y_I(t)} + \hxi^{-\a} \cda{\nab^k y_I(t)} &\lsm \nhat^{(k-2)_+}\Xi^k e_R^{1/2} \hh(t), \quad k = 0, 1, 2, 3 \\
\co{\nab^k \rho_I} + \co{\nab^k z_I} + \hxi^{-\a}(\cda{\nab^k \rho_I} + \cda{\nab^k z_I} ) &\lsm \nhat^{(k-2)_+} \Xi^k \hat{\varep} \hh(t), \quad k = 0, 1, 2, 3
\end{split} \label{eq:theEstimateshh}
}

The following Proposition is our main estimate on the growth rate of $\hh(t)$.  
\begin{prop}\label{prop:hGrowth}  There exists a constant $B_1$ depending on $C_0$ such that for all $t \in \wtld{J}_I$,
\ali{
\hh(t) &\leq B_1 \plhxi^2 \Xi e_v^{1/2} \left|\int_{t_0(I)}^t ( 1 + \hh(\tau))^2 d\tau \right| \label{eq:propHGrowthbd}
}
\end{prop}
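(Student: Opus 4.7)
The plan is to derive eight integrated estimates---one for each term in Definition~\ref{defn:hNorm}---by integrating the material derivative along the flow of $v$. Let $\Phi_\tau(x)$ be the flow of $v$ starting at $t = t_0(I)$. Since $y_I, \rho_I, z_I$ all vanish at $t_0(I)$, any $F \in \{y_I, \rho_I, z_I\}$ solving $(\pr_t + v \cdot \nab)F = G$ obeys
\[
F(t, \Phi_t(x)) = \int_{t_0(I)}^t G(\tau, \Phi_\tau(x))\, d\tau,
\]
so $\co{F(t)} \leq \int_{t_0(I)}^t \co{G(\tau)}\, d\tau$. For $\nb^k F$ I would commute $\nb^k$ with $v \cdot \nab$ and absorb the commutator $[\nb^k, v\cdot\nab]F$ (whose bound is a sum of products $\nb^a v \cdot \nb^{k+1-a} F$) into $G$. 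For $\cda{\nb^3 F}$ I would apply $P_q \nb^3$ to the equation, invoke the Littlewood-Paley commutator bound of Proposition~\ref{prop:LPcommutunab} to handle $[v\cdot\nab, P_q]$, and then take the supremum in $q$ via Proposition~\ref{prop:LPhold}.

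The first substantive step is to control the pressure increment $\bar p_I$. Taking the divergence of \eqref{eq:eulerIncIeqn} and using $\nb_j v^j = \nb_j y_I^j = 0$ (so that e.g.\ $\nb_\ell v^j \nb_j y_I^\ell = \nb_\ell\nb_j(v^j y_I^\ell)$) yields
\[
\bar p_I = -\De^{-1}\nb_\ell\nb_j\bigl[\,2\,v^j y_I^\ell + y_I^j y_I^\ell + R^{j\ell}\,\bigr]
\]
up to mean-zero normalization. Since $\De^{-1}\nb_\ell\nb_j$ is a zeroth-order Calder\'on--Zygmund operator, bounded on $\dot C^\a$, Proposition~\ref{prop:LPhold} supplies $\cda{\nb^k\bar p_I}$ in terms of the corresponding norms of $v, R, y_I$ (hence $\hh$), losing at most one factor $\lhxi$ in passing back to $C^0$.

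Next, for each $F \in \{y_I, \rho_I, z_I\}$, bound $(\pr_t + v\cdot\nab)F$ using the defining equations \eqref{eq:eulerIncIeqn}, \eqref{eq:zItrans}, \eqref{eq:rhoItrans}. The linear terms contribute $(1+\hh(\tau))$ and the quadratic term $y_I \otimes y_I$ contributes $\hh(\tau)^2$; dividing by the scales in Definition~\ref{defn:hNorm}, each is bounded by $\plhxi^2 \Xi e_v^{1/2}(1+\hh(\tau))^2$. The nontrivial case is the equation for $\rho_I$: a direct estimate of $\RR^{j\ell}[\nb_a v^i\nb_i(\rho_I^{ab}+z_I^{ab}) - y_I^i\nb_i v^b]$ is off by a factor $\nhat$. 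The key is the cancellation that, since $\nb_iy_I^i = \nb_iv^i = 0$ (and so $\nb_i\nb_a v^i = 0$),
\[
\nb_a v^i \nb_i \rho_I^{ab} - y_I^i \nb_i v^b \;=\; \nb_i\bigl[\,\nb_a v^i \rho_I^{ab} - y_I^i v^b\,\bigr].
\]
Hence $\RR^{j\ell}\nb_i$ acts as a zeroth-order operator on the product $\nb_a v^i \rho_I^{ab} - y_I^i v^b$, whose $\dot C^\a$ norm is controlled via Proposition~\ref{prop:LPhold} by $\hh$ and the assumed frequency-energy bounds on $v$ with only a single $\lhxi$ lost on conversion to $C^0$. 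Assembling the eight bounds yields \eqref{eq:propHGrowthbd} with $B_1$ depending only on $C_0$.

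The main obstacle is the divergence-free cancellation in the $\rho_I$ equation just described: without it, the scheme would lose a factor of $\nhat = (e_v/e_R)^{1/2}$ and fail to produce the Onsager-critical gain. A secondary subtlety is a careful accounting of logarithmic losses, so that only two factors of $\lhxi$ appear in $\plhxi^2$; this requires phrasing the pressure estimate and the $\RR^{j\ell}\nb$ estimate in $\dot C^\a$ wherever possible, and invoking the Littlewood-Paley commutator bound of Proposition~\ref{prop:LPcommutunab} to propagate $\dot C^\a$ norms through the transport step.
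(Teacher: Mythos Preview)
Your outline matches the paper's, but there is a genuine gap in the treatment of the dangerous term $\RR^{j\ell}[y_I^i \nb_i v^b]$ (and its pressure analogue $\De^{-1}\nb_\ell[y_I^j\nb_j v^\ell]$). You rewrite it as $\RR^{j\ell}\nb_i[\,y_I^i v^b\,]$ and propose to bound the product $y_I v$ in $C^0$ or $\dot C^\a$. But Definition~\ref{defn:frenlvls} provides \emph{no bound on $\co{v}$} --- only on $\co{\nb^k v}$ for $k\ge 1$ --- so the term $\co{v}\cda{y_I}$ (or $\co{v}\co{y_I}$) is uncontrolled. Even if one granted $\co{v}\lsm 1$, the resulting bound $\lhxi\, e_R^{1/2}\hh(t)\cdot\co{v}$ would miss the target $\plhxi^2\,\Xi e_v^{1/2}\varep_R\,\hh(t)=\lhxi\, e_R\,\hh(t)$ by a factor of $e_R^{-1/2}$; the paper explicitly flags this estimate as insufficient in Section~\ref{sec:goodAntiDiv} (see the discussion around \eqref{eq:badBoundRI} and \eqref{eq:rhoIbound}).

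The missing idea is to feed the anti-divergence identity $y_I^i = \nb_a r_I^{ai}$ of Proposition~\ref{prop:gotAnAntidiv} back into this term, so that it becomes bilinear in $\nb r_I$ and $\nb v$ (both of which \emph{are} controlled). One then performs a Littlewood-Paley decomposition of \emph{both} factors and telescopes $P_{\le\hq}[\,y_I\cdot\nb P_{\le\hq}v\,]$ into \emph{frequency increments} (see \eqref{eq:LLfreqinc} for the pressure and \eqref{eq:freqincDecompRhoI} for $\rho_I$). In each increment the operator $P_{\le q}\nb_a$ acting on $r_I$ costs $2^q$, which is exactly compensated by either $\|\RR P_{q}\|\lsm 2^{-q}$ or $\co{P_{q+1}v}\lsm 2^{-q}\co{\nb v}$; summing $O(\hq)$ increments produces the $\plhxi^2$ loss. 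This frequency-increment mechanism, not the bare zeroth-order bound on $\RR\nb$, is what saves the factor of $\nhat$ you correctly identify as critical.
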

Our main proposition for the Gluing Lemma, Proposition~\ref{prop:gluingProp}, follows quickly from Proposition~\ref{prop:hGrowth} by the following argument.
\begin{proof}[Proof of Proposition~\ref{prop:gluingProp}]  For simplicity consider the case $I = 0$ so that $t_0(I) = 0$, and restrict to $t \geq 0$.  Set $H(\hat{t}) = 1 + \hh([B_1 \plhxi^2 \Xi e_v^{1/2}]^{-1} \hat{t} )$.  Then $H(\hat{t})$ is continuous on the appropriate rescaled interval $\hat{J}$, and, after changing variable, \eqref{eq:propHGrowthbd} gives the estimate
\ALI{
H(\hat{t}) &\leq 1 + \int_0^{\hat{t}} H(\hat{\tau})^2 d\hat{\tau}
}
It follows by Bihari's inequality (i.e. the nonlinear Gronwall inequality) that
\ALI{
H(\hat{t}) &\leq (1 - \hat{t})^{-1} \quad \tx{ for all } \hat{t} \in \hat{J}
}
Returning to $\hh(t)$, we obtain $1 + \hh(t) \leq 2$ for all $t \in \wtld{J}_I$ such that $0 \leq t \leq \fr{1}{2B_1}  \plhxi^{-2} \Xi^{-1} e_v^{-1/2} $.  One analogously obtains the same estimate for $-t$ in the same range.  Set $\de_0$ in Proposition~\ref{prop:gluingProp} to be $\de_0 = \fr{1}{16 B_1}$ and let $\th_0 := \de_0\plhxi^{-2} \Xi^{-1} e_v^{-1/2}$ be as in that Proposition.  Then all of the estimates in Proposition~\ref{prop:gluingProp} follow directly from \eqref{eq:theEstimateshh} for $t \in \wtld{J}_I \cap [t_0(I) - 8 \th_0, t_0(I) + 8 \th_0]$.  In particular, $\sup \{ \co{\nab y_I(t)} \, : \, t \in \wtld{J}_I \cap [t_0(I) - 8 \th_0, t_0(I) + 8 \th_0] \}$ is finite, so the maximal open interval of existence $\wtld{J}_I$ must therefore contain $[t_0(I) - 8 \th_0, t_0(I) + 8 \th_0]$.  (Otherwise one contradicts \eqref{eq:continueNabu} of Proposition~\ref{thm:locExistEuler} using $\co{ \nb u_I(t) } \leq \co{ \nb y_I(t)} + \co{ \nb v(t) }$.)  The case of general $I \neq 0$ is equivalent to the $I = 0$ case by a translation in the time variable, so we have finished proving Proposition~\ref{prop:gluingProp}.
\end{proof}
We now continue to the proof of Proposition~\ref{prop:hGrowth}.  

\subsection{Estimates for the Pressure Increment} \label{sec:pressEsts}
We start by estimating the pressure increment $\bar{p}_I$, which appears in both of the equations \eqref{eq:eulerIncIeqn} and \eqref{eq:zItrans}.  The bounds we obtain are as follows, with implicit constants as usual depending on $C_0$.
\begin{prop}[Pressure Estimates] \label{prop:pressEstimates}  For $t \in \wtld{J}_I$,
\ali{
\co{\nb^k \bp_I(t) } &\lsm \nhat^{(k-2)_+}\Xi^{|k|}\lhxi e_R ( 1 + \hh(t) )^2, \quad k = 0, 1, 2, 3 \label{eq:co03press} \\
\hxi^{-\a}\cda{\nb^k \bp_I(t) } &\lsm  \nhat^{(k-2)_+}\Xi^{|k|}\lhxi e_R ( 1 + \hh(t) )^2, \quad k = 0, 1, 2, 3 \label{eq:canbkpress} \\
\co{ \nb^{k+1} \bp_I(t) } &\lsm \nhat^{(k-2)_+} (\Xi e_v^{1/2}) \lhxi  \Xi^{k} e_R^{1/2} ( 1 + \hh(t))^2, \quad k = 2, 3 \label{eq:34press} \\
\hxi^{-\a} \cda{\nb^{k+1} \bp_I(t) } &\lsm  \nhat^{(k-2)_+} (\Xi e_v^{1/2}) \lhxi  \Xi^{k} e_R^{1/2} ( 1 + \hh(t))^2, \quad k = 2, 3 \label{eq:34capress}
}
\end{prop}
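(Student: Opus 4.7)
The plan is to reduce the pressure estimates to bounds on a zeroth-order Calder\'on--Zygmund operator applied to a symmetric tensor built from $v$, $y_I$, and $R$. Taking the divergence of \eqref{eq:eulerIncIeqn} and using the incompressibility conditions $\nb_j v^j = \nb_\ell y_I^\ell = 0$ to cancel the time-derivative contribution, I would derive the elliptic equation
\ALI{
\De \bar p_I = -\nb_\ell \nb_j \bigl( v^j y_I^\ell + y_I^j v^\ell + y_I^j y_I^\ell + R^{j\ell} \bigr).
}
Normalizing $\bar p_I$ to have zero spatial mean then yields the representation $\bar p_I = -T^{j\ell}[G^{j\ell}]$, where $T^{j\ell} := \De^{-1}\nb_\ell \nb_j$ is a composition of Riesz transforms and $G^{j\ell} := v^j y_I^\ell + y_I^j v^\ell + y_I^j y_I^\ell + R^{j\ell}$. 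Since $T^{j\ell}$ commutes with $\nb^k$, one has $\nb^k \bar p_I = -T^{j\ell}[\nb^k G^{j\ell}]$ at every order.

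For the H\"older bounds \eqref{eq:canbkpress} and \eqref{eq:34capress}, I would invoke the boundedness of Riesz transforms on $\dot C^\a$ for $0 < \a < 1$, giving $\cda{\nb^k \bar p_I} \lsm \cda{\nb^k G^{j\ell}}$. The right-hand side is then estimated term by term by the Leibniz rule, combining the frequency-energy bounds on $v$ and $R$ from \eqref{eq:v1Est}--\eqref{eq:R1Est} with the $\hh$-based bounds \eqref{eq:theEstimateshh} on $y_I$ and converting between $C^0$ and $\dot C^\a$ norms via the interpolation inequality \eqref{ineq:holdInterp}. The factor $(1+\hh)^2$ arises from the quadratic $y_I^j y_I^\ell$ term; the cross terms $v \cdot y_I$ and the additive $R^{j\ell}$ contribute the linear and constant parts.

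For the $C^0$ bounds \eqref{eq:co03press} and \eqref{eq:34press}, $T^{j\ell}$ is no longer bounded, and I would recover a logarithmic estimate by Littlewood--Paley decomposition. Since each dyadic piece $T^{j\ell} P_q$ has kernel with $L^1$ norm bounded uniformly in $q$, splitting at a cutoff $q_0$ gives
\ALI{
\co{T^{j\ell} f} \leq \sum_{q \geq 0} \co{T^{j\ell} P_q f} \lsm \sum_{q = 0}^{q_0} \co{P_q f} + \sum_{q > q_0} 2^{-\a q} \cda{f},
}
using Proposition \ref{prop:LPhold} to control the tail. Choosing $q_0 \sim \log \hxi$ balances the two contributions and produces the factor $\lhxi$; applying this with $f = \nb^k G^{j\ell}$ delivers the claimed bound.

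The most delicate step will be the cross term $v \cdot y_I$ inside $G^{j\ell}$. Since the $C^0$ size of $v$ is generally much larger than $e_R^{1/2}$, a direct bound $\co{v y_I} \lsm \co{v}\co{y_I}$ is too large to fit within the target $\lhxi e_R$. The remedy is to exploit the incompressibility of both $v$ and $y_I$ through the identity
\ALI{
\nb_\ell \nb_j (v^j y_I^\ell) = \nb_\ell v^j \, \nb_j y_I^\ell,
}
which distributes both derivatives onto the factors before $\De^{-1}$ acts, converting the operator structure from a zeroth-order object acting on $v \otimes y_I$ into $\De^{-1}$ applied to a product of gradients controlled by \eqref{eq:v1Est} and \eqref{eq:theEstimateshh}. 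Combined with the Littlewood--Paley truncation described above, this is enough to meet the target. The analogous manipulations at higher derivative orders are routine but require careful product-rule bookkeeping when derivatives fall on $v$ versus $y_I$ at different orders, especially for the $k=3$ case where the sharper hypothesis on $v$ produces the weight $\Xi e_v^{1/2}$ appearing in \eqref{eq:34press}--\eqref{eq:34capress}.
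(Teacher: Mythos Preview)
Your plan is correct for \eqref{eq:34press} and \eqref{eq:34capress}, and in fact coincides with what the paper does there: once three or more derivatives are present, writing $\bar p_{I,1}=\De^{-1}[\nb_\ell v^j\,\nb_j y_I^\ell]$ and peeling off a zero-homogeneous $\nb^2\De^{-1}$ is exactly the right move.  But for \eqref{eq:co03press} at $k=0,1,2$ your proposed remedy does not close, and this is the heart of the matter.

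The issue is the low-frequency part.  After your identity you face $\De^{-1}$ acting on $F:=\nb_\ell v^j\,\nb_j y_I^\ell$, and the best pointwise bound available is $\co{F}\lsm \Xi^2 e_v^{1/2}e_R^{1/2}\hh$.  Since $\De^{-1}P_q$ has operator norm $\sim 2^{-2q}$, summing over $q\geq 0$ gives only $\co{\De^{-1}F}\lsm \co{F}\lsm \Xi^2 e_v^{1/2}e_R^{1/2}\hh = \Xi\hxi\,e_R\hh$, which misses the target $\lhxi\,e_R$ by the enormous factor $\Xi\hxi/\lhxi$.  The Littlewood--Paley truncation you described was tailored to a \emph{zero}-homogeneous multiplier; for $\De^{-1}$ the low modes are not small, and there is no mechanism in your outline to make $\co{P_0 F}$ or $\co{P_1 F}$ small.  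The same obstruction appears at $k=2$ (where one factor of $\hat N$ is still missing) and in the direct $\dot C^\a$ route, since $[G]_{\a}$ contains a term $\co{v}\,[y_I]_\a$ with the uncontrolled $\co{v}$.

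The paper's additional ingredient, which you are missing, is the symmetric anti-divergence $r_I=\rho_I+z_I$ built in Section~\ref{sec:goodAntiDiv}: one has $y_I^j=\nb_i r_I^{ij}$ with the much stronger bound $\co{r_I}\lsm \varep_R\hh$, $\varep_R=e_R/(\Xi e_v^{1/2}\lhxi)$, recorded in the norm $\hh$.  After the low/high split at $\hat q$ and a further split of $v$, the low--low piece $\De^{-1}\nb_\ell P_{\leq \hat q}[y_I^j\nb_j P_{\leq \hat q}v^\ell]$ is decomposed into \emph{frequency increments} in which, at each dyadic scale $2^q$, the restriction $P_{\leq q+4}y_I = P_{\leq q+4}\nb_i r_I$ gains a factor $2^q$ from $P_{\leq q+4}\nb_i$ at the cost of the tiny amplitude $\varep_R$.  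This converts the contribution of each increment to $\lsm (\lhxi)^{-1}e_R\hh$ (see \eqref{ineq:bpIqlqA}--\eqref{eq:bpI1LL}), and summing over $O(\lhxi)$ increments yields the claimed $\lhxi\,e_R\hh$.  Without invoking $r_I$ and its $\varep_R$ bound, the $k\leq 2$ cases of \eqref{eq:co03press} cannot be reached.
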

We start with the $k = 0$ case of \eqref{eq:co03press}, namely $\co{ \bp_I(t) } \lsm \lhxi e_R ( 1+ \hh(t))^2$
\begin{proof}[Proof of \eqref{eq:co03press}, k = 0]
Taking the divergence of \eqref{eq:eulerIncIeqn}, and using the fact that both $v^\ell$ and $y_I^\ell$ are divergence free, we have
\ALI{
\nb_\ell[v^j \nb_j y_I^\ell] &+ \nb_\ell[ y_I^j \nb_j v^\ell ] + \nb_\ell \nb_j(y_I^j y_I^\ell) + \De \bp_I = - \nb_\ell \nb_j R^{j\ell} \\
\bp_I &= -2 \De^{-1} \nb_\ell[ y_I^j \nb_j v^\ell ] - \De^{-1} \nb_\ell \nb_j[y_I^j y_I^\ell] - \De^{-1} \nb_\ell \nb_j R^{j\ell}
}
We can then decompose $\bp_I = - 2 \bp_{I,1} - \bp_{I,2} - \bp_{I,3}$ as
\ali{
\bp_{I,1} &= \De^{-1} \nb_\ell[ y_I^j \nb_j v^\ell ]  \label{eq:trickyPressTerm} \\
\bp_{I,2} + \bp_{I,3} &= \De^{-1} \nb_\ell \nb_j[y_I^jy_I^\ell] + \De^{-1} \nb_\ell \nb_j R^{j\ell} \label{eq:OkPressTerms}
}
We start by estimating \eqref{eq:OkPressTerms} as follows.  Choose $\hq \in \Z$ such that $2^{\hq - 1} < \hxi \leq 2^{\hq}$.  Then by \eqref{eq:LPdecomp} we have
\ali{
\bp_{I,2} + \bp_{I,3} &= \bp_{I,2L} + \bp_{I,3L} + \bp_{I,2H} + \bp_{I,3H} \notag\\
\bp_{I,2L} + \bp_{I,3L} &= \De^{-1} \nb_\ell \nb_j P_{\leq \hq}[ y_I^j y_I^\ell + R^{j\ell} ] \label{eq:p23L} \\
\bp_{I,2H} + \bp_{I,3H} &= \sum_{q > \hq} \De^{-1} \nb_\ell \nb_j P_q[ y_I^j y_I^\ell + R^{j\ell} ] \label{eq:p23H}
}
The operator acting on low frequencies is bounded on $C^0$ by
\ali{
\| \De^{-1} \nb_\ell \nb_j P_{\leq \hq} \| &\leq \sum_{q = 0}^{\hq} \| \De^{-1} \nb_\ell \nb_j P_q \| \notag \\
&\lsm \sum_{q = 0}^{\hq} 1 = (1 + \hq) \lsm \lhxi , \label{eq:logBdOp}
}
where we used \eqref{eq:convL1bd} and the fact that $ \De^{-1} \nb_\ell \nb_j P_q = \tilde{K}_q \ast $ are convolution operators whose kernels $\tilde{K}_q(h) = 2^{3q} K_0(2^q h)$ are rescaled Schwartz functions satisfying the same $L^1$ bound.  This scaling is due to the multiplier for $\De^{-1} \nb_\ell \nb_j$ being homogeneous of degree $0$.  From \eqref{eq:logBdOp}, we now obtain
\ali{
\co{ \bp_{I,2L} + \bp_{I,3L} } &\lsm \lhxi ( e_R \hh(t)^2 + e_R) \leq \lhxi e_R (1 + \hh(t))^2 \notag
}
For the high-frequency term, we apply $P_q = P_{[q-2, q+2]} P_q = P_{\approx q} P_q$ and use $\dot{C}^\a$ control to obtain
\ALI{
\co{\bp_{I,2H} + \bp_{I,3H} } &\leq \sum_{q  > \hq} \| \De^{-1} \nb_\ell \nb_j P_{\approx q} \| \co{ P_q[ y_I^j y_I^\ell + R^{j\ell} ] } \\
&\stackrel{\eqref{eq:LPhold}}{\lsm} \sum_{q > \hq} 2^{-\a q} \cda{y_I^j y_I^\ell + R^{j\ell}} \\
&\lsm \sum_{q > \hq} 2^{-\a q} \hxi^\a (e_R \hh(t)^2 + e_R) \\
&\lsm e_R (1 + \hh(t))^2
}
The term $\bp_{I,1}$ in \eqref{eq:trickyPressTerm} is similar to the problematic term that we encountered in the equation \eqref{eq:rhoItrans}.  For this term, we start with a similar decomposition
\ali{
\bp_{I,1} &= \bp_{I,1L} + \bp_{I,1H} \label{eq:bpI1decomp}\\
\bp_{I,1L} &= \De^{-1} \nb_\ell P_{\leq \hq} [ y_I^j \nb_j v^\ell ] \notag \\
\bp_{I,1H} &= \sum_{q > \hq} \De^{-1} \nb_\ell P_q[ y_I^j \nb_j v^\ell ] \notag
}
For the high frequency term, we can estimate
\ALI{
\bp_{I,1H} &\leq \sum_{q > \hq} \| \De^{-1} \nb_\ell P_{\approx q} \| \, \co{ P_q[ y_I^j \nb_j v^\ell ] } \\
&\lsm \sum_{q > \hq} 2^{-q} \co{ P_q[ y_I^j \nb_j v^\ell ] } \\
&\lsm \sum_{q > \hq} 2^{-q} 2^{-q} \co{ \nab[ y_I^j \nb_j v^\ell ] } \\
&\lsm \hxi^{-2} ( \co{ \nb y_I } \co{\nb v} + \co{y_I} \co{\nb^2 v} ) \\
&\lsm \hxi^{-2} \Xi^2 e_v^{1/2} e_R^{1/2} \hh(t) \leq e_R \hh(t)
}
In the second line, we used an argument similar to the proof \eqref{eq:logBdOp} to bound the $L^1$ norm of the kernel for the convolution operator, but in this case the scaling gains an inverse of the frequency $2^q$ since the multiplier for the operator is $-1$-homogeneous rather than $0$-homogeneous.

The low frequency term requires more delicate analysis, as we know already that the bound
\ALI{
\co{ \bp_{I,1L} } &\leq \| \De^{-1} \nb_\ell P_{\leq \hq} \nb_j \| \, \co{ y_I^j v^\ell}
}
is not sufficient. 

First we decompose $v$ into high and low frequencies
\ali{
\bp_{I,1L} &= \bp_{I,1LL} + \bp_{I,1LH} \notag \\
\bp_{I,1LL} &= \De^{-1} \nb_\ell P_{\leq \hq} [ y_I^j \nb_j P_{\leq \hq} v^\ell ] \notag \\
\bp_{I,1LH} &= \sum_{q > \hq} \De^{-1} \nb_\ell P_{\leq \hq}[y_I^j \nb_j P_q v^\ell ] \label{eq:bpI1lHdef}
}
Using that $y_I$ is divergence free, we estimate the LH term by
\ali{
\co{\bp_{I,1LH}} &\leq \sum_{q > \hq} \| \De^{-1} \nb_\ell P_{\leq \hq} \nb_j \| ~\co{ y_I P_q v^\ell } \notag \\
&\stackrel{\eqref{eq:logBdOp}}{\lsm} \sum_{q > \hq} (1 + \hq) ( e_R^{1/2} \hh(t) ) (2^{-q} \co{\nb v} ) \notag \\
&\lsm  (1 + \hq) \hxi^{-1} e_R^{1/2} \hh(t) \Xi e_v^{1/2} \notag \\
&\lsm \lhxi e_R \hh(t) \notag
}
For the LL term, we use the following technique closely related to the estimates for pressure increments in \cite{isett2}.  Namely, we decompose this term into {\it frequency increments} of the form 
\ali{
\bp_{I, 1LL} &= \sum_{q = -1}^{\hq - 1} \De^{-1} \nb_\ell P_{q+1} [ y_I^j \nb_j P_{\leq q+1} v^\ell ] + \De^{-1} \nb_\ell P_{\leq q} [ y_I^j \nb_j P_{q+1} v^\ell ] \label{eq:LLfreqinc}\\
&= \sum_{q = -1}^{\hq - 1} \bp_{I,1LqA} + \bp_{I,1LqB} \notag
}
For the first type of term, we use that $P_{\leq q+1} v^\ell$ is restricted to frequencies at most $2^{q + 2}$ to write
\ali{
 \De^{-1} \nb_\ell P_{q+1} [ y_I^j \nb_j P_{\leq q+1} v^\ell ] &= \De^{-1} \nb_\ell P_{q+1} [ P_{\leq q +4} y_I^j \nb_j P_{\leq q+1} v^\ell ] \notag
}
That is, the higher frequencies of $y_I$ cannot contribute to the $P_{q+1}$ projections of the product.

By Proposition~\ref{prop:gotAnAntidiv}, we have that $y_I^j = \nb_i r_I^{ij}$.  By substituting, we then have
\ali{
\co{ \bp_{I,1LqA} } &\leq \| \De^{-1} \nb_\ell P_{q+1} \| \co{P_{\leq q +4} \nb_i r_I^{ij} } \co{\nb_j P_{\leq q+1} v^\ell} \notag \\
&\lsm 2^{-q} \| P_{\leq q+4} \nb_i \| ~ \co{r_I} \co{\nb v } \notag \\
&\lsm 2^{-q} \cdot 2^q \cdot ( \hat{\varep} \hh(t))  \Xi e_v^{1/2} \notag \\
\co{ \bp_{I,1LqA} } &\lsm \plhxi^{-1} e_R \hh(t)  \label{ineq:bpIqlqA}
}
For the second term in \eqref{eq:LLfreqinc}, we use that $\nb_j y_I^j = 0$ and the same frequency restrictions to write
\ALI{
\bp_{I,1LqB} &=  \De^{-1} \nb_\ell P_{\leq q} [ y_I^j \nb_jP_{q+1} v^\ell ] =  \De^{-1} \nb_\ell P_{\leq q} \nb_j[ y_I^j P_{q+1} v^\ell ] \\
&= \De^{-1}  \nb_\ell P_{\leq q} \nb_j [ (P_{\leq q+4} \nb_i r_I^{ij} ) P_{q+1} v^\ell],
}
From this identity, we conclude
\ali{
\co{ \bp_{I,1LqB}} &\leq \| \De^{-1}  \nb_\ell P_{\leq q} \nb_j \| ~\| P_{\leq q+4} \nb_i \| ~\co{ r_I} \co{ P_{q+1} v^\ell } \notag\\
&\lsm (2+q) 2^q \hat{\varep} \hh(t) 2^{-q} \co{ \nb v } \notag\\
&\lsm (2+q) \hat{\varep} \hh(t) \Xi e_v^{1/2} = (2+q) \plhxi^{-1} e_R \hh(t) \label{ineq:bvI1lqb}
}
Substituting these estimates into \eqref{eq:LLfreqinc} gives
\ali{
\co{ \bp_{I, 1LL} } &\lsm \sum_{q = -1}^{\hq-1} ( 2 + q ) \plhxi^{-1} e_R \hh(t) \notag \\
&\lsm (1 + \hq)^2 \plhxi^{-1} e_R \hh(t) \lsm \lhxi e_R \hh(t) \label{eq:bpI1LL}
}
\hltRed{We remark that the above estimates can also be derived without the use of frequency localization of Fourier transforms of products by starting with the terms in a different form such as 
\ALI{
\De^{-1} \nb_\ell P_{q+1}[ y_I^j \nb_j P_{\leq q+1} v^\ell] &= \De^{-1} \nb_\ell P_{q+1}\nb_i [ r_I^{ij} \nb_j P_{\leq q+1} v^\ell] - \De^{-1} \nb_\ell P_{q+1} [ r_I^{ij} \nb_i \nb_j P_{\leq q+1} v^\ell].
}
}
With inequality \eqref{eq:bpI1LL} we have concluded the proof of \eqref{eq:co03press} for $k = 0$.
\end{proof}
We now move on to derivatives of order $1 \leq k \leq 3$.
\begin{proof}[Proof of \eqref{eq:co03press}]  Note that \eqref{eq:co03press} for $k = 1$ follows from the $k = 0, 2$ cases by the interpolation inequality $\co{ \nb f } \lsm \co{f}^{1/2} \co{\nb^2 f}^{1/2}$.  Also, the $k = 3$ case of \eqref{eq:co03press} is the same as the $k = 2$ case of \eqref{eq:34press}, which will be proven shortly.  Thus we consider only the case of $k = 2$ derivatives at this point.

Let $\nb_{a_1,a_2}^2 = \nb_{a_1} \nb_{a_2}$ be a partial derivative operator of order $2$.  We use the same decompositions and the same calculations as in the $k = 0$ case.  For example
\ali{
\nb_{a_1,a_2}^2 \bp_I &= - 2 \nb_{a_1,a_2}^{2} \bp_{I,1} - \nb_{a_1,a_2}^2 \bp_{I,2} - \nb_{a_1,a_2}^2 \bp_{I,3} \notag
}
Taking two derivatives of \eqref{eq:p23L}, we have
\ALI{
\co{ \nb_{a_1,a_2}^2 \bp_{I,2L} } &\lsm (1 + \hq) (\co{\nb^2(y_I^j y_I^\ell) } + \co{\nb^2 R } \\
&\lsm \lhxi ( \Xi^2 e_R \hh(t)^2 + \Xi^2 e_R ) \\
&\lsm \lhxi \Xi^2 e_R (1 + \hh(t))^2
}
Similarly for \eqref{eq:p23H} one has
\ALI{
\co{ \nb_{a_1,a_2}^2 \bp_{I,2H} } &\lsm \sum_{q > \hq} 2^{-\a q} \cda{\nb^2(y_I^j y_I^\ell) + \nb^2 R^{j\ell}} \\
\lsm \hxi^{-\a} (&\cda{\nb^2 y_I} \co{y_I} + \cda{\nb y_I} \co{\nb y_I} + \cda{y_I} \co{\nb^2 y_I} + \cda{\nb^2 R^{j\ell} }  ) \\
\lsm \hxi^{-\a} (&\hxi^\a \Xi^2 e_R \hh(t)^2 + \hxi^\a \Xi^2 e_R ) \\
&\lsm \Xi^2 e_R (1 + \hh(t))^2
}
The point in the above inequality is that the estimates for $y_I$ and $R$ do not start to see factors of $\hxi$ or $\nhat \Xi$ until going beyond the second derivative.  Above we interpolate between the $|a| = 2, 3$ cases of \eqref{eq:R1Est} to bound $\cda{\nb^2 R^{j\ell} }$.

Applying $\nb_{a_1,a_2}^2 = \nb_{a_1} \nb_{a_2}$ to \eqref{eq:bpI1lHdef} and commuting the partial derivative through gives
\ali{
\co{\nb^2 \bp_{I,1LH}} &\leq \sum_{q > \hq} \| \De^{-1} \nb_\ell P_{\leq \hq} \nb_j \| ~\co{ \nb^2( y_I P_q v^\ell ) } \notag \\
&\lsm (1 + \hq) \sum_{q > \hq} \left( \sum_{a + b = 2}\co{\nb^a y_I} \co{ \nb^b P_q v } \right) \notag \\
&\lsm (1 + \hq) \sum_{q > \hq} 2^{-q}\left( \sum_{a + b = 2} \co{\nb^a y_I} \co{\nb^{b + 1} v} \right) \notag \\
&\lsm (1 + \hq) \left(\sum_{q > \hq} 2^{-q}\right)  \sum_{a + b = 2} \Xi^a e_R^{1/2} \hh(t) \Xi^{(b+1)} e_v^{1/2} \notag \\
&\lsm (1 + \hq) \hxi^{-1} \Xi^3 e_R^{1/2} \hh(t) e_v^{1/2} \notag \\
\co{\nb^2 \bp_{I,1LH}} &\lsm \lhxi \Xi^2 e_R \hh(t) \notag
}
Here we use that no powers of $\nhat$ appear in the estimates \eqref{eq:v1Est} for $v$ until after the third derivative.  Similarly, we obtain
\ALI{
\co{ \nb^2 \bp_{I,1LqA} } &\lsm \plhxi^{-1} \Xi^2 e_R \hh(t)  \\
\co{ \nb^2 \bp_{I,1LqB} } &\lsm (2+q) \plhxi^{-1} \Xi^2 e_R \hh(t)  \\
\co{\nb^2 \bp_{I,1LL} } &\lsm  \lhxi \Xi^2 e_R \hh(t)
}
by applying $\nb_{a_1}\nb_{a_2}$ to the formulas that led to the proof of the bounds \eqref{ineq:bpIqlqA}, \eqref{ineq:bvI1lqb}, \eqref{eq:bpI1LL}, and noting that each derivative costs a factor $|\nb| \lsm \Xi$, $|\nb|^2 \lsm \Xi^2$ in the estimates, since we do not consider derivatives beyond $\co{\nb^2 r_I}, \co{\nb^2 y_I}, \co{\nb^3 v}$.  (See also Section~\ref{sec:rhoIests} below, where an analogous term is treated in detail.)
\end{proof}
\begin{proof}[Proof of \eqref{eq:canbkpress}]  This inequality follows by interpolating \eqref{eq:co03press} and \eqref{eq:34press}.
\end{proof}
\begin{proof}[Proof of \eqref{eq:34press}]  Let $\nb_{\va}$ be a partial derivative operator of order $k + 1$; i.e. $\nb_{\va} = \nb_{a_1} \nb_{a_2} \nb_{a_3}$ if $k = 2$ or $\nb_{\va} = \nb_{a_1} \nb_{a_2} \nb_{a_3} \nb_{a_4}$ if $k = 3$.  Recall the decomposition $\bar{p}_I = - 2 \bp_{I,1} - \bp_{I,2} - \bp_{I,3}$ in \eqref{eq:trickyPressTerm}-\eqref{eq:OkPressTerms}.  Using the divergence free condition on $y_I$ and $v$, we have
\ALI{
\bp_{I,1} &= \De^{-1}( \nb_\ell y_I^j \nb_j v^\ell ) 
}
Write $\nb_{\va} = \nb_{a_1} \nb_{a_2} \nb_{\chka}$ where $\nb_{\chka}$ is a lower order partial derivative operator of order $|\chka| = k - 1 \geq 1$.

Using the decomposition of \eqref{eq:bpI1decomp}, the low frequency part can be bounded by
\ali{
\co{ \nb_{\va} \bp_{I,1L} } &\leq \| \nb_{a_1} \nb_{a_2} \De^{-1} P_{\leq \hq } \| ~ \co{ \nb_{\chka}[ \nb_\ell y_I^j \nb_j v^\ell ] } \notag\\
&\lsm ( 1 + \hq ) ~ \sum_{\b + \ga = k-1} \co{\nb^{1+ \b} y_I} \co{\nb^{1 + \ga} v} \\
&\lsm ( 1 + \hq ) \sum_{\b + \ga = k-1} (\nhat^{(\b - 1)_+}\Xi^{1+\b} e_R^{1/2} \hh(t) ) (\nhat^{(\ga - 2)_+} \Xi^{1+\ga} e_v^{1/2} ) \notag \\
&\lsm \lhxi \nhat^{(k-2)_+} \Xi e_v^{1/2} \Xi^k e_R^{1/2} \hh(t) \label{ineq:lowDerivOp}
}
(which is enough for \eqref{eq:34press}).  Meanwhile, the high frequency part can be bounded by
\ali{
\nb_{\va} \bp_{I,1H} &= \sum_{q > \bar{q} } \nb_{a_1} \nb_{a_2} \De^{-1} \nb_{\chka} P_q  ( \nb_\ell y_I^j \nb_j v^\ell ) \notag \\
&= \sum_{q > \hq } \nb_{a_1} \nb_{a_2} \De^{-1} P_{\approx q}  P_q \nb_{\chka} ( \nb_\ell y_I^j \nb_j v^\ell  ) \notag \\
\co{\nb_{\va} \bp_{I,1H}} &\leq \sum_{q > \hq} \| \nb_{a_1} \nb_{a_2} \De^{-1} P_{\approx q} \| ~ \co{ P_q \nb_{\chka} ( \nb_\ell y_I^j \nb_j v^\ell )} \notag \\
&\lsm \sum_{q > \hq} 2^{-\a q} \cda{ \nb_{\chka} [\nb_\ell y_I^j \nb_j v^\ell] } \notag \\
&\lsm \hxi^{-\a} \sum_{|\va_1| + |\va_2| = k-1} \cda{ \nb_{\va_1} \nb_\ell y_I^j \nb_{\va_2} \nb_j v^\ell } \notag \\
&\lsm \hxi^{-\a} \sum_{|\va_1| + |\va_2| = k-1 } ( \cda{\nb_{\va_1} \nb_\ell y_I^j }\co{\nb_{\va_2} \nb_j v^\ell } + \co{\nb_{\va_1} \nb_\ell y_I^j}\cda{\nb_{\va_2} \nb_j v^\ell} ) \notag \\
&\lsm \hxi^{-\a} \left( \sum_{|\va_1| + |\va_2| = k-1 } \hxi^\a [\nhat^{(|\va_1| - 1)_+} \Xi^{1 + |\va_1|} e_R^{1/2} \hh(t) ][\nhat^{(|\va_2|-2)_+} \Xi^{1+|\va_2|} e_v^{1/2}] \right) \notag\\
&\lsm \nhat^{(k-2)_+} \Xi e_v^{1/2} \Xi^k e_R^{1/2} \hh(t) \label{eq:cpI1H}
}
Performing the analogous computations for $\bp_{I,2} = \De^{-1}( \nb_\ell y_I^j \nb_j y_I^\ell ) = \bp_{I,2L} + \bp_{I,2H}$ gives
\ALI{
\co{\nb_{\va} \bp_{I,2L}} &\lsm (1 + \hq) \sum_{\b + \ga = k-1} \co{\nb^{1+ \b} y_I} \co{\nb^{1 + \ga} y_I} \\
&\lsm (1 + \hq) \sum_{\b + \ga = k-1} (\nhat^{(\b - 1)_+}\Xi^{1+\b} e_R^{1/2} \hh(t) ) (\nhat^{(\ga - 1)_+}\Xi^{1+\ga} e_R^{1/2} \hh(t) ) \\
&\stackrel{\eqref{ineq:counting}}{\lsm} \lhxi \nhat^{(k-2)_+} \Xi e_R^{1/2} \Xi^k e_R^{1/2} \hh(t)^2 \\
\co{\nb_{\va} \bp_{I,2H}} &\lsm \sum_{q > \hq} 2^{-\a q} \cda{ \nb_{\chka} [\nb_\ell y_I^j \nb_j y_I^\ell] } \\
&\lsm \hxi^{-\a} \left( \sum_{\ga + \b = k-1 } \hxi^\a [\nhat^{(\ga - 1)_+} \Xi^{1 + \ga} e_R^{1/2} \hh(t)] [\nhat^{(\b - 1)_+} \Xi^{1 + \b} e_R^{1/2} \hh(t)] \right) \\
&\stackrel{\eqref{ineq:counting}}{\lsm} \nhat^{(k-2)_+} \Xi e_R^{1/2} \Xi^k e_R^{1/2} \hh(t)^2
}
As for $\bp_{I,3} = \De^{-1} \nb_j \nb_\ell R^{j\ell} = \bp_{I,3L} + \bp_{I,3H}$, we have the bounds (recalling that $\nhat = (e_v/e_R)^{1/2}$)
\ALI{
\co{\nb_{\va} \bp_{I,3L}} &\leq \|  \nb_{a_1} \nb_{a_2}\De^{-1} P_{\leq \hq} \| \co{ \nb_{\chka}\nb_j \nb_\ell R^{j\ell} } \\
&\lsm (1 + \hq) \co{ \nb^{(k+1)} R } \\
&\lsm (1 + \hq) \nhat^{(k-1)_+} \Xi^{k+1} e_R \lsm \lhxi  \Xi e_v^{1/2} \nhat^{(k-2)_+} \Xi^k e_R^{1/2} \\
\co{\nb_{\va}\bp_{I,3H}} &\leq \sum_{q > \hq} \|  \nb_{a_1} \nb_{a_2}\De^{-1} P_{\approx q} \| \co{ P_q \nb_{\chka} \nb_j \nb_\ell R^{j\ell} } \\
&\lsm \sum_{q > \hq} 2^{-\a q} \cda{\nb_{\chka} \nb_j \nb_\ell R^{j\ell}} \\
&\lsm \hxi^{-\a} \hxi^\a \nhat^{(k-1)_+} \Xi^{k+1} e_R \leq \Xi e_v^{1/2} \nhat^{(k-2)_+} \Xi^k e_R^{1/2}
}
This bound completes the proof of \eqref{eq:34press}.
\end{proof}
\begin{proof}[Proof of \eqref{eq:34capress}]  The $k = 2$ case follows by interpolation from \eqref{eq:34press}, so we need only consider the case $k = 3$.   By the Littlewood-Paley characterization of $\dot{C}^\a$ (Proposition~\ref{prop:LPhold}), it suffices to show that for all partial derivative operators $\nb_{\va}$ of order $|\va| = 4$ and all $q \in \Z$, we have
\ali{
\co{ P_q \nb_{\va} \bp_I } &\lsm 2^{-\a q} \hxi^\a \nhat (\Xi e_v^{1/2}) \lhxi  \Xi^{3} e_R^{1/2} ( 1 + \hh(t))^2 \label{eq:goalBound}
}
We use the same decomposition~\eqref{eq:trickyPressTerm}-\eqref{eq:OkPressTerms} as in the proof of \eqref{eq:34press} above.  Again writing $\nb_{\va} = \nb_{a_1} \nb_{a_2} \nb_{\chka}$,
\ALI{
P_q \nb_{\va} \bp_{I,1} &= \De^{-1}P_q \nb_{\va}( \nb_\ell y_I^j \nb_j v^\ell ) \\
&= \De^{-1} \nb_{a_1} \nb_{a_2} P_{\approx q} P_q \nb_{\chka} [ \nb_\ell y_I^j \nb_j v^\ell ] \\
\co{P_q \nb_{\va} \bp_{I,1}} &\lsm \| \De^{-1} \nb_{a_1} \nb_{a_2} P_{\approx q} \| ~\co{P_q \nb_{\chka} [ \nb_\ell y_I^j \nb_j v^\ell ] } \\
&\lsm 2^{-\a q} \cda{\nb_{\chka} [ \nb_\ell y_I^j \nb_j v^\ell ] } \\
&\lsm 2^{-\a q}\sum_{|\vcb| + |\vcc| = 2} \cda{\nb_{\vcb} \nb_\ell y_I^j \nb_{\vcc} \nb_j v^\ell }
}
\ALI{
\co{P_q \nb_{\va} \bp_{I,1}}&\lsm 2^{-\a q} \sum_{|\vcb| + |\vcc| = 2} (\cda{ \nb_{\vcb} \nb_\ell y_I^j} \co{\nb_{\vcc} \nb_j v^\ell} + \co{\nb_{\vcb} \nb_\ell y_I^j} \cda{\nb_{\vcc} \nb_j v^\ell} ) \\
&\lsm 2^{-\a q} \sum_{|\vcb| + |\vcc| = 2} \hxi^\a [\nhat^{(|\vcb| - 1)_+} \Xi^{(1+|\vcb|)} e_R^{1/2} \hh(t) ] [\nhat^{(|\vcc| - 2)_+} \Xi^{(1+|\vcc|)} e_v^{1/2}] \\
&\lsm 2^{-\a q} \hxi^\a \nhat \Xi e_v^{1/2} \Xi^3 e_R^{1/2} \hh(t) 
}
Performing the same computation for $P_q \nb_{\va} \bp_{I,2} = \De^{-1}P_q \nb_{\va}( \nb_\ell y_I^j \nb_j y_I^\ell )$ gives
\ALI{
\co{P_q \nb_{\va} \bp_{I,2}} &\lsm 2^{-\a q} \sum_{|\vcb| + |\vcc| = 2} (\cda{ \nb_{\vcb} \nb_\ell y_I^j} \co{\nb_{\vcc} \nb_j y_I^\ell} + \co{\nb_{\vcb} \nb_\ell y_I^j} \cda{\nb_{\vcc} \nb_j y_I^\ell} \\
&\lsm 2^{-\a q} \sum_{\ga + \b = 2} \hxi^\a [\nhat^{(\ga - 1)_+} \Xi^{(1+\ga)} e_R^{1/2} \hh(t) ][\nhat^{(\b - 1)_+} \Xi^{(1+\b)} e_R^{1/2} \hh(t) ] \\
&\lsm 2^{-\a q} \hxi^\a \nhat \Xi e_R^{1/2} \Xi^3 e_R^{1/2} \hh(t)^2
}
Similarly, we have 
\ALI{
P_q \nb_{\va} \bp_{I,3} &= \De^{-1} \nb_{a_1} \nb_{a_2} P_{\approx q} P_q \nb_{\chka} \pr_j \pr_\ell R^{j\ell} \\
\co{P_q \nb_{\va} \bp_{I,3} } &\lsm \co{ P_q \nb_{\chka} \pr_j \pr_\ell R^{j\ell} } \\
&\lsm 2^{-\a q} \cda{ \nb^4 R } \\
&\lsm 2^{-\a q} \hxi^\a ( \nhat^2 \Xi^4 e_R ) = 2^{-\a q} \hxi^\a \nhat \Xi e_v^{1/2} \Xi^3 e_R^{1/2}  
}
Combining these three estimates gives \eqref{eq:goalBound} and hence \eqref{eq:34capress}.
\end{proof}
We have now proven Proposition~\ref{prop:pressEstimates}.  With estimates for the pressure in hand, we turn to the evolution equations \eqref{eq:eulerIncIeqn},\eqref{eq:zItrans},\eqref{eq:rhoItrans} for $y_I$, $z_I$ and $\rho_I$.

\subsection{Estimates for the Velocity Increment}
The main result of this Section is the following array of estimates for the velocity increment $y_I^\ell$.
\begin{prop}[Velocity Estimates] \label{prop:velocEstimates}  For all $t \in \wtld{J}_I$ and for all multi-indices $\va$ with $0 \leq |\va| \leq 3$ and for all $q \in \Z$, the following estimates hold
\ali{
\co{ \nb_{\va} y_I(t) } &\lsm \nhat^{(|\va| - 2)_+}\Xi^{|\va|} e_R^{1/2} \lhxi \Xi e_v^{1/2} \left| \int_{t_0(I)}^t ( 1 + \hh(\tau))^2 d\tau \right| \label{eq:cointbdyI} \\
\hxi^{-\a} \cda{ \nb^3 y_I(t) } &\lsm \nhat \Xi^3 e_R^{1/2} \lhxi \Xi e_v^{1/2} \left| \int_{t_0(I)}^t ( 1 + \hh(\tau))^2 d\tau \right| \label{eq:cdaIntbdyI} \\
\co{ (\pr_t + v^j \nb_j + y_I^j \nb_j) \nb_{\va} y_I(t) } &\lsm \nhat^{(|\va| - 2)_+} \Xi^{|\va|} e_R^{1/2} \lhxi \Xi e_v^{1/2} ( 1 + \hh(t))^2  \label{eq:coTransBdyI}\\
\co{ (\pr_t + v^j \nb_j + y_I^j \nb_j) P_q \nb_{\va} y_I(t) } &\lsm 2^{-\a q} \hxi^\a \nhat \Xi^{3} e_R^{1/2} \lhxi \Xi e_v^{1/2} ( 1 + \hh(t))^2, \quad \tx{ if } |\va| = 3 \label{eq:LPtransBdyI}
}
\end{prop}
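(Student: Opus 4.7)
The plan is to derive all four estimates from the evolution equation \eqref{eq:eulerIncIeqn}, which together with $\nb_j y_I^j = 0$ can be rewritten in the transport form
\[
D_t y_I^\ell = -y_I^j \nb_j v^\ell - \nb^\ell \bp_I - \nb_j R^{j\ell},
\]
where $D_t := \pr_t + v^j \nb_j + y_I^j \nb_j = \pr_t + u_I^j \nb_j$ is the material derivative along the smooth, divergence-free flow $u_I = v + y_I$ from Section~\ref{sec:exist}. The estimates \eqref{eq:coTransBdyI}--\eqref{eq:LPtransBdyI} are the ``hard'' estimates: they bound $D_t$ of derivatives of $y_I$. The estimates \eqref{eq:cointbdyI}--\eqref{eq:cdaIntbdyI} will then follow cheaply by integration along the characteristics of $u_I$, since $y_I(t_0(I), x) = 0$ implies $\nb_{\va} y_I(t_0(I), x) = 0$ for every $\va$, hence $P_q \nb_{\va} y_I(t_0(I), x) = 0$.

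To prove \eqref{eq:coTransBdyI}, I apply $\nb_{\va}$ to the displayed equation and use the commutator identity $[D_t, \nb_a] = -(\nb_a u_I^j)\nb_j$ to rewrite $D_t \nb_{\va} y_I^\ell$ as $\nb_{\va}(\tx{RHS}) + $ commutator terms involving $\nb^{1+|\vcb|} u_I \otimes \nb^{1 + |\vcc|} y_I$ with $|\vcb| + |\vcc| = |\va| - 1$. Each term is then bounded by combining the Definition~\ref{defn:hNorm} bounds \eqref{eq:theEstimateshh} on $y_I$, the frequency-energy bounds \eqref{eq:v1Est} on $v$, the pressure bounds \eqref{eq:co03press}, \eqref{eq:34press} from Proposition~\ref{prop:pressEstimates}, and the assumed bounds on $R$; the counting inequality \eqref{ineq:counting} handles the bookkeeping of $\nhat^{(\cdot)_+}$ factors, and the worst contribution in every case is of size $\nhat^{(|\va|-2)_+} \Xi^{|\va|} \lhxi \Xi e_v^{1/2} e_R^{1/2}(1+\hh)^2$, matching the target.

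To prove \eqref{eq:LPtransBdyI}, I apply $P_q \nb_{\va}$ to the transport equation and split
\[
D_t P_q \nb_{\va} y_I^\ell = P_q D_t \nb_{\va} y_I^\ell + \bigl( u_I \cdot \nab\, P_q \nb_{\va} y_I^\ell - P_q(u_I \cdot \nab\, \nb_{\va} y_I^\ell) \bigr).
\]
The first piece is controlled using the $\dot{C}^\a$ versions of the product rule expansions above, combined with the $\dot{C}^\a$ pressure bound \eqref{eq:34capress} and the Littlewood--Paley characterization \eqref{eq:LPhold}, yielding a bound of size $2^{-\a q}\hxi^\a \cdot (\tx{target})$. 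The second piece is exactly the Littlewood--Paley commutator of Proposition~\ref{prop:LPcommutunab}, which contributes $\lsm 2^{-\a q}\co{\nab u_I}\, \cda{\nb_{\va} y_I}$; using $\co{\nab u_I} \lsm \Xi e_v^{1/2} + \Xi e_R^{1/2} \hh \lsm \Xi e_v^{1/2}(1+\hh)$ and \eqref{eq:theEstimateshh} on $\cda{\nb^3 y_I}$ gives a contribution of the same form.

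Estimates \eqref{eq:cointbdyI} and \eqref{eq:cdaIntbdyI} now follow by integration along the flow $\Phi_{t_0(I) \to t}$ of $u_I$: for any $\nb_{\va} y_I$ (or $P_q \nb_{\va} y_I$) vanishing at $t_0(I)$,
\[
(\nb_{\va} y_I)(t, \Phi_{t_0(I) \to t}(x)) = \int_{t_0(I)}^t (D_\tau \nb_{\va} y_I)(\tau, \Phi_{t_0(I) \to \tau}(x))\, d\tau,
\]
so taking $C^0$ norms in $x$ and applying \eqref{eq:coTransBdyI} yields \eqref{eq:cointbdyI}, and applying \eqref{eq:LPtransBdyI} to each $P_q \nb^3 y_I$ followed by Proposition~\ref{prop:LPhold} yields \eqref{eq:cdaIntbdyI}. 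The main obstacle is the $\dot{C}^\a$ transport estimate \eqref{eq:LPtransBdyI}, where one must simultaneously control the Littlewood--Paley commutator (requiring one full derivative on $u_I$) and produce the correct powers of $\hxi^\a \nhat$ without losing in the low/high frequency decomposition of the pressure; this is precisely where Propositions~\ref{prop:LPhold} and \ref{prop:LPcommutunab} together with the sharper pressure bound \eqref{eq:34capress} become essential.
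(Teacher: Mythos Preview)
Your proposal is correct and follows essentially the same approach as the paper: derive the transport-derivative bounds \eqref{eq:coTransBdyI} and \eqref{eq:LPtransBdyI} by differentiating the evolution equation for $y_I$, handling commutator and forcing terms via the pressure estimates of Proposition~\ref{prop:pressEstimates}, the bounds \eqref{eq:theEstimateshh}, and the counting inequality, with the Littlewood--Paley commutator of Proposition~\ref{prop:LPcommutunab} supplying the extra term in the $P_q$ version; then deduce \eqref{eq:cointbdyI} and \eqref{eq:cdaIntbdyI} by integrating along the flow of $u_I$ (Proposition~\ref{prop:maxTransport}) and invoking Proposition~\ref{prop:LPhold}. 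The organization and key inputs match the paper exactly.
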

Observe that \eqref{eq:coTransBdyI} implies \eqref{eq:cointbdyI} by the standard estimates for transport equations (Appendix Proposition~\ref{prop:maxTransport}).  Similarly, \eqref{eq:cdaIntbdyI} follows from \eqref{eq:LPtransBdyI} and Proposition~\ref{prop:maxTransport} using Proposition~\ref{prop:LPhold}.

We write the evolution equation \eqref{eq:eulerIncIeqn} (using $\nb_j y_I^j = 0$) for $y_I^\ell$ in the form
\ali{
(\pr_t + u_I^j \nb_j) y_I^\ell &= - y_I^j \nb_j v^\ell - \nb^\ell \bar{p}_I - \nb_j R^{j\ell} \label{eq:yIevol}
}
The vector field $u_I^\ell = v^\ell + y_I^\ell$ appearing above satisfies the estimates
\ali{
\co{ \nb_{\va} u_I } + \hxi^{-\a} \cda{\nb_{\va} u_I} &\lsm \nhat^{(|\va| - 3)_+} \Xi^{|\va|} e_v^{1/2} (1+\hh(t)), \quad 1 \leq |\va| \leq 3 \label{eq:uIbds}
}
We now prove the estimate \eqref{eq:coTransBdyI}.
\begin{proof}[Proof of \eqref{eq:coTransBdyI}]  Applying $\nb_{\va}$ to \eqref{eq:yIevol} and commuting gives an equation
\ali{
(\pr_t + u_I^j \nb_j) \nb_{\va} y_I^\ell &= \sum_{|\vcb| + |\vcc| = |\va|} c_{\va,\vcb,\vcc} \nb_{\vcb} u_I^j \nb_{\vcc} \nb_j y_I^\ell \cdot 1_{|\vcb| \geq 1} \label{eq:commutYIterm} \\
&- \nb_{\va} [ y_I^j \nb_j v^\ell ] - \nb_{\va} \nb^\ell \bp_I - \nb_{\va} \nb_j R^{j\ell}  \label{eq:forcingTermsYIeqn}
}
The commutator term in line \eqref{eq:commutYIterm} is bounded by (using $|\vcc| = |\va| - |\vcb| \leq |\va| - 1$)
\ali{
\co{\eqref{eq:commutYIterm}} &\stackrel{\eqref{eq:uIbds}}{\lsm} \sum_{|\vcb| + |\vcc| = |\va|} [\nhat^{(|\vcb| - 3)_+} \Xi^{|\vcb|} e_v^{1/2}(1+\hh(t)) ] [\nhat^{(|\vcc| - 1)_+}\Xi^{(|\vcc| + 1)} e_R^{1/2} \hh(t) ] 1_{|\vcb| \geq 1} \notag \\
&\lsm \sum_{|\vcb| + |\vcc| = |\va|} \left[\nhat^{(|\vcb| - 1 - 2)_+} \nhat^{(|\vcc| + 1 - 2)_+} 1_{|\vcb| \geq 1} \right] \Xi e_v^{1/2} \Xi^{|\va|} e_R^{1/2} (1 + \hh(t))^2 \notag \\
&\stackrel{\eqref{ineq:counting}}{\lsm} \nhat^{(|\va| - 2)_+} \Xi e_v^{1/2} \Xi^{|\va|} e_R^{1/2} (1 + \hh(t))^2 \label{eq:commutYIbdfirst}
}
The first term in line \eqref{eq:forcingTermsYIeqn} is bounded by
\ali{
\co{\nb_{\va} [ y_I^j \nb_j v^\ell ]} &\lsm \sum_{|\vcb| + |\vcc| = |\va|} \co{\nb_{\vcb} y_I^j } \co{ \nb_{\vcc} \nb_j v^\ell } \notag\\
&\lsm \sum_{|\vcb| + |\vcc| = |\va|} \left[\nhat^{(|\vcb| - 2)_+} \Xi^{|\vcb|} e_R^{1/2} \hh(t) \right] \left[ \nhat^{(|\vcc| + 1 - 3)_+} \Xi^{|\vcc| + 1} e_v^{1/2} \right] \notag \\
&\lsm \nhat^{(|\va| - 2)_+} \Xi e_v^{1/2} \Xi^{|\va|} e_R^{1/2}\hh(t) \label{eq:carryingyIestimate}
}
The other terms in \eqref{eq:forcingTermsYIeqn} are bounded by 
\ali{
\co{\nb_{\va} \nb^\ell \bp_I} &\stackrel{\eqref{eq:co03press},\eqref{eq:34press}}{\lsm} \lhxi \Xi e_v^{1/2} \nhat^{(|\va| - 2)_+} \Xi^{|\va|} e_R^{1/2} (1 + \hh(t))^2 \notag \\
\co{\nb_{\va} \nb_j R^{j\ell} } &\lsm \nhat^{(|\va| + 1 - 2)_+} \Xi^{|\va| + 1} e_R \leq \nhat^{(|\va| - 2)_+} \Xi e_v^{1/2} \Xi^{|\va|} e_R^{1/2} \notag
}
\end{proof}
\begin{proof}[Proof of \eqref{eq:LPtransBdyI}]  For $|\va| = 3$, we apply $P_q$ to \eqref{eq:commutYIterm}-\eqref{eq:forcingTermsYIeqn} to obtain
\ali{
(\pr_t + u_I^j \nb_j) P_q \nb_{\va} y_I^\ell &= u_I^j \nb_j P_q \nb_{\va} y_I^\ell - P_q[u_I^j \nb_j \nb_{\va} y_I^\ell ] \label{eq:LPCommutTermyI} \\
&+ \sum_{|\vcb| + |\vcc| = |\va|} c_{\va,\vcb,\vcc} P_q [\nb_{\vcb} u_I^j \nb_{\vcc} \nb_j y_I^\ell] \cdot 1_{|\vcb| \geq 1} \label{eq:PqcommutyI} \\
&- P_q\nb_{\va} [ y_I^j \nb_j v^\ell ] - P_q \nb_{\va} \nb^\ell \bp_I - P_q\nb_{\va} \nb_j R^{j\ell} \label{eq:PqCommutForceyI}
}
By Proposition~\ref{prop:LPcommutunab}, the commutator term in line \eqref{eq:LPCommutTermyI} obeys the bound 
\ALI{
\co{ \eqref{eq:LPCommutTermyI} } &\lsm 2^{-\a q} \co{\nb u_I} \cda{ \nb_{\va} y_I^\ell } \\
&\stackrel{\eqref{eq:uIbds}}{\lsm} 2^{-\a q} [\Xi e_v^{1/2} ( 1 + \hh(t) ) ] [\hxi^\a \nhat \Xi^{|\va|} e_R^{1/2} \hh(t) ] \\
&\lsm 2^{-\a q} \hxi^\a \nhat \Xi e_v^{1/2} \Xi^{3} e_R^{1/2} (1+\hh(t))^2 
}
Following the proof of \eqref{eq:commutYIbdfirst}, the term \eqref{eq:PqcommutyI} obeys (for $|\va| = 3$)
\ALI{
\co{\eqref{eq:PqcommutyI}} &\lsm \sum_{|\vcb| + |\vcc| = |\va|} 2^{-\a q} \cda{\nb_{\vcb} u_I^j \nb_{\vcc} \nb_j y_I^\ell } 1_{|\vcb| \geq 1}  \\
&\lsm 2^{-\a q} \sum_{|\vcb| + |\vcc| = |\va|} (\cda{\nb_{\vcb} u_I^j } \co{ \nb_{\vcc} \nb_j y_I^\ell } + \co{\nb_{\vcb} u_I^j} \cda{\nb_{\vcc} \nb_j y_I^\ell }) 1_{|\vcb| \geq 1} \\
&\lsm 2^{-\a q} \sum_{|\vcb| + |\vcc| = |\va|} \hxi^\a [\nhat^{(|\vcb| - 3)_+} \Xi^{|\vcb|} e_v^{1/2}(1+\hh(t)) ] [\nhat^{(|\vcc| - 1)_+}\Xi^{(|\vcc| + 1)} e_R^{1/2} \hh(t) ] 1_{|\vcb| \geq 1} \\
&\lsm 2^{-\a q} \hxi^\a \nhat \Xi e_v^{1/2} \Xi^{3} e_R^{1/2} (1 + \hh(t))^2.
}
Modifying the proof of \eqref{eq:carryingyIestimate} as in the previous computation gives the bound 
\ALI{
\co{P_q\nb_{\va} [ y_I^j \nb_j v^\ell ]} &\lsm 2^{-\a q} \hxi^\a (\nhat^{(3 - 2)_+} \Xi e_v^{1/2} \Xi^{|\va|} e_R^{1/2}\hh(t)).
}
The other two terms in \eqref{eq:PqCommutForceyI} are bounded by:
\ALI{
\co{ P_q \nb_{\va} \nb^\ell \bp_I } &\stackrel{\eqref{eq:34capress}}{\lsm} 2^{-\a q} \hxi^\a \lhxi \nhat \Xi e_v^{1/2} \Xi^{3} e_R^{1/2} ( 1 + \hh(t))^2  \\
\co{P_q\nb_{\va} \nb_j R^{j\ell}} &\lsm 2^{-\a q} \cda{\nb^4 R } \lsm 2^{-\a q} \hxi^\a \nhat^{(4 - 2)_+} \Xi^4 e_R \\
&\lsm 2^{-\a q} \hxi^\a \nhat \Xi e_v^{1/2} \Xi^3 e_R.
}
This estimate concludes the proof of \eqref{eq:LPtransBdyI}.
\end{proof}
We now proceed to estimate the components of the anti-divergence $r_I^{j\ell} = \rho_I^{j\ell} + z_I^{j\ell}$.


\subsection{Estimates for the Anti-Divergence I: \texorpdfstring{$z_I$}{}}
The main result of this Section and the following one is the following array of estimates for the components $z_I^{j\ell}$ and $\rho_I^{j\ell}$ of the anti-divergence $r_I^{j\ell} = \rho_I^{j\ell} + z_I^{j\ell}$.  We use the notation $\hq$ to denote the integer $\hq \in \Z$ such that $2^{\hq - 1} < \hxi \leq 2^{\hq}$.
\begin{prop}[Anti-Divergence Estimates] \label{prop:antiDivEsts}   For all $t \in \wtld{J}_I$, and for all multi-indices $\va$ with $0 \leq |\va| \leq 3$ and for all $q \in \Z$ with $q > \hq$, the following estimates hold
\ali{
\co{ \nb_{\va} z_I(t) } + \co{ \nb_{\va} \rho_I(t) } &\lsm \nhat^{(|\va| - 2)_+}\Xi^{|\va|} \hat{\varep} \plhxi^2 \Xi e_v^{1/2} \left| \int_{t_0(I)}^t ( 1 + \hh(\tau))^2 d\tau \right| \label{eq:cointbdrI} \\
\hxi^{-\a} ( \cda{ \nb^3 z_I(t) } + \cda{ \nb^3 \rho_I(t) } ) &\lsm \nhat \Xi^3 \hat{\varep} \plhxi^2 \Xi e_v^{1/2} \left| \int_{t_0(I)}^t ( 1 + \hh(\tau))^2 d\tau \right| \label{eq:cdaIntbdrI} \\
\co{ D_t \nb_{\va} z_I(t) } + \co{ D_t \nb_{\va} \rho_I(t) } &\lsm \nhat^{(|\va| - 2)_+} \Xi^{|\va|} \hat{\varep} \plhxi^2 \Xi e_v^{1/2} ( 1 + \hh(t))^2  \label{eq:coTransBdrI}\\
\co{ D_t P_q \nb_{\va} z_I(t) } + \co{ D_t P_q \nb_{\va} \rho_I(t) } &\lsm 2^{-\a q} \hxi^\a \nhat \Xi^{3} \hat{\varep} \plhxi^2 \Xi e_v^{1/2} ( 1 + \hh(t))^2, \quad \tx{ if } |\va| = 3 \label{eq:LPtransBdrI}
}
where $D_t$ is the operator $D_t := (\pr_t + v \cdot \nab)$ and $\hat{\varep} = e_R / (\lhxi \, \Xi e_v^{1/2})$.
\end{prop}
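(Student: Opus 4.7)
The plan is to first establish the material-derivative bounds \eqref{eq:coTransBdrI}--\eqref{eq:LPtransBdrI}; the integrated bounds \eqref{eq:cointbdrI}--\eqref{eq:cdaIntbdrI} then follow by the transport estimate of Proposition~\ref{prop:maxTransport} applied to the system with zero initial data at $t(I)$, together with Proposition~\ref{prop:LPhold} to pass from the $P_q$-bound to $\dot{C}^\a$. Observe the identity $\varep_R\,\plhxi^2\,\Xi e_v^{1/2}=\lhxi\,e_R$, so the target size for $D_t\nb_{\va}z_I$ and $D_t\nb_{\va}\rho_I$ is exactly that of the pressure bound \eqref{eq:co03press}: the pressure sets the overall rate, and the logarithmic factor $\plhxi^2$ must arise from the same structural mechanism as in the proof of Proposition~\ref{prop:pressEstimates}.

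For $z_I$, from \eqref{eq:zItrans} one has $D_tz_I^{j\ell}=-y_I^jy_I^\ell-\bp_I\de^{j\ell}-R^{j\ell}$, and commuting $\nb_{\va}$ through $D_t$ gives
\begin{align*}
D_t\nb_{\va}z_I^{j\ell}=\sum_{\substack{|\vcb|+|\vcc|=|\va|\\ |\vcb|\geq 1}}c_{\va,\vcb,\vcc}\,\nb_{\vcb}v^i\,\nb_i\nb_{\vcc}z_I^{j\ell}-\nb_{\va}(y_I^jy_I^\ell+\bp_I\de^{j\ell}+R^{j\ell}).
\end{align*}
The commutator is bounded by pairing \eqref{eq:v1Est} with the $\hh(t)$-bound \eqref{eq:theEstimateshh} on $z_I$ and applying the counting inequality \eqref{ineq:counting} to distribute $\nhat$-factors. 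The three forcing terms are controlled respectively by \eqref{eq:theEstimateshh} for the quadratic piece, by Proposition~\ref{prop:pressEstimates} for the pressure (the dominant piece, supplying the $\lhxi$ factor), and by the frequency-energy bound \eqref{eq:R1Est} for $R$. The $P_q$-version \eqref{eq:LPtransBdrI} is obtained identically after applying $P_q$, invoking Proposition~\ref{prop:LPcommutunab} on $[v^i\nb_i,P_q]\nb_{\va}z_I$, and substituting the $\dot{C}^\a$-analogues \eqref{eq:canbkpress}--\eqref{eq:34capress} for the forcing bounds.

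For $\rho_I$, the equation \eqref{eq:rhoItrans} has right-hand side $\RR^{j\ell}[\nb_av^i\nb_i(\rho_I^{ab}+z_I^{ab})-y_I^i\nb_iv^b]$. Since $v$, $\rho_I$, $z_I$ are divergence-free in the appropriate index, each argument of $\RR^{j\ell}$ factors as $\nb_i(\cdot)^b$, reducing matters to the order-zero operator $\RR^{j\ell}\nb_i$. Splitting this operator at the threshold $\hq$ with $2^{\hq}\sim\hxi$, one obtains the logarithmic $C^0$-bound \eqref{eq:logBdOp} on low frequencies and $\dot{C}^\a$-control on high frequencies via Proposition~\ref{prop:LPhold}, exactly as in \eqref{eq:p23L}--\eqref{eq:p23H}; this immediately disposes of the $\RR^{j\ell}\nb_i[\nb_av^i(\rho_I^{ab}+z_I^{ab})]$ piece with the correct rate $\lhxi\,\Xi e_v^{1/2}\cdot\varep_R\hh(t)$. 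The critical piece $\RR^{j\ell}\nb_i[y_I^iv^b]$ is the main obstacle, and is handled by copying the analysis of $\bp_{I,1}$ in \eqref{eq:trickyPressTerm}--\eqref{eq:bpI1LL}: a further LP split in $v$ isolates a low-low part, decomposed into frequency increments \eqref{eq:LLfreqinc}, within each of which one substitutes $y_I^i=\nb_jr_I^{ji}$ from Proposition~\ref{prop:gotAnAntidiv} to trade a derivative on $y_I$ for one on the frequency-localized $r_I$; summing over $q\leq\hq$ produces the tolerable $\plhxi^2$ logarithmic loss exactly as in \eqref{eq:bpI1LL}. Higher derivatives $\nb_{\va}$ pass through all of these splits, with the $\nhat^{(|\va|-2)_+}$-pattern arising from interpolation at the second derivative of each factor; the $P_q$-version uses Propositions~\ref{prop:LPhold}--\ref{prop:LPcommutunab} together with the H\"older pressure bound \eqref{eq:34capress}.

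The principal difficulty, as signalled in Section~\ref{sec:goodAntiDiv}, is precisely the control of $\RR^{j\ell}[y_I^i\nb_iv^b]$: without the anti-divergence substitution $y_I^i=\nb_jr_I^{ji}$ one would be left with the unacceptable bound \eqref{eq:badBoundRI}. Because $r_I=\rho_I+z_I$ is itself one of the unknowns, the whole estimate must be phrased in terms of the composite norm $\hh(t)$, and the nonlinear closure of the loop is deferred to the Gronwall step in Proposition~\ref{prop:hGrowth}.
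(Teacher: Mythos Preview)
Your proposal is correct and follows essentially the same route as the paper: reduce \eqref{eq:cointbdrI}--\eqref{eq:cdaIntbdrI} to the transport bounds \eqref{eq:coTransBdrI}--\eqref{eq:LPtransBdrI} via Proposition~\ref{prop:maxTransport} and Proposition~\ref{prop:LPhold}, handle $z_I$ by commuting $\nb_{\va}$ through $D_t$ and estimating the forcing (with the pressure term dominant via Proposition~\ref{prop:pressEstimates}), and handle $\rho_I$ by splitting $\RR^{j\ell}$ at frequency $\hq$ and treating the critical piece $\RR^{j\ell}[y_I^i\nb_i v^b]$ exactly as $\bp_{I,1}$ was treated, with the frequency-increment decomposition and the substitution $y_I^i=\nb_a r_I^{ai}$ from Proposition~\ref{prop:gotAnAntidiv}. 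Two small corrections: the divergence structure you invoke comes from $\nb_i v^i=0$ and $\nb_i y_I^i=0$ (not from any divergence-free property of $\rho_I$ or $z_I$), and the initial time is $t_0(I)$ rather than $t(I)$; neither affects the argument.
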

As in the discussion following Proposition~\ref{prop:velocEstimates}, note that \eqref{eq:coTransBdrI} implies \eqref{eq:cointbdrI}.  The bounds \eqref{eq:cointbdrI} and \eqref{eq:LPtransBdrI} imply \eqref{eq:cdaIntbdrI} as follows.  From \eqref{eq:LPtransBdrI} and Appendix Proposition~\ref{prop:maxTransport}, we have
\ALI{
\hxi^{-\a} \sup_{q > \hq} 2^{\a q} (\co{ P_q \nb_{\va} z_I(t) } + \co{ P_q \nb_{\va} \rho_I(t) }) &\lsm \nhat \Xi^3 \hat{\varep} \plhxi^2 \Xi e_v^{1/2} \left| \int_{t_0(I)}^t ( 1 + \hh(\tau))^2 d\tau \right|, \quad |\va| = 3
}
On the other hand, in the range $q \leq \hq$, we have $2^q \leq 2^{\hq} \leq 2 \hxi$, and
\ali{
\hxi^{-\a} \sup_{q \leq \hq} 2^{\a q} (\co{P_q \nb_{\va} z_I(t) } + \co{ P_q \nb_{\va} \rho_I(t) }) &\lsm \co{\nb_{\va} z_I(t) } + \co{\nb_{\va} \rho_I(t) }, \quad |\va| = 3 \notag
}
Thus \eqref{eq:LPtransBdrI} together with \eqref{eq:cointbdrI} imply \eqref{eq:cdaIntbdrI} by the characterization of $\dot{C}^\a$ in Proposition~\ref{prop:LPhold}.

In this Section, we address the estimates \eqref{eq:coTransBdrI} and \eqref{eq:LPtransBdrI} for $z_I$.  The proof of \eqref{eq:coTransBdrI} and \eqref{eq:LPtransBdrI} for $\rho_I$ will be given in Section~\ref{sec:rhoIests} below.
\begin{proof}[Proof of \eqref{eq:coTransBdrI} for $z_I$] Applying $\nb_{\va}$ to \eqref{eq:zItrans} gives
\ali{
(\pr_t + v^i \nb_i) \nb_{\va} z_I^{j\ell} &= \sum_{|\vcb| + |\vcc| = |\va|} c_{\va,\vcb,\vcc} \nb_{\vcb} v^i \nb_{\vcc} \nb_i z_I^{j\ell} \cdot 1_{|\vcb| \geq 1} \label{eq:commutermZI}\\
&- \nb_{\va} [ y_I^j y_I^\ell ] - \nb_{\va} \bar{p}_I \de^{j\ell} - \nb_{\va} R^{j\ell} \label{eq:forceTermszI}
}
The commutator term of line \eqref{eq:commutermZI} is bounded by (using $|\vcc| = |\va| - |\vcb| \leq |\va| - 1$)
\ali{
\co{\eqref{eq:commutermZI}} &\lsm \sum_{|\vcb| + |\vcc| = |\va|} \co{\nb_{\vcb} v^i} \co{\nb_{\vcc} \nb_i z_I^{j\ell} } \cdot 1_{|\vcb| \geq 1} \notag \\
&\lsm \sum_{|\vcb| + |\vcc| = |\va|} \left[ \nhat^{(|\vcb| - 3)_+} \Xi^{|\vcb|} e_v^{1/2} \right] \left[ \nhat^{(|\vcc|-1)_+} \Xi^{|\vcc|+1} \hat{\varep} \hh(t) \right] \cdot 1_{|\vcb| \geq 1} \notag\\
&\lsm  \sum_{|\vcb| + |\vcc| = |\va|}\nhat^{(|\vcb| - 1 - 2)_+} \nhat^{(|\vcc|+ 1-2)_+}1_{|\vcb| \geq 1} [ \Xi e_v^{1/2} \Xi^{|\va|} \hat{\varep} \hh(t) ]  \notag \\
&\stackrel{\eqref{ineq:counting}}{\lsm} \nhat^{(|\va| - 2)_+} \Xi e_v^{1/2} \Xi^{|\va|} \hat{\varep} \hh(t) \label{eq:commutermzIest}
}
Using $e_R = \lhxi \Xi e_v^{1/2} \hat{\varep}$, the first term in \eqref{eq:forceTermszI} is bounded by
\ALI{
\co{\nb_{\va} [ y_I^j y_I^\ell ]} &\lsm \sum_{|\vcb| + |\vcc| = |\va|} \co{\nb_{\vcb} y_I} \co{ \nb_{\vcc} y_I } \\
&\lsm \sum_{|\vcb| + |\vcc| = |\va|} \left[ \nhat^{(|\vcb|-2)_+} \Xi^{|\vcb|} e_R^{1/2} \hh(t) \right] \left[ \nhat^{(|\vcc|-2)_+} \Xi^{|\vcc|} e_R^{1/2} \hh(t) \right] \\
&\lsm \nhat^{(|\va|-2)_+} \Xi^{|\va|} e_R \hh(t)^2 \\
&\lsm \nhat^{(|\va|-2)_+} \Xi^{|\va|} \lhxi \Xi e_v^{1/2} \hat{\varep} \hh(t)^2
}
The other two terms are bounded by
\ALI{
\co{\nb_{\va} \bp_I \de^{j\ell} } &\stackrel{\eqref{eq:co03press}}{\lsm} \nhat^{(|\va|-2)_+}\Xi^{|\va|}\lhxi e_R ( 1 + \hh(t) )^2 \\
&\lsm \nhat^{(|\va|-2)_+}\Xi^{|\va|} \plhxi^2 \Xi e_v^{1/2} \hat{\varep} ( 1 + \hh(t))^2 \\
\co{\nb_{\va} R^{j\ell}} &\lsm \nhat^{(|\va| - 2)_+} \Xi^{|\va|} e_R \\
&\lsm \nhat^{(|\va|-2)_+}\Xi^{|\va|} \lhxi \Xi e_v^{1/2} \hat{\varep}
}
Combining these estimates gives \eqref{eq:coTransBdrI} for $z_I$.
 \end{proof}
\begin{proof}[Proof of \eqref{eq:LPtransBdrI} for $z_I$]  Take $|\va| = 3$ and apply $P_q$ to equation \eqref{eq:commutermZI}-\eqref{eq:forceTermszI} to obtain
\ali{
(\pr_t + v^i \nb_i) P_q \nb_{\va} z_I^{j\ell} &= v^i \nb_i P_q \nb_{\va} z_I - P_q[ v^i \nb_i \nb_{\va} z_I ] \label{eq:LPcommutermzI}\\
&+ \sum_{|\vcb| + |\vcc| = |\va|} c_{\va,\vcb,\vcc} P_q[\nb_{\vcb} v^i \nb_{\vcc} \nb_i z_I^{j\ell}] \cdot 1_{|\vcb| \geq 1} \label{eq:commutermZIPq}\\
&- P_q\nb_{\va} [ y_I^j y_I^\ell ] - P_q\nb_{\va} \bar{p}_I \de^{j\ell} - P_q\nb_{\va} R^{j\ell} \label{eq:forceTermszIPq}
}
We estimate \eqref{eq:LPcommutermzI} using Proposition~\ref{prop:LPcommutunab} by 
\ali{
\co{\eqref{eq:LPcommutermzI}} &\lsm 2^{-\a q}\co{ \nb v } \cda{\nb_{\va} z_I } \notag \\
\co{\eqref{eq:LPcommutermzI}} &\lsm 2^{-\a q} \Xi e_v^{1/2} \hxi^\a \nhat^{(|\va| - 2)_+} \Xi^{|\va|} \hat{\varep} \hh(t) \label{eq:LPcommutermzIbd}
}
We estimate \eqref{eq:commutermZIPq} by 
\ali{
\co{\eqref{eq:commutermZIPq}} &\lsm \sum_{|\vcb| + |\vcc| = |\va|} 2^{-\a q} \cda{\nb_{\vcb} v^i \nb_{\vcc} \nb_i z_I^{j\ell} }\cdot 1_{|\vcb| \geq 1} \notag\\
&\lsm 2^{-\a q}\sum_{|\vcb| + |\vcc| = |\va|} ( \cda{\nb_{\vcb} v^i} \co{\nb_{\vcc} \nb_i z_I^{j\ell}} + \co{\nb_{\vcb} v^i} \cda{\nb_{\vcc} \nb_i z_I^{j\ell}} )\cdot 1_{|\vcb| \geq 1} \notag \\
&\lsm 2^{-\a q} \sum_{|\vcb| + |\vcc| = |\va|} \hxi^\a \left[ \nhat^{(|\vcb| - 1 - 2)_+} \Xi^{|\vcb|} e_v^{1/2} \right] \left[ \nhat^{(|\vcc| +1 - 2)_+} \Xi^{|\vcc| + 1} \hat{\varep} \hh(t) \right]\cdot 1_{|\vcb| \geq 1} \notag \\
\co{\eqref{eq:commutermZIPq}} &\lsm 2^{-\a q} \hxi^\a \nhat^{(|\va|-2)_+} \Xi e_v^{1/2} \Xi^{|\va|} \hat{\varep} \hh(t) \label{eq:commutermZIPqbd}
}
Similarly, the first term of \eqref{eq:forceTermszIPq} is bounded by
\ALI{
\co{P_q\nb_{\va} [ y_I^j y_I^\ell ]} &\lsm 2^{-\a q} \sum_{|\vcb| + |\vcc| = |\va|} \cda{\nb_{\vcb} y_I^j \nb_{\vcc} y_I^{j\ell} } \\
&\lsm 2^{-\a q} \sum_{|\vcb| + |\vcc| = |\va|} \hxi^\a \left[ \nhat^{(|\vcb| - 2)_+} \Xi^{|\vcb|} e_R^{1/2} \hh(t) \right] \left[ \nhat^{(|\vcc| - 2)_+} \Xi^{|\vcc|} e_R^{1/2} \hh(t) \right] \\
&\lsm 2^{-\a q} \hxi^\a \nhat^{(|\va|-2)_+} \Xi^{|\va|} e_R \hh(t)^2 \\
&\lsm 2^{-\a q} \hxi^\a \nhat^{(|\va|-2)_+} \lhxi \Xi e_v^{1/2} \Xi^{|\va|} \hat{\varep} \hh(t)^2 
}
Finally, the latter two terms of \eqref{eq:forceTermszIPq} are bounded by
\ALI{
\co{ P_q\nb_{\va} \bar{p}_I \de^{j\ell}} &\lsm 2^{-\a q} \cda{\nb_{\va} \bar{p}_I} \\
&\stackrel{\eqref{eq:canbkpress}}{\lsm} 2^{-\a q} \hxi^\a \lhxi \nhat^{(|\va| - 2)_+} \Xi^{|\va|} e_R (1 + \hh(t))^2 \\
&\lsm 2^{-\a q} \hxi^\a \plhxi^2 \Xi e_v^{1/2} \nhat^{(|\va| - 2)_+} \Xi^{|\va|} \hat{\varep} (1 + \hh(t))^2 \\
\co{P_q\nb_{\va} R^{j\ell}} &\lsm 2^{-\a q} \cda{ \nb_{\va} R^{j\ell} } \\
&\lsm 2^{-\a q} \hxi^\a \nhat^{(|\va| - 2)_+} \Xi^{|\va|} e_R \\
&\lsm 2^{-\a q} \hxi^\a  \lhxi \Xi e_v^{1/2} \nhat^{(|\va| - 2)_+} \Xi^{|\va|} \hat{\varep}
}
This estimate concludes the proof of \eqref{eq:LPtransBdrI} for $z_I$.
\end{proof}

\subsection{Estimates for the Anti-Divergence II: \texorpdfstring{$\rho_I$}{}} \label{sec:rhoIests}
In this Section, we establish the estimates \eqref{eq:coTransBdrI}-\eqref{eq:LPtransBdrI} for $\rho_I^{j\ell}$.
\begin{proof}[Proof of \eqref{eq:coTransBdrI} for $\rho_I$]  Let $\nb_{\va}$ be a partial derivative operator of order $0 \leq |\va| \leq 3$.  Applying $\nb_{\va}$ to \eqref{eq:rhoItrans}, we obtain 
\ali{
(\pr_t + v^i \nb_i) \nb_{\va} \rho_I^{j\ell} &=  \sum_{|\vcb| + |\vcc| = |\va|} c_{\va,\vcb,\vcc} \nb_{\vcb} v^i \nb_{\vcc} \nb_i \rho_I^{j\ell} \cdot 1_{|\vcb| \geq 1} \label{eq:commutermrhoI} \\
&+ \nb_{\va}\RR^{j\ell}[\nb_a v^i \nb_i r_I^{ab}] + \nb_{\va} \RR^{j\ell}[y_I^i \nb_i v^b] \label{eq:forceTermrhoI}
}
Repeating the proof of the estimate \eqref{eq:commutermzIest}, the commutator term in \eqref{eq:commutermrhoI} is bounded by
\ali{
\co{\eqref{eq:commutermrhoI}} &\lsm \nhat^{(|\va| - 2)_+} \Xi e_v^{1/2} \Xi^{|\va|} \hat{\varep} \hh(t) \notag
}
Let $f^{j\ell} := \nb_{\va}\RR^{j\ell}[\nb_a v^i \nb_i r_I^{ab}]$ denote the first term in \eqref{eq:forceTermrhoI}.  Using $\nb_i v^i = 0$, we have
\ali{
f^{j\ell} &= \RR^{j\ell}\nb_i[\nb_{\va}[\nb_a v^i r_I^{ab}] ] \notag
}
Letting $\hq \in \Z$ be as in the pressure estimates of Section~\ref{sec:pressEsts} ($2^{\hq - 1} < \hxi \leq 2^{\hq}$), we decompose $f^{j\ell} = f_L^{j\ell} + f_H^{j\ell}$, where
\ali{
f_L^{j\ell} &= \RR^{j\ell}\nb_i P_{\leq \hq}[\nb_{\va}[\nb_a v^i r_I^{ab}]] \notag \\
f_H^{j\ell} &= \sum_{q > \hq} \RR^{j\ell} \nb_i P_q[\nb_{\va}[\nb_a v^i r_I^{ab}] ] \label{eq:fHjl}
}
In frequency space, the $0$th order operator $\RR^{j\ell} \nb_i$ is represented by a Fourier multiplier $m_{ib}^{j\ell}$
\ali{
 \widehat{\RR^{j\ell} \nb_i}[U](\xi) &= m_{ib}^{j\ell}(\xi) \widehat{U}^b(\xi), \quad \xi \in \widehat{\R}^3 \notag
}
such that $m_{ib}^{j\ell}(\xi)$ is homogeneous of degree $0$ and smooth away from the origin.  This fact follows from the formulas \eqref{eq:helmholtz} and \eqref{eq:RRformula}, from which we observe that the corresponding Fourier-multiplier for $\RR^{j\ell}$ is smooth away from the origin and homogeneous of degree $-1$.  Repeating the proof of \eqref{eq:logBdOp}, this observation allows us to bound
\ALI{
\| \RR^{j\ell}\nb_i P_{\leq \hq} \| &\lsm (1 + \hq) \lsm \lhxi \\
\co{f_L^{j\ell}} &\lsm \lhxi \sum_{|\vcb| + |\vcc| = |\va|} \co{\nb_{\vcb} \nb_a v^i } \co{ \nb_{\vcc} r_I } \notag \\
&\lsm \lhxi \sum_{|\vcb| + |\vcc| = |\va|} [ \nhat^{(|\vcb| - 2)_+} \Xi^{|\vcb| + 1} e_v^{1/2} ] [ \nhat^{(|\vcc| - 2)_+} \Xi^{|\vcc|} \hat{\varep} \hh(t) ] \notag \\
\co{f_L^{j\ell}} &\lsm \lhxi \Xi e_v^{1/2} \nhat^{(|\va| - 2)_+} \Xi^{|\va|} \hat{\varep} \hh(t)
}
Meanwhile, the high-frequency term in \eqref{eq:fHjl} can be estimated by
\ALI{
f_H^{j\ell} &= \sum_{q > \hq} \RR^{j\ell} \nb_i P_{\approx q} P_q[\nb_{\va}[\nb_a v^i r_I^{ab}] \\
\co{f_H^{j\ell}} &\lsm \sum_{q > \hq} \| \RR^{j\ell} \nb_i P_{\approx q} \|~ \co{P_q[\nb_{\va}[\nb_a v^i r_I^{ab}]} \notag \\
&\lsm \sum_{q > \hq} 2^{-\a q} \cda{\nb_{\va}[\nb_a v^i r_I^{ab}]} \notag \\
&\lsm \hxi^{-\a} \sum_{|\vcb| + |\vcc| = |\va|} (\cda{ \nb_{\vcb} \nb_a v^i } \co{\nb_{\vcc} r_I } + \co{ \nb_{\vcb} \nb_a v^i } \cda{\nb_{\vcc} r_I } ) \notag \\
&\lsm \hxi^{-\a} \sum_{|\vcb| + |\vcc| = |\va|} \hxi^\a [\nhat^{(|\vcb| - 2 )_+} \Xi^{|\vcb| + 1} e_v^{1/2} ][\nhat^{(|\vcc| - 2)_+} \Xi^{|\vcc|} \hat{\varep} \hh(t) ] \notag \\
\co{f_H^{j\ell}}&\lsm \Xi e_v^{1/2} \nhat^{(|\va|-2)_+} \Xi^{|\va|} \hat{\varep} \hh(t)
}
We now turn to the second term in \eqref{eq:forceTermrhoI}, which we call $g^{j\ell} :=  \nb_{\va}\RR^{j\ell}[y_I^i \nb_i v^b]$, which is the dangerous term discussed in Section~\ref{sec:goodAntiDiv}.  In proving the pressure estimate \eqref{eq:co03press}, we have already encountered a term $\bp_{I,1}$ with a similar structure.  The term $g^{j\ell}$ will be estimated by similar techniques.  We start by decomposing $g^{j\ell} = g_L^{j\ell} + g_H^{j\ell}$, where
\ali{
g_L^{j\ell} &=  \nb_{\va}\RR^{j\ell} P_{\leq \hq}[y_I^i \nb_i v^b] \label{eq:rhoLowFreqForce}\\
g_H^{j\ell} &= \sum_{q > \hq} \nb_{\va}\RR^{j\ell} P_q[y_I^i \nb_i v^b]\notag
}
We treat $g_H^{j\ell}$ as follows using $P_q = P_{\approx q} P_q$
\ALI{
g_H^{j\ell} &= \sum_{q > \hq} \RR^{j\ell} P_{\approx q} [\nb_{\va} P_q[y_I^i \nb_i v^b ] \\
\co{ g_H } &\lsm \sum_{q > \hq} \| \RR^{j\ell} P_{\approx q} \| \co{ P_q \nb_{\va}[y_I^i \nb_i v^b] } \\
&\lsm \sum_{q > \hq} 2^{-q} 2^{-\a q} \cda{ \nb_{\va}[y_I^i \nb_i v^b] } \notag \\
&\lsm \hxi^{-1} \hxi^{-\a} \sum_{|\vcb| + |\vcc| = |\va|} ( \cda{\nb_{\vcb} y_I } \co{ \nb_{\vcc} \nb_i v^b } + \co{\nb_{\vcb} y_I } \cda{ \nb_{\vcc} \nb_i v^b }) \notag \\
&\lsm \hxi^{-1} \hxi^{-\a} \sum_{|\vcb| + |\vcc| = |\va|} \hxi^{\a} [\nhat^{(|\vcb| - 2)_+} \Xi^{|\vcb|} e_R^{1/2} \hh(t) ] [\nhat^{(|\vcc| - 2)_+} \Xi^{|\vcc| + 1} e_v^{1/2} ] \notag \\
&\lsm \hxi^{-1} \nhat^{(|\va| - 2)_+} \Xi^{|\va| + 1} e_R^{1/2} e_v^{1/2} \hh(t) \notag \\
&\lsm \nhat^{(|\va| - 2)_+} \Xi^{|\va|} e_R \hh(t) \notag \\
\co{ g_H } &\lsm \lhxi \Xi e_v^{1/2} \nhat^{(|\va| - 2)_+} \Xi^{|\va|} \hat{\varep} \hh(t)
}
For the low frequency part in \eqref{eq:rhoLowFreqForce}, we further decompose into $g_L^{j\ell} = g_{LL}^{j\ell} + g_{LH}^{j\ell}$, where
\ali{
g_{LL}^{j\ell} &= \nb_{\va}\RR^{j\ell} P_{\leq \hq}[ y_I^i \nb_i P_{\leq \hq} v^b ] \label{eq:gLLforcerhoI} \\
g_{LH}^{j\ell} &= \sum_{q > \hq} \nb_{\va}\RR^{j\ell} P_{\leq \hq}[y_I^i \nb_i P_q v^b]  
}
We estimate the latter term $g_{LH}^{j\ell}$ using $\nb_i y_I^i = 0$ to obtain
\ALI{
g_H^{j\ell} &= \RR^{j\ell} P_{\leq \hq} \nb_i [ \nb_{\va}[y_I^i P_q v^b ] ] \\
\co{ g_{LH}^{j\ell} } &\lsm \sum_{q > \hq} \| \RR^{j\ell} P_{\leq \hq} \nb_i \|~ \co{ \nb_{\va}[y_I^i P_q v^b ]  } \\
&\lsm \lhxi \sum_{q > \hq} \sum_{|\vcb| + |\vcc| = |\va|} \co{\nb_{\vcb} y_I} \co{P_q \nb_{\vcc} v^b} \notag \\
&\lsm \lhxi \sum_{q > \hq} \sum_{|\vcb| + |\vcc| = |\va|} \co{\nb_{\vcb} y_I}[ 2^{-q} \co{ \nab \nb_{\vcc} v^b } ] \notag \\
&\lsm \lhxi \hxi^{-1} \sum_{|\vcb| + |\vcc| = |\va|} \co{\nb_{\vcb} y_I} \co{\nb \nb_{\vcc} v^b } \notag \\
&\lsm \lhxi \hxi^{-1} \sum_{|\vcb| + |\vcc| = |\va|} [\nhat^{(|\vcb| - 2)_+} \Xi^{|\vcb|} e_R^{1/2} \hh(t) ] [\nhat^{(|\vcc| + 1 - 3)_+} \Xi^{|\vcc| + 1} e_v^{1/2}  ] \notag \\
&\lsm \lhxi \hxi^{-1} \nhat^{(|\va| - 2)_+} \Xi^{|\va| + 1} e_R^{1/2} e_v^{1/2} \hh(t) \notag \\
&\lsm \lhxi \nhat^{(|\va| - 2)_+} \Xi^{|\va|} e_R \hh(t) \notag \\
\co{g_H^{j\ell}} &\lsm \plhxi^2 \Xi e_v^{1/2} \nhat^{(|\va| - 2)_+} \Xi^{|\va|} \hat{\varep} \hh(t)
}
Our technique to estimate the term $g_{LL}^{j\ell}$ in \eqref{eq:gLLforcerhoI} is to first decompose into {\it frequency increments}:
\ali{
g_{LL}^{j\ell} &= \sum_{q = -1}^{\hq - 1} g_{LLqA}^{j\ell} + g_{LLqB}^{j\ell} \label{eq:freqincDecompRhoI} \\
g_{LLqA}^{j\ell} &= \nb_{\va}\RR^{j\ell} P_{q+1}[ y_I^i \nb_i P_{\leq q} v^b ] \notag \\
g_{LLqB}^{j\ell} &= \nb_{\va}\RR^{j\ell} P_{\leq q}[ y_I^i \nb_i P_{q+1} v^b ]\notag
}
As in the estimates for \eqref{eq:LLfreqinc}, observe that $P_{\leq q} v^b$ and $P_{q+1} v$ are restricted to frequencies $|\xi| \leq 2^{q + 2}$.  The Littlewood Paley components of $y_I$ supported in $|\xi| > 2^{q + 5}$ thus cannot contribute to the $P_{q+1}$ projection of the products above, and we have
\ali{
g_{LLqA}^{j\ell} &= \nb_{\va}\RR^{j\ell} P_{q+1}[ P_{\leq q+3} y_I^i \nb_i P_{\leq q} v^b ]  \\
g_{LLqB}^{j\ell} &= \nb_{\va}\RR^{j\ell} P_{\leq q}[ P_{\leq q + 3} y_I^i \nb_i P_{q+1} v^b ] \label{eq:LLqBrhoI}
}
From Proposition~\ref{prop:gotAnAntidiv}, we have that $y_I^i = \nb_a r_I^{ai}$.  Substituting, we obtain the bound
\ali{
\co{ g_{LLqA}^{j\ell} } &\lsm \| \RR^{j\ell} P_{q+1} \|~ \co{ \nb_{\va} [ P_{\leq q+3} y_I^i \nb_i P_{\leq q} v^b ]  } \notag \\
&\lsm 2^{-q} \sum_{|\vcb| + |\vcc| = |\va|} \co{\nb_{\vcb} P_{\leq q + 3} y_I^i } \co{\nb_{\vcc} \nb_i v } \notag \\
&\lsm 2^{-q}\sum_{|\vcb| + |\vcc| = |\va|} \co{ P_{\leq q+3} \nb_a \nb_{\vcb} r_I^{ai} } \co{\nb_{\vcc} \nb_i v } \notag \\
&\lsm 2^{-q} \sum_{|\vcb| + |\vcc| = |\va|} \| P_{\leq q+3} \nb_a \|~ \co{\nb_{\vcb} r_I^{ai}} \co{\nb_{\vcc} \nb_i v } \notag \\
&\lsm 2^{-q} \sum_{|\vcb| + |\vcc| = |\va|} 2^q [\nhat^{(|\vcb| - 2)_+} \Xi^{|\vcb|} \hat{\varep} \hh(t) ] [ \nhat^{(|\vcc| + 1 - 3)_+} \Xi^{|\vcc| + 1} e_v^{1/2} ] \notag \\
\co{ g_{LLqA}^{j\ell} } &\lsm \Xi e_v^{1/2} \nhat^{(|\va| -2)_+} \Xi^{|\va|} \hat{\varep} \hh(t) \label{eq:gLLqAbd}
}
Using that $\nb_i y_I^i = 0$ and again applying Proposition~\ref{prop:gotAnAntidiv}, we estimate \eqref{eq:LLqBrhoI} by
\ali{
g_{LLqB}^{j\ell} &= \RR^{j\ell} P_{\leq q} \nb_i \nb_{\va}[  P_{\leq q + 3} \nb_a r_I^{ai} P_{q+1} v^b ] \notag \\
\co{g_{LLqB}^{j\ell}} &\lsm \| \RR^{j\ell} P_{\leq q} \nb_i \|~ \sum_{|\vcb| + |\vcc| = |\va|} \| P_{\leq q + 3} \nb_a \|~ \co{\nb_{\vcb} r_I} \co{P_{q+1} \nb_{\vcc} v} \notag \\
&\lsm (2 + q) \sum_{|\vcb| + |\vcc| = |\va|} 2^q \co{\nb_{\vcb} r_I} [2^{-q} \co{\nb \nb_{\vcc} v} ] \notag \\
&\lsm (2 + q) \sum_{|\vcb| + |\vcc| = |\va|} [\nhat^{(|\vcb| - 2)_+} \Xi^{|\vcb|} \hat{\varep} \hh(t) ][ \nhat^{(|\vcc| +1 -3)_+} \Xi^{|\vcc| +1} e_v^{1/2}] \notag \\
\co{g_{LLqB}^{j\ell}} &\lsm (2 + q) \Xi e_v^{1/2} \nhat^{(|\va| -2)_+} \Xi^{|\va|} \hat{\varep} \hh(t) \label{eq:gLLqBbd}
}
Summing the bounds \eqref{eq:gLLqAbd} and \eqref{eq:gLLqBbd} in \eqref{eq:freqincDecompRhoI} gives
\ALI{
\co{g_{LL}^{j\ell} } &\lsm \sum_{q = -1}^{\hq - 1} (2 + q) \Xi e_v^{1/2} \nhat^{(|\va| - 2)_+} \Xi^{|\va|} \hat{\varep} \hh(t) \\
&\lsm (1 + \hq)^2 \Xi e_v^{1/2} \nhat^{(|\va| - 2)_+} \Xi^{|\va|} \hat{\varep} \hh(t) \\
&\lsm \plhxi^2 \Xi e_v^{1/2} \nhat^{(|\va| - 2)_+} \Xi^{|\va|} \hat{\varep} \hh(t)
}
This estimate concludes the proof of \eqref{eq:coTransBdrI} for $\rho_I^{j\ell}$.
\end{proof}
We now prove the estimate \eqref{eq:LPtransBdrI} for $\rho_I$
\begin{proof}[Proof of \eqref{eq:LPtransBdrI} for $\rho_I$]  Let $\nb_{\va}$ be a partial derivative operator of order $|\va| = 3$, and let $P_q$, $q \in \Z$ be a Littlewood Paley projection with $q > \hq$.  Applying $P_q$ to \eqref{eq:commutermrhoI}-\eqref{eq:forceTermrhoI}, we have
\ali{
(\pr_t + v^i \nb_i) P_q\nb_{\va} \rho_I^{j\ell} &= v^i \nb_i P_q\nb_{\va} \rho_I^{j\ell} - P_q[ v^i \nb_i \nb_{\va} \rho_I^{j\ell} ] \label{eq:PqcommutermrhoI} \\
&+  \sum_{|\vcb| + |\vcc| = |\va|} c_{\va,\vcb,\vcc} P_q[ \nb_{\vcb} v^i \nb_{\vcc} \nb_i \rho_I^{j\ell}] \cdot 1_{|\vcb| \geq 1} \label{eq:commutermrhoIPq} \\
&+ \nb_{\va}\RR^{j\ell}P_q [\nb_a v^i \nb_i r_I^{ab}] + \nb_{\va} \RR^{j\ell}P_q[y_I^i \nb_i v^b] \label{eq:forceTermrhoIPq}
}
Repeating the arguments leading to \eqref{eq:LPcommutermzIbd} and \eqref{eq:commutermZIPqbd} but with $\rho_I$ in place of $z_I$, we obtain
\ALI{
\co{\eqref{eq:PqcommutermrhoI}} &\lsm 2^{-\a q} \hxi^\a \Xi e_v^{1/2}  \nhat^{(|\va|-2)_+} \Xi^{|\va|} \hat{\varep} \hh(t) \\
\co{\eqref{eq:commutermrhoIPq}} &\lsm 2^{-\a q} \hxi^\a  \Xi e_v^{1/2} \nhat^{(|\va|-2)_+} \Xi^{|\va|} \hat{\varep} \hh(t)
}
To bound the first term in \eqref{eq:forceTermrhoIPq}, we use that $\nb_i v^i = 0$ and $P_q = P_{\approx q} P_q$ to write
\ALI{
\nb_{\va}\RR^{j\ell}P_q[\nb_a v^i \nb_i r_I^{ab}] &= \RR^{j\ell} \nb_i P_{\approx q} P_q \nb_{\va}[\nb_a v^i r_I^{ab} ] \\
\co{\nb_{\va}\RR^{j\ell}P_q[\nb_a v^i \nb_i r_I^{ab}]} &\lsm \| \RR^{j\ell} \nb_i P_{\approx q} \| ~\co{P_q \nb_{\va}[\nb_a v^i r_I^{ab} ]} \notag \\
&\lsm \sum_{|\vcb| + |\vcc| = |\va|} \co{P_q \nb_{\vcb} r_I^{ab} \nb_{\vcc}\nb_a v^i ]} \notag\\
&\lsm 2^{-\a q} \sum_{|\vcb| + |\vcc| = |\va|} \cda{\nb_{\vcb} r_I^{ab} \nb_{\vcc}\nb_a v^i } \notag }
\ALI{
&\lsm 2^{-\a q} \sum_{|\vcb| + |\vcc| = |\va|} (\cda{\nb_{\vcb} r_I^{ab}} \co{ \nb_{\vcc}\nb_a v^i} + \co{\nb_{\vcb} r_I^{ab}} \cda{ \nb_{\vcc}\nb_a v^i}) \notag \\
&\lsm 2^{-\a q} \sum_{|\vcb| + |\vcc| = |\va|} \hxi^\a [\nhat^{(|\vcb| - 2)_+} \Xi^{|\vcb|} \hat{\varep} \hh(t) ] [\nhat^{(|\vcc| - 2)_+} \Xi^{|\vcc| + 1} e_v^{1/2} ] \notag  \\
\co{\nb_{\va}\RR^{j\ell}P_q[\nb_a v^i \nb_i r_I^{ab}]} &\lsm 2^{-\a q} \hxi^\a \Xi e_v^{1/2} \nhat^{(|\va| -2)_+} \Xi^{|\va|} \hat{\varep} \hh(t)
}
To bound the second term in \eqref{eq:forceTermrhoIPq}, we use that $\hxi \leq 2^{\hq} < 2^q$ to obtain 
\ALI{
\nb_{\va} \RR^{j\ell}P_q[y_I^i \nb_i v^b] &= \RR^{j\ell}P_{\approx q} P_q\nb_{\va} [y_I^i \nb_i v^b] \\
\co{\nb_{\va} \RR^{j\ell}P_q[y_I^i \nb_i v^b] } &\leq  \| \RR^{j\ell}P_{\approx q} \| \co{P_q\nb_{\va} [y_I^i \nb_i v^b] } \\
&\lsm 2^{-q} 2^{-\a q} \cda{\nb_{\va} [y_I^i \nb_i v^b]} \notag \\
&\lsm \hxi^{-1} 2^{-\a q} \sum_{|\vcb| + |\vcc| = |\va|} (\cda{ \nb_{\vcb} y_I^i } \co{ \nb_{\vcc} \nb_i v^b} + \co{ \nb_{\vcb} y_I^i } \cda{ \nb_{\vcc} \nb_i v^b} ) \notag \\
&\lsm \hxi^{-1} 2^{-\a q} \sum_{|\vcb| + |\vcc| = |\va|} \hxi^\a [ \nhat^{(|\vcb| -2)_+} \Xi^{|\vcb|} e_R^{1/2} \hh(t) ] [ \nhat^{(|\vcc| + 1 - 3)_+} \Xi^{|\vcc| + 1} e_v^{1/2} ] \notag \\
&\lsm \hxi^{-1} 2^{-\a q} \hxi^\a \nhat^{(|\va| - 2)_+} \Xi e_v^{1/2} \Xi^{|\va|} e_R^{1/2} \hh(t) \notag \\
&\lsm 2^{-\a q} \hxi^\a \nhat^{(|\va|-2)_+}\Xi^{|\va|} e_R \hh(t) \notag \\
\co{\nb_{\va} \RR^{j\ell}P_q[y_I^i \nb_i v^b] } &\lsm 2^{-\a q} \hxi^\a \lhxi \Xi e_v^{1/2} \nhat^{(|\va|-2)_+} \Xi^{|\va|} \hat{\varep} \hh(t) 
}
This bound concludes our proof of \eqref{eq:LPtransBdrI} for $\rho_I$.
\end{proof}
Having now proven Propositions~\ref{prop:velocEstimates} and \ref{prop:antiDivEsts}, we are ready to complete the proof of the Gluing Approximation Lemma~\ref{lem:glueLem}.

\subsection{Proof of the Gluing Lemma~\ref{lem:glueLem}} \label{sec:proofGlueLem}
In this Section, we prove the Gluing Approximation Lemma~\ref{lem:glueLem} using the results of Sections~\ref{sec:exist}-\ref{sec:rhoIests}.

Observe that Proposition~\ref{prop:hGrowth} follows immediately from Propositions~\ref{prop:velocEstimates} and \ref{prop:antiDivEsts} and the Definition~\ref{defn:hNorm} of $\hh(t)$.  Proposition~\ref{prop:gluingProp} has also been proven, as it follows from Proposition~\ref{prop:hGrowth} by the argument following the statement of Proposition~\ref{prop:hGrowth}.  Thus, the time interval $[t_0(I) - 8 \th_0, t_0(I) + 8 \th_0]$ is contained in the time interval of existence $\wtld{J}_I$, and the gluing construction is well-defined.  Furthermore, recall from the proof of Proposition~\ref{prop:gluingProp} from Proposition~\ref{prop:hGrowth} that the dimensionless norm $\hh(t)$ associated with each index $I$ satisfies $\hh(t) \leq 1$ for $t \in [t_0(I) - 8 \th_0, t_0(I) + 8 \th_0]$.  

In what follows, set $J_I = [t_0(I) - 8 \th_0, t_0(I) + 8 \th_0]$.  We claim the following estimates.
\begin{prop} \label{prop:ytildevBds}Let $y^\ell$, $\tilde{v}^\ell = v^\ell + y^\ell$ and $r_I^{j\ell} = \rho_I^{j\ell} + z_I^{j\ell}$ be as defined in Sections~\ref{sec:glueConstruct}-\ref{sec:exist}.
\ali{
\sup_{t \in \R} \co{\nb^k y} &\lsm  \nhat^{(k - 2)_+} \Xi^k e_R^{1/2}, \quad k = 0,1,2,3 \label{eq:nbkyBd}\\
\sup_I \sup_{t \in J_I} \co{\nb^k (\pr_t + \tilde{v} \cdot \nab) y_I} &\lsm \lhxi \Xi e_v^{1/2} \nhat^{(k-2)_+} \Xi^k e_R^{1/2}, \quad k = 0, 1, 2 \label{eq:transyIbd}\\
\sup_I \sup_{t \in J_I} \co{\nb^k (\pr_t + \tilde{v} \cdot \nab) r_I} &\lsm \plhxi^2 \Xi e_v^{1/2} \nhat^{(k-2)_+} \Xi^k \hat{\varep}, \quad k=0, 1, 2 \label{eq:transrIbd}
}
\end{prop}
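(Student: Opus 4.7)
My plan is to reduce each of the three inequalities to estimates already established, namely Proposition~\ref{prop:gluingProp} (which, since $\hh(t) \leq 1$ on $J_I$, gives pointwise bounds on $y_I$, $\rho_I$, $z_I$ and their derivatives), Proposition~\ref{prop:pressEstimates} (which, again with $\hh \leq 1$, controls $\bar{p}_I$ and its derivatives), and the hypotheses~\eqref{eq:v1Est}--\eqref{eq:R1Est} on $v$ and $R$. All remaining work amounts to expanding by Leibniz and verifying that powers of $\nhat$, $\lhxi$, and $\Xi$ combine correctly via the counting inequality~\eqref{ineq:counting}. I will first establish~\eqref{eq:nbkyBd} by writing $\nb^k y(t) = \sum_I \eta_I(t) \nb^k y_I(t)$; since $|\eta_I| \leq 1$, since at each $t$ at most two terms are nonzero by~\eqref{eq:partitionProperties}, and since every nonzero term has $t \in \supp \eta_I \subseteq J_I$, the bound $\co{\nb^k y_I} \lsm \nhat^{(k-2)_+} \Xi^k e_R^{1/2}$ from Proposition~\ref{prop:gluingProp} immediately yields~\eqref{eq:nbkyBd}.

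For~\eqref{eq:transyIbd}, I will rewrite $\tilde{v} = u_I + (y - y_I)$ with $u_I = v + y_I$ and use~\eqref{eq:yIevol} to get
\[
(\pr_t + \tilde{v}^j \nb_j) y_I^\ell = - y_I^j \nb_j v^\ell - \nb^\ell \bar{p}_I - \nb_j R^{j\ell} + (y - y_I)^j \nb_j y_I^\ell.
\]
Applying $\nb^k$ for $k \leq 2$, the first three summands are handled via Proposition~\ref{prop:gluingProp}, Proposition~\ref{prop:pressEstimates} (using~\eqref{eq:co03press} for $k \leq 1$ and~\eqref{eq:34press} for $k = 2$), and hypothesis~\eqref{eq:R1Est}; each produces a contribution dominated by $\lhxi \Xi e_v^{1/2} \nhat^{(k-2)_+} \Xi^k e_R^{1/2}$. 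For the last summand I will use that on $\supp \eta_I$ one has $y - y_I = \eta_{I-1}(y_{I-1} - y_I) + \eta_{I+1}(y_{I+1} - y_I)$, so the first step applied to the neighboring indices gives $\co{\nb^a(y - y_I)} \lsm \nhat^{(a-2)_+} \Xi^a e_R^{1/2}$, and combining with Proposition~\ref{prop:gluingProp} and~\eqref{ineq:counting} absorbs this term into the target.

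For~\eqref{eq:transrIbd}, I will write $(\pr_t + \tilde{v} \cdot \nb)(\rho_I + z_I) = (\pr_t + v \cdot \nb)(\rho_I + z_I) + y \cdot \nb r_I$ and substitute~\eqref{eq:zItrans} and~\eqref{eq:rhoItrans} to obtain
\[
(\pr_t + \tilde{v} \cdot \nb) r_I^{j\ell} = - y_I^j y_I^\ell - \bar{p}_I \de^{j\ell} - R^{j\ell} + \RR^{j\ell}[\nb_a v^i \nb_i r_I^{ab} - y_I^i \nb_i v^b] + y^i \nb_i r_I^{j\ell}.
\]
Applying $\nb^k$ for $k \leq 2$, the three source terms from the $z_I$ equation are each of order $\nhat^{(k-2)_+} \Xi^k \lhxi e_R$, which upon using $e_R = \lhxi \Xi e_v^{1/2} \varep_R$ matches the target bound $\plhxi^2 \Xi e_v^{1/2} \nhat^{(k-2)_+} \Xi^k \varep_R$. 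The $\RR$-term is exactly the expression estimated in the proof of~\eqref{eq:coTransBdrI} for $\rho_I$ in Section~\ref{sec:rhoIests}, and setting $\hh \leq 1$ there delivers the required bound. Finally, $y \cdot \nb r_I$ is handled by Leibniz using Step~1 and Proposition~\ref{prop:gluingProp}; the resulting contribution $\lsm \nhat^{(k-2)_+} \Xi^{k+1} e_R^{1/2} \varep_R$ is strictly smaller than the target since $\Xi e_R^{1/2} \leq \Xi e_v^{1/2} \leq \plhxi^2 \Xi e_v^{1/2}$.

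The only genuinely subtle estimate in this argument, namely the control of the dangerous commutator term $\RR^{j\ell}[y_I^i \nb_i v^b]$, has already been overcome in Section~\ref{sec:rhoIests} via the frequency-increment decomposition that exploits the identity $y_I^i = \nb_a r_I^{ai}$. In this sense Proposition~\ref{prop:ytildevBds} requires no new analytic input: the work consists purely of assembling and repackaging bounds that have already been proven, while taking care that the transition from the $v$-transport derivative used in~\eqref{eq:zItrans}--\eqref{eq:rhoItrans} to the $\tilde{v}$-transport derivative needed here costs only the lower-order correction $y \cdot \nb r_I$.
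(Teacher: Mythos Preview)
Your proposal is correct and follows essentially the same approach as the paper: for \eqref{eq:nbkyBd} you use the partition of unity and Proposition~\ref{prop:gluingProp}; for \eqref{eq:transyIbd} you split $\tilde v = u_I + (y - y_I)$ and invoke the forcing terms from \eqref{eq:yIevol}; for \eqref{eq:transrIbd} you split $\tilde v = v + y$ and invoke \eqref{eq:zItrans}--\eqref{eq:rhoItrans} together with the $\RR$-term estimates from Section~\ref{sec:rhoIests}. One cosmetic point: your decomposition $y - y_I = \eta_{I-1}(y_{I-1} - y_I) + \eta_{I+1}(y_{I+1} - y_I)$ is only valid on $\supp\eta_I$, whereas the claim is over $J_I$; the paper sidesteps this by simply using $\co{\nb^a(y - y_I)} \leq \co{\nb^a y} + \co{\nb^a y_I}$ together with your Step~1, which works on all of $J_I$.
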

\begin{proof}
The bound \eqref{eq:nbkyBd} follows from the definition $y^\ell = \sum_I \eta_I y_I^\ell$ of $y^\ell$ and the estimate \eqref{eq:nbkyIbd}.  To obtain \eqref{eq:transyIbd}, let $\nb_{\va}$ be a partial derivative operator of order $|\va| = k \leq 2$.  Then
\ali{
(\pr_t + \tilde{v}^i \nb_i) y_I^\ell &= (\pr_t + v^i + y_I^i) \nb_i y_I^\ell + y^i \nb_i y_I^\ell - y_I^i \nb_i y_I^\ell \notag \\
\nb_{\va} [(\pr_t + \tilde{v}^i \nb_i) y_I^\ell] &= \nb_{\va} [(\pr_t + v^i \nb_i + y_I^i\nb_i) y_I^\ell] + \nb_{\va} [ (y^i - y_I^i) \nb_i y_I^\ell ] \label{eq:nbvaDtyI}
}
The first term on the right hand side of \eqref{eq:nbvaDtyI} is equal to the sum of the terms in line \eqref{eq:forcingTermsYIeqn}.  For these terms, we established a bound
\ALI{
\co{\nb_{\va} [(\pr_t + v^i\nb_i + y_I^i\nb_i) y_I^\ell]} &\lsm \lhxi \Xi e_v^{1/2} \nhat^{(|\va| - 2)_+} \Xi^{|\va|} e_R^{1/2} (1 + \hh(t))^2 \\
&\lsm \lhxi \Xi e_v^{1/2} \nhat^{(|\va| - 2)_+} \Xi^{|\va|} e_R^{1/2}
}
The other term in \eqref{eq:nbvaDtyI} is bounded by 
\ALI{
\co{\nb_{\va} [ (y^i - y_I^i) \nb_i y_I^\ell ]} &\lsm \sum_{|\vcb| + |\vcc| = |\va|} ( \co{\nb_{\vcb} y^i} + \co{\nb_{\vcb} y_I^i} ) \co{\nb_{\vcc} \nb_i y_I} \\
&\lsm \sum_{|\vcb| + |\vcc| = |\va|} [\nhat^{(|\vcb| - 2)_+} \Xi^{|\vcb|} e_R^{1/2} ] [\nhat^{(|\vcc| - 1)_+} \Xi^{|\vcc| + 1} e_R^{1/2}] \\
&\lsm \Xi e_R^{1/2} \nhat^{(|\va| - 1)_+} \Xi^{|\va|} e_R^{1/2} \leq \Xi e_v^{1/2} \nhat^{(|\va| - 2)_+} \Xi^{|\va|} e_R^{1/2}
}
To obtain \eqref{eq:transrIbd}, we compute
\ali{
(\pr_t + \tilde{v} \cdot \nab) r_I &= (\pr_t + v^i \nb_i)[\rho_I^{j\ell} + z_I^{j\ell}] + y^i \nb_i r_I^{j\ell} \notag \\
\nb_{\va} [(\pr_t + \tilde{v} \cdot \nab) r_I] &= \nb_{\va} [ (\pr_t + v^i \nb_i) r_I^{j\ell}] + \nb_{\va}[ y^i \nb_i r_I^{j\ell} ] \label{eq:newtransrI}
}
The first term on the right hand side of \eqref{eq:newtransrI} is equal to the sum of the terms in lines \eqref{eq:forceTermszI} and \eqref{eq:forceTermrhoI}.  For these terms, we proved a bound
\ALI{
\co{\nb_{\va} [ (\pr_t + v^i \nb_i) r_I^{j\ell}]} &\lsm \plhxi^2 \nhat^{(|\va| - 2)_+} \Xi e_v^{1/2} \Xi^{|\va|} \hat{\varep} ( 1+ \hh(t))^2 \\
&\lsm \plhxi^2 \Xi e_v^{1/2} \nhat^{(|\va| - 2)_+} \Xi^{|\va|} \hat{\varep} 
}
The other term in line \eqref{eq:newtransrI} can be estimated by 
\ALI{
\co{\nb_{\va}[ y^i \nb_i r_I^{j\ell} ]} &\lsm \sum_{|\vcb| + |\vcc| = |\va|} \co{\nb_{\vcb} y^i} \co{\nb_{\vcc} \nb_i r_I} \\
&\lsm \sum_{|\vcb| + |\vcc| = |\va|} [\nhat^{(|\vcb| - 2)_+} \Xi^{|\vcb|} e_R^{1/2}] [ \nhat^{(|\vcc| - 1)_+} \Xi^{|\vcc|} \hat{\varep} ] \\
&\lsm \Xi e_R^{1/2} \nhat^{(|\va| - 1)_+} \Xi^{|\va|} \hat{\varep} \leq \Xi e_v^{1/2} \nhat^{(|\va| - 2)_+} \Xi^{|\va|} \hat{\varep} 
}
The proof of Proposition~\ref{prop:ytildevBds} is now complete.
\end{proof}
Using the formulas
\ALI{
\tilde{v}^\ell &= v^\ell + y^\ell \\
R_I &= 1_{[t(I)-\th, t(I)+\th]}\eta_I'(t) (r_I^{jl} - r_{I+1}^{j\ell}) + \eta_I \eta_{I+1} ( y_I^j y_{I+1}^\ell + y_{I+1}^j y_I^\ell) \\
&-\eta_I\eta_{I+1}( y_I^j y_I^\ell + y_{I+1}^j y_{I+1}^\ell ) 
}
\ali{
\wtld{D}_t R_I &= 1_{[t(I)-\th, t(I)+\th]} [ \eta_I''(t) (r_I^{jl} - r_{I+1}^{j\ell})+ \eta_I'(t) \wtld{D}_t (r_I^{j\ell} - r_{I+1}^{j\ell})] \label{eq:rITermwDt} \\
&+ (\eta_I' \eta_{I+1} + \eta_I \eta_{I+1}') ( y_I^j y_{I+1}^\ell + y_{I+1}^j y_I^\ell ) \label{eq:ctf1wDtRI} \\
&+\eta_I \eta_{I+1} ( \wtld{D}_t y_I^j y_{I+1}^\ell + y_I^j \wtld{D}_t y_{I+1}^\ell  + \wtld{D}_ty_{I+1}^j y_I^\ell + y_{I+1}^j \wtld{D}_t y_I^\ell ) \label{eq:wtldyIyIterm1} \\
&- (\eta_I' \eta_{I+1} + \eta_I \eta_{I+1}') ( y_I^j y_I^\ell + y_{I+1}^j y_{I+1}^\ell ) \label{eq:ctf2wDtRI} \\
&- \eta_I\eta_{I+1} (\wtld{D}_t y_I^j y_I^\ell + y_I^j \wtld{D}_t y_I^\ell + \wtld{D}_t y_{I+1}^j y_{I+1}^\ell + y_{I+1}^j \wtld{D}_t y_{I+1}^\ell) \label{eq:wtldyIyIterm2} \\
\wtld{D}_t &:= (\pr_t + \tilde{v} \cdot \nab) \label{eq:wtldDtdef}
}
from \eqref{eq:RIformula} in Section~\ref{sec:glueConstruct} and applying the bounds of Proposition~\ref{prop:ytildevBds}, we obtain the following estimates
\begin{prop} \label{prop:lastBdsGlue} Uniformly in $t \in \R$, for $\wtld{D}_t$ as in \eqref{eq:wtldDtdef} we have
\ali{
\co{\nb^k \tilde{v} } 
&\lsm \Xi^k e_v^{1/2}, \quad k = 1, 2, 3 \label{eq:nbtildevkbd} \\
\sup_I \co{\nb^k R_I } &\lsm_\de \nhat^{(k - 2)_+} \Xi^k \lhxi  e_R, \quad k = 0, 1, 2, 3 \label{eq:nbkRI} \\ 
\sup_I \co{\nb^k \wtld{D}_t R_I} &\lsm_\de \plhxi^2 \Xi e_v^{1/2} \nhat^{(k - 2)_+} \Xi^k \lhxi  e_R, \quad k = 0, 1, 2 \label{eq:nbkDtRIbd}
}
\end{prop}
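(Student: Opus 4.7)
The plan is to reduce all three bounds to the already established estimates of Proposition~\ref{prop:ytildevBds} together with the pointwise bounds \eqref{eq:partitionBound} on the partition of unity, using the explicit formulas \eqref{eq:RIformula} for $R_I$ and the expansion \eqref{eq:rITermwDt}--\eqref{eq:wtldyIyIterm2} for $\wtld{D}_t R_I$. The key quantitative input is the choice $\th = \de \plhxi^{-2} \Xi^{-1} e_v^{-1/2}$, which gives the conversion $\co{ \eta_I^{(m)} } \lsm_\de \plhxi^{2m} \Xi^m e_v^{m/2}$ for $m = 0, 1, 2$, and the identity $\varep_R = e_R/(\lhxi \Xi e_v^{1/2})$, which will repeatedly convert bounds on $r_I$ (in units of $\varep_R$) into bounds on $R_I$ (in units of $e_R$).

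For \eqref{eq:nbtildevkbd}, write $\tilde v = v + y$. The bounds \eqref{eq:frEnVelocbds} on $v$ (from the hypothesis that $(v, p, R)$ obeys \eqref{eq:v1Est}--\eqref{eq:R1Est}) give $\co{\nb^k v} \lsm \nhat^{(k-3)_+} \Xi^k e_v^{1/2}$ for $k = 1, 2, 3$, while \eqref{eq:nbkyBd} gives $\co{\nb^k y} \lsm \nhat^{(k-2)_+} \Xi^k e_R^{1/2}$. Since $\nhat = (e_v/e_R)^{1/2}$, the $y$ term is bounded by $\nhat^{(k-2)_+ - 1} \Xi^k e_v^{1/2} \leq \Xi^k e_v^{1/2}$ in the range $k \leq 3$, so both contributions are absorbed into $\Xi^k e_v^{1/2}$.

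For \eqref{eq:nbkRI}, apply Leibniz to the formula \eqref{eq:RIformula}. The leading contribution comes from $\eta_I'(t)(r_I - r_{I+1})$, for which $\co{\eta_I'} \lsm_\de \plhxi^2 \Xi e_v^{1/2}$ combines with the bound $\co{\nb^k r_I} \lsm \nhat^{(k-2)_+} \Xi^k \varep_R$ from Proposition~\ref{prop:gluingProp} to yield $\lsm_\de \plhxi^2 \Xi e_v^{1/2} \cdot \nhat^{(k-2)_+} \Xi^k e_R/(\lhxi \Xi e_v^{1/2}) = \lsm_\de \lhxi \nhat^{(k-2)_+} \Xi^k e_R$, which is exactly the required bound. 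The quadratic-in-$y_I$ terms contribute only $\nhat^{(k-2)_+} \Xi^k e_R$ (without the $\lhxi$ prefactor) via the bound $\co{\nb^a y_I} \lsm \nhat^{(a-2)_+} \Xi^a e_R^{1/2}$ and the counting inequality \eqref{ineq:counting}, so they are dominated.

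For \eqref{eq:nbkDtRIbd}, apply $\nb_{\va}$ with $|\va| \leq 2$ to each of the six groups of terms in \eqref{eq:rITermwDt}--\eqref{eq:wtldyIyIterm2}. The two dominant contributions are the $\eta_I''(t)(r_I - r_{I+1})$ term, which is bounded by $\co{\eta_I''} \cdot \co{\nb^k r_I} \lsm_\de \plhxi^4 \Xi^2 e_v \cdot \nhat^{(k-2)_+}\Xi^k \varep_R = \lsm_\de \plhxi^3 \Xi e_v^{1/2} \nhat^{(k-2)_+} \Xi^k e_R$, and the $\eta_I'(t) \wtld{D}_t(r_I - r_{I+1})$ term, bounded using \eqref{eq:transrIbd} by $\co{\eta_I'} \cdot \co{\nb^k \wtld{D}_t r_I} \lsm_\de \plhxi^2 \Xi e_v^{1/2} \cdot \plhxi^2 \Xi e_v^{1/2} \nhat^{(k-2)_+}\Xi^k \varep_R$, which after substituting $\varep_R$ gives the same $\plhxi^3 \Xi e_v^{1/2} \nhat^{(k-2)_+}\Xi^k e_R$ bound. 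The quadratic $y_I \otimes y_{I'}$ terms either pick up a single factor of $\eta_I'$ (giving $\plhxi^2$) or a single factor of $\wtld{D}_t y_I$, which by \eqref{eq:transyIbd} carries $\lhxi \Xi e_v^{1/2}$; in either case the product with another $y$-factor gives at worst $\lhxi \Xi e_v^{1/2} \nhat^{(k-2)_+}\Xi^k e_R$, strictly smaller than required.

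The main obstacle in carrying this out is purely bookkeeping: keeping track of the interplay between the $\plhxi$ and $\lhxi$ factors, and verifying that the $(k-2)_+$ weights combine correctly under the Leibniz rule via the counting inequality \eqref{ineq:counting}. No new analytic input is needed beyond Proposition~\ref{prop:ytildevBds} and the partition-of-unity bounds \eqref{eq:partitionBound}.
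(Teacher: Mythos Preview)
Your proposal is correct and follows essentially the same approach as the paper's own proof: decompose $\tilde v = v + y$, apply Leibniz to the explicit formula \eqref{eq:RIformula} and the $\wtld{D}_t R_I$ expansion \eqref{eq:rITermwDt}--\eqref{eq:wtldyIyIterm2}, and feed in the estimates from Propositions~\ref{prop:gluingProp} and~\ref{prop:ytildevBds} together with the cutoff bounds \eqref{eq:partitionBound} and the conversion $\varep_R = e_R/(\lhxi\,\Xi e_v^{1/2})$. The identification of the $\eta_I''(r_I - r_{I+1})$ and $\eta_I'\wtld{D}_t r_I$ contributions as the dominant ones for \eqref{eq:nbkDtRIbd} matches the paper exactly.
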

\begin{proof}
To obtain \eqref{eq:nbtildevkbd}, we use
\ALI{
\co{\nb^k v^\ell} + \co{ \nb^k y^\ell } &\lsm \Xi^k e_v^{1/2} + \nhat^{(k - 2)_+} \Xi^k e_R^{1/2} \lsm \Xi^k e_v^{1/2}, \quad k = 1, 2, 3
}
For \eqref{eq:nbkRI}, let $\nb_{\va}$ be a partial derivative operator of order $|\va| = k \leq 3$.  Then
\ALI{
\co{\nb_{\va} R_I} &\lsm \co{\eta_I'} \sup_I \co{ \nb_{\va} r_I } + \sum_{|\vcb| + |\vcc| = |\va|} \sup_I \co{\nb_{\vcb} y_I } \sup_I \co{\nb_{\vcc} y_I} \\
&\lsm_\de \plhxi^2 \Xi e_v^{1/2} \nhat^{(|\va| - 2)_+} \Xi^{|\va|} \hat{\varep} + \nhat^{(|\va| - 2)_+} \Xi^{|\va|} e_R \\
&\lsm_\de \lhxi \nhat^{(|\va| - 2)_+} \Xi^{|\va|} e_R
}
For \eqref{eq:nbkDtRIbd}, let $\nb_{\va}$ be a partial derivative operator of order $|\va| = k \leq 2$.  We estimate $\nb_{\va} \wtld{D}_t R_I$ by
\ALI{
\co{\nb_{\va} \eqref{eq:rITermwDt} } &\leq \sup_I [ \co{\eta_I''} \co{ \nb_{\va} r_I } + \co{ \eta_I' } \co{\nb_{\va} \wtld{D}_t r_I} ] \\
&\lsm_\de [\plhxi^2 \Xi e_v^{1/2}]^2 \nhat^{(|\va| - 2)_+} \Xi^{|\va|} \hat{\varep} \\
&\lsm_\de \plhxi^3 \Xi e_v^{1/2} \nhat^{(|\va| - 2)_+}\Xi^{|\va|} e_R \\
\co{\nb_{\va} \eqref{eq:ctf1wDtRI}} + \co{ \nb_{\va} \eqref{eq:ctf2wDtRI} } &\lsm_\de \plhxi^2 \Xi e_v^{1/2} \sum_{|\vcb| + |\vcc| = |\va|} \sup_I \co{\nb_{\vcb} y_I } \sup_I \co{\nb_{\vcc} y_I } \\
&\lsm_\de \plhxi^2 \Xi e_v^{1/2} \nhat^{(|\va| -2)_+} \Xi^{|\va|} e_R \\
\co{\nb_{\va} \eqref{eq:wtldyIyIterm1} } + \co{\nb_{\va} \eqref{eq:wtldyIyIterm2}} &\lsm  \sum_{|\vcb| + |\vcc| = |\va|} \sup_I \co{\nb_{\vcb} y_I } \sup_I \co{\nb_{\vcc} \wtld{D}_t y_I } \\
&\lsm \sum_{|\vcb| + |\vcc| = |\va|} [\nhat^{(|\vcb| -2)_+} \Xi^{|\vcb|} e_R^{1/2} ] [ \lhxi \Xi e_v^{1/2} \nhat^{(|\vcc|-2)_+} \Xi^{|\vcc|} e_R^{1/2} ] \\
&\lsm \lhxi \Xi e_v^{1/2} \nhat^{(|\va| - 2)_+} \Xi^{|\va|} e_R
}
Combining these bounds proves \eqref{eq:nbkDtRIbd} and finishes the proof of Proposition~\ref{prop:lastBdsGlue}.
\end{proof}
From Proposition~\ref{prop:lastBdsGlue}, we have established the estimates claimed in the Gluing Approximation Lemma~\ref{lem:glueLem} (noting that $\nhat^{(|\va|-2)_+} = 1$ in \eqref{eq:nbkDtRIbd} for $|\va| \leq 2$).  As we have already discussed the proof of the support properties in Sections~\ref{sec:glueConstruct}-\ref{sec:exist} (\hltRed{see in particular the discussion at the end of Section~\ref{sec:glueConstruct}}), 
we have finished the proof of Lemma~\ref{lem:glueLem}.

\part{The Convex Integration Sublemma}
In Sections~\ref{sec:mikado}-\ref{sec:concludingProof} below, we prove the Convex Integration Lemma~\ref{lem:convexInt}.  The proof of the Lemma is a combination of the framework of estimates in \cite{isett} with the construction of Mikado flows in \cite{danSze}.  We start by introducing some notation that will be used in this section.


For this proof, the notation $X \lesssim Y$ will mean that there exists a constant $C$ such that $X \leq C Y$, and this constant $C$ is allowed to depend on the constants $C_1$ and $C_\de$ and $\de$ provided in the assumptions of Lemma~\ref{lem:convexInt} (which are the conclusions of Lemma~\ref{lem:glueLem}).  If a constant depends on the parameter $\eta > 0$, we will write $\lsm_\eta$.  We will write $X \leq C_0 Y$ for an inequality involving a constant $C_0$ that is an absolute constant that is not allowed to depend on $C_1, C_\de$ or $\eta$.  The value of $C_0$ may change from line to line.

As opposed to the proof of Lemma~\ref{lem:glueLem}, in this proof the notation $\co{f}$ will refer to the $C^0 = C^0_{t,x}$ norm in both the time and space variables.

If $A = A_i^j \in \R^{3 \times 3}$ is a $3 \times 3$ matrix, we will let $|A|$ denote the Frobenius norm of $A$, namely $| A | = (\sum_{i,j = 1}^3 (A_i^j)^2)^{1/2}$.  In general, for any tensor field, it is implied that the Frobenius norm is taken pointwise whenever we write a norm such as the $C^0$ norm.


\section{Mikado Flows} \label{sec:mikado}
We recall the construction of Mikado flows from \cite{danSze}.  

Let $\F \subseteq \Z^3$ be a finite set of integer lattice vectors.  Then there exists a collection of points $(p_f)_{f \in \F}$ in $\T^3$ and a number $r_0 > 0$ such that if $\ell_f = \{ p_f + t f ~:~ t \in \R \} \subseteq \T^3$ denotes the periodization of the line passing through $p_f$ in the $f$ direction, and if $N_{\de}(\ell_f) = \{ X + h ~:~ X \in \T^3, h \in \R^3, |h| \leq \de \}$ denotes the closed $\de$-neighborhood of $\ell_f$, then
\ali{
N_{3r_0}(\ell_f) \cap N_{3r_0}(\ell_{\tilde{f}}) &= \emptyset, \quad \tx{ for all } f, \tilde{f} \in \F, f \neq \tilde{f}. \label{eq:disjointNbhds}
}
For each $f \in \F$, choose a function $\psi_f(X) : \T^3 \to \R$ of the form $\psi_f(X) = g_f(\tx{dist}(X, \ell_f))$ such that $g_f = g_f(d)$ is a smooth function with compact supported in $\supp g_f \subseteq \{ r_0 \leq d \leq 2r_0 \}$ and 
\ali{
\int_{\T^3} \psi_f(X) dX &= 0 \label{eq:int0psif} \\
\fr{1}{|\T^3|} \int_{\T^3} \psi_f^2(X) dx &= 1 \label{eq:int1psifsq}
}
Then $\psi_f(X)$ is a smooth function on $\T^3$ whose level surfaces are concentric periodic cylinders with central axis $\ell_f$.  From the orthogonality between $\nb \psi$ and $f$ and \eqref{eq:disjointNbhds}, one has that the vector fields $u_f^\ell = \psi_f(X) f^\ell$ are divergence free and have disjoint support
\ali{
\nb_\ell \psi_f(X) f^\ell &= 0 \label{eq:divFreePsif} \\
\supp \psi_f \cap \supp \psi_{\tilde{f}} &= \emptyset \quad \tx{ for all } f, \tilde{f} \in \F, f \neq \tilde{f} \label{eq:disjtSuppPsif}
}
Combining \eqref{eq:divFreePsif} and \eqref{eq:disjtSuppPsif}, one has that any linear combination $u^\ell = \sum_{f \in \F} \ga_f \psi_f(X) f^\ell$ is a stationary solution to incompressible Euler with $0$ pressure (i.e. $\nb_\ell u^\ell = 0$ and $\nb_j(u^j u^\ell) = 0$).  Solutions constructed as above are termed Mikado flows in \cite{danSze}.  

Letting $e_i$ denote the $i$th standard basis vector in $\Z^3$, we will fix our finite set $\F$ to be
\ali{
\F := \{ e_i \pm e_j ~:~ 1 \leq i < j \leq 3 \} \label{eq:choseFDirects}
}
and use $\psi_f$ to denote the above chosen $\psi_f$.  Note that the cardinality of $\F$ is $|\F| = 6$.


\section{The Coarse Scale Flow and Back-To-Labels Map} \label{sec:csfBTLmaps}
Let $(v, p, R)$ be the Euler-Reynolds flow given in the statement of Lemma~\ref{lem:convexInt}.  Let $v_\ep = \eta_\ep \ast v$ be a mollification of $v$ in the spatial variables, where $\eta_\ep : \R^3 \to \R$ is a standard smooth mollifier $\eta_\ep(h) = \ep^{-3} \eta(h/\ep)$ with compact supported in $\supp \eta_\ep(h) \subseteq \{ |h| \leq \ep \}$.  The positive number $\ep \leq 1$ will be chosen \hltRed{later in the proof in Section~\ref{sec:mollPrelim} below.}  Regardless of the choice of $\ep > 0$, one has that
\ali{
\hltRed{\co{ \nb v_\ep }} &\hltRed{\leq A_0 \co{ \nab v }} \notag
}
\hltRed{with $A_0 = \|\eta_1\|_{L^1(\R^3)}$ an absolute constant, which will later be set to $A_0 = 1$ by taking $\eta \geq 0$.}

Associated to $v_\ep$ we define the {\bf coarse scale flow} as the map $\Phi_s(t,x) : \R \times \R \times \T^3 \to \R \times \T^3$ by
\ali{
\begin{split} \label{eq:coarseScaleFlow}
\Phi_s(t,x) &= (t + s, \Phi_s^i(t,x)) \\
\fr{d}{ds} \Phi_s^i(t,x) &= v_\ep^i(\Phi_s(t,x)), \quad i = 1, 2, 3 \\
\Phi_0(t,x) &= (t,x)
\end{split}
}
Then $\Phi_s$ is the flow map of the four-vector field $\pr_t + v_\ep \cdot \nab$ on $\R \times \T^3$, and satisfies $\Phi_s \circ \Phi_{s'} = \Phi_{s + s'}$ for all $s, s' \in \R$.  In particular, for all $s \in \R$, $\Phi_s$ is a bijection on $\R \times \T^3$ with inverse map $\Phi_{-s}$.

Let $(t(I))_{I \in \Z}$ be as in the assumptions of Lemma~\ref{lem:convexInt}.  For each each $t(I) \in \R$, we define the {\bf back-to-labels map} $\Ga_I$ starting at $t(I)$ as the (unique, smooth) solution to
\ali{
(\pr_t + v_\ep^i \nb_i) \Ga_I(t,x) &= 0 \notag \\
\Ga_I(t(I), x) &= x \notag
}
One can regard $\Ga_I$ as a map $\Ga_I : \R \times \R^3 \to \R^3$ with the symmetry $\Ga_I(t,x + \ell) = \Ga_I(t,x) + \ell$ for all $\ell \in \Z^3$, which holds due to the integer periodicity of $v_\ep$ and uniqueness of solutions to the transport equation.  From this symmetry, we can also think of $\Ga_I : \R \times \T^3 \to \T^3$ as a map on the torus.

Recall that Lemma~\ref{lem:convexInt} assumes that 
\ali{
\th \co{\nb v} &\leq b_0 \notag
}
where $b_0$ is an absolute constant that remains to be chosen.  We will later choose some $b_0 \leq 1$, so that the estimates of the following Proposition hold:
\begin{prop}\label{prop:backToLabels} There exists an absolute constant $C_0$ such that if $\th \co{\nb v} \leq 1$ then for all $t \in [t(I) - \th, t(I) + \th]$ and $0 <\ep \leq 1$ 
\ali{
\co{ \nb \Ga_I } &\leq C_0 \label{eq:nbGaIbd}\\
\co{ (\pr_t + v_\ep \cdot \nb) \nb \Ga_I } &\leq C_0 \co{ \nb v } \label{eq:DtNbGaI}\\
\co{ \nb \Ga_I - \operatorname{Id} } &\leq C_0 \th \co{\nb v } \label{eq:closeToIdGaI}
}
Moreover, $\nb \Ga_I(t,x)$ is invertible at every point, and the inverse matrix $\nb \Ga_I^{-1}$ satisfies the same estimates \eqref{eq:nbGaIbd}-\eqref{eq:closeToIdGaI} when restricted to $t \in [t(I) - \th, t(I)+\th]$.
\end{prop}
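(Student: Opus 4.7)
The strategy is to derive the matrix ODE satisfied by the Jacobian $A := \nb \Ga_I$ along the characteristics of $\pr_t + v_\ep \cdot \nb$, apply Gronwall's inequality, and use the incompressibility of $v_\ep$ for the invertibility. First I would differentiate the defining equation $(\pr_t + v_\ep^i \nb_i)\Ga_I^b = 0$ with $\nb_a$ and commute derivatives to obtain
\[
(\pr_t + v_\ep^i \nb_i)(\nb_a \Ga_I^b) = -\nb_a v_\ep^i \, \nb_i \Ga_I^b, \qquad \nb_a \Ga_I^b(t(I), x) = \de_a^b.
\]
Writing $A = \nb \Ga_I$ and $M = \nb v_\ep$ viewed pointwise as $3 \times 3$ matrices, and setting $D_t := \pr_t + v_\ep \cdot \nb$, this is the matrix equation $D_t A = -M A$ with $A(t(I), \cdot) = \operatorname{Id}$. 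Along a characteristic $s \mapsto \Phi_s(t(I), x)$ it becomes the linear ODE $\fr{d}{ds}(A \circ \Phi_s) = -(M \circ \Phi_s)(A \circ \Phi_s)$. Since mollification does not worsen the first derivative ($\co{M} \leq \co{\nb v_\ep} \leq \co{\nb v}$), Gronwall's inequality and the assumption $\th \co{\nb v} \leq 1$ give $\co{A(t, \cdot)} \leq C_0 \exp(\th \co{\nb v}) \leq C_0$, proving \eqref{eq:nbGaIbd}. Substituting this back into the matrix ODE yields \eqref{eq:DtNbGaI}, and integrating in $s$ from $0$ to $t - t(I)$ gives \eqref{eq:closeToIdGaI} via $\| A(t,x) - \operatorname{Id} \| \leq |t - t(I)| \co{M} \co{A} \leq C_0 \th \co{\nb v}$.

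For the invertibility and the estimates on $A^{-1}$, the key observation is that $v_\ep = \eta_\ep \ast v$ is divergence-free, so $\tr M = \nb_i v_\ep^i = 0$. From the matrix ODE $D_t A = -M A$ one has $D_t \det A = -(\tr M) \det A = 0$, so $\det A \equiv 1$ and $A$ is invertible at every point. The estimates for $A^{-1}$ can then be derived in two equivalent ways. Directly, differentiating the identity $A A^{-1} = \operatorname{Id}$ yields $D_t A^{-1} = A^{-1} M$, while the representation $A^{-1} = \operatorname{adj}(A) / \det A$ combined with $\co{A} \leq C_0$ and $\det A = 1$ gives $\co{A^{-1}} \leq C_0$; the bound on $D_t A^{-1}$ then follows immediately. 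Alternatively, one observes that $(\nb \Ga_I)^{-1}(t,x) = \nb X_t(\Ga_I(t,x))$, where $X_t$ is the forward flow of $v_\ep$ started at time $t(I)$, and reruns the Gronwall argument for $Y_t := \nb X_t$, which solves $\pr_t Y_t = (\nb v_\ep)(t, X_t) Y_t$, $Y_{t(I)} = \operatorname{Id}$. In either case, the identity $A^{-1} - \operatorname{Id} = -A^{-1}(A - \operatorname{Id})$ combined with \eqref{eq:closeToIdGaI} produces the corresponding closeness-to-identity bound for $A^{-1}$.

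The argument is essentially routine ODE analysis and I do not anticipate any serious obstacle. The only minor points to verify carefully are that the constant $C_0$ in the conclusions can be taken independent of $\ep$ and $I$, which is immediate since every bound above depends only on $\th \co{\nb v}$ and on the dimension-dependent constants relating Frobenius and operator norms on $3 \times 3$ matrices. Should one prefer to avoid invoking $\det A = 1$, choosing $b_0$ later so that $C_0 \th \co{\nb v} \leq 1/2$ would allow $A^{-1}$ to be produced instead by a Neumann series $A^{-1} = \sum_{k \geq 0}(\operatorname{Id} - A)^k$, which gives an alternative route to both invertibility and the bound $\co{A^{-1}} \leq 2$.
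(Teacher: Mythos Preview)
Your argument is correct and, for the three estimates \eqref{eq:nbGaIbd}--\eqref{eq:closeToIdGaI}, is essentially identical to the paper's: both derive the linear matrix ODE $D_t A = -MA$ along characteristics, apply Gronwall using $\co{\nb v_\ep} \leq \co{\nb v}$ and $\th\co{\nb v}\leq 1$, substitute back for the advective derivative bound, and integrate in $s$ for the closeness-to-identity bound.

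The one genuine difference is in the invertibility step. You invoke Liouville's formula together with $\nb_i v_\ep^i = 0$ to get $\det A \equiv 1$, and then read off $\co{A^{-1}}$ from the adjugate representation (or from $D_t A^{-1} = A^{-1}M$). The paper instead defines $Y$ directly as the solution of the transport equation $D_t Y = (\nb v_\ep)\,Y$ with $Y(t(I),\cdot)=\operatorname{Id}$, checks that $D_t[(\nb\Ga_I)Y]=0$ so $(\nb\Ga_I)Y\equiv\operatorname{Id}$, and then reruns the Gronwall argument on $Y$'s equation. Your route is a bit shorter and exploits the incompressibility that is available here; the paper's route does not use $\nb\cdot v_\ep = 0$ at all and would work equally well for a general transport field. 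Either way the equation you derive for $A^{-1}$ coincides with the paper's equation for $Y$, so the resulting bounds are the same.
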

\begin{proof}  The equation for $\nb \Ga_I$ is
\ali{
\label{eq:evolNabGaI}
\begin{split}
(\pr_t + v_\ep^i \nb_i) \nb_a \Ga_I^k &= - \nb_a v_\ep^i \nb_i \Ga_I^k \\
\nb_a \Ga_I^k(t(I), x) &= \mbox{Id}_a^k = \de_a^k
\end{split}
}
For $|s| \leq \th$, let $\Phi_s$ be as in \eqref{eq:coarseScaleFlow} and $\| \cdot \|^2$ denote the Frobenius norm.  Using \eqref{eq:evolNabGaI}, we have
\ali{
\fr{d}{ds} \| \nb \Ga_I \|^2(\Phi_s(t(I),x)) &= \sum_{a, k = 1}^3 (\nb \Ga_I)_a^k(\Phi_s)[(\pr_t + v_\ep \cdot \nab) (\nb \Ga_I)_a^k](\Phi_s) \notag \\
\left| \fr{d}{ds} \| \nb \Ga_I \|^2(\Phi_s(t(I),x)) \right| &\leq C_0 \co{\nb v_\ep} \| \nb \Ga_I \|^2(\Phi_s) \notag \\
\tx{ Gronwall }\quad  \Rightarrow \| \nb \Ga_I \|^2(\Phi_s(t(I),x)) &\leq \| \mbox{ Id } \| e^{C_0 \co{ \nb v_\ep }|s| } \leq \| \mbox{ Id } \| e^{C_0 \co{ \nb v } |s| }  \notag\\
|s| \co{\nb v} \leq 1 \quad \Rightarrow \| \nb \Ga_I \|^2(\Phi_s(t(I),x)) &\leq C_0^2 \notag
}
Since $\Phi_{s}(t(I), \cdot)$ maps $\{ t(I) \} \times \T^3$ onto $\{ t(I) + s \} \times \T^3$, \eqref{eq:nbGaIbd} follows.  Then \eqref{eq:DtNbGaI} follows from \eqref{eq:evolNabGaI} and \eqref{eq:nbGaIbd}.  To obtain \eqref{eq:closeToIdGaI}, write
\ALI{
\nb \Ga_I(\Phi_s(t(I), x) ) - \mbox{Id} &= \nb \Ga_I(\Phi_s(t(I), x) ) - \nb \Ga_I(\Phi_0(t(I), x) ) \\
&= \int_0^s \fr{d}{d\si} \nb \Ga_I(\Phi_\si(t(I), x) ) \\ 
&= \int_0^s [(\pr_t + v_\ep \cdot \nb) \nb \Ga_I] ( \Phi_\si(t(I), x)) ,
}
then apply \eqref{eq:nbGaIbd},\eqref{eq:DtNbGaI} and $|s| \leq \th$.  

To see the invertibiliy of $\nb \Ga_I$, let $Y_b^a$ be the unique matrix-valued solution to the equation
\ali{
\begin{split} \label{eq:inverseEqn}
(\pr_t + v_\ep^i \nb_i) Y_b^a &= \nb_i v_\ep^a Y_b^i \\
Y_b^a(t(I), x) &= \mbox{Id}^a_b
\end{split}
}
Then $Y_b^a = (\nb \Ga_I^{-1})_b^a$ is also the unique inverse to $\nb \Ga_I$, as we have
\ALI{
(\pr_t + v_\ep^i \nb_i) [ (\nb \Ga_I)_a^k Y_b^a ] &= - \nb_i \Ga_I^k \nb_a v_\ep^i Y_b^a + \nb_a \Ga_I^k \nb_i v_\ep^a Y_b^i = 0\\
(\nb \Ga_I)_a^k Y_b^a(t(I), x) &= \mbox{Id}_b^k
}
The proof of the estimates \eqref{eq:nbGaIbd}-\eqref{eq:closeToIdGaI} for $(\nb \Ga_I^{-1})_b^a$ proceed exactly as for $\nb \Ga_I$, but using \eqref{eq:inverseEqn}.
\end{proof}

\section{Ansatz for the Correction} \label{sec:Ansatz}
We now explain the Ansatz for the correction.   The Ansatz used here is equivalent to the one in \cite{danSze} with the only significant differences being the presence of time cutoffs and the use of multiple waves.


The new Euler-Reynolds flow $(v_1, p_1, R_1)$ in the conclusion of Lemma~\ref{lem:convexInt} will have a velocity field of the form $v_1 = v + V$.  The correction $V$ is a sum of divergence free vector fields $V_J$ indexed by $J = (I, f) \in \Z \times \F$.  The integer part $I \in \Z$ will specify the $t(I) \in \R$ around which $V_J$ is supported in time, while the index $f \in \F$ will correspond to the direction in which $V_J$ takes values. 
\ali{
V^\ell &= \sum_{J \in \Z \times \F} V_J^\ell, \qquad \nb_\ell V_J^\ell = 0 \quad \tx{ for all } J \in \Z \times \F \notag
}
The leading order term in each $V_J$, $J = (I, f) \in \Z \times \F$, has the structure of a Mikado flow (as described in Section~\ref{sec:mikado}), but rescaled to have a large frequency $\la \in \Z$ and made to move along the coarse scale flow by composition with the back-to-labels map 
\ali{
V_J^\ell &= \VR_J^\ell + \de V_J^\ell \notag \\
\VR_J^\ell(t,x) &= v_J^\ell(t,x) \psi_f(\la \Ga_I(t,x)), \quad J = (I, f) \label{eq:leadingTerm}
}
The leading order term $\VR_J$ is divergence free to leading order in the large parameter $\la$, as we will have
\ali{
v_J^\ell \nb_\ell[ \psi_f(\la \Ga_I(t,x)) ] &= 0 \label{eq:keyCancel}
}
The lower order term $\de V_J^\ell$ will be chosen such that $V_J^\ell$ is exactly divergence free.

More precisely, the amplitude $v_J^\ell$ in \eqref{eq:leadingTerm} will have the form
\ali{
v_J^\ell &= e_I^{1/2}(t) \ga_J(t,x) (\nb \Ga_I^{-1})_a^\ell f^a  \label{eq:vJAnsatz}
}
for functions $e_I^{1/2}(t)$ and $\ga_J(t,x)$ to be described shortly.  The identity \eqref{eq:keyCancel} follows from this Ansatz thanks to the presence of $(\nb \Ga_I^{-1})$ and \eqref{eq:divFreePsif}.  The function $e_I^{1/2}(t)$ is required to satisfy
\ali{
\suppt e^{1/2}_I(t) &\subseteq [t(I) - \th, t(I) + \th] \label{eq:suppteFnctn}
}

To correct the Ansatz~\eqref{eq:leadingTerm} and ensure the divergence free condition, for each $f \in \F$ choose a smooth $(2,0)$ tensor field $\Om_f^{\a \b} : \T^3 \to \R^3 \otimes \R^3$ such that $\Om_f^{\a \b}$ is anti-symmetric in $\a,\b$, and
\ali{
\nb_\a \Om_f^{\a \b}(X) &= \psi_f(X) f^\b \label{eq:antiDiv} \\
\int_{\T^3} \Om_f^{\a \b}(X) dX &= 0 \label{eq:Omfint0}
}
One can take for instance $\Om_f^{\a \b} = \nb^\a \De^{-1} [ \psi_f f^\b ] - \nb^\b \De^{-1}[\psi_f f^\a]$.  The existence of this choice relies on the fact that $\psi_f(X) f^\ell$ is divergence free and has integral $0$ on $\T^3$.

We now define
\ali{
V_J^\ell &= \la^{-1} \nb_a[(\nb \Ga_I^{-1})^a_\a (\nb \Ga_I^{-1})_\b^\ell e^{1/2}_I(t) \ga_J \Om_f^{\a \b}(\la \Ga_I) ] , \quad J = (I,f) \in \Z \times \F \label{eq:bigDivFormula}
}
The $V_J^\ell$ above is divergence free because it is the divergence of an antisymmetric tensor.  (The tensor within the brackets is antisymmetric in $a, \ell$ because $\Om_f^{\a \b} = -\Om_f^{\b\a}$ is antisymmetric in $\a, \b$.)  Expanding the divergence in \eqref{eq:bigDivFormula} and using \eqref{eq:antiDiv}, one sees that $V_J^\ell$ has the form \eqref{eq:leadingTerm} for
\ali{
\de V_J^\ell &= \de v_{J,\a\b}^\ell \Om_f^{\a \b}(\la \Ga_I)  \label{eq:deVJdevJ}\\
\de v_{J,\a\b}^\ell &= \la^{-1}\nb_a[(\nb \Ga_I^{-1})^a_\a (\nb \Ga_I^{-1})_\b^\ell e_I^{1/2}(t) \ga_J(t,x) ]  \label{eq:devJform}
}

\section{The Error Terms}
Let $(v,p,R)$ be the given Euler-Reynolds flow obeying the assumptions of Lemma~\ref{lem:convexInt}.  The new Euler-Reynolds flow $(v_1, p_1, R_1)$ will have the form $v_1 = v + V$, $p_1 = p + P$, and $R_1^{j\ell}$ that satisfies
\ali{
\nb_j R_1^{j\ell} &= \pr_t V^\ell + \nb_j[v^j V^\ell + V^j v^\ell + V^j V^\ell + P \de^{j\ell} + R^{j\ell}] \notag
}
We will define mollifications $v_\ep$ and $R_\ep$ to approximate $v$ and $R$.  Recall also the decomposition $V^\ell = \sum_J \VR_J^\ell + \de V_J^\ell$.  In terms of these, the new error $R_1^{j\ell}$ will be composed of terms that solve 
\ali{
R_1^{j\ell} &= R_M^{j\ell} + R_T^{j\ell} + R_S^{j\ell} + R_H^{j\ell} \label{eq:R1decomp} \\
R_M^{j\ell} &= (v^j - v_\ep^j) V^\ell + V^j ( v^\ell - v_\ep^\ell) + (R^{j\ell} - R_\ep^{j\ell}) \label{eq:mollifTerm}\\
\nb_j R_T^{j\ell} &=  \pr_t V^\ell + v_\ep^j \nb_j V^\ell + V^j \nb_j v_\ep^\ell \label{eq:transTerm}\\
R_S^{j\ell} &= \sum_{J, K \in \Z \times \F} \de V_J^j \VR_K^\ell + \VR_J^j \de V_K^\ell + \de V_J^j \de V_K^\ell \label{eq:lowOrderProducterm} \\
\nb_j R_H^{j\ell} &= \nb_j \left[ \sum_{J \in \Z \times \F} \VR_J^j \VR_J^\ell + P \de^{j\ell} + R_\ep^{j\ell} \right] \label{eq:highFreqTerm}
}
Note that we used $\nb_j V_J^j = 0$ to obtain \eqref{eq:transTerm}.  Note also that \eqref{eq:lowOrderProducterm} is symmetric in $j,\ell$ due to the double sum over $\Z \times \F$.  In order to obtain \eqref{eq:highFreqTerm}, a key cancellation comes from the fact that $\supp \VR_J \cap \supp \VR_K = \emptyset$ for all $J, K \in \Z \times \F, J \neq K$, which eliminates all the cross terms in the product.  This disjointness of support follows from \eqref{eq:disjtSuppPsif}, \eqref{eq:suppteFnctn} and \eqref{ct:disjointness}.

The amplitudes $v_J^\ell$ in \eqref{eq:leadingTerm}-\eqref{eq:vJAnsatz} and the correction $P$ to the pressure will be chosen such that
\ali{
\sum_{J \in \Z \times \F} v_J^j v_J^\ell + P \de^{j\ell} + R_\ep^{j\ell} &= 0 \label{eq:stressEqn}
}
In this way, the ``low-frequency'' part of \eqref{eq:highFreqTerm}  will cancel out, and \eqref{eq:highFreqTerm} becomes (using \eqref{eq:leadingTerm},\eqref{eq:vJAnsatz})
\ali{
\nb_j R_H^{j\ell} &= \sum_{J \in \Z \times \F} \nb_j[ v_J^j v_J^\ell ( \psi_f^2(\la \Ga_I) - 1) ] , \quad J = (I,f) \label{eq:highFreqReduced}
}

\section{The Algebraic Equation} \label{sec:algEqn}
In this Section, we specify how the $e_I^{1/2}(t)$, $\ga_J(t,x)$ and $P$ above are chosen so that \eqref{eq:stressEqn} is satisfied.

From Lemma~\ref{lem:convexInt}, there is a decomposition $R = \sum_I R_I$ with $\suppt R_I \subseteq [t(I) - \fr{\th}{2}, t(I) + \fr{\th}{2}]$.  We will define $R_\ep^{j\ell} = \eta_\ep \ast R^{j\ell}$ by mollifying only in the spatial variables, and hence we obtain an analogous decomposition $R_\ep^{j\ell} = \sum_I R_{I,\ep}^{j\ell}$, with $R_{I,\ep}^{j\ell} = \eta_\ep \ast R_I^{j\ell}$ supported in the same time intervals $\suppt R_{I,\ep}^{j\ell} \subseteq [t(I) - \fr{\th}{2}, t(I) + \fr{\th}{2}]$.  

Writing $P = \sum_I P_I$, equation \eqref{eq:stressEqn} now reduces to choosing $v_J$ and $P_I$ such that for all $I \in \Z$
\ali{
\sum_{J \in I \times \F} v_J^j v_J^\ell + P_I \de^{j\ell} + R_{I,\ep}^{j\ell} &= 0. \label{eq:localStress}
}
We take $P_I = - e_I(t)$, and \eqref{eq:localStress} reduces to
\ali{
\sum_{J \in I \times \F} v_J^j v_J^\ell = e_I(t) \sum_{J \in I \times \F} \ga_J^2 (\nb \Ga_I^{-1})^j_a (\nb \Ga_I^{-1})_b^\ell f^a f^b &= e(t) \de^{j\ell} - R_{I, \ep}^{j\ell} \notag
}
Assuming that we can divide by $e_I(t)$, this equation will hold if we have for all $I \in \Z$ 
\ali{
\sum_{J \in I \times \F} \ga_J^2 (\nb \Ga_I^{-1})^j_a (\nb \Ga_I^{-1})_b^\ell f^a f^b &= \de^{j\ell} + \varep_I^{j\ell} \label{eq:gaJeqnwithInverse} \\
\varep_I^{j\ell} &= - e_I(t)^{-1} R_{I,\ep}^{j\ell} \notag
}
To ensure that the above division is well-behaved, we choose $e_I^{1/2}(t)$ to have the form
\ali{
e_I^{1/2}(t) &= [ K C_\de \lhxi ~ e_R ]^{1/2} \eta_{\th/8} \ast_t 1_{[t(I) - 3 \th/4, t(I) + 3\th/4 ]}(t) \label{eq:eIhalf}
}
In the above formula, $C_\de$ is the constant in the upper bound \eqref{eq:newRIbdGlue}, $\eta_{\th/8}(\tau)$ is a standard mollifying kernel in the time variable supported in $|\tau| \leq \fr{\th}{8}$, $1_{[t(I) - 3 \th/4, t(I) + 3\th/4 ]}$ is the characteristic function of $[t(I) - 3\th/4, t(I) + 3\th/4]$ and $K$ is a large constant to be determined shortly.  Note that the support restriction \eqref{eq:suppteFnctn} is satisfied by the above formula.

From the support property of $R_{I,\ep}$ and \eqref{eq:newRIbdGlue} (which holds also for $R_{I,\ep}$), we have
\ali{
\co{ \varep_I } &\leq K^{-1} \label{eq:varepC0bd}
}
Applying $(\nb \Ga_I)$ to \eqref{eq:gaJeqnwithInverse}, it suffices have for all $I \in \Z$ and all $(t,x) \in [t(I)-\th, t(I)+\th] \times \T^3$ that
\ali{
\sum_{J \in \{I\}\times \F} \ga_J^2 f^j f^\ell &= (\nb \Ga_I)_a^j (\nb \Ga_I)_b^\ell( \de^{ab} + \varep^{ab} ), \qquad J = (I, f) \in \Z \times \F \label{eq:goodgaJeqn}
}
At each point $(t,x)$, the right hand side belongs to the space $\SS \subseteq \R^3 \otimes \R^3$ of symmetric $(2,0)$ tensors.  From our choice of $\F$ in \eqref{eq:choseFDirects}, the following claims hold
\ali{
\tx{ The tensors } (f^j f^\ell)_{f \in \F} &\tx{ form a basis for } \SS \label{eq:basisclaim}\\
\sum_{f \in \F} \fr{1}{4} f^j f^\ell &= \de^{j\ell} \label{eq:symmetryIdentity}
}

\noindent Viewing the right hand side of \eqref{eq:goodgaJeqn} as a perturbation of $\de^{j\ell}$, we assume $\ga_{(I,f)}^2$ will have the form
\ali{
\ga_{(I,f)}^2(t,x) &= \fr{1}{4} + a_{(I,f)}(t,x) + b_{(I,f)}(t,x) \label{eq:gaIfsqDef} \\
\sum_{f \in \F} a_{(I,f)} f^j f^\ell &= [(\nb \Ga_I)_a^j (\nb \Ga_I)_b^\ell - \mbox{Id}_a^j \mbox{Id}_b^\ell] \de^{ab} \\
\sum_{f \in \F} b_{(I,f)} f^j f^\ell &= (\nb \Ga_I)_a^j (\nb \Ga_I)_b^\ell \varep_I^{ab} \label{eq:bfeqnGaEp}
}
Inverting to solve for $b_{(I,f)}$ and $a_{(I,f)}$ (which is possible by Claim~\eqref{eq:basisclaim}), we have the following bounds
\ali{
\co{a_{(I,f)}} &\leq C_0 (1 + \co{ \nb \Ga_I } )\co{ (\nb \Ga_I) - \mbox{Id}  } \label{eq:aIfbdis} \\
&\stackrel{\eqref{eq:closeToIdGaI}}{\leq} C_0 \th \co{\nb v} \label{eq:nearToIdbd}\\
\co{b_{(I,f)}} &\leq C_0 \co{ \nb \Ga_I }^2 \co{\varep_I} \\
&\stackrel{\eqref{eq:varepC0bd}}{\leq} C_0 K^{-1} \label{eq:bIftermchooseK}
}
We choose $K$ to be an absolute constant such that the last term is bounded by $20^{-1}$.  The right hand side of \eqref{eq:nearToIdbd} is bounded by $C_0 b_0$ where $b_0$ appears in the bound \eqref{eq:b0bd}.  We now choose $b_0$ an absolute constant such that \eqref{eq:nearToIdbd} is at most $20^{-1}$.  With these choices, we can take the positive square root in \eqref{eq:gaIfsqDef} to define $\ga_J$, which then solves \eqref{eq:goodgaJeqn} thanks to \eqref{eq:symmetryIdentity}.  

Note that we have now represented the coefficients  $\ga_J(t,x) = \ga_{(I,f)}(t,x)$ in the form
\ali{
\ga_{(I,f)}(t,x) &= \ga_f(\nb \Ga_I, \varep_I), \label{eq:gaJformula}
}
where $\ga_f : \overline{K} \to \R$ is one of $6$ smooth functions $(\ga_f)_{f \in \F}$ that are defined on an appropriate, compact subset $\overline{K} \subseteq \R^{3 \times 3} \times \SS$ containing the range of $(\nb \Ga_I, \varep_I)$ and that are bounded by $\sup_{\overline{K}} |\ga_f(\cdot)| \leq 1$.  

The construction is now entirely specified except for the definitions of $v_\ep$ and $R_\ep$ and the choice of the large parameter $\la \in \Z$.  The choices of these terms will be governed by the estimates we need to prove.  We start by defining $v_\ep$ and $R_\ep$.

\section{The Coarse Scale Velocity Field and Stress Tensor} \label{sec:mollPrelim}
Following \cite{isett}, the regularization of $R^{j\ell}$ will have a double mollification structure $R_\ep^{j\ell} := \eta_\ep \ast \eta_\ep \ast R^{j\ell}$.  The double-mollification structure will play a role in the advective derivative bounds of Proposition~\ref{prop:DdtRepbds} below.  The mollifying kernel $\eta_\ep$ has support in $|h| \leq \ep$ and satisfies the vanishing moment condition $\int_{\R^3} h^a \eta_\ep(h) dh = 0$ for each co-ordinate $a = 1,2,3$, so that $\co{ R - \eta_\ep \ast R } \leq C_0 \ep^2 \co{\nb^2 R} $ holds\footnote{A proof of this statement, which is well-known, can be found in \cite[Section 14]{isett}.}.  \hltRed{We may also take $\eta$ even and non-negative for convenience.}

The choice of $\ep$ here is dictated by the bound on $R - R_\ep$, which is given by
\ALI{
\co{R - R_\ep} &\leq \co{ R - \eta_\ep \ast R} + \co{ \eta_\ep \ast [ R - \eta_\ep \ast R ] } \\
&\leq C_0 \ep^2 \co{\nb^2 R } \\
\co{R - R_\ep} &\stackrel{\eqref{eq:newRIbdGlue}}{\lsm} \ep^2 \lhxi ~ \Xi^2 e_R
}
(Recall that constants in the $\lsm$ notation can depend on the $C_1$ and $C_\de$ in the hypotheses of Lemma~\ref{lem:convexInt}.)

Take $\ep = \ep_R$ to have the form $\ep_R = c_R N^{-1/2} \Xi^{-1}$, where $c_R$ is a small constant chosen to imply
\ali{
\co{R - R_\ep} &\leq \lhxi \fr{ e_R}{500 N} \label{eq:rminusRepbd}
}
This choice leads to the estimates
\ali{
\co{\nb^k R_\ep} &\lsm_k \lhxi N^{(k - 2)_+/2} \Xi^k e_R, \label{eq:nbkRep}
}
(which are the same as those in \cite{isett} except for the appearance of $\lhxi$).  To prove \eqref{eq:nbkRep}, use \eqref{eq:newRIbdGlue} for $0 \leq k \leq 2$ and write $\nb^k \eta_\ep \ast \eta_\ep \ast R = [\nb^{k-2} \eta_\ep \ast \eta_\ep ] \ast \nb^2 R$ for $k > 2$.

To define $v_\ep$, consider the error term
\ALI{
R_{M, v1}^{j\ell} &= \sum_J (v^j - v_\ep^j) \VR_J^\ell + \VR_J^j (v^\ell - v_\ep^\ell), \\
&= \sum_J [ (v^j - v_\ep^j) v_J^\ell + v_J^j (v^\ell - v_\ep^\ell)]  \psi_f(\la \Ga_I(t,x)), \qquad J = (I, f)
}
which is part of $R_M^{j\ell}$ in the Mollification term \eqref{eq:mollifTerm}.  Note that $v_J^\ell$ is not well-defined until $\nb \Ga_I$ is chosen; however, the term $\ga_J = \ga_f( \nb \Ga_I, \varep_I )$ in $v_J^\ell$ that involves $\nb \Ga_I$ is a priori bounded by a constant $\co{ \ga_J } \leq \sup \ga_f \leq 1$.  From \eqref{eq:eIhalf} and \eqref{eq:vJAnsatz}, we obtain an a priori bound
\ALI{
\sup_J \co{ v_J^\ell } &\lsm \plhxi^{1/2} e_R^{1/2} \\
\co{ R_{M,v1}^{j\ell} } &\lsm \co{ v - v_\ep } \plhxi^{1/2} e_R^{1/2} \\
&\lsm \ep^2 \Xi^2 e_v^{1/2} \plhxi^{1/2} e_R^{1/2},
}
assuming our mollifier satisfies the same vanishing moment condition as in the $R_\ep$ case.  

Take $\ep = \ep_v$ to have the form $\ep_v = c_v N^{-1/2} \Xi^{-1}$, with $c_v$ is a small constant such that
\ali{
\co{ R_{M,v1}^{j\ell} } &\leq \plhxi^{1/2}\fr{e_v^{1/2} e_R^{1/2}}{500 N} \label{eq:RMvepvchoice}
}
This choice of $\ep$ and \eqref{eq:newVelocBdGlue} lead to the bounds
\ali{
\co{\nb^k v_\ep} &\lsm_k N^{(k - 2)_+/2} \Xi^k e_v^{1/2}.  \label{eq:vepBounds}
}
These are the same bounds as those satisfied by the $v_\ep$ in \cite[Section 15]{isett} in the case of frequency energy levels of order $L = 2$ of the Main Lemma in that paper.  This coincidence is due to how we have chosen the same values of $\ep_v$ and $\ep_R$ as in that paper.

With the above choices, we obtain the following estimates for the advective derivative of $R_\ep$.
\begin{prop} \label{prop:DdtRepbds} Let $\Ddt = \pr_t + v_\ep \cdot \nab$ denote the coarse scale advective derivative operator.  Then
\ali{
\co{\nb^k \Ddt R_\ep } &\lsm_k \plhxi^{3}N^{(k - 1)_+/2} \Xi^{k+1} e_v^{1/2} e_R \label{eq:logLossTransRep}
}  
\end{prop}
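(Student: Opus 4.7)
The plan is to separate $\bar D_t R_\epsilon$ into a piece where the material derivative acts on $R$ itself (controlled by the hypothesis on $(\pr_t + v \cdot \nab) R_I$) and a commutator that is small thanks to the double-mollification structure $R_\epsilon = \eta_\epsilon \ast \eta_\epsilon \ast R$. Writing $\tilde\eta_\epsilon := \eta_\epsilon \ast \eta_\epsilon$ and using $\pr_t R = (\pr_t + v \cdot \nab) R - v \cdot \nab R$, the starting identity is
\ali{
\bar D_t R_\epsilon = \tilde\eta_\epsilon \ast \bigl[(\pr_t + v \cdot \nab) R\bigr] + \bigl( v_\epsilon \cdot \tilde\eta_\epsilon \ast \nab R - \tilde\eta_\epsilon \ast (v \cdot \nab R) \bigr) =: \MM_\epsilon + \CC_\epsilon.
}

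For $\MM_\epsilon$, I will use the decomposition $R = \sum_I R_I$. Since the $\suppt R_I$ are pairwise disjoint by \eqref{ct:disjointness} and $\tilde\eta_\epsilon$ convolves only in space, at each fixed time at most one $R_I$ contributes and it suffices to estimate each term. For $k \leq 2$, hypothesis \eqref{eq:newDtRIbdGlue} directly gives $\co{\nab^k \MM_\epsilon} \lsm \co{\nab^k (\pr_t + v \cdot \nab) R_I} \lsm \plhxi^3 \Xi e_v^{1/2} \Xi^k e_R$. For $k \geq 3$ I will redistribute derivatives as $\nab^k \tilde\eta_\epsilon \ast = (\nab^{k-2} \tilde\eta_\epsilon) \ast \nab^2$, paying $\epsilon^{-(k-2)} \sim N^{(k-2)/2} \Xi^{k-2}$ on the mollifier and invoking the $|a|=2$ case of \eqref{eq:newDtRIbdGlue} on the two remaining derivatives. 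Together with $\epsilon \sim N^{-1/2} \Xi^{-1}$, this stays within the target $\plhxi^3 N^{(k-1)_+/2} \Xi^{k+1} e_v^{1/2} e_R$, in fact with an $N^{1/2}$ margin of slack for $k \geq 3$.

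For $\CC_\epsilon$, I will exploit the double-mollification structure by setting $g := \eta_\epsilon \ast \nab R$ and splitting
\ali{
\CC_\epsilon = \bigl[(\eta_\epsilon \ast v)(\eta_\epsilon \ast g) - \eta_\epsilon \ast (v g)\bigr] + \eta_\epsilon \ast \bigl[ v \cdot (\eta_\epsilon \ast \nab R) - \eta_\epsilon \ast (v \cdot \nab R) \bigr].
}
The first bracket is immediately of product-commutator form $(\eta_\epsilon \ast f)(\eta_\epsilon \ast h) - \eta_\epsilon \ast (f h)$ with $(f,h) = (v, g)$, so Proposition~\ref{prop:mollifyCommute} yields $\co{\nab^k(\cdot)} \lsm \epsilon^{2-k} \co{\nab v} \co{\nab g} \lsm \epsilon^{2-k} \co{\nab v} \co{\nab^2 R}$. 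The second bracket reduces to the same form after a telescoping adjustment by $\pm (\eta_\epsilon \ast v)(\eta_\epsilon \ast \nab R)$, producing one more application of Proposition~\ref{prop:mollifyCommute} plus a remainder $\eta_\epsilon \ast [(v - v_\epsilon) \cdot \eta_\epsilon \ast \nab R]$ that is manifestly small using $\co{v - v_\epsilon} \lsm \epsilon \co{\nab v}$. Combining with \eqref{eq:vepBounds} and \eqref{eq:nbkRep}, one arrives at $\co{\nab^k \CC_\epsilon} \lsm \epsilon^{2-k} \lhxi \Xi^3 e_v^{1/2} e_R$, strictly smaller than the target by at least a factor of $N^{-1/2}$ and two powers of $\log \hxi$.

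The main difficulty I anticipate is purely bookkeeping: verifying at each derivative order $k$ that the distribution of derivatives between the mollifier and the ``raw'' quantity matches the claimed weight $\plhxi^3 N^{(k-1)_+/2} \Xi^{k+1} e_v^{1/2} e_R$, and that the single factor of $\log \hxi$ coming from $\co{\nab^{\leq 2} R_I} \lsm \lhxi e_R$ never cascades into more than the target $\plhxi^3$. Since my derivative count actually gives $N^{(k-2)_+/2}$ where the claim allows $N^{(k-1)_+/2}$, there is $N^{1/2}$ of slack to absorb any lower-order product-rule terms arising in the commutator decomposition, so the argument should close routinely.
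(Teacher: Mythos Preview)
Your proposal is correct and is in fact a direct unpacking of what the paper does by citation. The paper's proof is a one-line reduction: it rescales $\widetilde{R} = \plhxi^{-3} R$, observes that $\widetilde{R}$ then satisfies the hypotheses of \cite[Definition~10.1]{isett} at order $L=2$ (precisely because \eqref{eq:newRIbdGlue}--\eqref{eq:newDtRIbdGlue} carry one respectively three powers of $\log\hxi$), and invokes \cite[Proposition~18.6]{isett} as a black box. Your decomposition $\bar D_t R_\ep = \MM_\ep + \CC_\ep$, with $\MM_\ep$ controlled by the assumed bound on $(\pr_t + v\cdot\nab)R_I$ and $\CC_\ep$ handled via the quadratic commutator estimate (Proposition~\ref{prop:mollifyCommute}) applied to the double-mollification structure, is exactly the mechanism that lives inside that cited proposition. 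Your version is more self-contained; the paper's is terser.

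One small technical point to watch: in this paper $v_\ep$ and $R_\ep$ are mollified at different length scales $\ep_v = c_v N^{-1/2}\Xi^{-1}$ and $\ep_R = c_R N^{-1/2}\Xi^{-1}$, so your first bracket $(\eta_\ep\ast v)(\eta_\ep\ast g) - \eta_\ep\ast(vg)$ is not literally of the form needed for Proposition~\ref{prop:mollifyCommute} with a single $\ep$. The fix is harmless: add and subtract $\eta_{\ep_R}\ast v$ in place of $v_\ep$, apply the commutator estimate with $\ep_R$, and note that the residual $(v_\ep - \eta_{\ep_R}\ast v)\cdot(\tilde\eta_{\ep_R}\ast\nab R)$ obeys the same bound as your remainder term since $\co{v_\ep - \eta_{\ep_R}\ast v} \lsm (\ep_v+\ep_R)\co{\nab v}$. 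With that adjustment your bookkeeping closes with the slack you identified.
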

We deduce this Proposition from \cite[Proposition 18.6]{isett}.  The proof of that Proposition is where the double mollification structure of $R_\ep$ is used to control higher order derivatives.
\begin{proof}  Define $\widetilde{R}^{j\ell} = \plhxi^{-3} R^{j\ell}$.  Then $\wtld{R}^{j\ell}$ satisfies the bounds
\ALI{
\co{\nb^k \wtld{R} } &\lsm \Xi^k e_R, \quad k = 0, 1, 2 \\
\co{\nb^k (\pr_t + v \cdot \nab) \wtld{R}} &\leq \Xi^{k+1} e_v^{1/2} e_R, \quad k = 0, 1
}
These are the same estimates as those assumed on $R^{j\ell}$ in \cite[Definition 10.1]{isett} in the case of frequency energy levels to order $L = 2$.  Moreover, we have chosen the mollification parameters $\ep_v$ and $\ep_R$ to be the same as in \cite[Sections 15, 18.3]{isett} for the case $L=2$.  Thus the estimates of the $L=2$ case of \cite[Proposition 18.6]{isett} (which are the same as \eqref{eq:logLossTransRep} without the logarithmic factor) apply to $\wtld{R}$.  We note that \cite[Proposition 18.6]{isett} involves only the spatial mollification of $R$ (not the mollification in time along the flow) and that the proof does not involve estimates for $\nb p$.
\end{proof}
Having specified the mollification parameters and proven bounds on $v_\ep$ and $R_\ep$, we now turn to estimating the terms in the construction.
\section{Estimates for the Construction}
In Sections~\ref{sec:lowFreqbds}-\ref{sec:concludingProof} below, we prove all the required estimates for Lemma~\ref{lem:convexInt}.  The concluding Section~\ref{sec:concludingProof} reviews where each conclusion of Lemma~\ref{lem:convexInt} has been proven.

\subsection{Estimates for Low-Frequency Terms in the Construction} \label{sec:lowFreqbds}
In this Section we prove estimates for the low frequency terms in the construction, namely $(\nb \Ga_I)$, $(\nb \Ga_I^{-1})$, $\ga_J$, $\varep_I$, and the amplitudes $v_J$ and $\de v_{J,\a\b}$.

\begin{prop} \label{prop:backToLabelsBds}  The following estimates hold for the back-to-labels map
\ali{
\co{\nb^k (\nb \Ga_I) } + \co{\nb^k (\nb \Ga_I^{-1}) } &\lsm_k N^{(k-1)_+/2} \Xi^k, \quad \tx{ for all } k \geq 0 \label{eq:nbknbGaIbd} \\
\co{\nb^k \Ddt(\nb \Ga_I) } + \co{\nb^k \Ddt(\nb \Ga_I^{-1}) } &\lsm_k N^{(k-1)_+/2} \Xi^{k +1} e_v^{1/2} , \quad \tx{ for all } k \geq 0 \label{eq:nbkDdtnbGaIbd}
}
\end{prop}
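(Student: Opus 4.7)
The plan is to induct on $k$, using the transport equations \eqref{eq:evolNabGaI} and \eqref{eq:inverseEqn} together with the mollified-velocity bounds \eqref{eq:vepBounds} and the flow-composition argument already deployed in Proposition~\ref{prop:backToLabels}. The base case $k=0$ of \eqref{eq:nbknbGaIbd} is exactly \eqref{eq:nbGaIbd} (noting $(0-1)_+ = 0$ and $\Xi^0 = 1$), while the $k=0$ case of \eqref{eq:nbkDdtnbGaIbd} is immediate by taking $C^0$ norms in \eqref{eq:evolNabGaI} and combining with $\co{\nb v_\ep} \lsm \Xi e_v^{1/2}$ from \eqref{eq:vepBounds}.

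For the inductive step in \eqref{eq:nbknbGaIbd}, I apply $\nb^k$ to \eqref{eq:evolNabGaI} and commute with the transport operator to obtain
\[
(\pr_t + v_\ep^i \nb_i)\,\nb^k(\nb \Ga_I) \;=\; -\nb^k\!\big(\nb v_\ep \cdot \nb \Ga_I\big) \;-\; [\nb^k,\, v_\ep \cdot \nb](\nb \Ga_I),
\]
whose right-hand side is a sum of Leibniz products $\nb^\alpha v_\ep \otimes \nb^\beta(\nb \Ga_I)$ with $\alpha + \beta = k+1$ and $\alpha \geq 1$. I separate out the single term $\alpha=1,\,\beta=k$ (which contains the unknown $\nb^k(\nb\Ga_I)$ itself and will be absorbed by Gronwall) from all the rest, which have $\beta \leq k-1$. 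By the inductive hypothesis and \eqref{eq:vepBounds}, each of the remaining products is bounded in $C^0$ by
\[
N^{(\alpha - 2)_+/2}\, \Xi^\alpha e_v^{1/2} \cdot N^{(\beta-1)_+/2}\, \Xi^\beta \;\lsm_k\; N^{(k-1)_+/2}\, \Xi^{k+1}\, e_v^{1/2},
\]
where the $N$-exponent is controlled by \eqref{ineq:counting}. Composing with the coarse scale flow $\Phi_s$ of \eqref{eq:coarseScaleFlow} and running the differential-inequality / Gronwall argument of Proposition~\ref{prop:backToLabels}, the vanishing initial condition $\nb^k(\nb \Ga_I)(t(I),\cdot) = 0$ (valid for $k \geq 1$) together with the boundedness of the Gronwall exponential ($|s| \co{\nb v_\ep} \leq \th \co{\nb v} \leq b_0 \lsm 1$) yield
\[
\co{\nb^k(\nb \Ga_I)(t)} \;\lsm_k\; \th \cdot N^{(k-1)_+/2}\, \Xi^{k+1}\, e_v^{1/2} \;\lsm_k\; N^{(k-1)_+/2}\, \Xi^k,
\]
using $\th \Xi e_v^{1/2} \lsm 1$ from \eqref{ineq:thBound}. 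The matrix $(\nb \Ga_I^{-1})$ satisfies \eqref{eq:inverseEqn}, whose right-hand side has identical Leibniz structure, so the same induction closes the proof of \eqref{eq:nbknbGaIbd}.

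With \eqref{eq:nbknbGaIbd} in hand, the advective-derivative bounds \eqref{eq:nbkDdtnbGaIbd} follow without any further iteration. Directly from \eqref{eq:evolNabGaI} and \eqref{eq:inverseEqn} I have $\Ddt(\nb \Ga_I) = -\nb v_\ep \cdot \nb \Ga_I$ and $\Ddt(\nb \Ga_I^{-1}) = \nb v_\ep \cdot (\nb \Ga_I^{-1})$, so $\nb^k$ applied to these produces sums of products $\nb^{\alpha+1} v_\ep \otimes \nb^\beta(\cdot)$ with $\alpha + \beta = k$. Using \eqref{eq:vepBounds} on the velocity factor, the just-proven \eqref{eq:nbknbGaIbd} on the back-to-labels factor, and \eqref{ineq:counting} to sum $N$ exponents, every such product is bounded by $C_k\, N^{(k-1)_+/2} \Xi^{k+1} e_v^{1/2}$, which is the desired estimate.

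The only real work is the $N$-power bookkeeping: in each product $\nb^\gamma v_\ep \otimes \nb^\beta(\nb\Ga_I)$ arising on the right-hand sides above, the sum $(\gamma - 2)_+/2 + (\beta - 1)_+/2$ must never exceed $(k-1)_+/2$, which is immediate from \eqref{ineq:counting}. There is no genuine analytic obstacle beyond the standard transport-Gronwall scheme already applied in Proposition~\ref{prop:backToLabels}.
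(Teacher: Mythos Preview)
Your proof is correct and follows essentially the same approach as the paper: the paper defers to \cite[Propositions~17.3, 17.5]{isett}, which carry out precisely this induction-plus-Gronwall argument on the transport equation \eqref{eq:evolNabGaI} (packaged there via a weighted energy, but with the same Leibniz bookkeeping, the same use of \eqref{eq:vepBounds}, and the same time-scale gain $\th\,\Xi e_v^{1/2}\lsm 1$). Your treatment of $(\nb\Ga_I^{-1})$ via the parallel equation \eqref{eq:inverseEqn} is also one of the two options the paper explicitly mentions.
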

\begin{proof}
The estimates \eqref{eq:nbknbGaIbd} and \eqref{eq:nbkDdtnbGaIbd} for $\nb \Ga_I$ follow from Propositions 17.3 and 17.5 of \cite{isett}.  There the estimates are performed for a solution $\xi_I$ to
\ali{
(\pr_t + v_\ep \cdot \nab) \xi_I &= 0 \\
\xi_I(t(I),x) &= \hat{\xi}_I(x)
}
where the initial data $\hat{\xi}_I$ is linear with $|\nb \hat{\xi}_I| \leq C_0$.  Each component $\Ga_I^k$ of $\Ga_I$ therefore falls into the framework of those estimates.  Alternatively, one can adapt the proof of those estimates for the system \eqref{eq:evolNabGaI} while modifying the dimensionless energy for $\nb \xi_I$ \cite[Definition 17.1]{isett} to involve Frobenius norms of the matrix $\nb \Ga_I$ and its derivatives.  We note that the proof of these estimates does not require control over $\nb p$, which had been assumed in \cite{isett} for the purpose of controlling second order advective derivatives.

The estimates \eqref{eq:nbknbGaIbd} and \eqref{eq:nbkDdtnbGaIbd} for $(\nb \Ga_I^{-1})$ can be deduced from those for $\nb \Ga_I$ by taking spatial derivatives of the equations
\ALI{
(\nb \Ga_I) (\nb \Ga_I^{-1}) &= \mbox{Id} \\
\Ddt (\nb \Ga_I^{-1}) &= - (\nb \Ga_I^{-1}) [ \Ddt (\nb \Ga_I) ] (\nb \Ga_I^{-1})
}
and using the bound $\co{ \nb \Ga_I^{-1} } \leq C_0$ of Proposition~\ref{prop:backToLabels}.  (The second equation comes from applying $\Ddt$ to the first.)  Alternatively, since the evolution equation \eqref{eq:inverseEqn} for $\nb \Ga_I^{-1}$ has the same form as the equation \eqref{eq:evolNabGaI} for $\nb \Ga_I$ (except for the minus sign and order of matrix multiplication), one obtains \eqref{eq:nbknbGaIbd} and \eqref{eq:nbkDdtnbGaIbd} for $(\nb \Ga_I^{-1})$ by applying the proof of \cite[Propositions 17.3, 17.5]{isett} to equation \eqref{eq:inverseEqn}.
\end{proof}

The remaining low frequency building blocks of the construction can be estimated as follows
\begin{prop} \label{prop:lowFreqEstimates} The following estimates hold for all $k \geq 0$
\ali{
\sup_I \co{\nb^k \varep_I} &\lsm_k N^{(k - 2)_+/2} \Xi^{k} \label{eq:varepIbd1} \\
\sup_I \co{\nb^k \Ddt \varep_I} &\lsm_k \plhxi^2 N^{(k-1)_+/2} \Xi^{k+1} e_v^{1/2} \label{eq:varepIDdtbd} \\
\sup_J \co{\nb^k \ga_J} &\lsm_k N^{(k - 1)_+/2} \Xi^{k} \label{eq:gaJbd} \\
\sup_J \co{\nb^k \Ddt \ga_J} &\lsm_k \plhxi^2 N^{(k-1)_+/2} \Xi^{k+1} e_v^{1/2} \label{eq:DdtgaJbd} \\
\sup_J \co{\nb^k v_J } &\lsm_k \plhxi^{1/2} N^{(k - 1)_+/2} \Xi^{k} e_R^{1/2} \label{eq:nbkAmpbd} \\
\sup_J \co{\nb^k \Ddt v_J } &\lsm_k \plhxi^{5/2} N^{(k - 1)_+/2} \Xi^{k +1} e_v^{1/2} e_R^{1/2} \label{eq:nbDdtAmpbd} \\
\sup_{J, \a\b} \co{\nb^k \de v_{J,\a\b} } &\lsm_k \la^{-1} \plhxi^{1/2} N^{k/2} \Xi^{k+1} e_R^{1/2} \label{eq:nbkdeltavJbd}\\
\sup_{J,\a\b} \co{\nb^k \Ddt \de v_{J,\a\b} } &\lsm_k \la^{-1} \plhxi^{5/2} N^{k/2} \Xi^{k+2} e_v^{1/2} e_R^{1/2} \label{eq:DdtbdDevJ}
}
\end{prop}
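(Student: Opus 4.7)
The eight estimates cascade through four building blocks: $\varep_I$ controls $\ga_J$ via the chain rule; these then combine with bounds on $\nb\Ga_I^{\pm 1}$ from Proposition~\ref{prop:backToLabelsBds} and on $e_I^{1/2}(t)$ to control $v_J$ by Leibniz; finally $\de v_{J,\a\b}$ is $\la^{-1}$ times an extra spatial derivative of a $v_J$-like expression via \eqref{eq:devJform}. The only ingredient not already recorded is the elementary bound $|\pr_t^j e_I^{1/2}(t)| \lsm \plhxi^{1/2} e_R^{1/2}\,[\plhxi^2\Xi e_v^{1/2}]^j$, which is immediate from \eqref{eq:eIhalf} (mollification at scale $\th/8$) and the lower bound on $\th$ in \eqref{ineq:thBound}. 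For the $\varep_I$ estimates, the key observation is that on $\suppt R_{I,\ep}\subseteq[t(I)-\th/2, t(I)+\th/2]$ the function $e_I(t)$ equals the constant $K C_\de\lhxi e_R$, so $\varep_I = -R_{I,\ep}/[K C_\de\lhxi e_R]$ there (and vanishes off the support); dividing \eqref{eq:nbkRep} and Proposition~\ref{prop:DdtRepbds} by $\lhxi e_R$, and using $\pr_t e_I\equiv 0$ on that set for the advective bound, yield \eqref{eq:varepIbd1} and \eqref{eq:varepIDdtbd}.

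\textbf{Bounds on $\ga_J$, $v_J$, and $\de v_{J,\a\b}$.} Since $\ga_J = \ga_f(\nb\Ga_I,\varep_I)$ by \eqref{eq:gaJformula} with $\ga_f$ smooth on a fixed compact set containing the range, the higher-order chain rule expresses $\nb^k\ga_J$ as a sum of monomials in $\nb^{k_i}(\nb\Ga_I)$ and $\nb^{k_j}\varep_I$ with $\sum k_i + \sum k_j = k$; each monomial carries $\Xi$-exponent $k$ and $N^{1/2}$-exponent $\sum(k_i-1)_+ + \sum(k_j-2)_+ \leq (k-1)_+$ by \eqref{ineq:counting}, which gives \eqref{eq:gaJbd}. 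For \eqref{eq:DdtgaJbd}, apply $\Ddt$ first via the chain rule and then $\nb^k$ by Leibniz; the dominant contribution is the $\Ddt\varep_I$ term from Step~1, producing the $\plhxi^2$ factor. Leibniz on $v_J^\ell = e_I^{1/2}(t)\,\ga_J\,(\nb\Ga_I^{-1})_a^\ell f^a$ then yields \eqref{eq:nbkAmpbd} directly; for \eqref{eq:nbDdtAmpbd} the product rule produces three contributions --- $\pr_t e_I^{1/2}$ (via the time-derivative bound above), $\Ddt\ga_J$ (via the previous step, multiplied by $e_I^{1/2}\lsm\plhxi^{1/2}e_R^{1/2}$), and $\Ddt(\nb\Ga_I^{-1})$ (subdominant) --- each matching the claimed $\plhxi^{5/2}$ size. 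Finally, \eqref{eq:devJform} shows $\de v_{J,\a\b}$ is $\la^{-1}\nb_a$ of a product bounded like $v_J$; the extra derivative raises the $N^{1/2}$ exponent from $(k-1)_+$ to $k$ and inserts an extra $\Xi$, giving \eqref{eq:nbkdeltavJbd}. For \eqref{eq:DdtbdDevJ}, commute $[\Ddt,\nb_a] = -\nb_a v_\ep^i\nb_i$ to reduce matters to estimating $\la^{-1}\nb^{k+1}\Ddt\phi$ and $\la^{-1}\nb^k(\nb v_\ep\cdot\nb\phi)$, both of which are $\lsm \plhxi^{5/2} N^{k/2}\Xi^{k+2}e_v^{1/2}e_R^{1/2}$ by the preceding analyses combined with \eqref{eq:vepBounds}.

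\textbf{Main obstacle.} The delicate part will be the combinatorial bookkeeping of $\plhxi$ powers and $N^{1/2}$ exponents through the nested Leibniz and chain rules --- checking in each case that the claimed exponent is actually dominant --- and applying \eqref{ineq:counting} to correctly combine the $(k_i-1)_+$ profile inherited from $\nb\Ga_I$ with the $(k_j-2)_+$ profile inherited from $\varep_I$ (and $R_\ep$), first into the $(k-1)_+$ profile for $\ga_J$ and $v_J$, and then into the $k_+ = k$ profile for $\de v_{J,\a\b}$ after the structural $\la^{-1}\nb_a$ in \eqref{eq:devJform}.
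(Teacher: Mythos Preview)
Your proposal is correct and follows essentially the same approach as the paper: the same observation that $e_I(t)$ is constant on $\suppt R_{I,\ep}$ for the $\varep_I$ bounds, the same chain rule expansion for $\ga_J = \ga_f(\nb\Ga_I,\varep_I)$ combined with the counting inequality \eqref{ineq:counting}, the same Leibniz treatment of $v_J$, and the same commutator $[\Ddt,\nb_a] = -\nb_a v_\ep^i\nb_i$ for $\de v_{J,\a\b}$. Your identification of the $\plhxi$ bookkeeping and the $(k_i-1)_+$ versus $(k_j-2)_+$ profiles as the main technical point is accurate.
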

In the proof of Proposition~\ref{prop:lowFreqEstimates} below, the implicit constants in the $\lsm$ notation will in general depend on $k$, but we will omit this dependence.
\begin{proof}[Proof of \eqref{eq:varepIbd1}-\eqref{eq:varepIDdtbd}]
Recall that $\varep_I^{j\ell} = - e_I^{-1}(t) R_{I,\ep}^{j\ell}$.  From formula \eqref{eq:eIhalf}, we have that $e_I^{-1}(t)$ is a constant in time on the support of $R_{I,\ep}$, which is contained in $[t(I) - \th/2, t(I)+\th/2]$, and on that domain satisfies a lower bound $\sup_{t \in [t(I) - \th/2, t(I) + \th/2]} |e_I^{-1}(t)| \lsm \plhxi^{-1} e_R^{-1}$.  The bound \eqref{eq:varepIbd1} now follows from \eqref{eq:nbkRep}, and similarly \eqref{eq:varepIDdtbd} follows from \eqref{eq:logLossTransRep} and $\Ddt \varep_I^{j\ell} = - e_I^{-1}(t) \Ddt R_\ep^{j\ell}$.
\end{proof}
\begin{proof}[Proof of \eqref{eq:gaJbd}-\eqref{eq:DdtgaJbd}]
Recall from Section~\ref{sec:algEqn} (in particular Formula~\eqref{eq:gaJformula}) that $\ga_J = \ga_{(I,f)}$ takes the form $\ga_J(t,x) = \ga_f(\nb \Ga_I, \varep_I)$, where $\ga_f$ belongs to a set of six smooth functions whose domains are a compact subset $\overline{K}$ of $\R^{3 \times 3} \times \SS$.  We have already shown in Section~\ref{sec:algEqn} that $\co{\ga_J} \leq \sup_{\overline{K}} \ga_f \leq 1$.  Now let $\nb_{\va}$ be a partial derivative operator of order $|\va| = k \geq 1$.  We will use $\pr_\Ga \ga_f$ to denote a derivative of $\ga_f$ in the $\R^{3 \times 3}$ argument, and $\pr_\varep \ga_f$ to denote a derivative of $\ga_f$ in the $\SS$ argument.  Set $p = (\nb \Ga_I, \varep)$.  Using the chain rule and product rule, we can expand $\nb_{\va} \ga_J = \nb_{\va}[ \ga_f(\nb \Ga_I, \varep) ]$ in the form
\ali{
\nb_{\va}[\ga_f(\nb \Ga_I, \varep_I) ] &= \sum_{m + m' \leq k} \sum_{\vcb, \vcc} \pr_\Ga^{m} \pr_\varep^{m'} \ga_f(p) \prod_{i = 1}^m \nb_{\vcb_i} (\nb \Ga_I) \prod_{j=1}^{m'} \nb_{\vcc_j} \varep \notag
}
The innermost sum is restricted to certain multi-indices indices such that $\sum_{i=1}^m |\vcb_i| + \sum_{j=1}^{m'} |\vcc_j| = k$.  We now estimate this term by 
\ALI{
\co{\nb_{\va} \ga_J} &\lsm \sum_{m + m' \leq k} \left(\prod_{i = 1}^m N^{(|\vcb_i| - 1)_+/2} \Xi^{|\vcb_i|} \right) \left( \prod_{j=1}^{m'} N^{(|\vcc_j| - 1)_+/2} \Xi^{|\vcc_j|} \right) \\
&\stackrel{\eqref{ineq:counting}}{\lsm} N^{(k - 1)_+/2} \Xi^k
}
Similarly, for $\Ddt \ga_J = \pr_\Ga \ga_f \Ddt (\nb \Ga_I) + \pr_\varep \ga_f \Ddt \varep$, we can express
\ali{
\nb_{\va} \Ddt \ga_J &= \sum_{m + m' \leq k} \sum_{\vcb, \vcc, \vce} \pr_\Ga^{m + 1} \pr_\varep^{m'} \ga_f(p) \left(\prod_{i = 1}^m \nb_{\vcb_i} (\nb \Ga_I) \prod_{j=1}^{m'} \nb_{\vcc_j} \varep \right) \nb_{\vce} \Ddt (\nb \Ga_I) \label{eq:nbvaDtgaJGaItrm}\\
&+ \sum_{m + m' \leq k} \sum_{\vcb, \vcc, \vce} \pr_\Ga^{m} \pr_\varep^{m'+1} \ga_f(p) \left(\prod_{i = 1}^m \nb_{\vcb_i} (\nb \Ga_I) \prod_{j=1}^{m'} \nb_{\vcc_j} \varep \right) \nb_{\vce} \Ddt \varep,\label{eq:nbvaDtgaJvreptrm}
}
where the summation runs over certain multi-indices with $\sum_{i=1}^m |\vcb_i| + \sum_{j=1}^{m'} |\vcc_j| + |\vce| = k$ and empty products are equal to $1$.  These terms can be bounded using \eqref{eq:nbknbGaIbd}-\eqref{eq:nbkDdtnbGaIbd} and \eqref{eq:varepIbd1}-\eqref{eq:varepIDdtbd} by
\ALI{
\co{\eqref{eq:nbvaDtgaJGaItrm}} &\lsm \sum_{m + m' \leq k} \sum_{\vcb, \vcc, \vce} \left(\prod_{i = 1}^m [N^{(|\vcb_i| - 1)_+/2} \Xi^{|\vcb_i|}]  \prod_{j=1}^{m'} [N^{(|\vcc_j| -1)_+/2} \Xi^{|\vcc_j|} ] \right) N^{(|\vce| - 1)_+/2} \Xi^{|\vce| + 1} e_v^{1/2} \\
&\stackrel{\eqref{ineq:counting}}{\lsm} N^{(k -1)_+} \Xi^{k+1} e_v^{1/2} \\
\co{\eqref{eq:nbvaDtgaJvreptrm}} &\lsm \sum_{m + m' \leq k} \sum_{\vcb, \vcc, \vce} \left(\prod_{i = 1}^m [N^{(|\vcb_i| - 1)_+/2} \Xi^{|\vcb_i|}]  \prod_{j=1}^{m'} [N^{(|\vcc_j| -1)_+/2} \Xi^{|\vcc_j|} ] \right) N^{(|\vce| - 1)_+/2} \Xi^{|\vce| + 1} e_v^{1/2} \plhxi^2  \\
&\stackrel{\eqref{ineq:counting}}{\lsm} \plhxi^2 N^{(k -1)_+/2} \Xi^{k+1} e_v^{1/2}
}
This estimate concludes the proof of \eqref{eq:gaJbd}-\eqref{eq:DdtgaJbd}.
\end{proof}
\begin{proof}[Proof of \eqref{eq:nbkAmpbd}-\eqref{eq:nbDdtAmpbd}]
Recall from \eqref{eq:vJAnsatz} that $v_J^\ell = e_I^{1/2}(t) \ga_J(t,x) (\nb \Ga_I^{-1})_a^\ell f^a$.  Let $\nb_{\va}$ be a partial derivative of order $k$.  Then
\ali{
\nb_{\va} v_J^\ell &= \sum_{|\vcb| + |\vcc| = k} e_I^{1/2}(t) c_{\va, \vcb, \vcc} \nb_{\vcb} \ga_J \nb_{\vcc} (\nb \Ga_I^{-1})_a^\ell f^a \notag \\
\nb_{\va} \Ddt v_J^\ell &= \sum_{|\vcb| + |\vcc| = k} \pr_t e_I^{1/2}(t) c_{\va, \vcb, \vcc} \nb_{\vcb} \ga_J \nb_{\vcc} (\nb \Ga_I^{-1})_a^\ell f^a \label{eq:ddteItermvJ} \\
&+ \sum_{|\vcb| + |\vcc| = k}  e_I^{1/2}(t) c_{\va, \vcb, \vcc} \nb_{\vcb} \Ddt \ga_J \nb_{\vcc} (\nb \Ga_I^{-1})_a^\ell f^a \label{eq:DdtgaJtermvJ}\\
&+ \sum_{|\vcb| + |\vcc| = k} e_I^{1/2}(t) c_{\va, \vcb, \vcc} \nb_{\vcb} \ga_J \nb_{\vcc} \Ddt (\nb \Ga_I^{-1})_a^\ell f^a \label{eq:DdtGaIinvtermvJ}
}
From formula \eqref{eq:eIhalf} and \eqref{ineq:thBound}, we have the following bounds on $e_I^{1/2}(t)$:
\ali{
\begin{split}
\co{ e_I^{1/2}(t)} &\lsm \plhxi^{1/2} e_R^{1/2} \\
\co{ \pr_t e_I^{1/2}(t) } \lsm \th^{-1} &\plhxi^{1/2} e_R^{1/2} \lsm \plhxi^{5/2} \Xi e_v^{1/2} e_R^{1/2} \label{eq:prtbdeI}
\end{split}
}
Applying these bounds and those of \eqref{eq:varepIbd1}-\eqref{eq:DdtgaJbd} gives
\ALI{
\co{\nb_{\va} v_J^\ell} &\lsm \co{e_I^{1/2}} \sum_{|\vcb| + |\vcc| = k} \co{\nb_{\vcb} \ga_J} \co{\nb_{\vcc} (\nb \Ga_I^{-1})_a^\ell} \\
&\lsm \co{e_I^{1/2}} \sum_{|\vcb| + |\vcc| = k} [N^{(|\vcb| - 1)_+/2} \Xi^{|\vcb|}] [N^{(|\vcc| - 1)_+/2} \Xi^{|\vcc|}] \\
&\lsm \plhxi^{1/2} e_R^{1/2} N^{(k -1)_+/2} \Xi^k  
}
We similarly obtain the advective derivative estimates using \eqref{eq:nbknbGaIbd}-\eqref{eq:nbkDdtnbGaIbd}, \eqref{eq:gaJbd}-\eqref{eq:DdtgaJbd} and \eqref{eq:prtbdeI}.
\ALI{
\co{\nb_{\va} \eqref{eq:ddteItermvJ}} &\lsm \sum_{|\vcb| + |\vcc| = k} \co{ \pr_t e_I^{1/2}} \co{\nb_{\vcb} \ga_J} \co{\nb_{\vcc} (\nb \Ga_I^{-1})_a^\ell} \\
&\lsm \plhxi^{5/2} N^{(k -1)_+/2} \Xi^k e_R^{1/2} \\
\co{\nb_{\va} \eqref{eq:DdtgaJtermvJ}} &\lsm \sum_{|\vcb| + |\vcc| = k} \co{e_I^{1/2}} \co{\nb_{\vcb} \Ddt \ga_J} \co{\nb_{\vcc}(\nb \Ga_I^{-1})} \\
\co{\nb_{\va}\eqref{eq:DdtGaIinvtermvJ}}&\lsm \sum_{|\vcb| + |\vcc| = k} \co{e_I^{1/2}} \co{\nb_{\vcb} \Ddt (\nb \Ga_I^{-1}) } \co{\nb_{\vcc}\ga_J}  \\
\co{\nb_{\va} \eqref{eq:DdtgaJtermvJ}} + \co{\nb_{\va}\eqref{eq:DdtGaIinvtermvJ}}&\lsm \plhxi^{1/2} e_R^{1/2} \sum_{|\vcb| + |\vcc| = k} \plhxi^2 [N^{(|\vcb| - 1)_+/2} \Xi^{|\vcb|}] [N^{(|\vcc|-1)_+/2} \Xi^{|\vcc|}] \\
&\lsm \plhxi^{5/2} N^{(k - 1)_+/2} \Xi^k e_R^{1/2}
}
\end{proof}
\begin{proof}[Proof of \eqref{eq:nbkdeltavJbd}-\eqref{eq:DdtbdDevJ}]  Recalling formula \eqref{eq:devJform} and commuting in the advective derivative, we have
\ali{
\de v_{J,\a\b}^\ell &= \la^{-1}\nb_a[(\nb \Ga_I^{-1})^a_\a (\nb \Ga_I^{-1})_\b^\ell e_I^{1/2}(t) \ga_J(t,x) ] \notag\\
\Ddt \de v_{J,\a\b}^\ell &= \la^{-1}\nb_a[ \Ddt[(\nb \Ga_I^{-1})^a_\a (\nb \Ga_I^{-1})_\b^\ell \ga_J(t,x) ] e_I^{1/2}(t)   ] \label{eq:DdtnbGaIGaIdevJ} \\
&+ \la^{-1}\nb_a [ (\nb \Ga_I^{-1})^a_\a (\nb \Ga_I^{-1})_\b^\ell \ga_J(t,x) \pr_t e_I^{1/2}(t) ] \label{eq:prteItermdevJ} \\
&- \la^{-1} \nb_a v_\ep^i \nb_i[ (\nb \Ga_I^{-1})^a_\a (\nb \Ga_I^{-1})_\b^\ell e_I^{1/2}(t) \ga_J(t,x) ] \label{eq:commutermdevJDdt}
}
Let $\nb_{\va}$ be a partial derivative of order $|\va| = k$.  Then from the product rule
\ali{
\co{ \nb_{\va} \de v_{J,\a\b}^\ell } &\lsm \la^{-1} \co{e_I^{1/2}} \sum_{|\va_1| + |\va_2| + |\va_3| = k +1 } \co{ \nb_{\va_1} (\nb \Ga_I^{-1})} \co{ \nb_{\va_2} (\nb \Ga_I^{-1})} \co{\nb_{\va_{3}} \ga_J} \notag \\
&\lsm \la^{-1} \co{e_I^{1/2}} \sum_{|\va_1| + |\va_2| + |\va_3| = k +1 }  \prod_{i=1}^3 [N^{(|\va_i| - 1)_+/2} \Xi^{|\va_i|}] \notag \\
&\stackrel{\eqref{ineq:counting}}{\lsm} \la^{-1} \co{e_I^{1/2}} N^{(k + 1 - 1)_+/2} \Xi^{k+1} \notag \\
&\lsm \la^{-1} \plhxi^{1/2} N^{k/2} \Xi^{k+1} e_R^{1/2}  \notag
}
We similarly estimate the terms \eqref{eq:DdtnbGaIGaIdevJ}-\eqref{eq:commutermdevJDdt} by applying Proposition~\ref{prop:backToLabelsBds} and the bounds \eqref{eq:gaJbd}-\eqref{eq:DdtgaJbd} 
\ALI{
\co{\nb_{\va} \eqref{eq:prteItermdevJ}} &\lsm \la^{-1} \co{\pr_t e_I^{1/2}} \sum_{|\va_1| + |\va_2| + |\va_3| = k +1 } \co{ \nb_{\va_1} (\nb \Ga_I^{-1})} \co{ \nb_{\va_2} (\nb \Ga_I^{-1})} \co{\nb_{\va_{3}} \ga_J} \\
&\stackrel{\eqref{eq:prtbdeI}}{\lsm} \la^{-1} \plhxi^{5/2} \Xi e_v^{1/2} e_R^{1/2}  N^{k/2} \Xi^{k} \\
\co{\nb_{\va} \eqref{eq:DdtnbGaIGaIdevJ} } &\lsm \la^{-1} \co{e_I^{1/2}}  \sum_{|\va_1| + |\va_2| + |\va_3| = k +1 } \co{\nb_{\va_1} \Ddt (\nb \Ga_I^{-1})} \co{ \nb_{\va_2} (\nb \Ga_I^{-1})} \co{\nb_{\va_{3}} \ga_J} \\
&+\la^{-1} \co{e_I^{1/2}}  \sum_{|\va_1| + |\va_2| + |\va_3| = k +1 } \co{\nb_{\va_1}(\nb \Ga_I^{-1})} \co{ \nb_{\va_2} (\nb \Ga_I^{-1})} \co{\nb_{\va_{3}} \Ddt \ga_J}  \\
&\lsm \la^{-1} \co{e_I^{1/2}} \plhxi^2 \Xi e_v^{1/2} \prod_{i=1}^3 N^{(|\va_i| -1)_+/2} \Xi^{|\va_i|}  \\
&\lsm \la^{-1} \plhxi^{1/2 + 2} e_R^{1/2} \Xi e_v^{1/2} N^{(k + 1 - 1)_+/2} \Xi^{k+1} \\
&\lsm \la^{-1} \plhxi^{5/2} N^{k/2} \Xi^{k+2} e_v^{1/2} e_R^{1/2}
}
For the commutator term, we sum over multi-indices with $|\va_0| + \ldots + |\va_3| = k+1$ and $|\va_0| \leq k$
\ALI{
\co{\nb_{\va} \eqref{eq:commutermdevJDdt} } &\lsm \la^{-1} \co{e_I^{1/2}} \sum_{\va_0, \ldots, \va_3 } \co{\nb_{\va_0} \nb_a v_\ep} \co{ \nb_{\va_1} (\nb \Ga_I^{-1})} \co{ \nb_{\va_2} (\nb \Ga_I^{-1})} \co{\nb_{\va_{3}} \ga_J}  \notag \\
\co{\nb_{\va} \eqref{eq:commutermdevJDdt} } &\lsm \la^{-1} \co{e_I^{1/2}} \Xi e_v^{1/2} \sum_{\va_0, \ldots, \va_3 } \prod_{i=0}^3 N^{(|\va_i| - 1)_+/2} \Xi^{|\va_i|} \\
&\lsm \la^{-1} \plhxi^{1/2} e_R^{1/2} \Xi e_v^{1/2} N^{k/2} \Xi^{k+1}
}
This bound completes the proof of \eqref{eq:nbkdeltavJbd}-\eqref{eq:DdtbdDevJ}.
\end{proof}

Our next task will be to estimate high frequency terms, including the correction $V_J$.

\subsection{Bounds on the Correction} \label{sec:correctionBounds}
We now proceed to estimate the components of the high frequency correction $V^\ell = \sum_J \VR_J^\ell + \de V_J^\ell$ defined in Section~\ref{sec:Ansatz}.  In the process, we prove the estimate \eqref{eq:cobdcorrect}, and verify the estimates implied by \eqref{eq:newFrEnLvls} and \eqref{eq:frEnVelocbds} for the new velocity field.

The bounds for high frequency term involve the choice of the parameter $\la$, which we now describe.  Consistent with the frequency level in \eqref{eq:newFrEnLvls}, we assume that $\la$ will take the form
\ali{
\la &= B_\la N \Xi \notag
}
The parameter $B_\la$ is the last parameter that remains to be chosen.  Unlike all of the constants chosen previously, the choice of $B_\la$ will depend on the parameter $\eta$ in the assumptions of Lemma~\ref{lem:convexInt}. 

To be more precise, there will be a large constant $\overline{B}_\la$ that remains to be chosen depending on $(C_0, C_1, \de, \eta)$, and $B_\la$ will be chosen from the interval $B_\la \in [\overline{B}_\la, 2\overline{B}_\la]$ in order to ensure that $\la \in \Z$ is an integer.  Since $B_\la$ and $\overline{B}_\la$ are equal to within a factor of $2$, we can ignore the distinction between them and think of $B_\la$ as the last constant parameter that remains to be chosen.


The bounds we obtain for the correction are as follows
\begin{prop}[Correction Estimates] \label{correctEsts}  The following bounds hold for $0 \leq k \leq 3$
\ali{
\sup_J \co{\nb^k \VR_J} &\lsm (B_\la N \Xi)^k \plhxi^{1/2} e_R^{1/2} \label{eq:mainCorrectionBound}\\
\sup_J \co{\nb^k \de V_J} &\lsm (B_\la N \Xi)^{k-1} \Xi \plhxi^{1/2} e_R^{1/2} \label{eq:lowerCorrectionBound} \\
\co{\nb^k V^\ell } &\lsm (B_\la N \Xi)^k \plhxi^{1/2} e_R^{1/2} \label{eq:correctionBoundToPass} \\
\suppt V &\subseteq \bigcup_I [t(I) - \th, t(I) + \th] \label{eq:suppPropV}
}
Furthermore, the bound \eqref{eq:cobdcorrect} holds, and the estimates implied by \eqref{eq:newFrEnLvls} and \eqref{eq:frEnVelocbds} hold for $v_1 = v + V$.  
\end{prop}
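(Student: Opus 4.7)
The plan is to reduce both \eqref{eq:mainCorrectionBound} and \eqref{eq:lowerCorrectionBound} to the low-frequency bounds of Propositions~\ref{prop:backToLabelsBds}-\ref{prop:lowFreqEstimates} together with a single chain-rule estimate for the oscillating factors. Specifically, I would first establish
\[
\co{\nb^k[\psi_f(\la\Ga_I)]} + \co{\nb^k[\Om_f^{\a\b}(\la\Ga_I)]} \lsm_k \la^k, \qquad 0 \leq k \leq 3,
\]
by expanding via Fa\`a di Bruno: the leading term has the form $(\pr^k\psi_f)(\la\Ga_I)\cdot(\la\nb\Ga_I)^{\otimes k}$, which is bounded by $\la^k$ thanks to $\co{\nb\Ga_I}\leq C_0$ from \eqref{eq:nbknbGaIbd}. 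Each subleading term replaces some first-order factor $\la\nb\Ga_I$ by a higher-order $\la\nb^j\Ga_I$ with $j\geq 2$; using \eqref{eq:nbknbGaIbd} such a replacement costs $\la^{-1}N^{(j-2)_+/2}\Xi^{j-1}$, which, given $\la = B_\la N\Xi$, is at most $(B_\la N^{1/2})^{-(j-1)}$, hence absorbed into the leading term.

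Applying the product rule to $\VR_J^\ell = v_J^\ell\,\psi_f(\la\Ga_I)$ together with \eqref{eq:nbkAmpbd} then gives
\[
\co{\nb^k \VR_J} \lsm \plhxi^{1/2}e_R^{1/2}\sum_{j=0}^k N^{(j-1)_+/2}\Xi^j\la^{k-j},
\]
and since $\la \geq N^{1/2}\Xi \geq \Xi$ the $j=0$ summand dominates, yielding \eqref{eq:mainCorrectionBound}. The same computation applied to $\de V_J^\ell = \de v_{J,\a\b}^\ell\,\Om_f^{\a\b}(\la\Ga_I)$, with \eqref{eq:nbkdeltavJbd} in place of \eqref{eq:nbkAmpbd}, produces \eqref{eq:lowerCorrectionBound}; the extra factor $\la^{-1}\Xi$ comes from the explicit $\la^{-1}\nb_a$ built into formula~\eqref{eq:devJform}.

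For \eqref{eq:correctionBoundToPass}, the key observation is that at each $(t,x)$ only one index $J=(I,f)$ contributes to $V = \sum_J \VR_J + \de V_J$: distinct $I$'s are ruled out by $\suppt e_I^{1/2}\subseteq[t(I)-\th,t(I)+\th]$ (from \eqref{eq:eIhalf}) combined with the disjointness~\eqref{ct:disjointness}, while distinct $f$'s are ruled out pointwise in $x$ by the disjointness~\eqref{eq:disjtSuppPsif} of $\supp\psi_f$ (choosing $\Om_f$ in Section~\ref{sec:Ansatz} to be supported in the same small neighborhood of $\ell_f$ ensures the same property for the correctors $\de V_J$). Hence $\co{\nb^k V}\leq \sup_J(\co{\nb^k\VR_J}+\co{\nb^k \de V_J})$, and for $k\geq 1$ the $\de V_J$ bound is smaller than the $\VR_J$ bound by a factor $(B_\la N)^{-1}$, while for $k=0$ the $\de V_J$ contribution is $\lsm \la^{-1}\Xi\,\plhxi^{1/2}e_R^{1/2}$ and is absorbed. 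The support statement \eqref{eq:suppPropV} is immediate from $\suppt e_I^{1/2}\subseteq[t(I)-\th,t(I)+\th]$, and \eqref{eq:cobdcorrect} is the $k=0$ case of \eqref{eq:correctionBoundToPass}.

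Finally, the new frequency-energy-level bounds for $v_1 = v + V$ follow from the triangle inequality: the hypothesis $N\geq(e_v/e_R)^{1/2}$ gives $\co{\nb^k v}\leq \Xi^k e_v^{1/2}\leq N\Xi^k e_R^{1/2}\leq (N\Xi)^k(\lhxi\,e_R)^{1/2}$ for $k\geq 1$, while \eqref{eq:correctionBoundToPass} gives $\co{\nb^k V}\lsm B_\la^k(N\Xi)^k(\lhxi\,e_R)^{1/2}$, both of which fit the target $(\tilde{C}N\Xi)^k(e_v')^{1/2}$ once $\tilde{C}$ is taken larger than $B_\la$ plus an absolute constant. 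The main obstacle is really bookkeeping rather than analysis: one has to verify that $\tilde{C}$ and $B_\la$ depend only on $(\eta,\de,C_1,C_\de)$ allowed in Lemma~\ref{lem:convexInt}, and not on $\Xi$, $e_v$, $e_R$, or $\la$ itself. This is straightforward because every $\lsm$ in Propositions~\ref{prop:backToLabelsBds}-\ref{prop:lowFreqEstimates} already respects this dependence and the Fa\`a di Bruno expansion only introduces combinatorial constants depending on $k\leq 3$.
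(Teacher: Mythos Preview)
Your argument is essentially the same as the paper's: a Fa\`a di Bruno/product-rule expansion of $\VR_J = v_J\psi_f(\la\Ga_I)$ and $\de V_J = \de v_{J,\a\b}\Om_f^{\a\b}(\la\Ga_I)$, identification of the $m=|\va|$ term as dominant, and then the triangle inequality plus $N\geq (e_v/e_R)^{1/2}$ for the $v_1$ bounds.

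The one point where you diverge is in passing from $\sup_J$ to the sum $V = \sum_J V_J$. You claim that at each $(t,x)$ only a single index $J=(I,f)$ contributes, and to make this hold for the correctors $\de V_J$ you propose modifying the choice of $\Om_f$ so that $\supp\Om_f$ is contained in a neighborhood of $\ell_f$. The paper does not do this: it takes $\Om_f^{\a\b} = \nb^\a\De^{-1}[\psi_f f^\b] - \nb^\b\De^{-1}[\psi_f f^\a]$, which is supported on all of $\T^3$, so for the paper's construction your pointwise-disjointness claim for the $\de V_J$ is false. The paper's argument here is simpler and does not require any such modification: by time-disjointness~\eqref{ct:disjointness} at most one $I$ is active at each time, so at most $|\F|=6$ indices $J$ contribute, and the factor $6$ is absorbed into the implicit constant. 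Your suggested modification of $\Om_f$ would work too, but it is unnecessary.
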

\begin{proof}
Let $\nb_{\va}$ be a partial derivative of order $0 \leq |\va| \leq 3$.  Recall from \eqref{eq:leadingTerm} that $\VR_J^\ell = v_J^\ell \psi_f(\la \Ga_I)$.  First observe that
\ali{
\nb_{\va} \VR_J &= \sum_{0 \leq m \leq |\va|} \sum_{\vcb, \vcc} \nb_{\vcb} v_J \pr^m\psi_f(\la \Ga_I) \la^m \prod_{i=1}^m \nb_{\vcc_i}(\nb \Ga_I) \notag
}
where the sum ranges over a set of multi-indices such that $|\vcb| + m + \sum_{i=1}^m |\vcc_i| = |\va|$, and the empty product equals $1$ in the case $m = 0$.  Using \eqref{eq:nbknbGaIbd} and \eqref{eq:nbkAmpbd} we obtain
\ALI{
\co{\nb_{\va} \VR_J} &\lsm \plhxi^{1/2} e_R^{1/2}\sum_{0 \leq m \leq |\va|} \sum_{\vcb, \vcc} N^{(|\vcb| - 1)_+/2} \Xi^{|\vcb|} \la^m \prod_{i=1}^m [N^{(|\vcc_i|-1)_+/2} \Xi^{|\vcc_i|}] \\
&\lsm \plhxi^{1/2} e_R^{1/2}\sum_{0 \leq m \leq |\va|} \sum_{\vcb, \vcc} N^{(|\va| - m - 1)_+/2} \Xi^{|\va| - m} \la^m \\
&\lsm \plhxi^{1/2} e_R^{1/2}\sum_{0 \leq m \leq |\va|} \sum_{\vcb, \vcc} N^{(|\va| - m - 1)_+/2} \Xi^{|\va| - m} (B_\la N \Xi)^m \\
&\lsm \plhxi^{1/2} e_R^{1/2} (B_\la N \Xi)^{|\va|}
}
Note that the worst terms occur when all of the derivatives in $\nb_{\va}[ v_J^\ell \psi_f(\la \Ga_I) ]$ fall on the high frequency function $\psi_f(\la \Ga_I)$, in which case each derivative costs a factor of $B_\la N \Xi$.  This case corresponds to $m = |\va|$ and $|\vcc_i| = 0$ for all $i = 1, \ldots, m$ in the above estimate.  

Recalling formula \eqref{eq:deVJdevJ}, we treat $\de V_J^\ell = \de v_{J,\a\b}^\ell \Om_f^{\a\b}(\la \Ga_I) $ similarly
\ALI{
\nb_{\va} \de V_J &= \sum_{0 \leq m \leq |\va|} \sum_{\vcb, \vcc} \nb_{\vcb} \de v_{J,\a\b} \pr^m\Om_f^{\a\b}(\la \Ga_I) \la^m \prod_{i=1}^m \nb_{\vcc_i}(\nb \Ga_I) \\
\co{\nb_{\va} \de V_J} &\lsm \la^{-1} \Xi e_R^{1/2} \sum_{0 \leq m \leq |\va|} \sum_{\vcb, \vcc} [N^{|\vcb|/2} \Xi^{|\vcb|} ] \la^m \prod_{i=1}^m [N^{(|\vcc_i|-1)_+/2} \Xi^{|\vcc_i|}] \\
&\lsm \la^{-1} \Xi e_R^{1/2} \sum_{0 \leq m \leq |\va|} (B_\la N \Xi)^{m} N^{(|\va|-m)/2} \Xi^{(|\va| - m)} \\
&\lsm \la^{-1} \Xi e_R^{1/2} (B_\la N \Xi)^{|\va|} \\
&\lsm (B_\la N \Xi)^{|\va| - 1} \Xi e_R^{1/2}
}
As before, the worst terms in $\nb_{\va}[\de v_{J,\a\b}^\ell \Om_f^{\a\b}(\la \Ga_I)]$ occur when every derivative hits the high-frequency function $\Om_f^{\a \b}(\la \Ga_I)$, each time costing a factor of $\la$.  

The bound \eqref{eq:correctionBoundToPass} follows by adding \eqref{eq:mainCorrectionBound}-\eqref{eq:lowerCorrectionBound}, summing over $V = \sum_J \VR_J + \de V_J$ and noting that at most $|\F| = 6$ of the $\VR_J$ and $\de V_J$ are nonzero at any given time, and that the $\VR_J$ contribute the dominant term.  

The support property \eqref{eq:suppPropV} is clear from the formula \eqref{eq:bigDivFormula} using \eqref{eq:eIhalf}, which implies \eqref{eq:suppteFnctn}.

To check that the new bounds \eqref{eq:frEnVelocbds} are satisfied for \eqref{eq:newFrEnLvls}, observe that for all $1 \leq |\va| \leq 3$ 
\ali{
\nb_{\va} v_1 &= \nb_{\va} v + \nb_{\va} V \notag \\
\co{ \nb_{\va} v_1} &\lsm \Xi^{|\va|} e_v^{1/2} + \plhxi^{1/2} (B_\la N \Xi)^{|\va|} e_R^{1/2} \notag \\
&\lsm  (B_\la N \Xi)^{|\va|} (\lhxi \, \, e_R)^{1/2} , \label{eq:newBdv1FrEn}
}
where we used that $N \geq (e_v/e_R)^{1/2}$.  Note that \eqref{eq:newBdv1FrEn} coincides with the bounds required in \eqref{eq:mainCorrectionBound}-\eqref{eq:lowerCorrectionBound}.

To check that \eqref{eq:cobdcorrect} holds, note that \eqref{eq:cobdcorrect} is equivalent to the $k = 0$ case of \eqref{eq:correctionBoundToPass}.  The bound here is independent of $B_\la$ (and independent of $\eta$).
\end{proof}
We now begin estimating the error terms, beginning with the terms that do not involve solving the divergence equation.

\subsection{Stress Terms not Involving the Divergence Equation }
In this Section, we begin estimating the terms in the new stress $R_1$ determined by \eqref{eq:R1decomp}.  We start with the terms \eqref{eq:mollifTerm} and \eqref{eq:lowOrderProducterm}, which do not require solving the divergence equation.  

\begin{prop} \label{prop:noInvDivbds} There exists a constant $\overline{B}_\la$ such that for all $B_\la \geq \overline{B}_\la$ the following bounds hold
\ali{
\co{R_M} + \co{R_S} &\leq \lhxi \fr{e_v^{1/2} e_R^{1/2}}{10 N} \label{eq:mollStressNewBd} \\
\co{\nb_{\va} R_M} + \co{\nb_{\va} R_S} &\lsm (B_\la N \Xi)^{|\va|} \lhxi \fr{e_v^{1/2}e_R^{1/2}}{N}, \quad 1 \leq |\va| \leq 3 \label{eq:mollStressNewBdNba} \\
\suppt R_M \cup \suppt R_S &\subseteq \bigcup_I [t(I) - \th, t(I)+\th] \label{eq:suppRMRS}
}
\end{prop}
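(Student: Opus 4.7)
The strategy is to expand each summand in $R_M$ and $R_S$ by the Leibniz rule and absorb the resulting factors using the pointwise and derivative estimates already in hand (Propositions~\ref{prop:lowFreqEstimates}, \ref{correctEsts}, together with the mollification bounds \eqref{eq:rminusRepbd}, \eqref{eq:RMvepvchoice}, \eqref{eq:nbkRep}, \eqref{eq:vepBounds}); the only remaining freedom is the parameter $B_\la$, and I pick $\overline{B}_\la$ large enough (depending only on the implicit constants from Proposition~\ref{correctEsts}) that cross-term constants fit inside the factor $\tfrac{1}{10}$ in \eqref{eq:mollStressNewBd}.

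\textbf{$C^0$ bounds.} In $R_M = (v-v_\ep)V + V(v-v_\ep) + (R-R_\ep)$, the leading contribution $(v-v_\ep)\VR_J$ is controlled by \eqref{eq:RMvepvchoice}, the subleading contribution $(v-v_\ep)\de V_J$ is smaller by a factor $(B_\la N)^{-1}$ thanks to \eqref{eq:lowerCorrectionBound}, and $R-R_\ep$ is handled by \eqref{eq:rminusRepbd}; since $e_R \leq e_v^{1/2}e_R^{1/2}$ and $\plhxi^{1/2} \leq \lhxi$ (as $\hxi \geq 3 > e$), the three pieces together lie inside $\tfrac{1}{10}\lhxi\, e_v^{1/2} e_R^{1/2}/N$. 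For $R_S$, Proposition~\ref{correctEsts} yields $\co{\de V_J}\co{\VR_K} \lsm (B_\la N)^{-1}\lhxi\, e_R$; using $e_R^{1/2} \leq e_v^{1/2}$ this is bounded by $B_\la^{-1}\lhxi\, e_v^{1/2} e_R^{1/2}/N$, which fits the target provided $\overline{B}_\la \geq 10 C$ for the relevant implicit constant $C$. The $\de V_J\,\de V_K$ summands carry an extra factor $(B_\la N)^{-1}$ and are negligible, and by \eqref{ct:disjointness} only pairs with $I=I'$ contribute at any given time, so the double sum is a sum of $O(|\F|^2)=O(1)$ terms.

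\textbf{Higher derivatives, support, and main obstacle.} For $1\leq|\va|\leq 3$, apply Leibniz to each product in $R_M$ and $R_S$ and bound every factor using \eqref{eq:mainCorrectionBound}--\eqref{eq:correctionBoundToPass}, \eqref{eq:vepBounds}, \eqref{eq:nbkRep}, \eqref{eq:newVelocBdGlue}, and \eqref{eq:newRIbdGlue}. Every term produces the factor $(B_\la N\Xi)^{|\va|}$ by charging each derivative that lands on a high-frequency factor at cost $B_\la N\Xi$, while derivatives on low-frequency factors (like $v_\ep$ or $R$) cost only $\Xi$ or $N^{1/2}\Xi$; the remaining scalar is always at most $\lhxi\, e_v^{1/2}e_R^{1/2}/N$ multiplied by a strictly negative power of $B_\la$ or $N$. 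The worst case for $R_M$ is $\nb^3(R-R_\ep)$, where \eqref{eq:newRIbdGlue} gives $\co{\nb^3 R} \lsm \hat N\,\Xi^3\lhxi\,e_R$; the hypothesis $N\geq\hat N=(e_v/e_R)^{1/2}$ together with $e_R\leq e_v$ folds this into the target. The support statement \eqref{eq:suppRMRS} is immediate from \eqref{eq:suppPropV} and the fact that $R-R_\ep$ inherits the time support of $R$, already contained in $\bigcup_I [t(I)-\th,t(I)+\th]$ by \eqref{ct:disjointness}. The main obstacle is pure bookkeeping: verifying in every Leibniz term that the powers of $N$ produced by $\nb^k v_\ep \lsm N^{(k-2)_+/2}\Xi^k e_v^{1/2}$ and $\nb^k R \lsm \hat N^{(k-2)_+}\Xi^k \lhxi\,e_R$ are dominated by the single $N$ in the denominator of the target; this works because $|\va|\leq 3$ caps these exponents at $1$ and $N \geq (e_v/e_R)^{1/2}$ provides precisely the gain required to close every case.
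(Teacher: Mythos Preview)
Your proposal is correct and follows essentially the same approach as the paper: Leibniz-expand $R_S$ and $R_M$, feed in the bounds from Proposition~\ref{correctEsts} together with the mollification estimates \eqref{eq:rminusRepbd}, \eqref{eq:RMvepvchoice}, \eqref{eq:nbkRep}, \eqref{eq:vepBounds}, and absorb constants into $B_\la$. One minor fix: for the time support of $R-R_\ep$ you should cite \eqref{eq:supptRI} rather than \eqref{ct:disjointness}.
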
 
\begin{proof}[Proof of \eqref{eq:mollStressNewBd}-\eqref{eq:suppRMRS} for $R_S$] 
From the formula~\eqref{eq:lowOrderProducterm} and Proposition~\ref{correctEsts} we have 
\ALI{
R_S^{j\ell} &= \sum_{J, K \in \Z \times \F} \de V_J^j \VR_K^\ell + \VR_J^j \de V_K^\ell + \de V_J^j \de V_K^\ell \\
\co{R_S} &\lsm [ (B_\la N \Xi)^{-1} \plhxi^{1/2} \Xi e_R^{1/2} ] [\plhxi^{1/2} e_R^{1/2} ] + [ (B_\la N \Xi)^{-1} \plhxi^{1/2} \Xi e_R^{1/2} ]^2 \\
&\lsm (B_\la N \Xi)^{-1} \Xi \lhxi  e_R
}
Here we used that the number of nonzero terms is bounded by $|\F|^2 = 36$ at any given time.  For $B_\la$ a sufficiently large constant, this term is bounded by $\lhxi \fr{e_R}{1000 N}$.  The term $R_M^{j\ell}$ will obey a similar bound, from which \eqref{eq:mollStressNewBd} will follow.  As for the derivatives of $R_S$, again the term involving $\VR_J$ dominates the term quadratic in $\de V_J$, and we have
\ALI{
\co{\nb_{\va} R_S^{j\ell}} &\lsm \sum_{|\vcb| + |\vcc| = |\va| } \sup_J \co{\nb_{\vcb} \de V_J} \sup_J ( \co{\nb_{\vcc} \VR_J} + \co{\nb_{\vcc} \de V_J} ) \\
&\lsm \sum_{|\vcb| + |\vcc| = |\va| } [ (B_\la N \Xi)^{|\vcb|-1} \Xi \plhxi^{1/2} e_R^{1/2}] [(B_\la N \Xi)^{|\vcc|} \plhxi^{1/2} e_R^{1/2}] \\ 
&\lsm \lhxi ~ (B_\la N \Xi)^{|\va| - 1} \Xi e_R.
}
This bound suffices for \eqref{eq:mollStressNewBdNba}.  The support property \eqref{eq:suppRMRS} follows from \eqref{eq:deVJdevJ}-\eqref{eq:devJform} and \eqref{eq:suppteFnctn}.
\end{proof}
\begin{proof}[Proof of \eqref{eq:mollStressNewBd}-\eqref{eq:suppRMRS} for $R_M$]
We recall the following formula from \eqref{eq:mollifTerm}
\ali{
R_M^{j\ell} &= (v^j - v_\ep^j) V^\ell + V^j ( v^\ell - v_\ep^\ell) + (R^{j\ell} - R_\ep^{j\ell}) \label{eq:mollTerms}
}
In Lines~\eqref{eq:rminusRepbd} and \eqref{eq:RMvepvchoice}, the parameters $\ep_R$ and $\ep_v$ for $R_\ep$ and $v_\ep$ were chosen such that
\ali{
\co{R - R_\ep} &\leq \lhxi \fr{ e_R}{500 N} \notag \\
\sum_J \co{(v^j - v_\ep^j) \VR_J^\ell + \VR_J^j ( v^\ell - v_\ep^\ell) } &\leq \plhxi^{1/2}\fr{e_v^{1/2} e_R^{1/2}}{500 N} \label{eq:1over500bd} \\
\co{v^\ell - v_\ep^\ell} &\lsm \fr{e_v^{1/2}}{N} \label{eq:vMinusvEpbd}
}
To complete the proof of \eqref{eq:mollStressNewBd}, we use \eqref{eq:lowerCorrectionBound} to bound the remaining lower order term by 
\ALI{
R_{M, v2}^{j\ell} &= \sum_{J \in \Z \times \F} (v^j - v_\ep^j) \de V_J^\ell + \de V_J^j ( v^\ell - v_\ep^\ell) \\
\co{R_{M, v2}^{j\ell}} &\lsm \fr{e_v^{1/2}}{N} [(B_\la N \Xi)^{-1} \Xi \plhxi^{1/2} e_R^{1/2}]  
}
For $B_\la$ suffiently large, this term also satisfies the estimate \eqref{eq:1over500bd}.  Thus the $C^0$ estimate \eqref{eq:mollStressNewBd} holds using also our previous bound $\co{R_S} \leq \lhxi \fr{e_R}{1000 N}$.  We now move on to proving \eqref{eq:mollStressNewBdNba}.   

Let $\nb_{\va}$ be a partial derivative operator of order $1 \leq |\va| \leq 3$.  We start by estimating 
\ALI{
\co{\nb_{\va} (R^{j\ell} - R_\ep^{j\ell} )} &\leq \co{\nb_{\va} R^{j\ell}} + \co{\nb_{\va} R_\ep^{j\ell}} \\
&\stackrel{\eqref{eq:R1Est}}{\lsm} \lhxi ~ (e_v/e_R)^{(|\va| - 2)_+/2} \Xi^{|\va|} e_R + \lhxi ~ N^{(|\va| - 2)_+/2} \Xi^{|\va|} e_R \\
N \geq (e_v/e_R)^{1/2} \Rightarrow \quad &\lsm \lhxi ~ N^{(|\va| - 2)_+} \Xi^{|\va|} e_R \\
|\va| \geq 1 \Rightarrow \quad&\lsm \lhxi N^{|\va|} \Xi^{|\va|} \fr{e_R}{N}
}
Let $R_{M,v}^{j\ell} = R_{M,v1}^{j\ell} + R_{M,v2}^{j\ell}$ denote the term in \eqref{eq:mollTerms} involving $(v - v_\ep)$.  By \eqref{eq:vMinusvEpbd} and \eqref{eq:correctionBoundToPass}, we obtain
\ali{
\co{ \nb_{\va} R_{M,v}^{j\ell}} &\leq \co{v - v_\ep} \co{\nb_{\va} V} + \sum_{|\vcb| + |\vcc| = |\va|} ( \co{\nb_{\vcb} v} + \co{\nb_{\vcb} v_\ep} ) \co{\nb_{\vcc} V} 1_{1 \leq |\vcb| \leq 3} \notag \\
&\lsm \fr{e_v^{1/2}}{N} \plhxi^{1/2} (B_\la N \Xi)^{|\va|} e_R^{1/2} \label{eq:vminusvepnbVterm} \\
&+ \sum_{|\vcb| + |\vcc| = |\va|} ( \Xi^{|\vcb|} e_v^{1/2} + N^{(|\vcb| - 2)_+/2} \Xi^{|\vcb|} e_v^{1/2} ) [ (B_\la N \Xi)^{|\vcc|} \plhxi^{1/2} e_R^{1/2} ] 1_{|\vcb| \geq 1}  \label{eq:summTerm} \\
\eqref{eq:summTerm} &\leq B_\la^{|\va|} \Xi^{|\va|} \plhxi^{1/2} e_v^{1/2} e_R^{1/2} \sum_{|\vcb| + |\vcc| = |\va|} N^{(|\vcb| - 2)_+/2 + |\vcc|} 1_{|\vcb| \geq 1} \label{eq:vminvepSumVtrmRmv} \\
\eqref{eq:summTerm} &\lsm (B_\la N \Xi)^{|\va|} \plhxi^{1/2} \fr{e_v^{1/2} e_R^{1/2}}{N} \label{eq:nonvvepnbVtermRmv}
}
In the last line, we note that the largest term in \eqref{eq:vminvepSumVtrmRmv} occurs when $|\vcb| = 1$ and $|\vcc| = |\va| - 1 \geq 0$.  Combining \eqref{eq:vminusvepnbVterm} and \eqref{eq:nonvvepnbVtermRmv} we obtain \eqref{eq:mollStressNewBdNba} for $R_{M,v}$, which finishes the proof of \eqref{eq:mollStressNewBdNba} for $R_M$.

\end{proof}

\subsection{Stress Terms Involving the Divergence Equation } \label{sec:divEqnTerms}
In this Section, we bound the terms \eqref{eq:transTerm} and \eqref{eq:highFreqTerm} that compose the remaining part of the new stress $R_1$ defined in \eqref{eq:R1decomp}.  The bound we obtain is the following:
\begin{prop} \label{prop:divStressBds} There exists a constant $\overline{B}_\la$ (depending on $C_0, C_1, \de, \eta$) such that for all $B_\la \geq \overline{B}_\la$, there exist symmetric tensors $R_T^{j\ell}$ and $R_H^{j\ell}$ that solve
\ali{
\nb_j R_T^{j\ell} &=  \pr_t V^\ell + v_\ep^j \nb_j V^\ell + V^j \nb_j v_\ep^\ell \label{eq:transTerm2}\\
\nb_j R_H^{j\ell} &= \nb_j \left[ \sum_{J \in \Z \times \F} \VR_J^j \VR_J^\ell + P \de^{j\ell} + R_\ep^{j\ell} \right] \label{eq:highFreqTerm2}
}
and satisfy the following bounds
\ali{
\co{R_T} + \co{R_H} &\leq \plhxi^{5/2} \fr{e_v^{1/2} e_R^{1/2}}{20N} \label{eq:needc0bdRTRH} \\
\co{\nb^k R_T} + \co{\nb^k R_H} &\lsm (B_\la N \Xi)^k \plhxi^{5/2} \fr{e_v^{1/2} e_R^{1/2}}{N}, \quad 1 \leq k \leq 3 \label{eq:nbknewRTRH} \\
\suppt R_T \cup \suppt R_H &\subseteq \bigcup_{I} [t(I) - \th, t(I)+\th] \notag
}
\end{prop}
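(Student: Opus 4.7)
The plan is to exploit, separately for $R_T$ and $R_H$, the oscillatory structure of the corrections $V_J$ in order to solve the divergence equations \eqref{eq:transTerm2}--\eqref{eq:highFreqTerm2} with a gain of $\la^{-1} = (B_\la N \Xi)^{-1}$. Throughout, I would follow the microlocal anti-divergence framework of \cite{isett} (the same framework already used in a low-frequency form in Section~\ref{sec:goodAntiDiv}), adapted to the time-localized Mikado-based waves. The key structural observation common to both terms is the identity $\Ddt[\psi_f(\la\Ga_I)] = \la \psi_f'(\la\Ga_I)\,\Ddt\Ga_I = 0$, i.e.\ the high-frequency oscillation is transported exactly by the coarse-scale flow. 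As a consequence, every time one differentiates or commutes with $\Ddt$, the oscillation $\psi_f(\la\Ga_I)$ passes through without producing the dangerous factor $\la$; in combination with an anti-divergence that respects the oscillation this is what buys the $\la^{-1}$ gain.

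For $R_T^{j\ell}$, I would first rewrite the right-hand side of \eqref{eq:transTerm2} as $\Ddt V^\ell + V^j \nb_j v_\ep^\ell$ and decompose $V^\ell = \sum_J (\VR_J^\ell + \de V_J^\ell)$. Applying $\Ddt$ to \eqref{eq:leadingTerm} and \eqref{eq:deVJdevJ} and using the transport identity for $\psi_f(\la\Ga_I)$ and $\Om_f^{\a\b}(\la\Ga_I)$, one sees that $\Ddt\VR_J$ is a low-frequency amplitude $\Ddt v_J^\ell$ times the oscillation $\psi_f(\la\Ga_I)$ (plus lower-order contributions from $\de V_J$), with amplitude controlled by Proposition~\ref{prop:lowFreqEstimates}. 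Similarly $V^j\nb_j v_\ep^\ell$ factors as a low-frequency field times $\psi_f(\la\Ga_I)$. This puts each summand into the form $A_J^\ell(t,x)\,\psi_f(\la\Ga_I(t,x))$ (and an analogous form with $\Om_f$ on the lower-order piece). Since $\psi_f$ has mean zero along the straight pipes $\ell_f$ and the back-to-labels map $\Ga_I$ is a near-identity diffeomorphism, I would invoke a stationary-phase/microlocal anti-divergence lemma (exactly of the type constructed in \cite[Sections 20--22]{isett} for Beltrami waves, now adapted to Mikado profiles as in \cite{danSze}) to produce a symmetric tensor $R_T^{j\ell}$ whose $C^0$ size gains a factor $\la^{-1}$ over the naive pointwise estimate. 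Summing the amplitude bounds from \eqref{eq:nbDdtAmpbd} and \eqref{eq:DdtbdDevJ} yields $\co{R_T}\lsm\la^{-1}\plhxi^{5/2}\Xi e_v^{1/2}e_R^{1/2}$, which for $B_\la\geq \overline{B}_\la$ large enough (depending on $\eta$, through $\la=B_\la N\Xi$) gives \eqref{eq:needc0bdRTRH}. The higher derivatives in \eqref{eq:nbknewRTRH} follow by the same scheme with each derivative costing at most $B_\la N\Xi$ when it falls on the oscillation.

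For $R_H^{j\ell}$, the algebraic equation \eqref{eq:stressEqn} has been arranged precisely so that the mean part of $\sum_J \VR_J^j\VR_J^\ell$ cancels against $P\de^{j\ell}+R_\ep^{j\ell}$, reducing \eqref{eq:highFreqTerm2} to the purely oscillatory divergence \eqref{eq:highFreqReduced}:
\[
\nb_j R_H^{j\ell} \;=\; \sum_{J=(I,f)} \nb_j\bigl[\,v_J^j v_J^\ell\,(\psi_f^2(\la\Ga_I)-1)\,\bigr].
\]
Since $\psi_f^2-1$ is a $\T^3$-periodic, mean-zero function of $X$, I would again apply the microlocal anti-divergence lemma to the amplitude $v_J^j v_J^\ell$ composed with the phase $\la\Ga_I$: each application produces a factor $\la^{-1}$ and transfers a spatial derivative onto the slow amplitude, whose $C^0$ norm is controlled by \eqref{eq:nbkAmpbd}. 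This gives $\co{R_H}\lsm \la^{-1}\plhxi\,\Xi e_v^{1/2}e_R^{1/2}$ after a harmless logarithmic loss coming through $\ep_v,\ep_R$, which is absorbed into the $\plhxi^{5/2}/(20N)$ budget once $B_\la$ is large. The derivative bounds \eqref{eq:nbknewRTRH} and the support property follow the same way: differentiating inside the microlocal anti-divergence costs at most $(B_\la N\Xi)^k$, while the time support is inherited from $e_I^{1/2}(t)$ through \eqref{eq:suppteFnctn}, keeping everything inside $\bigcup_I[t(I)-\th,t(I)+\th]$.

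I expect the main obstacle to be the precise formulation and verification of the microlocal anti-divergence estimates in the setting of Mikado flows transported along the coarse-scale flow. Unlike the Beltrami case treated in \cite{isett}, where the phase functions are close to linear, here the phase is $\la\Ga_I(t,x)$ and the profile $\psi_f$ is only supported in a thin pipe, so one must verify that the anti-divergence operator respects both the pipe structure (so that disjointness of supports is preserved) and the transport identity $\Ddt\Ga_I=0$ (so that no spurious advective derivatives appear in the amplitude). The cleanest realization is to pull back by $\Ga_I^{-1}$ to Lagrangian coordinates, invert the divergence there against the Mikado profile using its $0$-mean property on the cross-section, and then push forward; the Jacobians and their advective derivatives are controlled by Proposition~\ref{prop:backToLabelsBds} while the amplitude derivatives are controlled by Proposition~\ref{prop:lowFreqEstimates}. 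Once this parametrix is set up, both bounds follow by a bookkeeping of factors of $\la$, $N$, $\Xi$, and $\plhxi$ identical to the one performed in Section~\ref{sec:correctionBounds}.
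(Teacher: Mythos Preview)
Your high-level strategy matches the paper's: write each source as a slow amplitude times a mean-zero oscillation $\om(\la\Ga_I)$ (using $\Ddt[\psi_f(\la\Ga_I)]=0$ for the transport term and the algebraic cancellation \eqref{eq:stressEqn} for the high-frequency term), then apply an oscillatory anti-divergence to gain $\la^{-1}$. The amplitude estimates you cite from Proposition~\ref{prop:lowFreqEstimates} are the right ones.

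Two points where your sketch diverges from the paper and where a gap could appear:

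\emph{The anti-divergence mechanism.} The paper does not pull back to Lagrangian coordinates. Instead (Proposition~\ref{prop:nonstatPhase}) it expands the profile $\om$ in Fourier series on $\T^3$, reducing to individual exponential phases $e^{i\la m\cdot\Ga_I}$, and for each Fourier mode runs the parametrix of \cite[Section~26]{isett} with nonlinear phase $\xi_m = m\cdot\Ga_I$. The Fourier coefficients of the smooth profile decay rapidly, which allows one to sum over $m$ despite the $|m|^{|\va|}$ growth in the individual-mode bounds. Your Lagrangian pullback idea is plausible but would need to contend with the fact that the Euclidean divergence does not commute with pullback by $\Ga_I$; the paper's Fourier-mode approach sidesteps this entirely.

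\emph{The role of $\eta$ and the parametrix depth $D$.} The parametrix does not produce a clean $\la^{-1}$ gain. One iteration of $q_{(k)}\mapsto u_{(k)}=-\la^{-1}\nb_j q_{(k)}^{j\ell}$ gains only $\la^{-1}N^{1/2}\Xi = B_\la^{-1}N^{-1/2}$, because the amplitudes have frequency scale $N^{1/2}\Xi$, not $1$. After $D$ iterations the remainder, handled by the nonlocal operator $\RR^{j\ell}$, has size $B_\la^{-1}N^{-D/2}H$. To absorb this into the leading $\la^{-1}H$ term one needs $N^{-D/2}\leq (N\Xi)^{-1}$, and this is exactly where the hypothesis $N\geq\Xi^\eta$ enters: one must take $D>2(1+\eta^{-1})$. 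The $\eta$-dependence of $\overline{B}_\la$ (and of the implicit constants in \eqref{eq:nbknewRTRH}) comes from the fact that the amplitude bounds \eqref{eq:boundsToDutJ}--\eqref{eq:boundsToDuHJ} must hold up to order $D+5$. Your proposal omits this step, which is the essential link between the structural hypothesis $N\geq\Xi^\eta$ in Lemma~\ref{lem:convexInt} and the final bound.
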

The key to the estimates will be a set of crucial cancellations that arise thanks to the Ansatz of Section~\ref{sec:Ansatz} combined with Proposition~\ref{prop:nonstatPhase} below (which is inspired by calculations in \cite{danSze}), which gains cancellation while inverting the divergence equation for high frequency right hand sides.
\begin{proof}  
From Section~\ref{sec:Ansatz}, we can expand the correction $V^\ell$ in the form
\ali{
V^\ell &= \sum_J \left( v_J^\ell \psi_f(\la \Ga_I) + \de v_{J,\a\b}^\ell \Om_f^{\a\b}(\la \Ga_I) \right) \notag
} 
Substituting into the Transport term \eqref{eq:transTerm2} and using
\ALI{
(\pr_t + v_\ep^j \nb_j) [\psi_f(\la \Ga_I)] &= 0 \\
(\pr_t + v_\ep^j \nb_j) [\Om_f^{\a \b}(\la \Ga_I)] &= 0,
}
we can write the Transport term in the form
\ali{
\eqref{eq:transTerm2} &= \sum_J \left(u_{T,J}^\ell \psi_f(\la \Ga_I) + u_{T,J\a\b}^\ell \Om_f^{\a\b}(\la \Ga_I) \right)\label{eq:transTermtwoParts}\\
u_{T,J}^\ell &= (\Ddt v_J^\ell + v_J^j \nb_j v_\ep^\ell) \label{eq:utJelldef} \\
u_{T,J\a\b}^\ell &= (\Ddt \de v_{J,\a \b}^\ell + \de v_{J,\a\b}^j \nb_j v_\ep^\ell) \label{eq:utJalbeelldef}
}
For the term \eqref{eq:highFreqTerm2}, we recall \eqref{eq:highFreqReduced} and use the fact in \eqref{eq:keyCancel} that $v_J^j \nb_j[ \psi_f(\la \Ga_I) ] = 0$ to obtain
\ali{
\eqref{eq:highFreqTerm2} &= \sum_{J \in \Z \times \F} \nb_j[ v_J^j v_J^\ell ( \psi_f^2(\la \Ga_I) - 1) ]  \notag \\
&= \sum_J \nb_j[v_J^j v_J^\ell] ( \psi_f^2(\la \Ga_I) - 1) \label{eq:highFreqIntReady}
}
Oberserve that the terms \eqref{eq:transTermtwoParts} and \eqref{eq:highFreqIntReady} all have the form $u^\ell \om(\la \Ga_I)$ for some smooth $u^\ell$ and some smooth $\om : \T^3 \to \R$ that has integral $0$ (\eqref{eq:int0psif}, \eqref{eq:int1psifsq} and \eqref{eq:Omfint0}).  
Applying Proposition~\ref{prop:nonstatPhase} below, we have that for any $D \in \Z_+$ 
there exist tensors $R_{T,J}^{j\ell}$, $R_{T,J\a\b}^{j\ell}$ and $R_{H,J}^{j\ell}$ that solve
\ali{
\nb_j R_{T,J}^{j\ell} &= (1 - \Pi_0) [u_{T,J}^\ell \psi_f(\la \Ga_I)] \notag \\
\nb_j R_{T,J\a \b}^{j\ell} &= (1 - \Pi_0) [ u_{T,J\a\b}^\ell \Om_f^{\a \b}(\la \Ga_I) ] \notag \\
\nb_j R_{H,J}^{j\ell} &= (1 - \Pi_0) [ u_{H,J}^\ell (\psi_f^2(\la \Ga_I) - 1) ] \notag \\
u_{H,J}^\ell &:= \nb_j[v_J^j v_J^\ell] \label{eq:uHjelldef}
}
and that obey the estimates 
\ali{
\sup_{0 \leq k \leq 3} \la^{-k} \co{\nb^k R_{T,J}^{j\ell} } &\lsm_D (\la^{-1} + B_\la^{-1} N^{-D/2}) \sup_{0 \leq |\va| \leq D + 5} \fr{\co{\nb_{\va} u_{T,J} }}{N^{|\va|/2} \Xi^{|\va|}} \label{eq:longRTJboundD} \\
\sup_{0 \leq k \leq 3} \la^{-k} \co{\nb^k R_{T,J\a\b}^{j\ell} } &\lsm_D (\la^{-1} + B_\la^{-1} N^{-D/2}) \sup_{0 \leq |\va| \leq D + 5} \fr{\co{\nb_{\va} u_{T,J\a\b} }}{N^{|\va|/2} \Xi^{|\va|}} \notag \\
\sup_{0 \leq k \leq 3} \la^{-k} \co{\nb^k R_{H,J}^{j\ell} } &\lsm_D (\la^{-1} + B_\la^{-1} N^{-D/2}) \sup_{0 \leq |\va| \leq D + 5} \fr{\co{\nb_{\va} u_{H,J} }}{N^{|\va|/2} \Xi^{|\va|}} \label{eq:longRTHboundD} \\
\suppt R_{T,J} \cup_{\a,\b} \suppt R_{T,J\a\b}  \cup \suppt R_{H,J} &\subseteq [t(I) - \th, t(I) + \th], \quad  J \in \{ I \} \times \F \notag
}
Our goal for these estimates is to gain a factor of $\la^{-1}$ in the bound for each stress term.  We choose $D \in \Z_+$ such that $N^{-D/2} \leq N^{-1} \Xi^{-1}$.  By the assumption $N \geq \Xi^\eta$ in Lemma~\ref{lem:convexInt}, it suffices to take $D > 2(1+\eta^{-1})$.  With this choice, the $B_\la^{-1} N^{-D/2}$ term above can be absorbed into the $\la^{-1}$ term.    

We now set $R_T^{j\ell} = \sum_J R_{T,J}^{j\ell} + \sum_{J,\a\b} R_{T,J\a\b}^{j\ell}$ and $R_H^{j\ell} = \sum_J R_{H,J}^{j\ell}$.  Then $R_T$ and $R_H$ solve \eqref{eq:transTerm2} and \eqref{eq:highFreqTerm2} respectively, since (using that $V^\ell$ in \eqref{eq:bigDivFormula} is the divergence of an antisymmetric tensor)
\ALI{
\nb_j R_T^{j\ell} &\stackrel{\eqref{eq:transTermtwoParts}}{=} (1 - \Pi_0) [ \pr_t V^\ell + v_\ep^j \nb_j V^\ell + V^j \nb_j v_\ep^\ell ] \\
&= (1-\Pi_0)[\pr_t V^\ell + \nb_j(v_\ep^j V^\ell + V^j v_\ep^\ell) ] ]  \\
&\stackrel{\eqref{eq:bigDivFormula}}{=} \pr_t V^\ell + \nb_j(v_\ep^j V^\ell + V^j v_\ep^\ell) = \eqref{eq:transTerm2} \\
\nb_j R_H^{j\ell} &\stackrel{\eqref{eq:highFreqIntReady}}{=} (1 - \Pi_0) \sum_{J \in \Z \times \F} \nb_j[ v_J^j v_J^\ell ( \psi_f^2(\la \Ga_I) - 1) ] \\
&= \sum_{J \in \Z \times \F} \nb_j[ v_J^j v_J^\ell ( \psi_f^2(\la \Ga_I) - 1) ] \stackrel{\eqref{eq:highFreqReduced}}{=} \eqref{eq:highFreqTerm2}
}
To prepare to bound $R_T$ and $R_H$, we first bound $u_{T,J}, u_{T,J\a\b}$ and $u_{H,J}$ using \eqref{eq:nbkAmpbd} -\eqref{eq:DdtbdDevJ} 
\ali{
\co{\nb_{\va} u_{T,J}} &\lsm_{|\va|} \co{\nb_{\va} \Ddt v_J^\ell} + \sum_{|\vcb| + |\vcc| = |\va|} \co{\nb_{\vcb} v_J^j} \co{\nb_{\vcc} \nb_j v_\ep^\ell } \notag \\
&\lsm_{|\va|} \plhxi^{5/2} N^{(|\va| - 1)_+/2} \Xi^{|\va| + 1} e_v^{1/2} e_R^{1/2} \notag \\
&+  \sum_{|\vcb| + |\vcc| = |\va|} [\plhxi^{1/2} N^{(|\vcb| - 1)_+/2} \Xi^{\vcb}] [ \plhxi^{1/2} N^{(|\vcc| - 1)_+/2} \Xi^{|\vcc|}] \notag \\
\co{\nb_{\va} u_{T,J\a\b}} &\lsm_{|\va|} \co{\nb_{\va} \Ddt \de v_{J,\a\b}^\ell} + \sum_{|\vcb| + |\vcc| = |\va|} \co{\nb_{\vcb} \de v_{J,\a\b}^j} \co{\nb_{\vcc} \nb_j v_\ep^\ell } \notag\\
&\lsm_{|\va|} \la^{-1} \plhxi^{5/2} N^{|\va|/2} \Xi^{|\va| + 2} e_v^{1/2} e_R^{1/2} \notag \\
&+  \la^{-2}\sum_{|\vcb| + |\vcc| = |\va|} [\plhxi^{1/2} N^{|\vcb|/2} \Xi^{|\vcb|+1}] [ \plhxi^{1/2} N^{|\vcc|/2} \Xi^{|\vcc|+1}] \notag \\
\Rightarrow \co{\nb_{\va} u_{T,J}} + \co{\nb_{\va} u_{T,J\a\b}} &\lsm_{|\va|} N^{|\va|/2} \Xi^{|\va|} [ \plhxi^{5/2} \Xi e_v^{1/2} e_R^{1/2}] \label{eq:conbvautjutjab} }
\ali{
\co{\nb_{\va} u_{H,J}} &\lsm \sum_{|\vcb| + |\vcc| = |\va| + 1} \co{\nb_{\vcb} v_J} \co{\nb_{\vcc} v_J} \notag \\
&\lsm_{|\va|} \lhxi \sum_{|\vcb| + |\vcc| = |\va| + 1} [N^{(|\vcb|-1)_+/2} \Xi^{|\vcb|} e_R^{1/2}] [N^{(|\vcc|-1)_+/2} \Xi^{|\vcc|} e_R^{1/2} ] \notag \\
&\lsm_{|\va|} \lhxi ~ N^{|\va|/2} \Xi^{|\va|+1} e_R \label{eq:nbvauHJbd}
}
From \eqref{eq:conbvautjutjab} and \eqref{eq:nbvauHJbd} we obtain
\ali{
\sup_{0 \leq |\va| \leq D + 5} \fr{\co{\nb_{\va} u_{T,J} }}{N^{|\va|/2} \Xi^{|\va|}} + \sup_{0 \leq |\va| \leq D + 5} \fr{\co{\nb_{\va} u_{T,J\a\b} }}{N^{|\va|/2} \Xi^{|\va|}} &\lsm_\eta \plhxi^{5/2} \Xi e_v^{1/2} e_R^{1/2} \label{eq:boundsToDutJ} \\
\sup_{0 \leq |\va| \leq D + 5} \fr{\co{\nb_{\va} u_{H,J} }}{N^{|\va|/2} \Xi^{|\va|}} &\lsm_\eta \lhxi \Xi e_R \label{eq:boundsToDuHJ}
}

Combining \eqref{eq:longRTJboundD}-\eqref{eq:longRTHboundD} with \eqref{eq:boundsToDutJ}-\eqref{eq:boundsToDuHJ} and using our choice of $D$ above, $\la = B_\la N \Xi$, and the universal bound on the number of $v_J$ and $\de v_{J,\a\b}$ that are nonzero at any given time, we obtain
\ali{
\sup_{0 \leq k \leq 3} \la^{-k} \co{\nb^k R_{T}^{j\ell} } &\lsm_\eta (B_\la N \Xi)^{-1} \plhxi^{5/2} \Xi e_v^{1/2} e_R^{1/2} \label{eq:almostChooseBLaT} \\
\sup_{0 \leq k \leq 3} \la^{-k} \co{\nb^k R_{T}^{j\ell} } &\lsm_\eta (B_\la N \Xi)^{-1} \lhxi \Xi e_R \label{eq:almostChooseBLaH}
}
We finally choose the parameter $B_\la$ sufficiently large so that \eqref{eq:needc0bdRTRH} holds (and such that $\la = B_\la N \Xi$ is an integer).  The other estimates in \eqref{eq:nbknewRTRH} now follow from \eqref{eq:almostChooseBLaT}-\eqref{eq:almostChooseBLaH} and $\la = B_\la N \Xi$.
\end{proof}

\subsection{Proof of Proposition~\ref{prop:nonstatPhase}}
In this Section we prove Proposition~\ref{prop:nonstatPhase} below, which was used to bound the error terms in Section~\ref{sec:divEqnTerms}.  The implicit constants in this section will depend on the $D$ introduced below.

\begin{prop}\label{prop:nonstatPhase}  For every integer $D \geq 1$ and for any smooth $\om : \T^3 \to \R$ with $\int_{\T^3} \om(X) dX = 0$ there exists $C = C_{D,\om}$ such that if $u^\ell$ is smooth and satisfies $\suppt u^\ell \subseteq [t(I) - \th, t(I) + \th]$ and 
\ali{
\sup_{0 \leq |\va| \leq D + 5} \fr{\co{\nb_{\va} u}}{N^{|\va|/2} \Xi^{|\va|}} &\leq H \label{eq:HdimensionnormDef}
}
then there exists a symmetric tensor field $Q^{j\ell}$ in the class $C_t C_x^3(\R \times \T^3)$ such that 
\ali{
\nb_j Q^{j\ell} &= (1 - \Pi_0)[u^\ell \om(\la \Ga_I) ] \notag \\
\sup_{0 \leq k \leq 3} \la^{-k} \co{\nb^k Q} &\leq C_{D,\om} ( \la^{-1} + B_\la^{-1} N^{-D/2} ) H \label{eq:boundOnQgood} \\
\suppt Q^{j\ell} &\subseteq [t(I) - \th, t(I)+\th] \label{eq:containingQGood}
}
Moreover, one can arrange that $Q$ depends bilinearly on $u$ and $\om$.
\end{prop}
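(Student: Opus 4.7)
The strategy is an iterated integration-by-parts (non-stationary phase) construction along the oscillatory factor $\om(\la \Ga_I)$, followed by application of a crude symmetric inverse-divergence on the small remainder. First I would Fourier-expand
\ALI{
\om(X) \;=\; \sum_{k \in \Z^3 \setminus \{0\}} \hat\om_k \, e^{2\pi i k \cdot X},
}
noting $\hat\om_0 = 0$ by hypothesis and $|\hat\om_k| \lsm_{\om,M} (1+|k|)^{-M}$ for every $M$. Linearity in $u$ and $\om$ lets me build $Q^{j\ell}$ as a corresponding sum, so it suffices to construct, for each fixed $k \neq 0$, a symmetric $Q_k^{j\ell}$ solving
\ALI{
\nb_j Q_k^{j\ell} \;=\; (1 - \Pi_0)\bigl[u^\ell e^{i\la' \varphi_k}\bigr], \qquad \varphi_k := k \cdot \Ga_I, \quad \la' := 2\pi\la,
}
with polynomial-in-$|k|$ losses that Schwartz-decay then absorbs after summing.

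The key analytic fact is that the covector $\xi_a := \nb_a \varphi_k = (\nb \Ga_I)_a^m k^m$ satisfies $|\xi| \asymp |k|$ uniformly, by Proposition~\ref{prop:backToLabels} (since $\nb \Ga_I$ is close to the identity in the regime $\th \co{\nb v} \leq b_0$). Set $w^a := \xi^a/|\xi|^2$, so $w^a \nb_a \varphi_k = 1$ and $|w| \lsm |k|^{-1}$. Then
\ALI{
e^{i\la' \varphi_k} \;=\; \frac{1}{i\la'}\, w^a \nb_a e^{i\la' \varphi_k}.
}
Plugging this in and symmetrizing in $(j,\ell)$, one obtains an exact identity of the form
\ALI{
u^\ell e^{i\la' \varphi_k} \;=\; \nb_j T_{1}^{j\ell}[u] \;-\; u_{1}^\ell e^{i\la' \varphi_k},
}
where the symmetric tensor $T_1^{j\ell}[u]$ has the schematic form $(i\la')^{-1}(w^j u^\ell + w^\ell u^j - \de^{j\ell} w\cdot u)\,e^{i\la'\varphi_k}$ and the new ``amplitude'' $u_1^\ell$ is a linear combination of $u^\ell$, $\nb u^\ell$, and factors $\nb w$, $w \nb \Ga_I$, divided by $\la'$. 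The gain is one power of $\la^{-1}$ on the size of $T_1$, at the cost of one derivative falling on $u$ (worth $N^{1/2}\Xi$ via \eqref{eq:HdimensionnormDef}) or on the low-frequency coefficients (worth $\Xi$ via Proposition~\ref{prop:backToLabelsBds}). Iterating this procedure $D$ times produces
\ALI{
u^\ell e^{i\la' \varphi_k} \;=\; \nb_j\Bigl(\sum_{n=1}^{D} T_n^{j\ell}[u]\Bigr) \;-\; u_D^\ell e^{i\la' \varphi_k},
}
with $T_n$ symmetric, $\co{\nb^s T_n} \lsm_{D,k} H \,|k|^{C_D}\, \la^{s-1}\,(N^{1/2}\Xi/\la)^{n-1}$ for $0 \leq s \leq 3$, and the residual amplitude obeying $\co{u_D} \lsm_{D,k} H\,|k|^{C_D}\,(N^{1/2}\Xi/\la)^D = H\,|k|^{C_D}\, B_\la^{-D} N^{-D/2}$.

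For the leftover term $(1-\Pi_0)[u_D^\ell e^{i\la'\varphi_k}]$ I would then simply apply the crude symmetric inverse-divergence operator $\RR^{j\ell}$ of Section~\ref{sec:goodAntiDiv}, which on $C^0$ loses only a logarithmic factor (cf.\ \eqref{eq:logBdOp}) while producing a symmetric antidivergence of size $\lsm$ the $C^0$ size of its argument divided by the frequency, hence bounded by $C_D\,|k|^{C_D}\, B_\la^{-1} N^{-D/2} H$ (and analogously for three spatial derivatives). Summing the $T_n$ and this remainder gives the required
\ALI{
\sup_{0 \leq s \leq 3} \la^{-s}\co{\nb^s Q_k} \;\lsm_{D}\; |k|^{C_D}(\la^{-1} + B_\la^{-1} N^{-D/2})\,H.
}
Finally, I would sum in $k$, using $|\hat\om_k| \lsm_{\om} (1+|k|)^{-(C_D + 5)}$ to absorb the polynomial losses in $|k|$, which yields the bound \eqref{eq:boundOnQgood}. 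The support property \eqref{eq:containingQGood} is automatic from bilinearity and $\suppt u \subseteq [t(I)-\th, t(I)+\th]$.

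The main obstacle is purely bookkeeping: one must keep track in the iteration of how many derivatives each $T_n$ and the residual amplitude $u_D$ inherits from $u$ and from $\nb \Ga_I$, verifying that after $D$ iterations and three further spatial derivatives (needed for the $\co{\nb^3 Q}$ bound) the total derivative count stays within $D+5$, which is exactly why the hypothesis \eqref{eq:HdimensionnormDef} is imposed up to order $D+5$. A secondary point is ensuring symmetry in $j,\ell$ at every step, which is handled by the symmetrization ansatz above and by the fact that $\RR^{j\ell}$ produces symmetric output by construction.
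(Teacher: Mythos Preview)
Your approach is essentially the same as the paper's: Fourier-expand $\om$, build for each mode a symmetric parametrix by iterated non-stationary phase, and handle the remainder with $\RR^{j\ell}$. The paper packages the parametrix step via an explicit map $\bar q^{j\ell}(p)[u]$ (homogeneous of degree $-1$ in $p$, symmetric, with $ip_j\bar q^{j\ell}=u^\ell$), which differs from your ansatz $w^j u^\ell + w^\ell u^j - \de^{j\ell}(w\cdot u)$ only by a choice of symmetric right-inverse; both yield the same gain $N^{1/2}\Xi/\la = B_\la^{-1}N^{-1/2}$ per step and the same bookkeeping for the $D{+}5$ derivative budget.

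One correction to your description of the remainder step: $\RR^{j\ell}$ is simply \emph{bounded} on $C^0(\T^3)$ (since its symbol is order $-1$, so $\|\RR^{j\ell}\|\le \sum_{q\ge 0}\|\RR^{j\ell}P_q\|\lsm \sum_q 2^{-q}\lsm 1$); there is no logarithmic loss and no extra $\la^{-1}$ gain at this stage. The smallness of the remainder term comes entirely from the iterated gain already present in $u_D$, and the $\la^{|\va|}$ cost for $|\va|\le 3$ spatial derivatives arises because those derivatives can hit the phase in $\RR^{j\ell}[e^{i\la'\varphi_k}u_D]$. Your final bound is nonetheless correct, so this is a wording issue rather than a gap.
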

To prove the Proposition, as in calculations of \cite{danSze}, we will expand $\om$ in a Fourier series to reduce to the case where $\om(X) = e^{i m \cdot X}$ and then sum over $m \neq 0$.  The case of $\om(X) = e^{i m \cdot X}$ will be handled as in \cite{isett} using a nonstationary phase argument with nonlinear phase functions.  We remark that the nonstationary phase technique in \cite{danSze} based on the earlier \cite{deLSzeCts} is different in that it proves estimates directly for $Q^{j\ell} = \RR^{j\ell}[u~ \om(\la\Ga_I)]$ that gain a factor of $\la^{-1+\ep}$ rather than $\la^{-1}$.  

To prepare for the proof, we start by stating estimates for the phase functions $m \cdot \Ga_I(t,x)$.
\begin{prop}  Let $m \in \Z^3 \setminus \{ 0 \}$ and $\xi_m(t,x) := m \cdot \Ga_I$.  Then for $t \in [t(I) - \th, t(I) + \th]$, we have
\ali{
\co{\nb_{\va} \nb \xi_m} &\lsm_{|\va|} |m| N^{(|\va| - 1)_+} \Xi^{|\va|} \label{eq:phaseGradBound} \\
\co{ ~|\nb \xi_m|^{-1} } &\lsm |m|^{-1}  \label{eq:phaseGradInvbd}
}
\end{prop}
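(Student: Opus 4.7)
The plan is to reduce both inequalities to facts that have already been established about the back-to-labels map $\Ga_I$ in Proposition~\ref{prop:backToLabels} and Proposition~\ref{prop:backToLabelsBds}. The key observation is that $\xi_m$ is \emph{linear} in $\Ga_I$: since $\xi_m(t,x) = m_k \Ga_I^k(t,x)$, one has the exact identity $\nb_j \xi_m = m_k \nb_j \Ga_I^k$, and, more generally, $\nb_{\va} \nb \xi_m$ is a simple contraction of $m$ against $\nb_{\va} \nb \Ga_I$. All the analytic work has therefore already been done; only bookkeeping remains.

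For \eqref{eq:phaseGradBound}, I would apply Proposition~\ref{prop:backToLabelsBds} directly, which yields $\co{\nb^k (\nb \Ga_I)} \lsm_k N^{(k-1)_+/2} \Xi^k$ on the relevant time interval. Pulling out $|m|$ and using $N \geq 1$ (so that $N^{(|\va|-1)_+/2} \leq N^{(|\va|-1)_+}$) produces the stated bound. In fact the proof gives a stronger exponent on $N$, but the weaker form stated in \eqref{eq:phaseGradBound} is all that is needed downstream.

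For \eqref{eq:phaseGradInvbd}, I would view $\nb \xi_m$ pointwise as the vector $(\nb \Ga_I)^{T} m$. By Proposition~\ref{prop:backToLabels}, $\nb \Ga_I$ is invertible everywhere on $[t(I)-\th, t(I)+\th] \times \T^3$ with $\co{(\nb \Ga_I)^{-1}} \leq C_0$. Applying the inverse transpose to the identity $\nb \xi_m = (\nb \Ga_I)^{T} m$ gives $m = (\nb \Ga_I)^{-T} \nb \xi_m$, hence $|m| \leq \| (\nb \Ga_I)^{-1}\| \, |\nb \xi_m| \leq C_0 |\nb \xi_m|$. Rearranging yields $|\nb \xi_m|^{-1} \leq C_0 |m|^{-1}$, as required. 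The only point worth checking is that the smallness condition \eqref{eq:b0bd}, $\th \co{\nb v} \leq b_0$, has been arranged so that the hypothesis $\th \co{\nb v} \leq 1$ of Proposition~\ref{prop:backToLabels} holds on the full interval $[t(I) - \th, t(I) + \th]$; since $b_0 \leq 1$ is an absolute constant chosen in Section~\ref{sec:algEqn}, this is automatic and there is no real obstacle.
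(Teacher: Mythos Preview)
Your proof is correct and in fact cleaner than the paper's own argument. The paper does not exploit the linearity $\nb\xi_m = m_k\,\nb\Ga_I^k$ directly; instead it rewrites $\xi_m$ as a scalar solution of the transport equation $(\pr_t + v_\ep\cdot\nab)\xi_m = 0$ with initial data $m\cdot x$, derives the evolution equation for $\nb\xi_m$, and then invokes the weighted energy estimate of \cite[Proposition~17.4]{isett} together with a Gronwall argument (for \eqref{eq:phaseGradInvbd} it computes $\frac{d}{ds}|\nb\xi_m|^{-2}$ along the flow and applies Gronwall again). Your route bypasses this by recognizing that all the transport-equation work has already been packaged into Propositions~\ref{prop:backToLabels} and~\ref{prop:backToLabelsBds}, so only the linear-algebraic contraction with $m$ remains; in particular your lower bound $|\nb\xi_m| \geq C_0^{-1}|m|$ via $m = (\nb\Ga_I)^{-T}\nb\xi_m$ is a one-line replacement for the paper's Gronwall computation. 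Both approaches yield the sharper exponent $N^{(|\va|-1)_+/2}$ rather than the $N^{(|\va|-1)_+}$ appearing in the statement (which is evidently a typo, since the downstream application in \eqref{eq:bqnbximBound} uses the $1/2$ exponent). The paper's approach is slightly more self-contained in that it does not rely on the matrix version of Proposition~\ref{prop:backToLabelsBds}, but yours is shorter and avoids repeating an argument already carried out.
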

\begin{proof}  If we view $\xi_m(t,x)$ as a map from $\R \times \R^3 \to \R^3$, then $\xi_m(t,x)$ is the unique solution to
\ALI{
(\pr_t + v_\ep^j \nb_j) \xi_m &= 0 \\
\xi_m(t(I),x) &= m \cdot x
}
The gradient of $\xi_m(t,x)$ then satisfies the equation 
\ali{
(\pr_t + v_\ep^j \nb_j) \nb_i \xi_m &= - \nb_i v_\ep^j \nb_j \xi_m \label{phaseTransport} \\
\nb_i \xi_m(t(I), x) &= m_i \notag
}
Then the vector field $\nb \xi_m$ obeys the same transport equations as the functions $\nb \xi_I$ used in \cite{isett} in the case of frequency-energy levels of order $L = 2$ .  From \cite[Proposition 17.4]{isett}, we obtain
\ali{
E_M[\xi_m](\Phi_s(t(I), x)) &\leq e^{C_M \Xi e_v^{1/2} |s|} E_M[\xi_m](t(I), x) \label{eq:expGrowthEMxim}\\
E_M[\xi_m](t,x) &:= \sum_{0 \leq |\va| \leq M } \Xi^{-2|\va|} N^{-(|\va| -1)_+} |\nb_{\va} \nb \xi_m|^2, \label{eq:EMdefxim}
}
where $\Phi_s$ is the coarse scale flow map defined in Section~\ref{sec:csfBTLmaps}.  For $|s| \leq \th$, we have $\Xi e_v^{1/2} |s| \lsm 1$ and the initial data satisfies $E_M[\xi_m](t(I),x) \lsm |m|$, from which the bound \eqref{eq:phaseGradBound} follows using \eqref{eq:expGrowthEMxim}-\eqref{eq:EMdefxim}.  

To obtain \eqref{eq:phaseGradInvbd}, let $p_s$ denote the point $p_s = \Phi_s(t(I),x)$.  Then for $|s| \leq \th$
\ALI{
\fr{d}{ds} |\nb \xi_m|^{-2}(\Phi_s(t(I),x)) &= - 2 |\nb \xi_m|^{-4}(p_s) \nb^i \xi_m(p_s) [\Ddt\nb_i \xi_m](p_s) \\
\left| \fr{d}{ds} |\nb \xi_m|^{-2}(\Phi_s(t(I),x)) \right| &\stackrel{\eqref{phaseTransport}}{\lsm} \co{\nb v_\ep } |\nb \xi_m|^{-2}(\Phi_s(t(I),x)) \\
\tx{Gronwall } \Rightarrow \quad |\nb \xi_m|^{-2}(\Phi_s(t(I),x)) &\leq e^{C_0 \co{\nb v_\ep}|s| } |\nb \xi_m|^{-2}(t(I), x)
}
From $\co{\nb v_\ep} |s| \, {\lsm} 1$ and $|\nb \xi_m|^{-2}(t(I), x) \, {=} \, |m|^{-2}$, we have $\co{\,|\nb \xi_m|^{-2} } \lsm |m|^{-2}$ implying \eqref{eq:phaseGradInvbd}.
\end{proof}

As a step towards proving Proposition~\ref{prop:nonstatPhase}, we now state
\begin{prop} \label{prop:linNonstPhaseProp} Under the assumptions of Proposition~\ref{prop:nonstatPhase}, if $m \in \Z^3 \setminus \{ 0 \}$,  
then there exists a (complex-valued) symmetric tensor field $Q^{j\ell} = Q_m^{j\ell} : \R \times \T^3 \to \SS \otimes \C$ such that 
\ali{
\nb_j Q_m^{j\ell} &= (1 - \Pi_0)[u^\ell e^{2\pi i m \cdot \la \Ga_I} ] \label{eq:divQmeqn} \\
\sup_{0 \leq |\va| \leq 3} (|m|\la)^{-{|\va|}} \co{\nb_{\va} Q_m} &\leq C_{D,\om} ( \la^{-1} + B_\la^{-1} N^{-D/2} ) H \label{eq:desiredQmbd} \\
\suppt Q_m^{j\ell} &\subseteq [t(I) - \th, t(I)+\th] \notag
}
Moreover, one can take $Q_m^{j\ell}$ to depend linearly on $u^\ell$. 
\end{prop}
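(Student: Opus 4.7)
The plan is to construct $Q_m^{j\ell}$ by iterative symmetric integration-by-parts against the rapid phase $e^{i\phi}$, where $\phi := m \cdot \la \Ga_I$, followed by one application of the anti-divergence operator $\RR^{j\ell}$ of Section~\ref{sec:goodAntiDiv} to clean up a small residual. Writing $\xi_m := m \cdot \Ga_I$, we have $\nb\phi = \la\,\nb\xi_m$, and from the preceding Proposition $|\nb\xi_m|^{-1}\lsm |m|^{-1}$ together with $\co{\nb_{\va}\nb\xi_m} \lsm |m|\, N^{(|\va|-1)_+/2}\Xi^{|\va|}$. The algebraic device that drives the proof is the symmetric bilinear operator
\begin{equation*}
B^{j\ell}[w] := \fr{w^\ell\nb^j\xi_m + w^j\nb^\ell\xi_m - (w_a\nb^a\xi_m)\,\de^{j\ell}}{i\la\,|\nb\xi_m|^2},
\end{equation*}
which is manifestly symmetric in $(j,\ell)$ and satisfies the contraction identity $i\la\,\nb_j\xi_m\, B^{j\ell}[w] = w^\ell$. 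Consequently
\begin{equation*}
\nb_j\!\left(B^{j\ell}[w]\,e^{i\phi}\right) = w^\ell e^{i\phi} + \left(\nb_j B^{j\ell}[w]\right)e^{i\phi},
\end{equation*}
allowing one to cancel the leading oscillating term at the price of an extra derivative falling on the amplitude.

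Setting $w_{(0)}^\ell := u^\ell$ and iterating by $w_{(n+1)}^\ell := \nb_j B^{j\ell}[w_{(n)}]$, $S_{(n)}^{j\ell} := B^{j\ell}[w_{(n)}]\,e^{i\phi}$, an easy induction yields
\begin{equation*}
u^\ell e^{i\phi} = \nb_j\Bigl(\sum_{n=0}^D (-1)^n S_{(n)}^{j\ell}\Bigr) + (-1)^{D+1}\, w_{(D+1)}^\ell e^{i\phi}.
\end{equation*}
I then set
\begin{equation*}
Q_m^{j\ell} := \sum_{n=0}^D (-1)^n S_{(n)}^{j\ell} + (-1)^{D}\,\RR^{j\ell}\bigl[w_{(D+1)}\, e^{i\phi}\bigr],
\end{equation*}
which is symmetric since each $B^{j\ell}$ is symmetric and $\RR^{j\ell}$ returns symmetric tensors by construction. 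Because the divergence of any periodic tensor has zero spatial mean, the identity $\Pi_0[u^\ell e^{i\phi}] = (-1)^D\,\Pi_0[w_{(D+1)}^\ell e^{i\phi}]$ is automatic, and \eqref{eq:divQmeqn} follows immediately from the defining property \eqref{eq:invertModIntegral} of $\RR^{j\ell}$. The support property \eqref{eq:containingQGood} is preserved at every stage because all operations involved act only in space, while $u^\ell$ already has the required $t$-support.

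For the quantitative bound we use $|\nb\xi_m|\sim|m|$ and $\la = B_\la N\Xi$. A direct computation gives $\co{B^{j\ell}[w]}\lsm (\la|m|)^{-1}\co{w}$, and an induction using the product and chain rules together with the counting inequality \eqref{ineq:counting} shows that for $|\vcb|+n\leq D+5$,
\begin{equation*}
\co{\nb_{\vcb} w_{(n)}} \lsm_{D} H\,N^{|\vcb|/2}\Xi^{|\vcb|}\bigl(B_\la N^{1/2}|m|\bigr)^{-n}.
\end{equation*}
Summing geometrically (possible once $B_\la$ is large), the contribution of $\sum_n S_{(n)}^{j\ell}$ to $(\la|m|)^{-|\va|}\co{\nb_{\va} Q_m}$ is $\lsm \la^{-1}H$, since every derivative hitting $e^{i\phi}$ costs exactly the normalizing factor $\la|m|$, while derivatives on the amplitudes are strictly cheaper. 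The residual $\RR^{j\ell}[w_{(D+1)}\,e^{i\phi}]$ is bounded by the same Littlewood-Paley split used in the pressure estimates of Section~\ref{sec:pressEsts}: the high-frequency piece (frequencies $q > \hq$ with $2^{\hq}\sim\la|m|$) gains a factor of $(\la|m|)^{-1}$ from the $-1$-homogeneity of $\RR^{j\ell}$, while the low-frequency piece incurs only a mild logarithmic loss coming from the $L^1$ norm of the truncated kernel; combined, this contributes at size $\lsm_D H\,(B_\la N^{1/2})^{-(D+1)} \leq H\, B_\la^{-1}N^{-D/2}$, matching the second term of \eqref{eq:desiredQmbd}.

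The main obstacle is the combinatorial bookkeeping: one must track how each of the $D+1$ iterations distributes derivatives among $u$, $\nb\xi_m$, $|\nb\xi_m|^{-2}$, and $e^{i\phi}$, and verify that the worst-case multi-index chain-rule expansion of $\nb_{\va} B^{j\ell}[w_{(n)}]$ assembles to the clean bound in the end. This follows the template established in Proposition~\ref{prop:lowFreqEstimates} and the pressure estimates: one separates the ``derivative hits $u$'' branch (cost $N^{1/2}\Xi$ per derivative, from \eqref{eq:HdimensionnormDef}) from the ``derivative hits a $\xi_m$-coefficient'' branch (cost $\Xi$ per derivative) and applies \eqref{ineq:counting} to collapse the resulting sums, ensuring in particular that no spurious factor of $|m|$ survives in the final bound once the $(\la|m|)^{-|\va|}$ normalization is applied.
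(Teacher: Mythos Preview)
Your approach is essentially the paper's: a parametrix expansion $Q_m = \sum_{n=0}^D (-1)^n S_{(n)} + (\text{residual via }\RR)$ built from a symmetric pointwise right inverse of the symbol $p\mapsto ip_j(\cdot)^{j\ell}$. Your operator $B^{j\ell}[w]$ and the paper's $\la^{-1}\bar{q}^{j\ell}(\nb\xi_m)[w]$ are two different symmetric solutions of the same underdetermined algebraic equation $i\la\,\nb_j\xi_m\,q^{j\ell}=w^\ell$; either choice works, and the iteration and derivative bookkeeping you outline match the paper's Lines~\eqref{eq:qkparambdsiterate}--\eqref{eq:ampseqParambds}. (Minor slip: the residual should carry $(-1)^{D+1}$, not $(-1)^D$, but this is immaterial.)

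The one place you overcomplicate things is the residual $\RR^{j\ell}[w_{(D+1)}e^{i\phi}]$. You invoke a Littlewood--Paley split at scale $2^{\hat q}\sim\la|m|$ and speak of a ``mild logarithmic loss'' on the low-frequency piece, echoing the order-$0$ operator $\De^{-1}\nb\nb$ from the pressure estimates. But $\RR^{j\ell}$ is order $-1$, so $\|\RR P_q\|\lsm 2^{-q}$ sums to a finite constant with no loss at all; the paper simply uses $\|\RR\|_{C^0\to C^0}\lsm 1$ and the bound $\co{\nb_{\va}(w_{(D+1)}e^{i\phi})}\lsm (|m|\la)^{|\va|}B_\la^{-1}N^{-D/2}H$ directly (cf.\ \eqref{eq:remainderParamBded}). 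Your split is not wrong, just unnecessary.
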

\begin{proof}  Following \cite[Section 26]{isett}, we write our solution using a parametrix expansion of the form
\ali{
Q^{j\ell} &= Q_{(D)}^{j\ell} + \wtld{Q}_{(D)}^{j\ell}, \qquad Q_{(D)}^{j\ell} = (2 \pi \la)^{-1} \sum_{k=1}^D e^{2\pi i \la \xi_m} q_{(k)}^{j\ell} \label{eq:highorderExpand}
}
We explain first the case $D = 1$, where the method reduces to writing 
\ali{
Q^{j\ell} = (2 \pi \la)^{-1} e^{2 \pi i \la \xi_m} q_{(1)}^{j\ell} + \wtld{Q}_{(1)}^{j\ell} \label{eq:firstOrderExpand}
}
After we choose a smooth symmetric tensor $q_{(1)}^{j\ell}$ that solves $i \nb_j \xi_m q_{(1)}^{j\ell} = u^\ell$ pointwise, the first term in \eqref{eq:firstOrderExpand} will be a good approximate solution to $\nb_j Q^{j\ell} = e^{i \la \xi_m} u^\ell$.  The remainder term $\wtld{Q}^{j\ell}_{(1)}$ is then chosen to eliminate the error by solving $\nb_j \wtld{Q}^{j\ell}_{(1)} = - (2 \pi \la)^{-1} e^{2 \pi i \la \xi_m} \nb_j q_{(1)}^{j\ell} = e^{2 \pi i \la \xi_m} u^\ell - \nb_j Q_{(1)}^{j\ell}$.  This last equation can only be solved exactly when the original $e^{2 \pi i \la \xi_m} u^\ell$ has integral $0$; otherwise, one obtains a solution to \eqref{eq:divQmeqn} involving the projection $(1 - \Pi_0)$.

For $D \geq 1$, we repeat this process to determine a sequence of $q_{(k)}^{j\ell}$ and $u_{(k)}^\ell$ such that $u_{(0)}^\ell = u^\ell$ and 
\ali{
i \nb_j \xi_m q_{(k)}^{j\ell} &= u_{(k-1)}^\ell \label{eq:undetLin} \\ 
u_{(k)}^\ell &= - (2 \pi \la)^{-1} \nb_j q_{(k)}^{j\ell} \notag
}
on all of $\R \times \T^3$ for all $1 \leq k \leq D$.  To construct a good solution to the underdetermined equation \eqref{eq:undetLin}, we set $q_{(k)}^{j\ell} = \bar{q}^{j\ell}(\nb \xi_m)[u_{(k-1)}]$, where $\bar{q}^{j\ell} = \bar{q}^{j\ell}(p)[u]$ is a map with the following properties
\ali{
\bar{q}^{j\ell} \in C^\infty(\R^3 \setminus \{ 0 \} \times \R^3)  & \tx{ as a map taking values in } \SS \otimes \C \label{eq:cinfinityqbar} \\
 \bar{q}^{j\ell}(p)[u] \tx{ is linear in } &u \tx{ and homogeneous of degree -1 in } p \label{eq:homogpropqbar}\\
ip_j \bar{q}^{j\ell}(p)[u] &= u^\ell , \quad \tx{ for all } (p, u) \in \R^3 \setminus \{ 0 \} \times \R^3 \label{eq:solvelinqbar}
}
One can construct such a map $\bar{q}$ by decomposing $u^\ell = u_\perp^\ell + u_{\parallel}^\ell$, $u_\parallel^\ell = |p|^{-2}(u \cdot p) p^\ell$ and then setting $\bar{q}^{j\ell}(p)[u] = -i (q_\perp^{j\ell} + q_\parallel^{j\ell} )$, where $q_\parallel^{j\ell} = |p|^{-2}(u \cdot p) \de^{j\ell}$ and $q_\perp^{j\ell} = |p|^{-2} (p^j u_\perp^\ell + u_\perp^j p^\ell)$.  With this construction, $\bar{q}^{j\ell}$ is symmetric, one has $p_j q_\perp^{j\ell} = u_\perp^\ell$ and $p_j q_\parallel^{j\ell} = u_\parallel^{\ell}$, and properties \eqref{eq:cinfinityqbar}-\eqref{eq:solvelinqbar} all hold.

We now begin estimating the above parametrix.  By \eqref{eq:homogpropqbar}, the map $\bar{q}^{j\ell}(p)[u]$ can be written in the form $\bar{q}^{j\ell}(p)[u] = \bar{q}^{j\ell}_a(p) u^a$ where the $\bar{q}^{j\ell}_a(p)$ are homogeneous of degree $-1$ in $p$ and smooth away from $0$.  Then $q_{(k)}^{j\ell} = \bar{q}_a^{j\ell}(\nb \xi_m) u_{(k-1)}^a$.  We use the homogeneity of the derivatives of $\bar{q}_a^{j\ell}(\nb \xi_m)$ to write
\ALI{
\nb_{\va} [\bar{q}_a^{j\ell}(\nb \xi_m) ] &= \sum_{r=0}^{|\va|} \sum_{\va_i} \pr^{r} \bar{q}_a^{j\ell}(\nb \xi_m) \prod_{i=1}^r \nb_{\va_i} \nb \xi_m \\
&= \sum_{r=0}^{|\va|} \sum_{\va_i} |\nb \xi_m|^{-(1+r)} \pr^r \bar{q}_a^{j\ell}\left( \fr{\nb \xi_m}{|\nb \xi_m|} \right) \prod_{i=1}^r \nb_{\va_i} \nb \xi_m,
}
where the sum ranges over a family of multi-indices with $\sum_i |\va_i| = |\va|$.  Using \eqref{eq:phaseGradBound}-\eqref{eq:phaseGradInvbd} we have
\ali{
\co{\nb_{\va} [\bar{q}_a^{j\ell}(\nb \xi_m) ]} &\lsm_{|\va|} \sum_{r=0}^{|\va|} \sum_{\va_i} |m|^{-(1+r)} \sup_{|p| =1} |\pr^r \bar{q}_a^{j\ell}(p)| ~ \prod_{i=1}^r [N^{(|\va_i| - 1)_+/2} \Xi^{|\va_i|} |m| ] \notag \\
\co{\nb_{\va} [\bar{q}_a^{j\ell}(\nb \xi_m) ]}&\lsm_{|\va|} |m|^{-1} N^{(|\va| - 1)_+/2} \Xi^{|\va|} \lsm N^{|\va|/2} \Xi^{|\va|} \label{eq:bqnbximBound}
}
By induction, we now prove the following estimates for $q_{(k)}^{j\ell} = \bar{q}_a^{j\ell}(\nb \xi_m) u_{(k-1)}^a$ and $u_{(k)}^\ell = - \la^{-1} \nb_j q_{(k)}^{j\ell}$
\ali{
\co{\nb_{\va} q_{(k)}^{j\ell} } &\lsm N^{-(k-1)/2} N^{|\va|/2} \Xi^{|\va|} H, \quad \tx{ for all } 0 \leq |\va| \leq D - k + 4, \quad 1 \leq k \leq D \label{eq:qkparambdsiterate}\\
\co{\nb_{\va} u_{(k)}^\ell } &\lsm B_\la^{-1} N^{-k/2} N^{|\va|/2} \Xi^{|\va|} H \quad \tx{ for all } 0 \leq |\va| \leq D - k + 3, \quad 0 \leq k \leq D \label{eq:ampseqParambds} 
}
As a base case, note that \eqref{eq:ampseqParambds} holds for $k = 0$ (without the $B_\la^{-1}$ factor) because $\co{\nb_{\va} u_{(0)} } = \co{ \nb_{\va} u} \leq N^{|\va|/2} \Xi^{|\va|} H$ for all $0 \leq |\va| \leq D + 5$ by definition of $H$ in \eqref{eq:HdimensionnormDef}.  Now for $k \geq 1$, suppose \eqref{eq:ampseqParambds} holds for $k - 1$ (without the $B_\la^{-1}$ if $k - 1 = 0$).  Then if $0 \leq |\va| \leq D - k + 4 = D - (k-1) + 3$
\ALI{
\co{\nb_{\va} q_{(k)}^{j\ell} } &\lsm \sum_{|\vcb| + |\vcc| = |\va|} \co{ \nb_{\vcb} [ \bar{q}_a^{j\ell}(\nb \xi_m) ] } \co{\nb_{\vcc} u_{(k-1)}} \\
&\lsm \sum_{|\vcb| + |\vcc| = |\va|} [ N^{|\vcb|/2} \Xi^{|\vcb|} ] [N^{-(k-1)/2} N^{|\vcc|/2} \Xi^{|\vcc|} H ] \\
&\lsm N^{-(k-1)/2} N^{|\va|/2} \Xi^{|\va|} H
}
Then for $u_{(k)}^\ell = - (2\pi\la)^{-1} \nb_j q_{(k)}^{j\ell}$ and $0 \leq |\va| \leq D - k + 3$ we have
\ALI{
\co{\nb_{\va} u_{(k)} } &\lsm \la^{-1} \co{\nb_{\va} \nb_j q_{(k)}^{j\ell} } \\
&\lsm (B_\la N \Xi)^{-1} N^{-(k-1)/2} N^{(|\va| + 1)/2} \Xi^{(|\va| + 1)} H \\
&\lsm B_\la^{-1} N^{-k/2} N^{|\va|/2} \Xi^{|\va|} H
}
To estimate the parametrix in \eqref{eq:highorderExpand} we write, for $0 \leq |\va| \leq 3$, 
\ali{
\nb_{\va} Q_{(D)}^{j\ell} &= \sum_{k=1}^D \sum_{r = 0}^{|\va|} (2\pi\la)^{-1+r}\sum_{\vcb_i, \vcc} e^{2\pi i \la \xi_m} \nb_{\vcc} q_{(k)}^{j\ell} \prod_{i=1}^r [ \nb_{\vcb_i} \nb \xi_m ] \label{eq:bigFormula}
}
where the multi-indices in the summation satisfy $r + |\vcc| + \sum_i |\vcb_i| = |\va|$.  In the extreme case where all the derivatives hit $q_{(k)}$, note that even for $k = D$, \eqref{eq:qkparambdsiterate} provides bounds on at least $|\va| \leq 3$ derivatives.  The worst term is the case $r = |\va|$ where all the derivatives hit the phase function, each costing a factor of $\la |\nb \xi_m| \lsm \la |m|$ in the estimate.  The bound we attain (using $N^{-(k-2)/2} \leq 1$) is
\ali{
\co{\nb_{\va} Q_{(D)}^{j\ell}} &\lsm \sum_{k=1}^D \sum_{r=0}^{|\va|} \la^{-1+r} [N^{-(k-1)/2} N^{|\vcc|/2} \Xi^{|\vcc|} H] \prod_{i=1}^r [ |m| N^{|\vcb_i|/2} \Xi^{|\vcb_i|}  ] \notag \\
&\lsm  \la^{-1}\sum_{r=0}^{|\va|} \la^{r} N^{(|\va| - r)/2} \Xi^{|\va| - r } |m|^r H \notag \\
&\lsm  \la^{-1} \la^{|\va|} |m|^{|\va|} \sum_{r=0}^{|\va|} (B_\la N \Xi)^{r - |\va|} N^{(|\va| - r)/2} \Xi^{|\va| - r } H \notag \\
&\lsm \la^{-1} (|m| \la)^{|\va|} H \label{eq:finalParametrixBound}
}
The remainder term $\wtld{Q}_{(D)}^{j\ell}$ in \eqref{eq:highorderExpand} is defined to be
\ali{
\wtld{Q}_{(D)}^{j\ell} &= \RR^{j\ell}[ e^{2\pi i \la \xi_m} u_{(D)} ] \label{eq:qtldQDform}
}
Since $e^{2 \pi i \la \xi_m} u_{(D)}^\ell = e^{2 \pi i \la \xi_m} u^\ell - \nb_j Q_{(D)}^{j\ell}$ (which can be seen by induction on $D$) we see that $Q^{j\ell}$ in \eqref{eq:highorderExpand} solves the divergence equation \eqref{eq:divQmeqn} using \eqref{eq:invertModIntegral}.  To estimate $\wtld{Q}_{(D)}$, we use that $\RR^{j\ell}$ is a bounded operator on $C^0(\T^3)$.  This boundedness can be proven as in the bounds of Section~\ref{sec:rhoIests} by estimating
\ALI{
\| \RR^{j\ell}[U] \|_{C^0(\T^3)} &\leq \| \RR^{j\ell} \|~ \| U \|_{C^0(\T^3)} \\
\| \RR^{j\ell} \| \leq \sum_{q=0}^\infty \| \RR^{j\ell} P_q \|  &\lsm \sum_{q = 0}^\infty 2^{-q} \lsm 1 
}
Thus, as before in \eqref{eq:bigFormula}-\eqref{eq:finalParametrixBound}, if $0 \leq |\va| \leq 3$, using \eqref{eq:ampseqParambds} we have that
\ali{
\nb_{\va} \wtld{Q}_{(D)}^{j\ell} &= \RR^{j\ell}\left[\sum_{r = 0}^{|\va|} \la^{r}\sum_{\vcc, \vcb_i} e^{i \la \xi_m} \nb_{\vcc} u_{(D)} \prod_{i=1}^r [ \nb_{\vcb_i} \nb \xi_m ] \right] \notag \\
\co{\nb_{\va} \wtld{Q}_{(D)}^{j\ell} }&\lsm \sum_{r=0}^{|\va|} \sum_{\vcc, \vcb_i} \la^{r} [B_\la^{-1} N^{-D/2} N^{|\vcc|/2} \Xi^{|\vcc|} H] \prod_{i=1}^r [ |m| N^{|\vcb_i|/2} \Xi^{|\vcb_i|}  ] \notag \\
&\lsm B_\la^{-1} N^{-D/2} \la^{|\va|} |m|^{|\va|}  \sum_{r=0}^{|\va|} \sum_{\vcc, \vcb_i} (B_\la N \Xi)^{r - |\va|}  N^{(|\va|-r)/2} \Xi^{|\va|-r} H \notag \\
\co{\nb_{\va} \wtld{Q}_{(D)}^{j\ell} }&\lsm (|m| \la)^{|\va|} B_\la^{-1} N^{-D/2} H \label{eq:remainderParamBded}
}
Combining \eqref{eq:finalParametrixBound} and \eqref{eq:remainderParamBded} gives \eqref{eq:desiredQmbd}.  It is also clear from \eqref{eq:qtldQDform} and the construction of $Q_{(D)}$ that $\suppt Q_{(D)} \cup \suppt \wtld{Q}_{(D)} \subseteq \suppt u \subseteq [t(I) - \th, t(I) + \th]$.  This containment together with the previous discussion of \eqref{eq:divQmeqn} concludes the proof of Proposition~\ref{prop:linNonstPhaseProp}.
\end{proof}
We are now ready to prove Proposition~\ref{prop:nonstatPhase}.
\begin{proof}[Proof of Proposition~\ref{prop:nonstatPhase}]
Let $\om : \T^3 \to \R$ be smooth and have integral $0$ as in Proposition~\ref{prop:nonstatPhase} above.  Then the Fourier series
\ali{
\om(X) &= \sum_{m \neq 0} \hat{\om}(m) e^{2 \pi i m \cdot X}
}
converges absolutely in $C^0(\T^3)$, and, since $\om$ is real-valued and smooth, the coefficients obey
\ali{
\hat{\om}(-m) = \overline{\hat{\om}(m)} , \quad |\hat{\om}(m)| \lsm_\om |m|^{-40} \label{eq:fCoeffProps}
}
For each $m \in \Z^3$, choose a solution $Q_m^{j\ell}$ to $\nb_j Q_m^{j\ell} = (1 - \Pi_0) [ e^{2 \pi i \la m \cdot \Ga_I } u^\ell] $ that obeys the conclusions of Proposition~\ref{prop:linNonstPhaseProp} and set
\ali{
Q^{j\ell} &:= \fr{1}{2}\sum_{m \in \Z^3 \setminus \{ 0 \}} \left(\hat{\om}(m)Q_m^{j\ell} + \hat{\om}(-m) \, \overline{Q}_m^{j\ell} \right) \label{eq:Qjlsumdef}
}
Then $Q^{j\ell}$ is real-valued by \eqref{eq:fCoeffProps} and belongs to $C_t C_x^3(\R \times \T^3)$ by the following estimate 
\ali{
\sup_{0 \leq |\va| \leq 3} \la^{-|\va|} \co{ \nb_{\va} Q } &\lsm \sum_{m \in \Z^3\setminus \{ 0 \}} |\hat{\om}(m)| |m|^{3} \sup_{0 \leq |\va| \leq 3} (|m|\la)^{-|\va|} \co{ \nb_{\va} Q_m } \\
&\lsm \sum_{m \in \Z} |\hat{\om}(m)| |m|^3 (\la^{-1} + B_\la^{-1} N^{-D/2}) H \\
&\stackrel{\eqref{eq:fCoeffProps}}{\lsm} (\la^{-1} + B_\la^{-1} N^{-D/2}) H
}
Thus $Q^{j\ell}$ satisfies \eqref{eq:boundOnQgood}-\eqref{eq:containingQGood}.  Taking the divergence of $Q^{j\ell}$ in \eqref{eq:Qjlsumdef} and using that $u^\ell$ is real-valued, 
\ali{
\nb_j Q^{j\ell} &= \fr{1}{2} \sum_{m \in \Z^3} \Big(\hat{\om}(m)(1 - \Pi_0) [ e^{2 \pi i \la m \cdot \Ga_I } u^\ell ] + \hat{\om}(-m)(1 - \Pi_0) [ e^{-2 \pi i \la m \cdot \Ga_I } u^\ell ] \Big) \\
\nb_j Q^{j\ell} &= (1 - \Pi_0) \Big( \sum_{m \in \Z^3} \hat{\om}(m) e^{2 \pi i m \cdot (\la \Ga_I) } u^\ell \Big) = (1 - \Pi_0) [  u^\ell \om(\la \Ga_I)] 
}
This calculation concludes the proof of Proposition~\ref{prop:nonstatPhase}.  \end{proof}

\subsection{Concluding the Proof of the Convex Integration Lemma} \label{sec:concludingProof}
In this Section we conclude the proof of Lemma~\ref{lem:convexInt} by indicating where in the course of the proof the various conclusions of the Lemma have been shown.  The constant $b_0$ whose existence is asserted by the Lemma was chosen following Lines~\eqref{eq:aIfbdis}-\eqref{eq:bIftermchooseK}.  The choices of $b_0$ and $K$ there assure that the square root used to define the coefficients $\ga_J$ is well-defined and bounded from below.  The bounds implied by \eqref{eq:newFrEnLvls} and Definition~\ref{defn:frenlvls} for the new velocity $v_1 = v + V$ were proven in Proposition~\ref{correctEsts}.  Inequality \eqref{eq:cobdcorrect} for the velocity correction was also proven in Proposition~\ref{correctEsts}.  The bounds implied by \eqref{eq:newFrEnLvls} and Definition~\ref{defn:frenlvls} for the new stress $R_1 = R_M + R_S + R_T + R_H$ follow from the bounds in Propositions~\ref{prop:noInvDivbds} and \ref{prop:divStressBds}.

To check the statement~\eqref{ct:suppCvxInt} regarding the growth of support, observe that \eqref{ct:growth} and \eqref{eq:supptRI} imply 
\ali{
\suppt R &\subseteq N(J;3^{-1} \Xi^{-1} e_v^{-1/2}) \cap \bigcup_I \left[ t(I) - 2^{-1} \th, t(I) + 2^{-1} \th \right] \notag
}
Technically, statement \eqref{ct:suppCvxInt} may not hold if we define $e_I^{1/2}(t)$ by formula \eqref{eq:eIhalf} for all $I \in \Z$.   However, we can replace $e_I^{1/2}(t)$ by $0$ for $I$ such that $R_I$ is equal to $0$ without affecting the proof.  Modifying the construction in this way, recalling from Lemma~\ref{lem:glueLem} and \eqref{ineq:thBound} that $\th \leq \de_0 \Xi^{-1} e_v^{-1/2} \leq 25^{-1} \Xi^{-1} e_v^{-1/2}$, and letting $\II$ be the subset of $\Z$ such that $R_I \neq 0$, we have
\ALI{
\suppt V \cup \suppt R_1 &\subseteq \bigcup_{I \in \II} \left[t(I) - \th, t(I) + \th \right] \subseteq N(\suppt R; 2^{-1} \th) \\
 \subseteq N(\suppt R; 50^{-1} \Xi^{-1} e_v^{-1/2}) &\subseteq N\Big(N(J; 3^{-1} \Xi^{-1} e_v^{-1/2}); 50^{-1} \Xi^{-1} e_v^{-1/2}\Big) \\
\Rightarrow \quad \suppt V \cup \suppt R_1 &\subseteq N(J; \Xi^{-1} e_v^{-1/2})
}
Since $\suppt v \subseteq N(J; 3^{-1} \Xi^{-1} e_v^{-1/2})$, we also have $\suppt v_1 = \suppt (v + V) \subseteq N(J; \Xi^{-1} e_v^{-1/2})$, which confirms the containment~\eqref{ct:suppCvxInt} and hence concludes the proof of Lemma~\ref{lem:convexInt}.

\section{Proof of the Main Theorem} \label{proofMainThm}
In this Section we give a proof of Theorem~\ref{thm:main} based on Lemma~\ref{lem:mainLem}.  We follow the algorithm for computing regularity in the presence of double exponential frequency growth developed in \cite{isett}. 

For the base case of the iteration, we will use the previous convex integration result of \cite{isett}, since Lemma~\ref{lem:mainLem} of the present paper does not include any inputs that would guarantee the nontriviality of the solution.  The final solution is then constructed by iteratively applying Lemma~\ref{lem:mainLem}.


Let $\a^* < 1/3$ be given.  We introduce a parameter $\de$, $0 < \de < 1/4$, that will be chosen close to zero depending on $\a^*$.  Our proof will lead to the following result, which immediately implies Theorem~\ref{thm:main}:
\begin{thm} \label{thm:messyReg}  For any $0 < \de < 1/4$ there exists a weak solution $(v, p)$ to the incompressible Euler equations with nonempty, compact support in time in $\R \times \T^3$ such that $v \in C_{t,x}^\a, p \in C_{t,x}^{2\a}$ whenever
\ali{
- \left( \fr{1}{2} - \de \a \right)\left(1 + \fr{\de}{2} \right) + \fr{\de}{2} + \a\left(\fr{3}{2} + \de \right) < 0 \label{ineq:messyReg}
}
\end{thm}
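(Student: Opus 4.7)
The plan is to construct the solution as a telescoping series $v = v_0 + \sum_{k \geq 0} V_k$, where $V_k = v_{k+1} - v_k$ is produced by iterating the Main Lemma~\ref{lem:mainLem}. First I would take a base Euler-Reynolds flow $(v_0, p_0, R_0)$ with \emph{nontrivial} velocity and compact time support, supplied for instance by the $C^{1/5-\ep}$ construction of \cite{isett}; this is needed because Lemma~\ref{lem:mainLem} only shrinks the stress and cannot by itself create a nontrivial velocity. Then, fixing a small $\eta > 0$ and setting $N_k = \Xi_k^\de$ at each stage (large enough to dominate both $\Xi_k^\eta$ and $(e_{v,k}/e_{R,k})^{1/2}$ in the attracting regime of the scheme), I would apply Lemma~\ref{lem:mainLem} iteratively to produce $(v_k, p_k, R_k)$ whose frequency-energy levels satisfy
\[
\Xi_{k+1} = C N_k \Xi_k, \qquad e_{v,k+1} = \lhxi_k \, e_{R,k}, \qquad e_{R,k+1} = \plhxi_k^{5/2}\, \fr{e_{v,k}^{1/2} e_{R,k}^{1/2}}{N_k}.
\]

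Next I would analyze the attracting fixed point of this recursion in the scaling exponents. Writing $e_{v,k} \sim \Xi_k^{-2\beta}$ and $e_{R,k} \sim \Xi_k^{-2\gamma}$, up to logarithmic corrections, the recursion forces $(1+\de)\beta = \gamma$ and $(1+\de)\gamma = (\beta + \gamma + \de)/2$, giving $\beta = 1/(3+2\de)$ and $\gamma = (1+\de)/(3+2\de)$, both tending to $1/3$ as $\de \to 0$. Combined with the Main Lemma's correction estimates
\[
\co{V_k} \lsm \plhxi_k^{1/2} e_{R,k}^{1/2}, \qquad \co{\nab V_k} \lsm \Xi_{k+1}\, \plhxi_k^{1/2} e_{R,k}^{1/2},
\]
interpolation yields $\cda{V_k} \lsm \Xi_{k+1}^\a \, \plhxi_k^{1/2} e_{R,k}^{1/2}$. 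Plugging in the asymptotics and carefully accounting for the geometrically growing logarithmic factors $\plhxi_k$ reduces summability of $\sum_k \cda{V_k}$ precisely to the inequality~\eqref{ineq:messyReg}. A matching temporal Hölder bound follows from the Euler equation itself (since $\pr_t V_k$ is the divergence of terms controlled in $C^0$), and the pressure bound $p \in C_{t,x}^{2\a}$ follows from Calderón-Zygmund / Schauder estimates applied to the pressure equation $-\De p = \nb_j \nb_\ell(v^j v^\ell)$ in the limit.

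Compact time support is preserved at each step because \eqref{ct:suppCdn} enlarges the support by only $\Xi_k^{-1} e_{v,k}^{-1/2} \sim \Xi_k^{-1+\beta}$, and this tail series converges by the double-exponential growth of $\Xi_k$; since $R_k \to 0$ in $C^0$ along the scheme, the limit $(v, p)$ is indeed a weak solution. The main obstacle I anticipate is careful bookkeeping of the compounding logarithmic factors $\plhxi_k$ through the recursion and across the interpolation, which is what causes the exact admissibility threshold to take the somewhat untidy form \eqref{ineq:messyReg} rather than a clean bound like $\a < 1/(3+2\de)$; a secondary issue is verifying that the initial triple supplied by the base case really flows into the attracting regime of the scheme (and that the $N$-constraint \eqref{eq:Nrestrict} and the hypothesis $e_{v,k} \geq e_{R,k}$ persist along the iteration) so that the asymptotic exponents $\beta, \gamma$ are actually achieved.
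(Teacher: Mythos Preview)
Your overall plan---iterate Lemma~\ref{lem:mainLem} starting from a nontrivial base Euler--Reynolds flow supplied by \cite{isett}, analyze the parameter recursion, and sum the corrections in $C^\a$---matches the paper's architecture. But two aspects of your sketch diverge from the paper in ways worth flagging.

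\textbf{The choice of $N_k$ and the threshold.} You take $N_k = \Xi_k^\de$, whereas the paper takes $N_{(k)} = Z\,(\log\hxi_{(k)})^{5/2}(e_v/e_R)_{(k)}^{1/2} e_{R,(k)}^{-\de}$ and then dominates the logarithm by $(e_v/e_R)^{\ep}$ with $\ep = \de/2$. The paper then writes the evolution rules \eqref{eq:Xievol}--\eqref{eq:eREvol} as an affine map on the vector $\psi_{(k)} = (\log e_R, \log(e_v/e_R), \log\Xi)_{(k)}$ with lower-triangular \emph{parameter evolution matrix} $T_\de$; the dominant eigenvector is $\psi_+ = (-(1+\de/2),\,\de,\,3/2+\de)$, and the inequality \eqref{ineq:messyReg} is literally the condition $[1/2-\de\a,\,1/2,\,\a]\cdot\psi_+ < 0$. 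Your choice $N_k = \Xi_k^\de$, carried through your fixed-point computation, does \emph{not} reproduce \eqref{ineq:messyReg}: it yields the cleaner threshold $\a < \beta = 1/(3+2\de)$. The logarithmic factors cannot be what produces the messy form---they grow only like $(1+\de)^k$ while $\Xi_k$ grows double-exponentially, so they are harmless against any slack in the exponent. So your scheme would in fact prove a slightly \emph{stronger} statement than Theorem~\ref{thm:messyReg}; your assertion that it reproduces \eqref{ineq:messyReg} ``precisely'' is incorrect, though the theorem still follows. The paper's linear-algebraic treatment has the advantage that verification of the constraints \eqref{eq:Nrestrict} and of convergence into the attracting regime (your ``secondary issue'') is done systematically in Proposition~\ref{prop:paramProp}.

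\textbf{Time and pressure regularity.} Your argument that $v \in C_{t,x}^\a$ because ``$\pr_t V_k$ is the divergence of terms controlled in $C^0$'' does not work: that only controls $\pr_t V_k$ in a negative-order norm, not $C^0$, and gives no H\"older continuity in time. Likewise, Schauder applied to $-\De p = \nb_j\nb_\ell(v^j v^\ell)$ with $v \in C_x^\a$ yields $p \in C_x^\a$, not $C_x^{2\a}$. The paper does not attempt either of these directly: it proves only $v \in C_t C_x^\a$ from the iteration and then invokes the time-regularity theory of \cite{isett2}, which shows that any Euler flow with $v \in C_t C_x^\a$ automatically has $v \in C_{t,x}^\a$ and $p \in C_{t,x}^{2\b}$ for all $\b < \a$. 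You should use that black box rather than the ad hoc arguments sketched.
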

Note that the left hand side of \eqref{ineq:messyReg} is bounded by $-\fr{1}{2} + \fr{3\a}{2} + O(\de)$.  Thus, given $\a^* < 1/3$, we can always choose $\de > 0$ so that \eqref{ineq:messyReg} is satisfied for $\a = \a^*$, and Theorem~\ref{thm:main} now follows.

\subsection{Regularity parameters} \label{sec:regParams}
We start by introducing a few parameters.  The parameter $\de \in (0, 1/4)$ is fixed.  We introduce a parameter $\ep$, which we set equal to $\ep := \fr{\de}{2}$.  We introduce a third parameter $\eta$, which we set equal to $\eta := \fr{\ep}{16} = \fr{\de}{32}$.  We also define a parameter $r := \fr{20}{\de^2}$.  
The parameters have already been chosen in such a way that $\eta \ll \ep \ll \de$, 
and $r$ is large enough depending on $\de$ and $\ep$.  
We define the constant $C_\eta$ to be the constant from the conclusion of Lemma~\ref{lem:mainLem} using the above choice of $\eta > 0$.  

There will be two large constant parameters.  One parameter is called $N_{(-1)}$.  The largest parameter will be called $Z$; it depends on the $\de, \ep, \eta$ above and will be large compared to $N_{(-1)}$.

There will also be a sequence of parameters $(\Xi_{(k)}, e_{v,(k)}, e_{R,(k)})$ that will represent the frequency-energy level bounds on our approximate solutions.  In terms of these, we define $\hxi_{(k)} := \Xi_{(k)} \left( \fr{e_v}{e_R} \right)^{1/2}_{(k)}$.

\subsection{The base case: k = -1} \label{sec:baseCase}

The base case will rely on the Main Lemma in \cite{isett}, since this Lemma gives information that will be crucial for proving nontriviality of the solution, and also controls higher derivatives of the Euler-Reynolds flow to make it compatible with the scheme of the current paper.

Consider first the zero solution to the Euler-Reynolds system $(v, p, R)_{-1} = (0, 0, 0)$.  It has frequency energy levels (in the sense of \cite[Definition 10.1]{isett}) to order $3$ in $C^0$ below $(\Xi_{(-1)}, e_{v,(-1)}, e_{R,(-1)}) = (3, 1, 1)$.  Let $e^{1/2}(t) \geq 0$ be a smooth function with compact support in $\R$ such that $e^{1/2}(0) = 1$ and
\ALI{
\sup_t \left| \fr{d^a}{dt^a} e^{1/2}(t) \right| &\leq 10 (\Xi_{(-1)} e_{v,(-1)}^{1/2})^{r} e_{R,(-1)}^{1/2}, \quad a = 0, 1, 2.
}
Let $\check{C}$ be the constant in the Main Lemma (Lemma 10.1) of \cite{isett}, where we take $L = 3$ and $N_{(-1)} \geq \Xi_{(-1)}^{1/2}$.  Applying that lemma with the function $e^{1/2}(t)$ above and $N_{(-1)}$ to be chosen, we obtain an Euler-Reynolds flow $(v, p, R)_{(0)}$, also with compact support in time, with Frequency-Energy levels to order $3$ in $C^0$ (in the sense of Definition~\ref{defn:frenlvls} above) bounded by
\ali{
(\Xi_{(0)}, e_{v,(0)}, e_{R,(0)}) &= \left( 3\check{C} N_{(-1)} , 1, \fr{1}{N_{(-1)}^{1/2}}\right)  \notag
}
For $N_{(-1)}$ sufficiently large, the following inequalities are satisfied in the stage $k = 0$:
\ali{
\log \hxi_{(k)} &\leq \left( \fr{e_v}{e_R} \right)_{(k)}^\ep \label{ineq:logReq}\\
\hxi_{(k)} &\leq \fr{1}{r^r} \ever^{r \ep}_{(k)} \label{ineq:secondReq} \\
e_{R,(k)}^{\fr{\de r \ep}{4}} \Xi_{(k)} &\leq 1 \label{ineq:thirdReq}\\
\Xi_{(k)}^\eta \ever_{(k)}^{-1/2} e_{R,(k)}^\de &\leq 1 \label{ineq:etaReq}
}
This choice is possible thanks to our choice of $r$ being large relative to $\de$ and $\ep$; for example, when $k = 0$ the left hand side of \eqref{ineq:thirdReq} is bounded above by 
\ALI{
e_{R,(0)}^{\fr{\de r \ep}{4}} \Xi_{(0)} &\leq \check{C} N_{(-1)}^{1 - \fr{\de r \ep}{8}} 3.
}
One obtains \eqref{ineq:secondReq} and \eqref{ineq:etaReq} similarly for sufficiently large $N_{(-1)}$.  

Note that inequality \eqref{ineq:logReq} follows immediately from \eqref{ineq:secondReq} by taking $u$ to be the right hand side of \eqref{ineq:logReq} in the following elementary inequality
\ALI{ \left(1 + \fr{u}{r}\right)^r \leq e^u \quad \tx{ for all } u \geq 0, r > 0  }

The last conditions we require on $N_{(-1)}$ are the ones that will ultimately guarantee that we construct a nontrivial solution.  Observe that Lemma 10.1 of~\cite{isett} guarantees a bound of
\ALI{
\sup_t \left| \int_{\T^3} |v_{(0)}(t,x)|^2 dx - e(t) \right| &\leq \check{C} \fr{e_{R,(-1)}}{N_{(-1)}}
}
(Note that, since we start with $v_{(-1)} = 0$, the correction $V = V_{(-1)}$ in this case is equal to our solution $v_{(0)}$.)  For $N_{(-1)}$ sufficiently large, we can guarantee that 
\ali{
.81 = e(0) - .19 &\leq \int_{\T^3} |v_{(0)}(0,x)|^2 dx \leq \co{ v_{(0)} }^2, \qquad \notag \\
\Rightarrow \quad .9 &\leq \co{ v_{(0)} } \label{eq:nontrivC0}
}
Finally, let $\check{C}_0$ be the constant that in the statement of inequality \eqref{ineq:coBdV} in Lemma~\ref{lem:mainLem} (which is the upper bound on the $C^0$ norm of the correction).  A sufficiently large choice of $N_{(-1)}$ guarantees that
\ali{
5000 \check{C}_0 ( \log \hxi_{(0)} )^{1/2} e_{R,(0)}^{1/2} \leq \fr{1}{400} \label{eq:0thCorrectSmall}
}
We now fix $N_{(-1)}$ to satisfy the above conditions together with \eqref{ineq:logReq}-\eqref{ineq:etaReq}.

\subsection{The Sequence of Parameters}

The goal of the present section is establish the main properties of the parameters $(\Xi, e_v, e_R)_{(k)}$ that will ultimately be the frequency energy levels of our sequence of Euler-Reynolds flows.  The value of $(\Xi_{(0)}, e_{v,(0)}, e_{R,(0)})$ is already determined.  The remaining values of the sequence are governed by the parameters $\de, \ep, \eta$ and a parameter $Z$ according to the following rules:
\ali{
\Xi_{(k+1)} &= C_\eta Z \ever_{(k)}^{\fr{1}{2} + \fr{5 \ep}{2}} e_{R,(k)}^{-\de} \Xi_{(k)} \label{eq:Xievol} \\
e_{v,(k+1)} &= \ever_{(k)}^\ep e_{R,(k)} \label{eq:evEvol} \\
e_{R,(k+1)} &= \fr{e_{R,(k)}^{1+\de}}{Z} \label{eq:eREvol}
}
The constant $C_\eta$ above is the constant associated to the parameter $\eta$ by Lemma~\ref{lem:mainLem}, with $\eta = \fr{\de}{32}$ specified above.  We also define the sequence
\ali{
N_{(k)} &:= Z (\log \hxi_{(k)})^{5/2} \ever_{(k)}^{1/2} e_{R,(k)}^{-\de} \label{eq:Nkdef}
}
Our choice of $Z$ will be specified later in line \eqref{ineq:smallCorrections}.

The following Proposition will ensure that the iteration proceeds in a well defined way for sufficiently large choices of $Z$.
\begin{prop} \label{prop:paramProp} Let $(\Xi_{(0)}, e_{v,(0)}, e_{R,(0)})$ be parameters that satisfy the conditions \eqref{ineq:logReq}-\eqref{ineq:etaReq} of Section~\ref{sec:baseCase}, with $e_{v,(0)} \geq e_{R,(0)}$ and $e_{R,(0)} < 1 < \Xi_{(0)}$.  There exists $Z_0$ such that for all $Z \geq Z_0$, the sequence determined by \eqref{eq:Xievol}-\eqref{eq:eREvol} and \eqref{eq:Nkdef} satisfies conditions \eqref{ineq:logReq}-\eqref{ineq:etaReq} for all $k \geq 0$ and also
\ali{
\ever_{(k)}^{\fr{1}{2}} &\leq Z^{1/2} e_{R,(k)}^{-\fr{\de}{2}} \label{eq:ratioGrowth} \\
N_{(k)} &\geq \Xi_{(k)}^\eta \label{eq:Nkbound}
}
\end{prop}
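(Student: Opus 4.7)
The plan is to prove all the inequalities by a simultaneous induction on $k$, after taking logarithms to convert the multiplicative recursions into linear ones. Setting $a_k := -\log e_{R,(k)}$, $b_k := \log(e_v/e_R)_{(k)}$, and $c_k := \log\Xi_{(k)}$, the recursions \eqref{eq:Xievol}--\eqref{eq:eREvol} become
\begin{align*}
a_{k+1} &= (1+\de)\,a_k + \log Z, \\
b_{k+1} &= \ep\,b_k + \de\,a_k + \log Z, \\
c_{k+1} &= c_k + \bigl(\tfrac{1}{2}+\tfrac{5\ep}{2}\bigr) b_k + \de\, a_k + \log(C_\eta Z),
\end{align*}
and each of \eqref{ineq:logReq}--\eqref{ineq:etaReq} and \eqref{eq:ratioGrowth}--\eqref{eq:Nkbound} becomes a linear inequality in $(a_k, b_k, c_k)$. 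The base case $k=0$ holds by hypothesis for \eqref{ineq:logReq}--\eqref{ineq:etaReq}, while \eqref{eq:ratioGrowth} and \eqref{eq:Nkbound} at $k=0$ reduce to explicit lower bounds on $\log Z$ in terms of the fixed initial data $(\Xi_{(0)}, e_{v,(0)}, e_{R,(0)})$, which determine part of the threshold $Z_0$.

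The engine of the induction is \eqref{eq:ratioGrowth}, which I would propagate first and independently of everything else. In additive form it reads $b_k \le \log Z + \de\, a_k$. Feeding this into the recursion for $b_{k+1}$ gives $b_{k+1} \le (1+\ep)\log Z + \de(1+\ep)a_k$, while the target $\log Z + \de\, a_{k+1}$ equals $(1+\de)\log Z + \de(1+\de)a_k$. The implication therefore reduces to $\ep \le \de$, which holds by construction since $\ep = \de/2$. Thus \eqref{eq:ratioGrowth} self-propagates with no further condition on $Z$.

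With \eqref{eq:ratioGrowth} available at every stage, each of \eqref{ineq:etaReq}, \eqref{ineq:thirdReq}, \eqref{ineq:secondReq} can be propagated by the same template: substitute the recursion for $c_{k+1}$, apply the inductive hypothesis on $c_k$, and use \eqref{eq:ratioGrowth} to replace every occurrence of $b_k$ by $\log Z + \de\, a_k$. In every case the sufficient condition collapses to a comparison of the form $\mu\,(\log Z + \de\, a_k) \ge \text{const}$ with an explicit positive margin $\mu = \mu(\de,\ep,\eta, r)$. For \eqref{ineq:etaReq} one computes $\mu = \de + \ep/2 - \tfrac{3\eta}{2} + O(\eta\ep)$; for \eqref{ineq:thirdReq}, which in additive form reads $c_k \le \tfrac{5}{2} a_k$ (using $\de r\ep/4 = 5/2$), one gets $\mu = 1 - \tfrac{5\ep}{2}$; and for \eqref{ineq:secondReq}, equivalent to $c_k \le -r\log r + (r\ep - \tfrac{1}{2}) b_k$, one gets $\mu = r\ep^2 - \tfrac{3}{2} - 3\ep = \tfrac{7}{2} - 3\ep$ (using $r\ep^2 = 5$). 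All three margins are strictly positive by the scale separation $\eta \ll \ep \ll \de$ and the choice of $r$, so each inequality closes once $Z$ is large enough for $\mu \log Z$ to absorb the constant $\log C_\eta$ and the base-case slack. Finally, \eqref{ineq:logReq} is obtained pointwise from \eqref{ineq:secondReq} by the elementary inequality $(1+u/r)^r \le e^u$ applied with $u = (e_v/e_R)_{(k)}^\ep$, as the paper already indicates, and \eqref{eq:Nkbound} is immediate from \eqref{ineq:etaReq} together with $Z(\log \hxi_{(k)})^{5/2} \ge 1$.

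The main obstacle is the intrinsic coupling among $a_k, b_k, c_k$, all of which grow at roughly the same rate $(1+\de)^k$ and whose individual inequalities would be too tight to close if attacked directly. The decisive trick is to isolate \eqref{eq:ratioGrowth} and propagate it first, then treat it as a black-box substitution $b_k \mapsto \log Z + \de\, a_k$ in all subsequent computations; after this reduction every remaining inequality becomes a one-dimensional comparison in $\log Z + \de\, a_k$, and the engineered scale separations $\eta = \de/32 \ll \ep = \de/2 < \de$ together with $r = 20/\de^2$ always produce strictly positive margins that absorb the additive constant $\log C_\eta$ once $Z_0$ is chosen sufficiently large.
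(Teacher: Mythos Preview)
Your proposal is correct and follows essentially the same approach as the paper: a simultaneous induction in which \eqref{eq:ratioGrowth} self-propagates (using only $\ep\le\de$) and then serves as the substitution $b_k\le \log Z+\de a_k$ to close each of the remaining inequalities with a positive margin absorbed by taking $Z$ large. The only organizational difference is that you propagate \eqref{ineq:secondReq} from its own inductive hypothesis plus \eqref{eq:ratioGrowth}, whereas the paper instead feeds \eqref{ineq:thirdReq} into the computation for \eqref{ineq:secondReq}; both routes work, and your logarithmic-variable template makes the uniform structure of the argument more transparent.
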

\begin{proof}[The case $k = 0$]
For $Z$ sufficiently large depending on $e_{v,(0)}, e_{R,(0)}$, we can ensure that \eqref{eq:ratioGrowth} holds for $k = 0$.  Also, \eqref{eq:Nkbound} is an immediate consequence of \eqref{ineq:etaReq} and the definition \eqref{eq:Nkdef} of $N_{(k)}$.  Thus we may assume the Proposition holds for $k = 0$.  We now prove the Proposition for stage $k+1$.
\end{proof}
\begin{proof}[Proof of \eqref{ineq:secondReq} and \eqref{ineq:logReq} for $k +1$]
Observe that
\ALI{
\hxi_{(k+1)} \ever_{(k+1)}^{-r\ep} &= \Xi_{(k+1)} \ever_{(k+1)}^{\fr{1}{2} - r \ep} \leq \Xi_{(k+1)} \ever_{(k+1)}^{-\fr{r\ep}{2}} \notag \\
\Xi_{(k+1)} \ever_{(k+1)}^{-\fr{r\ep}{2}} &= \left[ C_\eta Z e_{R,(k)}^{-\de} \ever_{(k)}^{\fr{1 + 5 \ep}{2}} \Xi_{(k)} \right] \left[\ever_{(k)}^\ep Z e_{R,(k)}^{-\de} \right]^{-\fr{r \ep}{2}} \notag \\
&\leq C_\eta Z^{1 - \fr{r \ep}{2}} \ever_{(k)}^{\fr{1 + 5 \ep}{2} - \fr{r \ep^2}{2}} [ e_{R,(k)}^{\fr{\de r \ep}{4}} \Xi_{(k)} ]
}
By our choice of $r$, the power to which $Z$ is raised in the last line is negative and likewise the energy ratio $(e_v/e_R)_{(k)}$ is raised to a positive power.  The energy ratio term is therefore at most $1$ and the constant term is bounded by $r^{-r}$ for sufficiently large $Z$.  The last term in brackets is at most $1$ by the induction hypothesis on \eqref{ineq:thirdReq}.  Finally, as noted previously in Section~\ref{sec:baseCase}, inequality \eqref{ineq:logReq} follows from \eqref{ineq:secondReq}. 
\end{proof}
\begin{proof}[Proof of \eqref{ineq:thirdReq} for $k + 1$]
Observe that
\ALI{
e_{R,(k+1)}^{\fr{\de r \ep}{4}} \Xi_{(k+1)} &= C_\eta Z^{1 - \fr{\de r \ep}{4}} \left[ \ever_{(k)}^{\fr{1 + 5 \ep}{2}} e_{R,(k)}^{-\de + \fr{\de^2 r \ep}{4}} \right] e_{R,(k)}^{\fr{\de r \ep}{4}} \Xi_{(k)} 
}
The last product involving $e_{R,(k)}$ and $\Xi_{(k)}$ is at most $1$ by the induction hypothesis on \eqref{ineq:thirdReq}.  Applying the induction hypothesis of inequality \eqref{eq:ratioGrowth} and $5 \ep < 1$ gives
\ALI{
e_{R,(k+1)}^{\fr{\de r \ep}{4}} \Xi_{(k+1)} &\leq C_\eta Z^{2 - \fr{\de r \ep}{4}} e_{R,(k)}^{-2 \de + \fr{\de^2 r \ep}{4}}
}
For $Z$ large enough, the constant term is bounded by $1$, and the power to which $e_{R,(k)}$ is raised above is positive by the choice of $r$, which gives the inequality.  
\end{proof}
\begin{proof}[Proof of \eqref{ineq:etaReq} and \eqref{eq:Nkbound} for $k + 1$]
As before, \eqref{eq:Nkbound} follows from \eqref{ineq:etaReq}.  To prove \eqref{ineq:etaReq}, observe that
\ALI{
\Xi_{(k+1)}^\eta &\ever_{(k+1)}^{-1/2} e_{R,(k)}^\de = \left[ C_\eta Z \ever_{(k)}^{\fr{1 + 5 \ep}{2}} e_{R,(k)}^{-\de} \Xi_{(k)} \right]^\eta \left[ Z^{-1/2} \ever_{(k)}^{-\ep/2} e_{R,(k)}^{\fr{\de}{2}} \right] \fr{e_{R,(k)}^{\de(1+\de)}}{Z^\de} \\
&\leq (C_\eta)^\eta Z^{\eta - \fr{1}{2} - \de} \ever_{(k)}^{\left(\fr{1 + 5}{2}\right) \eta - \fr{\ep}{2}} e_{R,(k)}^{\de(- \eta + \fr{1}{2} + \de)} [ e_{R,(k)}^\de \Xi_{(k)}^\eta]
}
By our induction hypotheses on \eqref{ineq:etaReq} and \eqref{eq:ratioGrowth}, the last term in brackets is at most
\ALI{
 e_{R,(k)}^\de \Xi_{(k)}^\eta \leq \ever_{(k)}^{1/2} \leq Z^{1/2} e_{R,(k)}^{-\fr{\de}{2}}
}
Using this bound and noting that $(e_v/e_R)_{(k)}$ is raised to a negative power by the choice of $\eta$ gives
\ALI{
\Xi_{(k+1)}^\eta \ever_{(k+1)}^{-1/2} e_{R,(k)}^\de &\leq (C_\eta)^\eta Z^{\eta - \de}  e_{R,(k)}^{\de(-\eta + \de)} 
}
Since we chose $\eta < \de$, we see that $Z$ is raised to a negative power and $e_{R,(k)}$ is raised to a positive power in the above estimate.  For large $Z$, the constant term is at most $1$ and the estimate follows.
\end{proof}
\begin{proof}[Proof of \eqref{eq:ratioGrowth} for $k + 1$]  By the induction hypothesis on \eqref{eq:ratioGrowth} and using \eqref{eq:eREvol} and $\ep = \fr{\de}{2}$, we obtain
\ALI{
\ever_{(k+1)}^{1/2} &=Z^{1/2} \ever_{(k)}^{\fr{\ep}{2}} e_{R,(k)}^{-\fr{\de}{2}} \leq Z^{\fr{1}{2} + \fr{\ep}{2}} e_{R,(k)}^{- \fr{\de \ep}{2} - \fr{\de}{2}} \\
&= Z^{\fr{1}{2}} Z^{\fr{\ep}{2} -\fr{\de}{2} \left(\fr{1 + \ep}{1 + \de} \right) } e_{R,(k+1)}^{-\fr{\de}{2} \left( \fr{1 + \ep}{1 + \de}  \right)} \leq Z^{1/2} e_{R,(k+1)}^{-\fr{\de}{2}}
}
\end{proof}

\subsection{Iteration of the Main Lemma} \label{sec:iterateMainLem}

We now prove Theorem~\ref{thm:main} by repeated iteration of Lemma~\ref{lem:mainLem}.

Let $(v_{(0)}, p_{(0)}, R_{(0)})$ be the Euler-Reynolds flow constructed in the base case of Section~\ref{sec:baseCase}, and $(\Xi_{(0)}, e_{v,(0)}, e_{R,(0)})$ be its associated frequency-energy levels, which satisfy the assumptions of Proposition~\ref{prop:paramProp}.  Let $I_{(0)}$ be a bounded, closed interval containing $\suppt v_{(0)} \cup \suppt R_{(0)}$.  

We construct a sequence of Euler-Reynolds flows with supporting time intervals $J_{(k)}$ and frequency-energy levels bounded by $(\Xi_{(k)}, e_{v,(k)}, e_{R,(k)})$ as follows.  For $k \geq 0$, apply Lemma~\ref{lem:mainLem} with the parameter $\eta$ chosen in Section~\ref{sec:regParams} and taking $N$ to be the $N_{(k)}$ defined in \eqref{eq:Nkdef}.  Note that the parameter $N_{(k)}$ satisfies the admissibility conditions $N_{(k)} \geq (e_v/e_{R,(k)})^{1/2}$ and $N_{(k)} \geq \Xi_{(k)}^\eta$ by Proposition~\ref{prop:paramProp}.  Lemma~\ref{lem:mainLem} then yields  an Euler-Reynolds flow $(v_{(k+1)}, p_{(k+1)}, R_{(k+1)})$ such that
\ali{
\suppt v_{(k+1)} \cup \suppt R_{(k+1)} &\subseteq J_{(k+1)} := N(J_{(k)}; \Xi_{(k)}^{-1} e_{v,(k)}^{-1/2}) \label{eq:supportGrowth}
}
and such that the frequency energy levels of $(v_{(k+1)}, p_{(k+1)}, R_{(k+1)})$ are bounded by
\ALI{
(\Xi_{(k)}', e_{v,(k)}', e_{R,(k)}') = \left( C_\eta Z (\log \hxi_{(k)})^{5/2} e_{R,(k)}^{-\de} \Xi_{(k)}, (\log \hxi_{(k)}) e_{R,(k)}, \fr{e_{R,(k)}^{1+\de}}{Z} \right)
}
By inequality \eqref{ineq:logReq} of Proposition~\ref{prop:paramProp}, we have $\Xi_{(k)}' \leq \Xi_{(k+1)}$ and $e_{v,(k)}' \leq e_{v,(k+1)}$, and we also have $e_{R,(k)}' = e_{R,(k+1)}$.  We may therefore regard $(v_{(k+1)}, p_{(k+1)}, R_{(k+1)})$ as an Euler-Reynolds flow with frequency energy levels below $(\Xi_{(k+1)}, e_{v,(k+1)}, e_{R,(k+1)})$, allowing the induction to continue in a well-defined way.  Lemma~\ref{lem:mainLem} also gives the following estimates for $V_{(k)} := v_{(k+1)} - v_{(k)}$
\ali{
\co{ V_{(k)} } &\leq \check{C}_0 (\log \hxi_{(k)})^{1/2} e_{R,(k)}^{1/2} \label{eq:c0bound} \\
\co{ \nab V_{(k)} } &\leq C_\eta N_{(k)} \Xi_{(k)} (\log \hxi_{(k)})^{1/2} e_{R,(k)}^{1/2} \label{eq:c1boundCorrect}
}

\subsection{Continuity and Nontriviality of the Solution}
We claim that the sequence of velocity fields $v_{(k)}$ converges uniformly to a limit that is a nontrivial, continuous weak solution to the Euler equations.  Indeed, from \eqref{eq:c0bound} we have that for all $k \geq 0$,
\ali{
\co{ V_{(k+1)} } \stackrel{\eqref{ineq:logReq}}{\leq} \check{C}_0 \ever_{(k+1)}^{\fr{\ep}{2}} & e_{R,(k+1)}^{1/2} \stackrel{\eqref{eq:ratioGrowth}}{\leq} \check{C}_0 Z^{\fr{\ep}{2}} e_{R,(k+1)}^{\fr{1}{2} - \fr{\de}{2}} \leq \check{C}_0 Z^{\fr{\ep + \de - 1}{2}} e_{R,(k)}^{(1 + \de)\left(\fr{1 - \de}{2}\right)} \notag \\
\co{ V_{(k+1)} } &\leq \check{C}_0 Z^{-\fr{1}{3}} e_{R,(k)}^{\fr{1}{4}} \label{ineq:c0bdkplus1}
}
Using $e_{R,(k+1)} \leq \fr{1}{2} e_{R,(k)}$, we can at this point choose $Z$ large enough depending on $\check{C}_0$ (= the constant in inequality \eqref{ineq:coBdV}) and $e_{R,(0)}$ such that
\ali{
\sum_{k=0}^\infty  \check{C}_0 Z^{-\fr{1}{3}} e_{R,(k)}^{\fr{1}{4}} &\leq \fr{3}{400}. \label{ineq:smallCorrections}
}
It follows that $v_{(k)}$ converges uniformly to a continuous velocity field $v$.  

If we choose the integral $0$ normalization for $p_{(k)} = \De^{-1} \nb_j \nb_\ell R^{j\ell}_{(k)} - \De^{-1} \nb_j \nb_\ell(v_{(k)}^j v_{(k)}^\ell) $, then since $R_{(k)} \to 0$ uniformly, we have that  $p_{(k)}$ converges weakly in $\DD'$ to the pressure $p = -\De^{-1} \nab_j \nab_\ell (v^j v^\ell)$.  One sees by testing the Euler-Reynolds equations for $(v_{(k)}, p_{(k)}, R_{(k)})$ against smooth test functions that the pair $(v,p)$ form a weak solution to the incompressible Euler equations.


To see that $v$ is not the $0$ solution, compare the lower bound \eqref{eq:nontrivC0} on $\co{v_{(0)}}$ to the upper bound
\ALI{
\co{ v - v_{(0)} } &\leq \co{ V_{(0)} } + \sum_{k = 0}^\infty \co{V_{(k+1)}} \leq \fr{1}{400} + \fr{3}{400} = .01 ,
}
which follows from \eqref{eq:0thCorrectSmall} and \eqref{ineq:c0bdkplus1}-\eqref{ineq:smallCorrections}.  
It now remains to show that $v$ has compact support in time and satisfies the regularity stated in Theorem~\ref{thm:messyReg}.

\subsection{Regularity and Compact Support in Time of the Solution}

We now show that the incompressible Euler flow $v$ defined in Section~\ref{sec:iterateMainLem} above belongs to the class $v \in C_{t,x}^\a, p \in C_{t,x}^{2 \a}$ for all $\a$ that satisfy inequality \eqref{ineq:messyReg}.  The time regularity theory of \cite{isett2} shows that if $0 < \a < 1$ and $(v,p)$ is an incompressible Euler flow in the class $v \in C_t C_x^\a$, then $v \in C_{t,x}^\a$ and $p \in C_{t,x}^{2\b}$ for all $0 \leq \b < \a$.  It therefore suffices to show that $v \in C_t C_x^\a$ for the stated range of $\a$.

Interpolating \eqref{ineq:coBdV} and \eqref{ineq:coNbvCorrect} gives the following bound on the $C_t C_x^\a$ norm of each correction
\ali{
\| V_{(k)} \|_{C_t C_x^\a} &\leq C (N_{(k)} \Xi_{(k)})^\a (\log \hxi_{(k)})^{1/2} e_{R,(k)}^{1/2} \notag \\
&\leq C Z^\a [ (\log \hxi_{(k)})^{\fr{5\a}{2}} \ever_{(k)}^{\fr{\a}{2}} e_{R,(k)}^{-\de \a} \Xi_{(k)}^\a ] (\log \hxi_{(k)})^{1/2} e_{R,(k)}^{1/2} \notag \\
&= C Z^\a (\log \hxi_{(k)})^{\fr{5\a + 1}{2}} \ever_{(k)}^{\fr{\a}{2}} e_{R,(k)}^{\fr{1}{2}-\de \a} \Xi_{(k)}^\a  \notag \\ 
\fr{\| V_{(k)} \|_{C_t C_x^\a}}{C Z^\a} &\stackrel{\eqref{ineq:logReq}}{\leq}  e_{R,(k)}^{\fr{1}{2} - \de \a} \ever_{(k)}^{1/2} \Xi_{(k)}^\a  \label{eq:ctxabdCorrect}
}
Next define the following sequence of parameter vectors
\ALI{
\psi_{(k)} &:= \vect{ \log e_{R} \\ \log (e_v/e_R) \\ \log \Xi }_{(k)}
}
The evolution rules \eqref{eq:Xievol}-\eqref{eq:eREvol} can be rephrased as
\ali{
\psi_{(k+1)} &= \vect{ - \log Z \\ \log Z \\ \log(C_\eta Z) } + \mat{ccc}{1 + \de & 0 & 0 \\ -\de & \ep & 0 \\ -\de & \fr{1}{2} + \ep & 1 } \psi_{(k)} \label{eq:logEvolutionRule}
}
We call the $3 \times 3$ matrix appearing on the right hand side of \eqref{eq:logEvolutionRule} the ``{\bf parameter evolution matrix}'' as in \cite{isett}, and we denote this matrix by $T_\de$.  Since $T_\de$ is lower triangular, the eigenvalues of $T_\de$ are the diagonal entries $(1+\de, \ep = \fr{\de}{2}, 1)$.  For large $k$, the $\psi_{(k)}$  are (projectively) concentrated near the eigenline corresponding to the largest eigenvalue, $(1 + \de)$, which is spanned by the eigenvector 
\ali{
\psi_+ &:= \vect{ - (1 + \fr{\de}{2}) \\ \de \\ \fr{3}{2} + \de} \in \mbox{NS}[T_\de - (1{+} \de) I ] 
= \mbox{NS} \mat{ccc}{ 0 & 0 & 0 \\ -\de & -1 - \fr{\de}{2} & 0 \\  -\de & \fr{1}{2} + \fr{\de}{2} & -\de } \label{eq:dominantEvect}
}
More precisely, let $(\psi_+, \psi_\ep, \psi_1)$ be an eigenbasis for $T_\de$ corresponding to the eigenvalues $(1+\de, \ep, 1)$.  In terms of this basis, 
\ali{
\psi_{(k)} &= c_{+,(k)} \psi_+ + c_{\ep, (k)} \psi_\ep + c_{1,(k)} \psi_{1} \label{eq:expressEigen}
} 
and $[ - \log Z , \log Z , \log(C_\eta Z) ]^t = u_+ \psi_+ + u_\ep \psi_\ep + u_1 \psi_1$.  In this basis, \eqref{eq:logEvolutionRule} transforms to
\ali{
c_{+,(k+1)} &= u_+ + (1+\de) c_{+,(k)} \label{eq:dominantEqnDiag} \\
c_{\ep,(k+1)} &= u_\ep + \ep c_{\ep,(k)} \label{eq:epEqndiag} \\
c_{1,(k+1)} &= u_1 + c_{1,(k)} \label{eq:1eqndiag}
}
From \eqref{eq:epEqndiag} and \eqref{eq:1eqndiag}, one obtains by induction a linear upper bound of
\ali{
|c_{\ep,(k)}| + |c_{1,(k)}| \leq |k|(|u_\ep| + |u_1|) + |c_{\ep,(0)}| + |c_{1,(0)}| \label{eq:minorParamBounds}
}
As for \eqref{eq:dominantEqnDiag}, we claim that $u_+ > 0$ and
\ali{
c_{+,(k)} \geq (1+\de)^k c_{+, (0)} > 0 \quad \tx{ for all } k \label{eq:dominantEqn}
}
To prove the claim, we use the fact that $[1,0,0]$ is a $(1 + \de)$-row-eigenvector: $[1, 0, 0] [T_\de - (1+ \de) I ] = 0$.  
It is therefore invariant under the projection to the $1 + \de$ eigenspace:
\ali{
[1, 0, 0] &= [1, 0, 0]\left[ \fr{(T_ \de - \ep I)}{(1+\de - \ep)} \fr{(T_\de - I)}{\de} \right] \label{eq:rowProject}
}
Applying $[1,0,0]$ to \eqref{eq:expressEigen} and using \eqref{eq:dominantEvect} and \eqref{eq:rowProject}, one obtains $u_+ = \fr{\log Z}{(1 + \fr{\de}{2})}$ and $c_{+,(k)} = - \fr{\log e_{R,(k)}}{(1 + \fr{\de}{2})}$.  From this calculation, the claim \eqref{eq:dominantEqn} follows from \eqref{eq:dominantEqnDiag} by induction.

We now turn to the estimate \eqref{eq:ctxabdCorrect}.  Let $E_{\a,(k)}$ denote the right hand side of the upper bound of \eqref{eq:ctxabdCorrect}.  Then using \eqref{eq:expressEigen} and \eqref{eq:minorParamBounds}
\ali{
\log E_{\a,(k)} &= [1/2 - \de \a, 1/2, \a ] \psi_{(k)} \label{eq:logBoundEalk}\\
\log E_{\a,(k)} &= c_{+,(k)} [1/2 - \de \a, 1/2, \a ] \psi_+ + O_{\a,Z}(|k|) \label{eq:mainLogBoundAlp}
}
The $O(\cdot)$ term above grows linearly in $k$ with an implied constant that depends on $\a, Z, C_\eta$.  The assumption \eqref{ineq:messyReg} on $\a$ in Theorem~\ref{thm:messyReg} is exactly the condition that
\ali{
[1/2 - \de \a, 1/2, \a ] \psi_+ &= - \left( \fr{1}{2} - \de \a \right)\left(1 + \fr{\de}{2} \right) + \fr{\de}{2} + \a\left(\fr{3}{2} + \de \right) < 0 \label{ineq:negativeIneq}
} 
Using \eqref{eq:dominantEqn}, \eqref{eq:ctxabdCorrect} and \eqref{eq:mainLogBoundAlp} we obtain a double-exponential decay for the $C_t C_x^\a$ norms of $V_{(k)}$: 
\ALI{
\fr{\| V_{(k)} \|_{C_t C_x^\a}}{C Z^\a} \leq E_{\a, (k)} &\leq e^{ - c_\a ( 1 + \de)^k + O_{\a,Z}(|k|) }
}
The constant $c_\a$ above is a positive number that depends on $\a, Z$ and the initial $(\Xi, e_v, e_R)_{(0)}$.  With this bound, we obtain the desired regularity $v \in C_t C_x^\a$ for our solution.

We now prove the compact support in time for the solution.  By \eqref{eq:supportGrowth}, is suffices to show that the series $\sum_{k=0}^\infty (\Xi_{(k)} e_{v,(k)}^{1/2})^{-1} = \sum_{k = 0} ^\infty e_{R,(k)}^{-1/2} (e_v/e_R)_{(k)}^{-1/2} \Xi_{(k)}^{-1}$ converges to a finite value.  As in the analysis from \eqref{eq:logBoundEalk} to \eqref{ineq:negativeIneq}, it suffices to check that
\ALI{
[-1/2, -1/2, -1] \psi_+ &= \fr{1}{2} \left(1 + \fr{\de}{2} \right) - \fr{\de}{2} - \left(\fr{3}{2} + \de \right) < 0.
}
This calculation concludes the proof of Theorem~\ref{thm:messyReg}.


\appendix
\section{Appendix} \label{sec:appendix}
In this Appendix, we gather several general analysis facts that have been used throughout the proofs of Lemmas~\ref{lem:regSublem}-\ref{lem:convexInt}.  We start by proving Proposition~\ref{prop:LPhold}, which is the well-known Littlewood-Paley characterization of the $\dot{C}^\a$ seminorm.  We refer to Section~\ref{sec:gluingPrelims} for notation.

\begin{proof}[Proof of Proposition~\ref{prop:LPhold}] Let $\| f \|_{\dot{B}^\a_{\infty, \infty}} := \sup_q 2^{\a q} \co{P_q f}$ denote the Littlewood-Paley version of the seminorm.  To see that $\| f \|_{\dot{B}_{\infty, \infty}^\a} \lsm_\a \cda{f}$, we use that $\int_{\R^n} \chi_q(h) dh = 0$ to write 
\ALI{
P_q f(x) &= \int_{\R^n} f(x - h) \chi_q(h) dh \\
&= \int_{\R^n} ( f(x - h) - f(x) ) \chi_q(h) dh \\
| P_q f(x) | &\leq \cda{f} \int_{\R^n} |h|^\a |\chi_q(h)| dh \lsm_\a 2^{-\a q} \cda{f}
} 
Multiplying by $2^{\a q}$ and taking a supremum over $x$ and $q$ gives $\| f \|_{\dot{B}^\a_{\infty, \infty}} \lsm_\a \cda{f}$.

  To prove $\cda{f} \lsm \| f \|_{\dot{B}^\a_{\infty, \infty}}$,	let $x \in \T^n$, $h \in \R^n$, $h \neq 0$.  Choose $\bar{q} \in \Z$ such that $2^{\bar{q} - 1} < |h| \leq 2^{\bar{q}}$.  
Using the decomposition \eqref{eq:LPdecomp} and that $\Pi_0 f$ is a constant, we have
\ALI{
f(x + h) - f(x) &= \sum_{q \leq \bar{q}} [P_q f(x + h) - P_q f(x)] + \sum_{q > \bar{q} } [P_q f(x + h) - P_q f(x)] \\
\left|\sum_{q > \bar{q} } [P_q f(x + h) - P_q f(x)] \right| &\leq \sum_{q > \bar{q} } 2\co{ P_q f } \\
&\leq 2 \sum_{q > \bar{q}} 2^{-\a q} \| f \|_{\dot{B}^\a_{\infty,\infty}} \\
(0 < \a) \Rightarrow \qquad&\lsm_\a 2^{-\a \bar{q}} \| f \|_{\dot{B}^\a_{\infty,\infty}} \lsm |h|^\a  \| f \|_{\dot{B}^\a_{\infty,\infty}} 
}
For the low-frequency part, apply the Mean Value Theorem and $P_q = P_{\approx q} P_q$ to obtain
\ALI{
\left|\sum_{q \leq \bar{q}} [P_q f(x + h) - P_q f(x)] \right| &\leq \sum_{q \leq \bar{q}} |h| \co{ \nb P_q f } \\
&\leq |h| \sum_{q \leq \bar{q}} \co{ \nb P_{\approx q} P_q f } \\
&\leq |h| \sum_{q \leq \bar{q}} \| \nb P_{\approx q} \|~\co{ P_q f } \\
&\lsm_\a |h| \sum_{q \leq \bar{q}} 2^{q} [2^{-\a q } \| f \|_{\dot{B}^\a_{\infty,\infty}}] \\
(\a < 1) \Rightarrow \qquad &\lsm_\a |h| 2^{(1 - \a) \bar{q}} \|  f \|_{\dot{B}^\a_{\infty,\infty}} \leq |h|^\a \|  f \|_{\dot{B}^\a_{\infty,\infty}}
}
\end{proof}
We next prove the commutator estimate of Proposition~\ref{prop:LPcommutunab}.
\begin{proof}[Proof of Proposition~\ref{prop:LPcommutunab}]  Let $u \in L^\infty(\R^n)$ be a smooth vector field and $f \in L^\infty(\R^n)$ be a smooth function.  Then for all $x \in \R^n$
\ALI{
 u\cdot \nb P_q f(x) - P_q[u \cdot \nb f](x) &= u^i(x) \fr{\pr}{\pr x^i} \int_{\R^n} f(x + h) \chi_q(h) dh - \int_{\R^n} u^i(x + h) \fr{\pr}{\pr x^i} f( x + h) \chi_q(h) dh \\
&= \int_{\R^n} ( u^i(x) - u^i(x+h) ) \fr{\pr}{\pr h^i} f(x + h) \chi_q(h) dh \\
&= \int_{\R^n} ( u^i(x) - u^i(x+h) ) \fr{\pr}{\pr h^i}[ f(x+h) - f(x) ] \chi_q(h) dh
}
Using that $u \in L^\infty(\R^n)$, $f \in L^\infty(\R^n)$ and $\chi_q$ is Schwartz, we may integrate by parts in $h$ to obtain
\ali{
u\cdot \nb P_q f(x) - P_q[u \cdot \nb f](x) &= - \int_{\R^n} ( u^i(x) - u^i(x+h) ) (f(x + h) - f(x)) \nb_i \chi_q(h) dh \label{eq:doublediffcomterm}\\
&+ \int_{\R^n} \nb_i u^i(x+h) (f(x+h) - f(x)) \chi_q(h) dh \label{eq:nbiuicomterm}
}
We estimate the terms on the right hand side by
\ALI{
|\eqref{eq:nbiuicomterm}| &\leq \co{\nb u} \cda{f} \int_{\R^n} |h|^\a |\chi_q(h)| dh \\
&\lsm 2^{-\a q} \co{\nb u} \cda{f} \\
\eqref{eq:doublediffcomterm} &=  \int_{\R^n} \left[\int_0^1 \nb_a u^i( x + \si h ) h^a d\si \right] (f(x + h) - f(x) ) \nb_i \chi_q(h) dh \\
|\eqref{eq:doublediffcomterm}| &\leq \co{ \nb u } \cda{f} \int_{\R^n} |h|^{1 + \a} |\nb \chi_q(h)| dh \\
&\lsm 2^{-\a q} \co{ \nb u } \cda{f}
}
\end{proof}

Proposition~\ref{prop:maxTransport} below lists some standard facts about nonsingular linear transport equations.
\begin{prop}  \label{prop:maxTransport} Let $t_0 \in \R$ and $J$ be an open interval in $\R$ containing $t_0$.  Let $u : J \times \T^n \to \R^n$ be a smooth vector field and $g : J \times \T^n \to \R$ be a smooth function, i.e. $u \in C_t C_x^k$, $g \in C_t C_x^k$ for all $k \geq 0$.  Let $f_0 : \T^n \to \R$ be smooth.  Then there exists $f : J \times \T^n \to \R$ such that $f \in C^1(J \times \T^n)$, $f \in \bigcap_{k \geq 0} C_t C_x^k$ is smooth in the spatial variables on $J \times \T^n$, and $f$ satisfies
\ali{
\label{eq:transpEqn}
\begin{split}
(\pr_t + u \cdot \nb) f &= g, \qquad \qquad \tx{ on } J \times \T^n \\
f(t_0, x) &= f_0(x), \qquad \tx{ for } x \in \T^n
\end{split}
}
Furthermore, $f$ is unique among solutions to \eqref{eq:transpEqn} in the class $f \in C^1(J \times \T^n)$, and $f$ satisfies
\ali{
\co{f(t)} &\leq \co{f_0} + \left| \int_{t_0}^t \co{g(\tau)} d\tau \right|, \quad \tx{ for all } t \in J \label{eq:estimateTrans}
}   
\end{prop}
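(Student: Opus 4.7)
The plan is to use the classical method of characteristics. Since $\T^n$ is compact and $u \in C_t C_x^k$ for all $k$, the vector field $u$ is bounded and spatially smooth on any compact subinterval of $J$, so the ODE $\dot{\gamma}(s) = u(s, \gamma(s))$ admits a unique, globally defined flow $\Phi_s : J \times \T^n \to \T^n$, meaning $\Phi_{t-t_0}(t_0, x_0)$ is well defined for all $t, t_0 \in J$ and is a diffeomorphism in $x_0$ that is jointly smooth in $(s, x_0)$ by standard ODE regularity (Gr\"onwall plus differentiation of the variational equation). Let $\Gamma(t, x)$ denote the back-to-labels map, i.e.\ the unique solution of $\Phi_{t-t_0}(t_0, \Gamma(t,x)) = x$; by the inverse function theorem, $\Gamma(t, \cdot)$ is a smooth diffeomorphism of $\T^n$ depending continuously on $t$.

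Next I would define $f$ explicitly by the Duhamel formula along characteristics:
\[
f(t,x) := f_0(\Gamma(t,x)) + \int_{t_0}^t g\bigl(\tau, \Phi_{\tau - t_0}(t_0, \Gamma(t,x))\bigr) \, d\tau .
\]
The verification that $f$ solves \eqref{eq:transpEqn} is the key computation: for a fixed initial label $x_0 \in \T^n$, let $\gamma(t) = \Phi_{t-t_0}(t_0, x_0)$, so that $\Gamma(t, \gamma(t)) = x_0$ and the formula collapses to $f(t, \gamma(t)) = f_0(x_0) + \int_{t_0}^t g(\tau, \gamma(\tau))\, d\tau$. Differentiating in $t$ yields $\frac{d}{dt} f(t,\gamma(t)) = g(t,\gamma(t))$, which is exactly $(\partial_t + u \cdot \nabla) f = g$ evaluated at $(t, \gamma(t))$, and characteristics cover $J \times \T^n$ so the identity holds everywhere.

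For regularity, smoothness of $f$ in the spatial variables follows by differentiating the Duhamel formula under the integral sign and using that $\Gamma, \Phi, f_0, g$ are all spatially smooth, together with uniform bounds on their spatial derivatives on any compact $t$-subinterval. Continuity of $\partial_t f$ on $J \times \T^n$ then follows directly from the equation $\partial_t f = g - u \cdot \nabla f$, whose right-hand side is continuous on $J \times \T^n$ since $u, g, \nabla f$ are; this gives $f \in C^1(J \times \T^n)$.

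Uniqueness is the usual one-line argument: if $f_1, f_2$ are two $C^1$ solutions, their difference $h = f_1 - f_2$ satisfies $(\partial_t + u \cdot \nabla) h = 0$ with $h(t_0, \cdot) = 0$, so along any characteristic $\frac{d}{dt} h(t, \gamma(t)) = 0$ and hence $h \equiv 0$. Finally, the $C^0$ estimate \eqref{eq:estimateTrans} is immediate from the Duhamel formula by taking absolute values inside the integral and using $|f_0(\Gamma(t,x))| \leq \co{f_0}$ and $|g(\tau, \Phi_{\tau-t_0}(t_0, \Gamma(t,x)))| \leq \co{g(\tau)}$. There is no serious obstacle here — the entire Proposition is a textbook application of the method of characteristics on a compact manifold, and the only point requiring a little care is the joint $C^1$ regularity of $f$, which as noted above comes from rewriting $\partial_t f$ using the PDE itself rather than trying to differentiate the Duhamel formula directly in $t$ (which would require time-differentiability of $u$ and $g$ that is not assumed).
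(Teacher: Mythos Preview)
Your proposal is correct and follows the same method of characteristics that the paper uses; indeed the paper only sketches the proof of the $C^0$ estimate \eqref{eq:estimateTrans} via the characteristic through a given point $(t,x)$, while you supply the full Duhamel formula and the existence, regularity, and uniqueness arguments as well. One small remark: your argument for joint $C^1$ regularity is slightly circular as written (using the PDE to extract $\partial_t f$ presupposes $f$ is differentiable in $t$), but this is easily fixed by first observing that $F(t,x_0) := f(t,\Phi_{t-t_0}(t_0,x_0))$ is manifestly $C^1$ in $(t,x_0)$ from the integral formula and ODE theory, and then composing with the $C^1$ back-to-labels map.
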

\begin{proof} We only sketch the proof of \eqref{eq:estimateTrans}.  Let $(t, x) \in J \times \T^n$.  Let $\ga(\cdot) : J \to \T^n$ be the unique solution to the ODE $\fr{d \ga}{d\tau} (\tau) = u(\tau, \ga(\tau) )$ with $\ga(t) = x$.  Then for any $f \in C^1(J \times \T^n)$ solving \eqref{eq:transpEqn} and $\tau \in J$, set $\psi(\tau) := f(\tau, \ga(\tau))$.  Then $\fr{d}{d\tau} \psi(\tau) = g(\tau, \ga(\tau))$ for all $\tau \in J$ and $\psi(t_0) = f_0(\ga(t_0))$.  Integrating from $t_0$ to $t$, we have that $|\psi(t)| = |f(t,x)|$ is bounded by the right hand side of \eqref{eq:estimateTrans}.  
\end{proof}

Our proof of the Gluing Approximation Lemma relies on the following Proposition concerning existence of regular solutions to the transport-elliptic equation \eqref{eq:transportElliptic}.  Recall that $\SS \subseteq \R^3 \otimes \R^3$ denotes the space of real symmetric $(2,0)$ tensors.  
\begin{thm} \label{prop:transElliptWPthry}  Let $J$ be an open subinterval of $\R$ and $t_0 \in J$.  Let $v : J \times \T^3 \to \R^3$ be a smooth vector field $v \in \bigcap_{k \geq 0} C_t C_x^k$ that is divergence free $\nb_i v^i = 0$.  Let $\rho_0 : \T^3 \to \SS$ be smooth and $Z : J \times \T^3 \to \R^3$ be a smooth vector field $Z \in \bigcap_{k\geq 0} C_t C_x^k$.  
Then there exists $\rho : J \times \T^n \to \SS$ such that $\rho \in C^1(J \times \T^3)$, $\rho \in \bigcap_{k \geq 0} C_t C_x^k$ is smooth in the spatial variables, and
\ali{
\begin{split} \label{eq:transportElliptic2}
(\pr_t + v \cdot \nab) \rho^{j\ell} &= \RR^{j\ell}[\nb_a v^i \nb_i( \rho^{ab} ) + Z^b ] \\
\rho^{j\ell}(t_0, x) &= \rho_{0}^{j\ell}(x) 
\end{split}
}
\end{thm}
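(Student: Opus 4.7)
The plan is to solve \eqref{eq:transportElliptic2} by Picard iteration, exploiting the fact that the apparent loss of one derivative in the forcing term is recovered by the operator $\RR^{j\ell}$. The cleanest way to see this is to commute the derivative: using $\nb_i v^i = 0$, we have
\ali{
\RR^{j\ell}[\nb_a v^i \nb_i \rho^{ab}] &= \RR^{j\ell} \nb_i[ \nb_a v^i \rho^{ab} ],
}
so the right-hand side of \eqref{eq:transportElliptic2} is the zeroth-order (Calder\'{o}n--Zygmund type) operator $\RR \nb$ applied to the smooth coefficient $\nb v$ times $\rho$, plus the fixed forcing $\RR^{j\ell}[Z^b]$. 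Since $v$ is smooth, the map $K : \rho \mapsto \RR^{j\ell}\nb_i[\nb_a v^i \rho^{ab}]$ is a bounded linear operator on $H^s(\T^3)$ for every real $s$ (boundedness of $\RR \nb$ on $H^s$ follows from its being a Fourier multiplier that is smooth and homogeneous of degree $0$ away from the origin; multiplication by $\nb v$ preserves $H^s$).

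First I would pull back along the flow of $v$. Let $\Phi_{t,t_0} : \T^3 \to \T^3$ be the smooth diffeomorphism obtained by flowing $v$ from time $t_0$ to time $t$ (well-defined on all of $J$ since $v \in C_t C_x^k$ for every $k$ and $\T^3$ is compact). Setting $\tilde{\rho}(t,x) = \rho(t, \Phi_{t,t_0}(x))$, equation \eqref{eq:transportElliptic2} becomes the Banach-space ODE
\ali{
\fr{d}{dt} \tilde{\rho}(t,x) &= K_t \tilde{\rho}(t, \cdot) \big|_{\Phi_{t,t_0}(x)} + \RR^{j\ell}[Z^b](t, \Phi_{t,t_0}(x)), \qquad \tilde{\rho}(t_0, x) = \rho_0(x),
}
where $K_t = K$ at time $t$. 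Fix $s$ large (say $s > 5/2$) and regard this as an ODE in $H^s(\T^3, \SS)$. Since $v$ is smooth on $J$, the operator-valued function $t \mapsto K_t$ (conjugated by pullback along $\Phi_{t,t_0}$, which is bounded on $H^s$) is locally bounded in the operator norm on $H^s$, and the inhomogeneity lies in $C_t H^s$. Picard iteration on any compact subinterval of $J$ therefore produces a unique solution $\tilde{\rho} \in C^1(J; H^s(\T^3))$, and the argument is global by a standard Gronwall bound for linear ODEs.

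Next I would bootstrap regularity. Since the estimate above works for every $s$, the solution $\rho(t,\cdot) = \tilde{\rho}(t, \Phi_{t_0,t}(\cdot))$ lies in $C^1_t H^s_x$ for every $s$; Sobolev embedding then gives $\rho \in \bigcap_{k \geq 0} C_t C_x^k$. Using \eqref{eq:transportElliptic2} itself and the fact that $v \cdot \nb \rho$ and $\RR^{j\ell}[\ldots]$ are smooth in $x$, one upgrades $\rho$ to $C^1(J \times \T^3)$ and indeed to joint smoothness in $(t,x)$ by iterating $\pr_t \rho = - v \cdot \nb \rho + \RR^{j\ell}[\nb_a v^i \nb_i \rho + Z]$. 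The symmetry $\rho^{j\ell} = \rho^{\ell j}$ is preserved because both the transport term and the right-hand side (the tensor $\RR^{j\ell}[\cdot]$ is manifestly symmetric in $j,\ell$ from formula \eqref{eq:RRformula}) map symmetric tensors to symmetric tensors, so the symmetric part of $\rho$ satisfies the same equation with the same initial data.

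The main obstacle is the one already identified: the right-hand side depends on $\nb \rho$, so naively the equation loses a derivative and standard transport theory does not apply. Everything hinges on the algebraic rewriting via $\nb_i v^i = 0$ that turns $\RR[\nb v \cdot \nb \rho]$ into the zeroth-order operator $\RR \nb$ acting on $\nb v \cdot \rho$, together with the $H^s$-boundedness of $\RR \nb$. Once this is in place the problem reduces to a linear ODE in a Banach space, and the remaining steps are routine.
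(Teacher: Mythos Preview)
Your approach is correct and genuinely close in spirit to the paper's, but packaged differently. Both proofs rest on the same observation you single out: since $\nb_i v^i = 0$, the forcing $\RR^{j\ell}[\nb_a v^i \nb_i \rho^{ab}]$ can be rewritten as $\RR^{j\ell}\nb_i[\nb_a v^i \rho^{ab}]$, so the map $\rho \mapsto \text{RHS}$ is order zero and no derivative is lost. From there you pull back along the flow of $v$ and reduce the problem to a linear ODE in $H^s(\T^3)$, invoking boundedness of the Fourier multiplier $\RR\nb$ on $H^s$ together with boundedness of composition by a smooth diffeomorphism; Picard plus Gronwall then gives existence for every $s$, and you bootstrap. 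The paper instead iterates directly on the transport equation \eqref{eq:transportElliptic3} without pulling back: it introduces a weighted $L^4$-based energy $E_L$ and an exponential-in-time weight to make the solution map a contraction on $C_t W_x^{L,4}(\overline{J}\times\T^3)$ for each compact $\overline{J}\subseteq J$ and each $L$, using Calder\'on--Zygmund boundedness of $\nb\RR$ on $L^4$ in place of your $H^s$ multiplier bound. Your route is conceptually cleaner (it reduces to textbook Banach-space ODE theory) but requires keeping track of the conjugation by the flow; the paper's route avoids that bookkeeping at the cost of a more hands-on energy estimate. One notational slip: your displayed ODE should read $\tilde K_t \tilde\rho$ with $\tilde K_t = \Phi_{t,t_0}^* \circ K_t \circ (\Phi_{t,t_0}^*)^{-1}$ rather than $K_t\tilde\rho|_{\Phi_{t,t_0}(x)}$, though you correct this in words immediately after.
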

\begin{proof}  The proof of Theorem~\ref{prop:transElliptWPthry} proceeds by modifying the work in \cite[Sections 27.1-27.3]{isett}.  There the analysis specialized to the case of $\rho_0^{j\ell} = \RR^{j\ell}[U]$ and $Z^b = (\pr_t + v \cdot \nb) U$ for some vector field $U$ with spatial integral $0$ (which suffices for the applications of the present paper).  The proof in \cite[Sections 27.1-27.3]{isett} assumes estimates on $v$ and $Z$ that are uniform in time and emphasizes the a priori estimates on the solution.  Here we outline how to adapt the proof to an arbitrary open time interval, and focus on the proof of existence for the solution.

Set $\rho_{(0)}^{j\ell}(t,x) = \rho_0^{j\ell}(x)$, and define $\rho_{(k+1)}^{j\ell}$ to be the unique solution to 
\ali{
\begin{split} \label{eq:transportElliptic3}
(\pr_t + v \cdot \nab) \rho_{(k+1)}^{j\ell} &= \RR^{j\ell}[\nb_a v^i \nb_i( \rho_{(k)}^{ab} ) + Z^b ] \\
\rho^{j\ell}_{(k+1)}(t_0, x) &= \rho_{0}^{j\ell}(x) 
\end{split}
}
Observe that the functions $\rho_{(k)}^{j\ell} : J \times \T^3 \to \SS$ are well defined on all of $J \times \T^3$ and are smooth in the spatial variables: $\rho_{(k)}, \pr_t \rho_{(k)} \in \bigcap_{k \geq 0} C_t C_x^k$.  We claim that for every compact subinterval $\overline{J} \subseteq J$ and every $L \in \Z_+$, the sequence $\rho_{(k)}$ is Cauchy in $C_t C_x^L(\overline{J} \times \T^3)$.

Let $\overline{J}$ be an compact subinterval of $J$.  We claim that for all $L \geq 0$ the sequence $\rho_{(k)}$ is Cauchy in $C_t C_x^L(\overline{J} \times \T^3)$.  Let $L \geq 1$ be given.  Choose parameters $\bar{\tau}^{-1}, \La >0 $ such that
\ali{
\co{\nb^{k+1} v} &\leq \La^k \bar{\tau}^{-1}, \quad \tx{ for all } 0 \leq k \leq L \label{eq:vbdstauinvla}
}
For any $\rho : J \times \T^n \to \SS$ in the class $\bigcap_{k\geq 0} C_t C_x^k$, define the weighted energy
\ALI{
E_L[\rho(t)] &= \sum_{K=1}^L \sum_{j, \ell = 1}^3 \sum_{|\va| = K} \int_{\T^n} \fr{| \nb_{\va} \rho^{j\ell}(t,x)|^4}{\La^{4K}} dx 
}
For $B \geq 1$ to be chosen later and smooth $\rho : \overline{J} \times \T^3 \to \SS$, define the seminorm
\ali{
\| \rho \|_X &= \sup_{t \in \overline{J}} e^{- B \bar{\tau}^{-1} | t - t_0 |} E_L[\rho(t)]^{1/4} \notag
}
Let $T$ be the map such that $\rho_{(k+1)} = T[\rho_{(k)}]$, which is defined by solving \eqref{eq:transportElliptic3}.  We want to show that $T$ is a contraction on $C_t W_x^{L,4}$ if one takes the appropriate norm.  Let $\rho, \tilde{\rho} : J \times \T^3 \to \SS$ be smooth $(2,0)$ tensor fields.  Then differentiating equation \eqref{eq:transportElliptic3} for the difference $T[\rho] - T[\tilde{\rho}]$ and commuting using \eqref{eq:vbdstauinvla}, one obtains that for all $1 \leq |\va| \leq L$ and all $t \in \overline{J}$,
\ali{
\| (\pr_t + v \cdot \nab) \nb_{\va}[T[\rho] - T[\tilde{\rho}] ] \|_{L^4(\T^n)} &\leq C_{|\va|} \La^{|\va|} \bar{\tau}^{-1} ( E_L[\rho - \tilde{\rho}]^{1/4} + E_L[T[\rho] - T[\tilde{\rho}]]^{1/4} ) \label{eq:advecDeriv}
}
The computation follows as in the a priori estimate in \cite[Propositions 27.1-27.2]{isett}.  A key input in this estimate is the fact that $\nab \RR^{j\ell}$ acts as a bounded operator on $L^4(\T^3)$, which follows from the Calderon-Zygmund theory on $\R^3$ as discussed in \cite[Proposition 6.2]{isett}.  

Applying \eqref{eq:advecDeriv} and using (in a non-essential way) that $\nb_i v^i = 0$, 
\ali{
\fr{d}{dt} E_L\big[ T[\rho](t) - T[\tilde{\rho}](t) \big] &= \sum_{K=1}^L \sum_{j, \ell = 1}^3 \sum_{|\va| = K} \La^{-4K} \int_{\T^n} (\pr_t + v \cdot \nab)  | \nb_{\va}\big[T[\rho]^{j\ell} - T[\tilde{\rho}]^{j\ell}\big]|^4(t,x) dx \notag \\
\left| \fr{d}{dt} E_L\big[T[\rho](t) - T[\tilde{\rho}](t) \big] \right| &\stackrel{\eqref{eq:advecDeriv}}{\leq} C_L \bar{\tau}^{-1} ( E_L[\rho - \tilde{\rho}]^{1/4} + E_L\big[T[\rho] - T[\tilde{\rho}]\big]^{1/4} ) E_L\big[T[\rho] - T[\tilde{\rho}] \big]^{\frac{3}{4}} \notag \\
&\leq C_L \bar{\tau}^{-1} (E_L[\rho(t) - \tilde{\rho}(t)] +E_L\big[T[\rho](t) - T[\tilde{\rho}](t)\big] ) \label{eq:afterYoung} \\
\left| \fr{d}{dt} E_L\big[T[\rho](t) - T[\tilde{\rho}](t) \big] \right| &\leq C_L \bar{\tau}^{-1} e^{4 B \bar{\tau}^{-1} |t - t_0|} (\| \rho - \tilde{\rho} \|_X^4 + \| T[\rho] - T[\tilde{\rho}] \|_X^4 )
}
In line \eqref{eq:afterYoung}, we applied Young's inequality with the exponents $\fr{1}{4} + \fr{3}{4} = 1$.  Integrating the above estimate from $t = t_0$ and observing that $E_L[T[\rho](t_0) - T[\tilde{\rho}](t_0)] = 0$, we obtain
\ali{
E_L[T[\rho](t) - T[\tilde{\rho}](t)] &\leq \fr{C_L}{4B} e^{4 B \bar{\tau}^{-1} |t - t_0|} (\|\rho - \tilde{\rho} \|_X^4 + \| T[\rho] - T[\tilde{\rho}] \|_X^4 ) \notag \\
\| T[\rho] - T[\tilde{\rho}] \|_X^4 &\leq \fr{C_L}{4B}\|\rho - \tilde{\rho} \|_X^4 +  \fr{C_L}{4B}\|T[\rho] - T[\tilde{\rho}] \|_X^4 \label{eq:readyToAbsorb}
}
Choosing $B$ large enough, the last term in \eqref{eq:readyToAbsorb} can be subtracted from both sides and we obtain that 
\ali{
\| T[\rho] - T[\tilde{\rho}] \|_X \leq \fr{1}{2} \| \rho - \tilde{\rho} \|_X \label{eq:contraction}
}
It is also true that for all $t \in \overline{J}$
\ali{
 \int_{\T^3} ( T[\rho]^{j\ell}(t,x) - T[\tilde{\rho}]^{j\ell}(t,x) ) dx &= 0. \label{eq:intZero}
}
Equation \eqref{eq:intZero} follows from the following conservation law, which uses $\nb_i v^i = 0$ and \eqref{eq:transportElliptic3}:
\ali{
\fr{d}{dt} \int_{\T^n} T[\rho]^{j\ell}(t,x) dx &= \int_{\T^n} (\pr_t + v\cdot \nab) T[\rho]^{j\ell}(t,x) dx = 0. \notag
}
Combining \eqref{eq:contraction} and \eqref{eq:intZero}, we see that $T$ is a contraction on the space of $C_t W_x^{L,4}$ tensor fields $\rho : \overline{J} \times \T^3 \to \SS$ when this space is endowed with the norm $\| \rho \| = \sup_{t\in \overline{J}} | \int_{\T^n} \rho(t,x) dx| + \| \rho \|_X$.  In particular, the sequence $\rho_{(k)}^{j\ell}$ is Cauchy in $C_t W_x^{L,4}$ and hence Cauchy in $C_t C_x^{L - 1}$ on $\overline{J} \times \T^3$ by Sobolev embedding.  Since $\overline{J}$ was an arbitrary compact subinterval of $J$ containing $t_0$ and $L$ was also arbitrary, we conclude that $\rho_{(k)}^{j\ell}$ converges to a limit $\rho$ that exists on all of $J \times \T^3$ and is smooth $\rho \in \bigcap_{k \geq 0} C_t C_x^k$.  It also follows that $\rho$ solves the initial value problem \eqref{eq:transportElliptic2} (as a distribution), from which we also have that $\rho \in C^1(J \times \T^3)$ and $\pr_t \rho \in \bigcap_{k \geq 0} C_t C_x^k$.
\end{proof}

\bibliographystyle{alpha}
\bibliography{eulerOnRn}

\end{document}